\documentclass[]{amsart}
\usepackage[utf8]{inputenc}
\usepackage[margin=1.5in]{geometry}
\usepackage{amsmath}
\usepackage{amscd}
\usepackage{amssymb}
\usepackage{amsthm}
\usepackage{ascmac}
\usepackage{url}
\usepackage[normalem]{ulem}
\usepackage[X2,T1]{fontenc} 
\DeclareTextSymbolDefault{\cyrsdsc}{X2}
\usepackage{mathtools}
\usepackage{tikz-cd}
\usetikzlibrary{cd}
\usepackage{mathrsfs}
\usepackage{hyperref}
\hypersetup{breaklinks=true} 
\usepackage[all]{xy} 
\usepackage{color}
\usepackage{enumitem}
\setenumerate{
label=(\arabic*),
font=\normalfont
}
\setlist[enumerate,2]{label=(\alph*),ref=(\alph*)}
\theoremstyle{definition}
\newtheorem{thm}{Theorem}[section]
\newtheorem{lem}[thm]{Lemma}
\newtheorem{cor}[thm]{Corollary}
\newtheorem{prop}[thm]{Proposition}
\newtheorem{defn-prop}[thm]{Definition-Proposition}
\newtheorem{defn-thm}[thm]{Definition-Theorem}

\newtheorem{rem}[thm]{Remark}

\newtheorem{defn}[thm]{Definition}

\def\a{{\mathrm a}}
\def\p{{\mathfrak p}}
\def\q{{\mathfrak q}}
\def\F{{\mathbb F}}
\def\G{{\mathbb G}}
\def\Gm{{\mathbb G_{m}}}
\def\Ga{{\mathbb G_{a}}}

\def\P{{\mathbb P}}
\def\Q{{\mathbb Q}}
\def\C{{\mathbb C}}
\def\Z{{\mathbb Z}}
\def\G{{\mathbb G}}
\def\ksep{k^{\mathrm{sep}}}
\def\cO{\mathcal{O}}

\def\red{\mathrm{red}}
\def\sing{\mathrm{sing}}

\DeclareMathOperator\Res{\mathrm{Res}}

\DeclareMathOperator\SL{\mathrm{SL}}
\DeclareMathOperator\Ord{\mathrm{Ord}}

\DeclareMathOperator\Stab{\mathrm{Stab}}

\DeclareMathOperator\Aut{\mathrm{Aut}}

\DeclareMathOperator\Gal{\mathrm{Gal}}

\DeclareMathOperator\Pic{\mathrm{Pic}}

\DeclareMathOperator\Frac{\mathrm{Frac}}
\DeclareMathOperator\Spec{\mathrm{Spec}}

\DeclareMathOperator\Shaf{\mathrm{Shaf}}

\DeclareMathOperator\chara{\mathrm{char}}

\DeclareMathOperator\Hom{\mathrm{Hom}}

\DeclareMathOperator\Br{\mathrm{Br}}

\DeclareMathOperator\diag{\mathrm{diag}}
\DeclareMathOperator{\PGL}{PGL}

\def\et{\mathop{\text{\rm \'et}}}

\newcommand{\cred}{\color{black}}
\newcommand{\cblue}{\color{black}}
\newcommand{\cora}{\color{black}}

\usepackage[
backend=biber,
bibstyle=ieee-alphabetic,
citestyle=ieee-alphabetic,
sorting=nyt,
isbn=false,
url=false,
doi=false,
giveninits=true,
maxnames=10,
labelalpha=true,
maxalphanames=4,
dashed=false,
]{biblatex}
\DeclareCaseLangs{}
\addbibresource{bibliography.bib}

\DeclareFieldFormat*{date}{\mkbibparens{#1}}
\DeclareFieldFormat*{volume}{\mkbibbold{#1}}
\DeclareFieldFormat*{title}{\mkbibemph{#1\isdot}}
\DeclareFieldFormat*{journaltitle}{#1}
\DeclareFieldFormat*{pages}{{#1}}

\AtEveryBibitem{%
  \ifentrytype{online}{%
    \DeclareFieldFormat*{date}{#1}%
  }{%
  }%
}

\title[Fano threefolds of genus 12 in positive and mixed characteristic]{Fano threefolds of genus 12 with large automorphism group in positive and mixed characteristic}

\begin{document}

\numberwithin{equation}{subsection}

\author{Tetsushi Ito}
\address{
Department of Mathematics, Faculty of Science, Kyoto University
Kyoto, 606--8502, Japan}
\email{tetsushi@math.kyoto-u.ac.jp}
\thanks{Supported by JSPS KAKENHI Grant Numbers
JP21K18577, JP23K20204, JP23K20786, JP24K21512 and JP25K00905.}

\author{Akihiro Kanemitsu}
\address{Department of Mathematical Sciences, Graduate School of Science, Tokyo Metropolitan University, 1-1 Minami-Osawa, Hachioji-shi, Tokyo 192-0397, Japan}
\email{kanemitsu@tmu.ac.jp}
\thanks{Supported by JSPS KAKENHI Grant Numbers 23K12948.}

\author{Teppei Takamatsu}
\address{Department of Mathematics, Faculty of Science,
Saitama University,
255 Shimo-Okubo, Sakura-ku,
Saitama-shi, Saitama 338-8570,
Japan}
\email{teppeitakamatsu.math@gmail.com}
\thanks{Supported by JSPS KAKENHI Grant Numbers JP22KJ1780 and JP25K17228.}

\author{Yuuji Tanaka}
\address{Beijing Institute of Mathematical Sciences and Applications (BIMSA), No. 544, Hefangkou Village, Huaibei Town, Huairou District, Beijing 101408, China}
\email{ytanaka@bimsa.cn}
\thanks{Supported by JSPS KAKENHI Grant Numbers JP21H00973, JP21K03246, Startup Grant at BIMSA, the Beijing NSF Beijing International Scientist Project IS25031 }

\begin{abstract}
We study prime Fano threefolds of genus 12 ($V_{22}$-varieties) with positive-dimensional automorphism groups in positive and mixed characteristic.
We classify such varieties over any perfect field.
In particular, we prove that $V_{22}$-varieties of Mukai--Umemura type over $k$ exist if and only if $\mathrm{char}\ k \neq 2$, $5$.
We also prove the same result for $\Ga$-type.
As arithmetic applications, we show that the Shafarevich conjecture holds for $V_{22}$-varieties of Mukai--Umemura type and of $\Gm$-type,
while it fails for $V_{22}$-varieties of $\Ga$-type.
Moreover, we prove that there exists $V_{22}$-variety over $\Z$,
whereas there do not exist $V_{22}$-varieties over $\Z$ whose generic fiber has a positive-dimensional automorphism group.
\end{abstract}

\subjclass[2020]{11G35, 14J45, 14G17}  
\keywords{Shafarevich conjecture, Fano varieties, Mukai--Umemura varieties}

\maketitle

\section{Introduction}

A smooth projective variety $X$ over a field $k$ is called a Fano variety when the anti-canonical divisor $-K_{X}$ is ample.
Among Fano threefolds, those with Picard rank 1 and index 1 — commonly referred to as prime Fano threefolds — form one of the most important classes.
A fundamental result in the classification theory of Fano threefolds states that complex prime Fano threefolds are classified into ten types according to their genus
\[
g \in \{2, 3, \ldots, 10,12\}.
\]
(see, e.g., \cite{Iskovskikh-Prokhorov} and references therein.)
Here, the genus of a prime Fano threefold over a field is defined by
\[
g = \frac{(-K_{X})^3+2}{2}.
\]
A similar classification of Fano threefolds over algebraically closed fields of positive characteristic was recently given by Tanaka (\cite{Tanaka1} and \cite{Tanaka2}).

In this paper, we undertake a more detailed algebraic and arithmetic geometric study of prime Fano threefolds of the highest genus, the ones of genus 12, which we refer to as {\it $V_{22}$-varieties}.
More precisely, we discuss the classification of $V_{22}$-varieties over a perfect field whose automorphism groups have dimension at least one, generalizing the results in \cite{Mukai-Umemura}, \cite{Prokhorov_automorphism}, \cite{DKK17},
\cite{Kuznetsov-ProkhorovGm}, \cite{Kuznetsov-Prokhorov-Shramov} (cf.\ \cite{duboulozFujitaKishimoto}).
Moreover, we study the behavior of their reductions along discrete valuation rings, and we prove the non-existence and finiteness results for $V_{22}$-varieties over a ring of integers.

\subsection{Classification of $V_{22}$-varieties with positive-dimensional automorphism groups}

Our first main theorem is an algebro-geometric classification of $V_{22}$-varieties with positive-dimensional automorphism groups.

\begin{thm}[Theorems \ref{thm:MukaiUmemuraclassification}, \ref{thm:Gaclassification}, and \ref{thm:Gmclassification}]
\label{thm:intromainthm1}
Let $k$ be an algebraically closed field, and $X$ a $V_{22}$-variety such that $\dim \Aut_{X/k} \geq 1$.
Then $\Aut_{X/k,\red}^{\circ}$ is one of $\Gm$, $\Ga$, and $\PGL_{2}$.
Moreover, we obtain the following:
\begin{enumerate}
\item 
There exists a unique $V_{22}$-variety $X$ with $\Aut_{X/k, \red}^{\circ} \simeq \PGL_{2,k}$ if $\chara k \neq 2,5$. 
Furthermore, we have $\Aut_{X/k, \red} \simeq \PGL_{2,k}$ in this case.
On the other hand, there exists no such variety if $\chara k = 2$ or $5$.

\item 
There exists a unique $V_{22}$-variety $X$ with $\Aut_{X/k, \red}^{\circ} \simeq \Ga$ if $\chara k \neq 2,5$.
Furthermore, we have $\Aut_{X/k, \red} \simeq \Ga \rtimes \mu_4$.
On the other hand, there exists no such variety if $\chara k = 2$ or $5$.

\item 
The set of isomorphism classes of $V_{22}$-varieties $X$ with $\Aut_{X/k, \red}^{\circ} \simeq \Gm$ has a natural bijection with 
\[
\P^1 (k) \setminus \{(0:1), (1:1), (5:4), (1:0)\}.
\]
Furthermore, we have $\Aut_{X/k, \red} \simeq \Gm \rtimes \Z/2\Z$.
\end{enumerate}
\end{thm}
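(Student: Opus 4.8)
The plan is to assemble the statement from three separate finer classifications — one for each possible connected automorphism group — all of which rest on Mukai's description of $V_{22}$-varieties, which I would first record over an arbitrary algebraically closed field $k$ (this is the technical heart prepared in the earlier sections): a $V_{22}$-variety $X/k$ is isomorphic to
\[
X_{N}=\{\,W\in\Gr(3,U)\ :\ \omega|_{W}=0\ \text{ for all }\omega\in N\,\}
\]
for a $7$-dimensional $k$-vector space $U$ and a sufficiently generic $3$-dimensional subspace $N\subset\wedge^{2}U^{\vee}$; conversely every such $X_{N}$ is a $V_{22}$-variety; and $\Aut_{X/k}$ is identified as a group scheme with the stabiliser of $N$ inside $\PGL(U)$, since every automorphism preserves $-K_{X}$ and hence the Mukai model. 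In particular $G:=\Aut_{X/k,\red}^{\circ}$ is a positive-dimensional connected smooth affine $k$-group acting faithfully and linearly (up to scalars) on $U$, and therefore on $N$ and on $\wedge^{2}U^{\vee}$.

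First I would prove the trichotomy. The image of $G$ in $\PGL(N)=\PGL_{3}$ is positive-dimensional, so $G$ is either solvable or not. If $G$ is not solvable, a finite check of $7$-dimensional representations of semisimple groups — discarding those for which $\wedge^{2}U^{\vee}$ has no $3$-dimensional subrepresentation (e.g.\ $G_{2}$ on its standard module) or for which the resulting $X_{N}$ fails to be a smooth $V_{22}$ — leaves only the case $G=\PGL_{2}$ with $U=\Sym^{6}$ of the standard module; the decomposition of $\wedge^{2}\Sym^{6}$ then contains a \emph{unique} copy of $\Sym^{2}$, which must be $N$, so $X$ is unique, $G=\PGL_{2}$, and (as I explain below) $\chara k\neq 2,5$. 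If $G$ is solvable, a Lie–Kolchin diagonalisation and a weight count for the $\Gm$- or $\Ga$-action inside $\wedge^{2}U^{\vee}$ show that a non-abelian solvable group of dimension $\geq 2$ cannot preserve a net $N$ with $X_{N}$ a smooth prime Fano threefold, so $G$ is $\Gm$ or $\Ga$.

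For (1), uniqueness is the uniqueness of that copy of $\Sym^{2}$; the equality $\Aut_{X/k,\red}=\PGL_{2}$ follows because the normaliser in $\PGL(U)$ of $\PGL_{2}$ acting via $\Sym^{6}$ is $\PGL_{2}$ itself; and the non-existence when $\chara k=2,5$ is the part I expect to be genuinely hard — one must show not merely that the evident model $X_{N}$ is singular there, but that \emph{no} smooth $V_{22}$ admits a $\PGL_{2}$-action, which I would reduce to the forced description $(U,N)=(\Sym^{6},\Sym^{2})$ and then to an explicit determination of $\Sing X_{N}$, equivalently to the divisibility by $2$ and $5$ of the discriminant of the associated $\PGL_{2}$-invariant binary form of degree $12$. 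Part (2) runs in parallel: I would exhibit the $\Ga$-type variety as a specific degeneration of the pair $(U,N)$ inside the $\GL_{7}\times\GL_{3}$ parameter space, check smoothness, ampleness of $-K$, and Picard rank $1$ (valid again exactly for $\chara k\neq 2,5$), identify $\Aut_{X/k,\red}^{\circ}$ with the one-parameter unipotent subgroup, and read off the component group $\mu_{4}$ as the subgroup of $\GL(U)$ that normalises this $\Ga$ and fixes $N$, acting on the additive coordinate by a primitive fourth root of unity. Part (3) is the $\Gm$-case: letting $\Gm$ act on $U$ with weights $(a_{1},\dots,a_{7})$ and enumerating the $3$-dimensional $\Gm$-stable subspaces $N\subset\wedge^{2}U^{\vee}$ for which $X_{N}$ is a smooth prime Fano threefold, one finds the moduli to be a rational curve whose normalisation is $\P^{1}$ minus the four values at which $X_{N}$ becomes singular, fails to be Fano, or acquires a strictly larger connected automorphism group — the four excluded points $(0:1),(1:1),(5:4),(1:0)$ — while the residual $\Z/2\Z$ is the class of the element of $\GL(U)$ inverting all the weights $a_{i}$.

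The main obstacle, as flagged, is the behaviour at $\chara k=2$ and $5$ together with the arithmetic of the boundary of the $\Gm$-family: the hard step is not constructing the models but proving \emph{non-existence}, i.e.\ controlling all $V_{22}$-varieties carrying a prescribed group action, and doing so while tracking the non-reducedness of the automorphism group schemes in small characteristic — which is exactly why the statement is phrased in terms of $\Aut_{X/k,\red}$ rather than $\Aut_{X/k}$.
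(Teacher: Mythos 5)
Your proposal rests on a foundation the paper never establishes and that is genuinely problematic in exactly the characteristics where the theorem is delicate. You assume, as "the technical heart prepared in the earlier sections," that every $V_{22}$-variety over an arbitrary algebraically closed field $k$ is of the form $X_N\subset\Gr(3,U)$ for a net $N\subset\wedge^2U^\vee$, and that $\Aut_{X/k}$ is the stabiliser of $N$ in $\PGL(U)$. That is Mukai's characteristic-zero theory; its positive-characteristic version (existence and uniqueness of the rank-$3$ Mukai bundle, hence canonicity of the embedding and the identification of the automorphism group with $\Stab(N)$) is a substantial theorem that the paper deliberately avoids. The paper's Sections 2--4 prepare something entirely different: the two-ray game $X\dashrightarrow Y$ from a line $L\subset X$ to a quintic del Pezzo threefold $Y$ with a smooth rational quintic curve $Z$, the isomorphism $\Aut_{(X,L)}\simeq\Aut_{(Y,Z)}$, and an explicit classification of quintic curves in the split $V_5$ with positive-dimensional stabiliser (Theorems \ref{thm:BGaquintic>2} and \ref{thm:BGaquintic2}), treating characteristic $2$ separately because $\Aut_{Y,\red}$ is $\SL_2$ acting through a degenerate representation there. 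All the existence, uniqueness, non-existence in characteristic $2,5$, and the computation of the component groups are then read off from that classification. Without an actual proof of the Mukai-model statement over $k$ of characteristic $2$ and $5$, your non-existence arguments (discriminant of the invariant degree-$12$ binary form, singularity of $X_N$) only rule out the model varieties, not all $V_{22}$-varieties carrying the group action, which is precisely the point you correctly flag as the hard one.

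There is also a concrete false step in your trichotomy: the claim that "a non-abelian solvable group of dimension $\geq 2$ cannot preserve a net $N$ with $X_N$ a smooth prime Fano threefold." The Borel subgroup of $\PGL_2$ acting through $\Sym^6$ preserves the Mukai--Umemura net, and the resulting $X_N$ is smooth for $\chara k\neq 2,5$; so such nets do exist. What you actually need is that whenever a connected solvable group of dimension $\geq 2$ stabilises $N$, the \emph{full} stabiliser is non-solvable (so that $\Aut^\circ_{\red}$ is $\PGL_2$ rather than the Borel), and separately that $\dim\Aut_{X/k}\neq 2$. In the paper this is Corollary \ref{cor:noV22Autdim2}: one takes a maximal connected solvable subgroup, uses the Borel fixed-point theorem to find a stable line, runs the two-ray game, and observes that the stabiliser of the resulting quintic curve is forced to be a Borel subgroup, $\Ga\rtimes\mu_4$, or $\Gm$, so $\dim\Aut_{X/k}\in\{1,3\}$. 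Your weight-count sketch does not supply this, and as stated it would wrongly exclude the Mukai--Umemura case from your own case analysis.
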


When $k= \C$, Theorem \ref{thm:intromainthm1} was proved by 
\cite{Mukai-Umemura}, \cite{Prokhorov_automorphism}, \cite{DKK17},
\cite{Kuznetsov-ProkhorovGm}, \cite{Kuznetsov-Prokhorov-Shramov}.
Moreover, the non-existence in Theorem \ref{thm:intromainthm1} (1) provides an answer to a question posed by Shigeru Mukai to the authors.

A $V_{22}$-variety $X$ over a general field $k$ is called a {\it Mukai--Umemura variety} (or a $V_{22}$-variety of $\PGL_2$-type), if $X_{\overline{k}}$ satisfies (1) in Theorem \ref{thm:intromainthm1}.
$V_{22}$-varieties of $\Ga$-type and $V_{22}$-varieties of $\Gm$-type over $k$ are defined similarly.

As a corollary of the above theorem, we also construct a $W(k)$-lift of $V_{22}$-varieties of $\PGL_2$-/$\Ga$-/$\Gm$-type over an algebraically closed field $k$ preserving the type of $\Aut^{\circ}_{\red}$ (Corollaries \ref{cor:liftMukaiUmemura}, \ref{cor:liftGa}, and \ref{cor:liftGm}).

Furthermore, we give the classification of $V_{22}$-varieties of $\PGL_2$-/$\Ga$-/$\Gm$-type over a general perfect field (Theorems \ref{thm:Mukai-Umemurageneral}, \ref{thm:GaV22overk}, and \ref{thm:Gmclassificationgeneral}).
Restricting to the case of finite fields, the results are as follows:
\begin{thm}
\label{thm:intromainthm1'}
For $\ast \in \{ \PGL_{2}, \Ga, \Gm \}$,
let $N_{\ast, \F_q}$ be the number of isomorphism classes of $V_{22}$-variety of $\ast$-type over $\F_{q}$. 
Then we have the following.
\begin{enumerate}
\item
\[
N_{\PGL_{2}, \F_q} =
\begin{cases}
    0 & \textup{if $2|q$ or $5 |q$,} \\
    1 & \textup{otherwise.}
\end{cases}
\]
\item 
\[
N_{\Ga, \F_q} =
\# (\F_q^{\times} / \F_q^{\times 4})
= 
\begin{cases}
    0 & \textup{if $2|q$ or $5 |q$,} \\
\gcd (4, q-1)& \textup{otherwise.}
\end{cases}
\]
\item
\begin{eqnarray*}
N_{\Gm, \F_q} &=&
2 \# (\P^1 (\F_q) \setminus \{(0:1), (1:1), (5:4), (1:0)\}) \\
&=& 
\begin{cases}
2q-4 &\textup{if $2|q$ or $5|q$}, \\
2q-6 &\textup{otherwise}.
\end{cases}
\end{eqnarray*}
\end{enumerate}
\end{thm}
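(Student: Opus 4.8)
The plan is to reduce Theorem~\ref{thm:intromainthm1'} to the classification of $V_{22}$-varieties of each type over an arbitrary perfect field (Theorems~\ref{thm:Mukai-Umemurageneral}, \ref{thm:GaV22overk}, and~\ref{thm:Gmclassificationgeneral}) together with Theorem~\ref{thm:intromainthm1}, and then to carry out the resulting Galois-cohomology computation over $\F_q$. The starting point is that, since $\F_q$ is perfect, any $V_{22}$-variety over $\F_q$ becomes isomorphic over a finite \emph{separable} extension to the fixed geometric model $X_{0}$ of its type, so the isomorphism classes of $\F_q$-forms of a given geometric model $X_{0}$ are classified by the Galois cohomology set $H^{1}(\F_q, \Aut_{X_{0}/\overline{\F_q},\red})$. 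Thus the count splits into (i) deciding which geometric models descend to $\F_q$, which Theorem~\ref{thm:intromainthm1} and the general classification theorems settle, and (ii) computing the relevant $H^{1}$; the two inputs for (ii) are Lang's theorem ($H^{1}(\F_q,G)=\{\ast\}$ for $G$ a connected algebraic group over $\F_q$) and Kummer theory.

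For $\PGL_{2}$-type: if $2\mid q$ or $5\mid q$ there is no geometric model by Theorem~\ref{thm:intromainthm1}(1), hence $N_{\PGL_{2},\F_q}=0$; otherwise there is a unique geometric model, descending to $\F_q$ (e.g.\ from Theorem~\ref{thm:Mukai-Umemurageneral}, or by reduction of the lift in Corollary~\ref{cor:liftMukaiUmemura}), and its $\F_q$-forms are classified by $H^{1}(\F_q,\PGL_{2,\F_q})$, which is trivial by Lang's theorem (equivalently, every conic over $\F_q$ is split), so $N_{\PGL_{2},\F_q}=1$. For $\Ga$-type the same dichotomy gives $N_{\Ga,\F_q}=0$ when $2\mid q$ or $5\mid q$; otherwise there is a unique geometric model, descending to $\F_q$, with $\Aut_{\red}\simeq\Ga\rtimes\mu_{4}$, and its $\F_q$-forms are classified by $H^{1}(\F_q,\Ga\rtimes\mu_{4})$. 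The split exact sequence $1\to\Ga\to\Ga\rtimes\mu_{4}\to\mu_{4}\to 1$, with its section $\mu_{4}\to\Ga\rtimes\mu_{4}$, makes $H^{1}(\F_q,\Ga\rtimes\mu_{4})\to H^{1}(\F_q,\mu_{4})$ surjective, and each fibre is a quotient of $H^{1}$ of a twisted form of $\Ga$ by a $\mu_{4}$-cocycle; since the $\mu_{4}$-action factors through $\Gm$ and $H^{1}(\F_q,\Gm)=0$, such a twist is again $\Ga$, with vanishing $H^{1}$, so the fibres are singletons and $H^{1}(\F_q,\Ga\rtimes\mu_{4})\xrightarrow{\sim}H^{1}(\F_q,\mu_{4})=\F_q^{\times}/\F_q^{\times 4}$, of order $\gcd(4,q-1)$.

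For $\Gm$-type the geometric models are parametrised, Galois-equivariantly, by $\P^{1}(\overline{\F_q})\setminus\{(0:1),(1:1),(5:4),(1:0)\}$ (Theorem~\ref{thm:intromainthm1}(3)); hence a $V_{22}$-variety of $\Gm$-type over $\F_q$ has parameter an $\F_q$-point off those four points, and conversely every such $\F_q$-point is realised over $\F_q$ (Theorem~\ref{thm:Gmclassificationgeneral}), its $\F_q$-forms being classified by $H^{1}(\F_q,\Gm\rtimes\Z/2\Z)$. Exactly as above, the split sequence $1\to\Gm\to\Gm\rtimes\Z/2\Z\to\Z/2\Z\to 1$ gives a surjection onto $H^{1}(\F_q,\Z/2\Z)$ whose fibres are singletons, because the twisted forms of $\Gm$ that occur are $1$-dimensional tori, in particular connected, hence with trivial $H^{1}$ by Lang's theorem; therefore $\#H^{1}(\F_q,\Gm\rtimes\Z/2\Z)=\#H^{1}(\F_q,\Z/2\Z)=2$, and $N_{\Gm,\F_q}=2\,\#\bigl(\P^{1}(\F_q)\setminus\{(0:1),(1:1),(5:4),(1:0)\}\bigr)$. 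It remains to count the excluded $\F_q$-points: all four are $\F_q$-rational, and they are pairwise distinct except that $(5:4)=(1:0)$ when $2\mid q$ and $(5:4)=(0:1)$ when $5\mid q$; so the excluded set has $3$ distinct points if $2\mid q$ or $5\mid q$ and $4$ otherwise, whence $N_{\Gm,\F_q}=2(q+1)-2\cdot 3=2q-4$, resp.\ $2(q+1)-2\cdot 4=2q-6$, as claimed.

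Everything here is routine once the general classifications are in hand; the two points that need genuine care are the reduction of each nonempty $H^{1}$-fibre to a single element via Lang's theorem for the connected twisted tori and unipotent groups that arise, and the collapsing of the point $(5:4)$ onto $(1:0)$ in characteristic~$2$ and onto $(0:1)$ in characteristic~$5$, which is exactly the source of the $2q-4$ versus $2q-6$ dichotomy.
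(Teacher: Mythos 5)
Your proposal is correct and follows essentially the same route as the paper: the theorem is obtained there by specializing the general perfect-field classifications (Theorems \ref{thm:Mukai-Umemurageneral}, \ref{thm:GaV22overk}, \ref{thm:Gmclassificationgeneral} and Corollary \ref{cor:Gmclassificationfinite}) to $\F_q$, where the relevant $H^1$'s collapse exactly as you compute — triviality of $H^1(\F_q,\PGL_2)$, the identification $H^1(\F_q,\Ga\rtimes\mu_4)\simeq\F_q^{\times}/\F_q^{\times 4}$, and the two-element fibers over each $u$ coming from $\#H^1(\F_q,\Gm\rtimes\Z/2\Z)=2$ (the paper phrases the last point via surjectivity of the norm for $\F_{q^2}/\F_q$ rather than Lang's theorem for the norm-one torus, which is the same fact). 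Your bookkeeping of the coincidences $(5:4)=(1:0)$ in characteristic $2$ and $(5:4)=(0:1)$ in characteristic $5$ also matches the source of the $2q-4$ versus $2q-6$ dichotomy.
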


Moreover, as a by-product, we found examples in which a $V_{22}$-variety of $\Gm$-type degenerates into a $V_{22}$-variety of $\Ga$-type, as well as examples in which it degenerates into Mukai--Umemura type in an unusual manner (see Definition \ref{defn:Gmreduction}, Remark \ref{rem:twistednew}, and Proposition \ref{prop:grcforsplitGm}). 
Such degenerations do not appear to have been extensively investigated in the literature.

\subsection{Arithmetic applications}

The second main theorem concerns the Shafarevich conjecture, the finiteness of isomorphism classes of certain classes of varieties over a number field admitting good reduction outside a fixed finite set of finite places.

For a $V_{22}$-variety $X$ over a number field $F$ and a finite place $\p$ of $F$, we say $X$ admits \emph{good reduction} at $\p$ if there exists a smooth projective scheme $\mathcal{X}$ over the completion $\cO_{F,\p}$ such that $X_{F_{\p}} \simeq \mathcal{X}_{F_{\p}}$.
In this case, the special fiber $\mathcal{X}_{\kappa(\p)}$
is a $V_{22}$-variety over the residue field $\kappa(\p)$.

Fix a number field $F$ and a finite set $S$ of finite places.
For $\ast \in \{ \PGL_2, \Ga, \G_m \}$, we set
\[
\mathrm{Shaf}_{*, F, S} :=
\{X \colon V_{22}\textup{-variety of } \ast\textup{-type over }F \mid 
X \textup{ admits good reduction at all } \p\notin S
\}/F\textup{-isom}.
\]
The following is the second main theorem of this paper (Theorems \ref{thm:Mukai-Umemura_Shafarevich} and \ref{thm:GmShafarevich}, and Proposition \ref{prop:GaV22shaffails}).

\begin{thm}
\label{thm:intromainthm2}
The sets $\Shaf_{\PGL_2, F, S}$ and $\Shaf_{\Gm, F, S}$ are finite sets,
but the set $\Shaf_{\Ga, F, S}$ is an infinite set if $10 \in \cO_{F,S}^{\times}$.
\end{thm}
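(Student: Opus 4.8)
The plan is to treat the three types separately, since their moduli behave very differently. For the $\Ga$-type, the statement is a non-finiteness assertion, and the strategy is constructive: by the classification over a perfect field (Theorem \ref{thm:GaV22overk}) the $F$-isomorphism classes of $V_{22}$-varieties of $\Ga$-type are parametrized by $F^{\times}/F^{\times 4}$ (this is the arithmetic refinement of part (2) of Theorem \ref{thm:intromainthm1}, coming from the residual $\mu_4$-action, which supplies the twists). Granting that the split model already has good reduction away from the primes dividing $2\cdot 5$, each twist by a class $[d] \in F^{\times}/F^{\times 4}$ adds bad reduction only at primes dividing $d$ (together with those above $2$, $5$, and the ramification of the quartic extension). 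Hence if $10 \in \cO_{F,S}^{\times}$ one can choose infinitely many $d \in \cO_{F,S}^{\times}$ lying in distinct classes of $\cO_{F,S}^{\times}/(\cO_{F,S}^{\times})^{4}$; this quotient is infinite because $\cO_{F,S}^{\times}$ has positive rank (it contains a fundamental unit, or one enlarges $S$), so the $S$-integral twists already give infinitely many pairwise non-$F$-isomorphic members of $\Shaf_{\Ga,F,S}$. The one point to check carefully is that $S$-integral twists genuinely have good reduction outside $S$: this is where one uses that the $\Ga$-model, being affine-cone-like, does not see the quartic ramification at primes where $d$ is a unit — I expect this to reduce to an explicit equation-level statement about the family constructed in the classification section.

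For the $\PGL_2$-type the set $\Shaf_{\PGL_2,F,S}$ is finite for the strongest possible reason: by part (1) of Theorem \ref{thm:intromainthm1} there is, over any algebraically closed field of characteristic $\neq 2,5$, a \emph{unique} Mukai--Umemura variety, and its automorphism group is $\PGL_2$, which is connected with trivial outer form data over a field in the sense that $H^1(F,\PGL_2) = \Br(F)[2]$ classifies the $F$-forms. Thus $\Shaf_{\PGL_2,F,S}$ injects into the subset of $\Br(F)[2] = H^1(F,\PGL_{2})$ consisting of classes unramified outside $S' = S \cup \{\p \mid 10\}$, and this subset is finite by class field theory (the unramified-outside-$S'$ Brauer group of a number field is finite). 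One must first justify the reduction step — that an $F$-form of the Mukai--Umemura variety with good reduction at $\p \notin S'$ gives a class in $\Br(F)[2]$ unramified at $\p$ — which follows because good reduction of $X$ forces the torsor $\underline{\Isom}(X, X^{\mathrm{MU}})$ to extend to a $\PGL_2$-torsor over $\cO_{F,\p}$, hence to be unramified; here I would invoke the uniqueness over the residue field (characteristic $\neq 2,5$ since $\p \notin S'$) to identify the special fiber with the standard model.

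For the $\Gm$-type one combines a rigidity-of-the-$j$-line argument with the Brauer-group argument. By part (3) the geometric isomorphism class is determined by a point $\lambda \in \P^1 \setminus \{(0{:}1),(1{:}1),(5{:}4),(1{:}0)\}$, i.e.\ by a ``modulus'' $\lambda(X) \in F$; good reduction at $\p \notin S'$ forces $\lambda(X)$ to reduce to a point of the same punctured line over $\kappa(\p)$, so $\lambda(X) \in \cO_{F,\p}$ and moreover $\lambda(X)-0$, $\lambda(X)-1$, $4\lambda(X)-5$, $1/\lambda(X)$ are all $\p$-units — an $S'$-unit equation / Siegel-type constraint that, since we are not asserting finiteness of the $\lambda$'s directly but only need it in tandem with the twist count, leaves finitely many $\lambda \in \P^1(\cO_{F,S'})$ avoiding the four bad values (finiteness of $S$-integral points on $\P^1$ minus $\geq 3$ points, i.e.\ Siegel's theorem, or just that $\cO_{F,S'}$ meeting all four unit conditions is finite). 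For each such $\lambda$ the $F$-forms are twists governed by the residual $\Z/2\Z$-action, i.e.\ by a class in $H^1(F,\Z/2\Z) = F^{\times}/F^{\times 2}$ (refined by the structure of $\Aut_{X/k,\red} \simeq \Gm \rtimes \Z/2\Z$), and good reduction restricts this to the subgroup unramified outside $S'$, which is finite. Multiplying the two finite counts gives finiteness of $\Shaf_{\Gm,F,S}$. The main obstacle, and the step that needs the most care, is the reduction–semicontinuity input common to (1) and (3): showing that good reduction of the $V_{22}$-variety implies good reduction of the associated torsor and preservation of the geometric type and modulus under specialization — in particular one must rule out a jump where, say, a $\Gm$-type variety degenerates to a $\Ga$-type or to Mukai--Umemura type (phenomena the introduction explicitly flags as occurring in families); this is handled by noting that good reduction keeps $\dim \Aut$ and the reductive rank constant via the smoothness of the relative automorphism scheme over $\cO_{F,\p}$, so the special fiber has the same $\Aut^{\circ}_{\red}$ and the same modulus.
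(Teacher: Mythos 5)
Your $\PGL_2$ argument is essentially the paper's: good reduction at $\p\notin S$ is shown to be equivalent to local splitness, so the class in $\Br(F)[2]=H^1(F,\PGL_2)$ is locally trivial outside $S$ and the archimedean places, and class field theory gives finiteness (the paper in fact gets the exact count $2^{\#S+r-1}$). The substance you gloss over is the good-reduction criterion itself (Proposition \ref{prop:goodreductioncriteriaMukaiUmemura}), which is proved not by smoothness of an Isom-scheme but by running the two-ray game over $\cO_K$ and producing an $\cO_K$-point of the Hilbert scheme of lines; still, your route is workable.

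The $\Ga$ part is wrong. First, $\cO_{F,S}^{\times}/(\cO_{F,S}^{\times})^{4}$ is \emph{finite}: $\cO_{F,S}^{\times}$ is finitely generated by the $S$-unit theorem, so its quotient by fourth powers has order at most $4^{\mathrm{rank}}\cdot\#(\text{torsion})$; having positive rank does not make the quotient infinite. Second, and more fundamentally, restricting to $S$-unit twists is exactly the wrong move: the paper's Theorem \ref{thm:shafGatoGa} shows that the $S$-unit twists are precisely the ones with $\Ga$-good reduction, and there are only finitely many of them. The infinitude of $\Shaf_{\Ga,F,S}$ comes from twists by \emph{arbitrary} $\xi\in\cO_{F,S}$, including non-units: the relative quintic curve $\mathcal{Z}^{(\mathrm{a})}_{\xi}$ of (\ref{eqn:ZGageneraldvr}) stays a relative \emph{smooth} quintic even where $\xi$ reduces to $0$ (the fiber degenerates to the Mukai--Umemura quintic (\ref{eqn:ZMU}), which is still smooth for $p\neq 2,5$), so the two-ray game produces a smooth model over all of $\cO_{F,S}$. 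Since $F^{\times}/F^{\times 4}$ is infinite (infinitely many primes of $\cO_{F,S}$ give distinct classes), \emph{every} $\Ga$-type variety lies in $\Shaf_{\Ga,F,S}$ and the set is infinite. Your premise that ``each twist by $[d]$ adds bad reduction at primes dividing $d$'' is false, and without it your construction yields only finitely many members.

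The $\Gm$ part has a genuine gap in the twist count. For a fixed modulus $u$, the $F$-forms are classified by $H^1(F,\Gm(\overline F)\rtimes\Z/2\Z)$, whose fiber over a nontrivial class $\Spec E\in H^1(F,\Z/2\Z)$ is $F^{\times}/N_{E/F}(E^{\times})$ --- an \emph{infinite} group for a number field (the paper flags this in Remark \ref{rem:Gmclassificationcanonical}); it is not $F^{\times}/F^{\times 2}$, and ``unramified outside $S'$'' applied to the $\Z/2\Z$-part alone does not bound it. The paper's proof needs two further inputs you omit: Hermite--Minkowski to bound the quadratic extensions $E$ (via ramification controlled by $v_\p(u-\tfrac54)$), and a local rigidity statement (Lemma \ref{lem:unramifiedextensionGmV22}: over each $\cO_{F,\p}$ with $\p\notin S\cup T_u$ there are exactly two integral models, because $\cO_{K}^{\times}=N_{\cO_L/\cO_K}(\cO_L^{\times})$ for $L/K$ unramified) combined with the Hasse norm theorem to make the localization map $F^{\times}/N_{E/F}E^{\times}\to\prod_\p F_\p^{\times}/N E_\q^{\times}$ finite-to-one. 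Finally, your closing claim that good reduction preserves $\Aut^{\circ}_{\red}$ and the modulus via smoothness of the relative automorphism scheme is false: Proposition \ref{prop:grcforsplitGm}(2) exhibits good reductions of twisted type where a $\Gm$-type variety specializes to a $\Ga$-type or Mukai--Umemura variety. The unit conditions $u,u-1\in\cO_{F,\p}^{\times}$ that feed Siegel--Mahler--Lang are instead extracted from the explicit degeneration analysis of the quintic curves (Propositions \ref{prop:degenerationGm} and \ref{prop:degenerationGmp=2}), which covers both standard and twisted reduction types.
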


More precisely, for $\Shaf_{\mathrm{\PGL_2},F,S}$, we prove the following precise formula
\[
\# \Shaf_{\PGL_2, F, S} =
\begin{cases}
0 & \textup{if } 10 \notin \cO_{F,S}^{\times}, \\
2^{\#S+r-1}  & \textup{if } 10 \in \cO_{F,S}^{\times},
\end{cases}
\]
where $r$ is the number of real places of $F$.
Moreover, $\# \Shaf_{\Gm,F,S}$ is related to the number of solutions of the unit equation $\alpha + \beta =1$ ($\alpha, \beta \in \cO_{F,S}^{\times}$) and to the heights of those solutions.

On the other hand, since $\Shaf_{\Ga,F,S}$ may be an infinite set, we introduce the following set: 
\[
\Shaf'_{\Ga,F,S} \coloneqq
\{X \colon \Ga\textup{-}V_{22}\textup{-variety over }F \mid 
X \textup{ admits } \Ga\textup{-good reduction at all }\p\notin S
\}/F\textup{-isom},
\]
which is a subset of $\Shaf_{\Ga,F,S}$.
Here, we say $X$ admits \emph{$\Ga$-good reduction} if there exists a smooth projective scheme $\mathcal{X}$ over $\cO_{F,\p}$ such that $X_{F_{\p}} \simeq \mathcal{X}_{F_{\p}}$ and the special fiber $\mathcal{X}_{k (\p)}$ is a $V_{22}$-variety of $\Ga$-type.
Then we prove that the set $\Shaf'_{\Ga}$ is a finite set with an explicit upper bound
 (see Theorem \ref{thm:shafGatoGa} for details).

Theorem \ref{thm:intromainthm2} completely determines whether the Shafarevich conjecture holds or not for $V_{22}$-varieties with positive-dimensional automorphism groups.
While this gives a negative answer to Javanpeykar--Loughran's question (\cite[Question 1.6]{Javanpeykar-Loughran:GoodReductionFano}), the finiteness of $\Shaf_{\Ga,F,S}'$ suggests that we may have a positive answer to their question under an appropriate modification.

We also consider the problem of the existence of smooth projective schemes over $\Z$.
It has been expected that smooth projective scheme over $\Z$ is very few (cf.\ \cite{Mazur}).
Abrashkin and Fontaine (\cite{Abrashkin}, \cite{Fontaine}) proved that if smooth projective variety $X$ admits a smooth projective model $\mathcal{X}$ over $\Z$, then we have $h^{i+j} (\mathcal{X}_{\C})=0$ for $i+j \leq 3$ and $i \neq j$.
Recently, it was proved that there is no Enriques surface over $\Z$ (\cite{Schroer}). Also, the authors proved that there exists a Mukai $n$-folds of genus $7$ over $\Z$ if and only if $5 \leq n\leq 10$ (\cite{IKTT}).

For $V_{22}$-varieties, we obtain the following result (Theorems \ref{thm:noV22withlargeautomoverZ} and \ref{thm:V22overZ}):

\begin{thm}
\label{thm:intromainthm3}
\begin{enumerate}
    \item 
There is no smooth projective scheme $\mathcal{X}$ over $\Z$ such that $\mathcal{X}_{\Q}$ is a $V_{22}$-variety with $\dim \Aut_{\mathcal{X}_{\Q}/\Q} >0$.

\item 
There is a smooth projective scheme $\mathcal{X}$ over $\Z$ such that $\mathcal{X}_{\Q}$ is a $V_{22}$-variety.
\end{enumerate}
\end{thm}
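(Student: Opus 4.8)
# Proof Proposal for Theorem \ref{thm:intromainthm3}

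\begin{proof}[Proof proposal]
\emph{Part (1).} A smooth projective scheme $\mathcal{X}$ over $\Z$ has good reduction at every prime; in particular at $2$ and at $5$, and by upper semicontinuity of fibre dimension for the group scheme $\Aut_{\mathcal{X}/\Z}$, each special fibre $\mathcal{X}_{\F_p}$ is a $V_{22}$-variety with $\dim\Aut_{\mathcal{X}_{\F_p}/\F_p}\geq\dim\Aut_{\mathcal{X}_\Q/\Q}\geq1$. After base change to $\overline\Q$, Theorem~\ref{thm:intromainthm1} shows the geometric type of $\mathcal{X}_\Q$ is $\PGL_2$, $\Ga$, or $\Gm$ (and likewise for each $\mathcal{X}_{\F_p}$); the plan is to rule out each of these for $\mathcal{X}_\Q$.

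\emph{The case of $\Gm$-type.} Here $\mathcal{X}_\Q$ lies in $\Shaf_{\Gm,\Q,\emptyset}$. Since the classification over $\overline\Q$ is $\P^1\setminus\{(0:1),(1:1),(5:4),(1:0)\}$, good reduction at every prime forces the affine parameter $\lambda$ to satisfy $\lambda,\lambda-1\in\Z^{\times}=\{\pm1\}$ simultaneously---equivalently, the $S$-unit equation $\alpha+\beta=1$ with $\alpha,\beta\in\{\pm1\}$, which has no solution---while any $\Q$-form (a quadratic twist, as $\Aut_{X/k,\red}\simeq\Gm\rtimes\Z/2\Z$) must be everywhere unramified over $\Q$, hence trivial. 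Thus $\Shaf_{\Gm,\Q,\emptyset}=\emptyset$, so $\mathcal{X}_\Q$ is not of $\Gm$-type.

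\emph{The cases of $\PGL_2$- and $\Ga$-type (the main obstacle).} Suppose $\mathcal{X}_\Q$ is of $\PGL_2$- or $\Ga$-type. By the analysis of degenerations of the $\Gm$-family carried out in the paper (cf.\ Definition~\ref{defn:Gmreduction} and Proposition~\ref{prop:grcforsplitGm}), such a variety over a discrete valuation ring with good reduction specializes to a $V_{22}$-variety which is again of $\PGL_2$- or $\Ga$-type \emph{unless} the residue characteristic is $2$ or $5$; but in characteristic $2$ or $5$ neither the $\PGL_2$- nor the $\Ga$-type exists (Theorem~\ref{thm:intromainthm1}), so the specialization would have to be of $\Gm$-type, and the relevant $\Gm$-type specialization is not available---its parameter is pinned down by the degeneration to one of the excluded values, which remains excluded modulo $2$ and modulo $5$ (note e.g.\ $(5:4)=(1:0)$ in characteristic $2$ and $(5:4)=(0:1)$ in characteristic $5$). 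Hence $\mathcal{X}$ cannot have good reduction at both $2$ and $5$, a contradiction. (For the $\PGL_2$-case one may alternatively invoke $\#\Shaf_{\PGL_2,\Q,\emptyset}=0$, valid since $10\notin\Z^{\times}$; the genuinely new input is the $\Ga$-case, where $\Shaf_{\Ga,\Q,\emptyset}$ is not a priori known to be finite.) I expect the precise determination of the $\Gm$-type variety that a $\Ga$-type variety degenerates to, and the verification that it is obstructed in characteristics $2$ and $5$, to be the step requiring the most care.

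\emph{Part (2).} For existence I would exhibit one explicit model over $\Z$. The plan is to use Mukai's description of the general $V_{22}$-variety (for instance as the zero locus of a general global section of a suitable homogeneous vector bundle on a Grassmannian, or as a variety of sums of powers attached to a plane quartic; see \cite{Iskovskikh-Prokhorov}), choosing the defining data with coefficients in $\Z$, and then checking that every geometric fibre $\mathcal{X}_{\overline{\F_p}}$ (and $\mathcal{X}_{\overline\Q}$) is a smooth threefold of Picard rank $1$ and Fano index $1$ with $(-K)^3=22$; by choosing the data generically the only primes at which smoothness could fail are $2$ and $5$, so the computation localizes there. Alternatively, one can use that $V_{22}$-varieties exist over every $\F_p$ (\cite{Tanaka1},\cite{Tanaka2}) and over $\Q$, that their deformations are unobstructed, and that the parameter space of anticanonically embedded $V_{22}$-varieties is smooth over $\Z$ at points with finite reduced automorphism group; a lifting argument over $\Spec\Z$ then produces a $\Z$-point, which automatically satisfies $\dim\Aut_{\mathcal{X}_\Q/\Q}=0$, in agreement with Part (1)---though turning such a lifting into a single $\Z$-model requires some care. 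In either approach the main obstacle is controlling good reduction at the small primes $2$ and $5$: verifying smoothness of the explicit model there, respectively that the chosen fibres over $\F_2$ and $\F_5$ lie in the smooth locus of the parameter space.
\end{proof}
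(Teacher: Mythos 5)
For part~(1), your overall strategy (reduce modulo a small prime and rule out each type) is workable, but it is far more involved than necessary and one step is shaky. The paper's proof is essentially two lines: if $\mathcal{X}$ is smooth projective over $\Z$, then $\mathcal{X}_{\F_2}$ is a $V_{22}$-variety over $\F_2$ with $\dim\Aut_{\mathcal{X}_{\F_2}/\F_2}\geq 1$, and Corollary \ref{cor:V22overF2} says no such variety exists: in characteristic $2$ there are no $\PGL_2$- or $\Ga$-types at all, and a $\Gm$-type over $\F_2$ would require a parameter in $\P^1(\F_2)\setminus\{(0:1),(1:1),(5:4),(1:0)\}$, which is empty because $(5:4)=(1:0)$ in characteristic $2$ and $\P^1(\F_2)$ has only three points. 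No case split on the type of $\mathcal{X}_\Q$ and no Shafarevich-type input (unit equations, twist analysis) is needed. Your $\Gm$-case via the unit equation is correct but heavier than required. In your $\PGL_2$/$\Ga$ case, the assertion that the specialization ``would have to be of $\Gm$-type'' and is then obstructed because ``its parameter is pinned down by the degeneration to one of the excluded values, which remains excluded modulo $2$ and modulo $5$'' is not a correct argument as stated: at $p=5$ the set $\P^1(\F_5)\setminus\{(0:1),(1:1),(5:4),(1:0)\}$ is nonempty, so $\Gm$-type $V_{22}$-varieties do exist over $\F_5$; moreover Definition \ref{defn:Gmreduction} and Proposition \ref{prop:grcforsplitGm}, which you cite, concern reductions of $\Gm$-type varieties, not of $\Ga$- or $\PGL_2$-type ones. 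What actually closes these cases is $p=2$ alone (emptiness of the parameter space over $\F_2$), or equivalently Propositions \ref{prop:goodreductioncriteriaMukaiUmemura} and \ref{prop:GaV22degenerates}, which say directly that $\PGL_2$- and $\Ga$-type varieties have no good reduction in residue characteristic $2$ or $5$.

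For part~(2) there is a genuine gap: you describe two plans but carry out neither. The Grassmannian/VSP route would require exhibiting explicit integral defining data and verifying smoothness of the fibres over $\F_2$ and $\F_5$, which you do not do; the lifting-argument alternative does not obviously produce a single model over $\Z$ (as you acknowledge), since smoothness of a parameter space over each $\Z_p$ does not by itself yield a $\Z$-point. The paper instead writes down an explicit quadric threefold $Q\subset\P^4_{\Z}$, namely $xy+zw+u^2=0$, together with an explicit rational sextic $\Gamma\subset Q$ given by degree-$6$ forms with integer coefficients, checks that every fibre $\Gamma_s$ is a smooth quadratically normal rational sextic, and applies the relative two-ray game of Proposition \ref{prop:integraltwo-rayQ3toV22} to produce the $V_{22}$-scheme over $\Z$. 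Some such concrete construction and verification is unavoidable; as written, your part~(2) is a research plan rather than a proof.
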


\begin{rem}
Many results are known about the Shafarevich conjecture.
It was originally proved
by Shafarevich himself for elliptic curves.
Faltings (\cite{FaltingsShaf}) and Zarhin (\cite{Zarhin}) proved it for abelian varieties of fixed dimension and curves of fixed genus greater than or equal to $2$.
It was also studied for other classes of varieties,
including del Pezzo surfaces and Brauer--Severi varieties \cite{Scholl}, $K3$ and hyper-K\"ahler varieties
\cite{Andre}, \cite{She}, \cite{Takamatsu}, \cite{Fu-Li-Takamatsu-Zou}, flag varieties (\cite{Javanpeykar-Loughran:flag}), proper hyperbolic polycurves (\cite{Nagamachi-Takamatsu}, see also \cite{Javanpeykarcanonicallypolarized}), Enriques surfaces (\cite{TakamatsuEnriques}), bielliptic surfaces (\cite{Takamatsubielliptic}),
hypersurfaces of abelian varieties (\cite{Lawrence-Sawin}),
certain very irregular varieties (\cite{Kramer-Maculan}), and varieties with globally generated cotangent bundles (\cite{Kramer-Maculan_globally_generated}).

Recently, the Shafarevich conjecture for
some classes of Fano threefolds of Picard rank $1$ were proved by Javanpeykar--Loughran (\cite{Javanpeykar-Loughran:GoodReductionFano}, \cite{Javanpeykar-Loughran:completeintersection})
and Licht \cite{Licht}.
More precisely, they proved the Shafarevich conjecture for Fano threefolds of Picard rank 1 and index greater than or equal to $2$, and prime Fano threefolds of genus $2,3,4,5$.

In \cite{IKTT}, the authors proved the Shafarevich conjecture for prime Fano threefolds of genus $7$. Moreover, we studied the Shafarevich conjecture for Mukai varieties of genus 7, which are higher-dimensional analogues of prime Fano threefolds of genus $7$.
\end{rem}

\subsection{Methods}
As in \cite{Kuznetsov-Prokhorov-Shramov}, one of the key ingredients in our proof is the two-ray game connecting a $V_{22}$-variety and a quintic del Pezzo threefold $V_5$. The existence of this operation in positive characteristic is ensured by \cite{Tanakaflop}.
This procedure reduces the classification of $V_{22}$-varieties with large automorphism group to the classification of quintic curves in $V_5$-varieties with large stabilizer.
To obtain such a classification, we use geometric properties of $V_5$-varieties and the explicit description of their automorphism group, which were studied by the authors (\cite{V5}).
We also need another two-ray game connecting a $V_{22}$-variety and a quadric threefold, for studying the structure of the automorphism groups of $V_{22}$-varieties of $\Gm$-type.

Furthermore, to examine the phenomena of degeneration and reduction of $V_{22}$-varieties, we employ the two-ray games in the relative setting (with a one-dimensional base), as we did in \cite{IKTT}.

Concerning the Shafarevich conjecture, because the Torelli theorem fails for $V_{22}$-varieties, we cannot reduce the problem to abelian varieties.
Firstly, we need a more careful understanding of the behavior of reduction in positive characteristic, namely, the establishment of some kind of good-reduction criteria is the key to the proof (see Proposition \ref{prop:goodreductioncriteriaMukaiUmemura}, Proposition \ref{prop:GaV22degenerates}, and Proposition \ref{prop:grcforsplitGm}).
Based on the above criteria, 
we reduce the question to various finiteness properties of rings of integers.
In the Mukai--Umemura type case, the finiteness of the Brauer group of the ring of integers; in the $\Ga$-type case, the finite generation of the unit group; and in the $\Gm$-type case, Siegel–Mahler–Lang's theorem (which was also used in \cite{Scholl}) play crucial roles.

\subsection{Organization of the paper}

The organization of this paper is as follows.
In Section \ref{section:preliminaries}, 
we introduce the basic terminology for $V_{22}$-varieties and $V_5$-varieties. We also review the necessary facts about $V_5$-varieties established in \cite{IKTT}.
In Section \ref{section:quintics_in_V5}, we classify smooth rational quintic curves with large stabilizer in a (split) $V_5$-variety via explicit computations, distinguishing the case of characteristic two from the other cases. 
We also classify quintic curves over a discrete valuation ring for subsequent degeneration studies.
In Section \ref{section:two-ray},
we establish the preliminaries for the two-ray game. 
In particular, we establish relative two-ray games starting from lines and conics on $V_{22}$-varieties over Dedekind schemes and their converse.
In Section \ref{section:V_22-variety_in_positive_mixed},
we use the results of the preceding sections to give a classification of $V_{22}$-varieties with positive dimensional stabilizer
in positive characteristic (Theorem \ref{thm:intromainthm1} and Theorem \ref{thm:intromainthm1'}).
Moreover, we prove existence and non-existence results for $V_{22}$-varieties over $\Z$ 
(Theorem \ref{thm:intromainthm3}).
In Section \ref{section:Shafarevich}, we study degenerations (reductions) of $V_{22}$-varieties with large automorphism group and prove results concerning the Shafarevich conjecture (Theorem \ref{thm:intromainthm2}).

\subsection*{Notations and Convention}

\begin{itemize}
\item 
Let $k$ be a field.
A variety over $k$ is a geometrically integral
scheme that is separated and of finite type over $k$.
A curve over $k$ is a variety of pure dimension onel.
For a variety $X$ over $k$, we denote its Picard rank by $\rho (X)$.

\item
A rational normal curve of degree $d$ over $k$ is a smooth rational curve $C$ with an embedding $C \hookrightarrow \P^d$ of degree $d$ that linearly spans $\P^d$.

\item 
Let $X$ be a flat projective scheme over a Noetherian scheme $B$.
We denote the relative Picard functor by $\Pic_{X/B}$.
We denote the relative automorphism scheme by $\Aut_{X/B}$.

    \item 
    Let $X$ be a flat projective scheme over a Noetherian scheme $B$.
    Let $Z \subset X$ be a closed subscheme which is flat over $B$.
    We denote the stabilizer scheme of $Z$ 
    \[
    \Aut_{X/B} \times_{\Hom_{B}(Z, X)} \Aut _{Z/B}
    \]
    by $\Aut_{(X,Z)/B}$.
    Here, $\Hom_B(Z,X)$ is the homomorphism scheme from $Z$ to $X$ over $B$.
    When $B$ is a spectrum of a field, we denote $\Aut_{X/B} (B)$ by $\Aut (X)$ and $\Aut_{(X,Z)/B} (B)$ by $\Aut (X,Z) = \Stab_X (Z)$.
    \item 
    For any commutative ring $R$, we denote the ring of Witt vectors of $R$ by $W (R)$.
\item 
For a scheme $X$, we denote its reduction by $X_{\red}$.
For a group scheme $G$ over a perfect field $k$, we denote its reduction, which is a reduced $k$-group scheme, by $G_{\red}$.
\item 
For a reduced scheme $X$ over a scheme $B$, we denote its non-smooth locus with the reduced closed subscheme structure by $X_{\mathrm{sing}}$.
    \item 
    For an algebraic group $G$ over a field $k$, we denote its center by $Z_{G}$.
    We denote its identity component by $G^{\circ}$.
    \item 
    Let $B$ be a scheme.
    A geometric point $s$ on $B$ is a morphism 
    \[
    s \colon \Spec k (s) \rightarrow B
    \]
    from an algebraically closed field $k(s).$
    For any scheme $X$ over $B$, the geometric fiber $X_s$ over $s$ is the fiber product $X \times_B\Spec k(s)$ via $s$.
    \item 
    For a field $k$, we frequently denote its characteristic by $p$.
    Unless otherwise noted, $p$ can be $0$.
    \item 
    For any number field $F$ and any $p$-adic field $K$, we denote their ring of integers by $\cO_F$ and $\cO_K$, respectively.
    For a finite set $S$ of finite places of $F$, we denote the ring of $S$-integers of $F$ by $\cO_{F,S}$. 
    \item 
    Let $F$ be a number field, and $\p$ a finite place of $F$.
    We denote the $\p$-adic completion of $F$ by $F_{\p}$, its ring of integers by $\cO_{F,\p}$,
    and the localization of $\cO_{F}$ at $\p$ by $\cO_{F, (\p)}$.
    We denote the normalized additive valuation on $F$ or $F_{\p}$ associated with $\p$ by $v_\p$.
    \item
    For a discrete valuation ring $R$, we denote its completion by $\widehat{R}$.
    \item 
    For a field $k$, we denote its separable closure by $\ksep$.
    We denote its absolute Galois group $\Gal (\ksep/k)$ by $G_k$.
    \item 
    For a smooth projective variety over a field $k$, we denote its $i$-th Betti number by $b_i (X)$.
\end{itemize}

\section{Preliminaries}
\label{section:preliminaries}

\subsection{Basic definitions and terminologies}

\begin{defn}
\item 
Let $k$ be an algebraically closed field, and $B$ a scheme.
\begin{enumerate}
\item 
A \emph{Fano variety} over $k$ is a smooth projective variety $X$ over $k$ such that the anti-canonical divisor $-K_{X}$ is ample.
A \emph{Fano scheme} over $B$ is a smooth projective scheme over $B$ such that any geometric fiber is a Fano variety. 
\item
Let $X$ be a Fano variety over $k$.
The index $i(X)$ of $X$ is a maximal integer $i$ such that
$-K_{X}$ is $i$-divisible in $\Pic (X)$.
\end{enumerate}
\end{defn}

\begin{defn}
Let $B$ be a scheme, and $f \colon X \rightarrow S$ a smooth projective morphism.
\begin{enumerate}
\item
We say $f$ is a $V_5$-scheme (or quintic del Pezzo threefold) when any geometric fiber $X_s$ over $s$ is a Fano threefold such that $\rho(X_s) =1$, $i(X_s) =2$, and $(-K_{X_{s}})^3 =40$.
\item 
We say $f$ is a $V_{22}$-scheme (or prime Fano threefold of genus 12) when any geometric fiber $X_s$ over $s$ is a Fano threefold such that $\rho(X_s) =1$, $i(X_s) =1$, and $(-K_{X_{s}})^{3} =22.$
\end{enumerate}
When $B$ is a spectrum of a field $k$, we simply say that $Y$ is a $V_5$-variety or a $V_{22}$-variety over $k$.
\end{defn}

\begin{defn}
\label{defn:degree}
\begin{enumerate}
    \item 
Let $X$ be a Fano variety of index $i$
over an algebraically closed field $k$.
Assume that $H:=-K_{X}/i$ is very ample.
Let $C \subset X$ be a 1-dimensional integral closed subscheme.
We say $C$ is a \emph{curve of degree $d$} when 
\[
(C, -K_{X}/i) = (C,H)  =d.
\]
\item 
Let $B$ be a scheme, and $X$ a Fano scheme over $B$.
Assume that any geometric fiber of $X \rightarrow B$ satisfies the assumption of (1).
Let $C \subset X$ be a closed subscheme which is flat over $B$.
We say $C$ is a \emph{(relative) curve of degree $d$} when $C_{s} \subset X_{s}$ is a curve of degree $d$ for any geometric point $s$ in $B$.
\end{enumerate}
We frequently use terms \emph{line}, \emph{conic}, \emph{cubic curve}, and so on, instead of \emph{curve of degree $d$}.
\end{defn}

\begin{rem}
\label{rem:genus0rational}
Let $X$ be a Fano variety over a field $k$, and $i$ be the geometric index $i(X_{\overline{k}})$.
Assume that $H= -K_X/i$ is defined over $k$ and is very ample.
Let $C \subset X$ a smooth curve of degree $d \geq 1$.
If $g(C)=0$ and $d$ is odd, then $C$ is automatically a rational curve, 
since $\Pic (C)$ contains $\cO(d)$ and $\cO(2g(C)-2) \simeq \cO(K_{C})$.
On the other hand, if $d$ is even, $C$ may not be rational over $k$ even when $g(C)=0$.
Therefore, when $d$ is even, we will distinguish between the terms \emph{genus $0$} and \emph{rational}.
Note that,
\begin{itemize}
    \item 
    When $d=1$, $C$ is always rational.
    \item 
    When $d=2$, $C$ is always of genus 0 (though possibly non-rational).
\end{itemize}
\end{rem}

\begin{defn}
\label{defn:Sigma}
Let $Y$ be a $V_5$-scheme over an integral scheme $B$
and $X$ a $V_{22}$-scheme over $B$.
\begin{enumerate}
    \item 
We denote the Hilbert schemes of relative lines on $Y$ and $X$ over $B$ by
$\Sigma(Y/B)$ and $\Sigma(X/B)$, respectively.
    \item 
    We denote the universal line over $\Sigma(Y/B)$ by $\mathcal{U}_{Y/B}$.
    Let   $p \colon \mathcal{U}_{Y/B} \rightarrow Y$ and $q \colon \mathcal{U}_{Y/B} \rightarrow \Sigma (Y/B)$
    be the natural projections.
    \item 
    For a flat closed subscheme $Z\subset Y$ over $B$,
    we denote by $\Sigma_{Z} (Y/B)$ the closed subscheme $q (p^{-1}(Z))$ with the reduced structure .
    \footnote{In the literature (\cite{Kuznetsov-Prokhorov-Shramov}), the authors define $\Sigma_{Z} (Y/B)$ as a possibly non-reduced scheme and study its non-reduced structure in detail.
However, as considering only the reduction suffices for our purposes, we adopt this definition.}
\end{enumerate} 
We omit the base $B$ from the notation when $B$ is a spectrum of a field, and write simply $\Sigma(Y)$ etc.
\end{defn}

\subsection{Reviews of $V_5$-varieties}

In this subsection, we recall the basic properties of $V_{5}$-varieties (and $V_5$-schemes) obtained in \cite{V5}.

Recall that a $V_5$-variety $Y$ is embedded into $\P^6$ via the fundamental linear system $\left|-K_Y/2\right|$.
More generally, for a $V_5$-scheme $\pi \colon Y \rightarrow B$ over $B$, the relatively ample generator $H \in \Pic_{Y/B}(B)$ is a \emph{line bundle}, and defines an embedding $Y \hookrightarrow \P_B (\pi_{\ast} H)$
into a six-dimensional projective bundle $\P_B (\pi_{\ast} H) \rightarrow B$ (see \cite[Lemma 2.4]{V5}).
The following provides a model of $V_5$, which plays an important role in the present paper.

\begin{defn-prop}[{\cite[Definition-Proposition 3.12]{V5}}]
Let $B$ be a scheme.
The  closed subscheme $Y_B \subset \P_B^6$ defined by the following homogeneous polynomials is a $V_5$-scheme over $B$:
\begin{equation}
\label{eqn:W5}
\begin{cases}
a_0 a_4 - a_1 a_3 + a_2^2,\\
a_0 a_5 - a_1 a_4 + a_2 a_3,\\
a_0 a_6 - a_2 a_4 + a_3^2,\\
a_1 a_6 - a_2 a_5 + a_3 a_4,\\
a_2 a_6 - a_3 a_5 + a_4^2.
\end{cases}
\end{equation}
Moreover, the hyperplane section of $Y_B \subset \P^6_B$ corresponds to the ample generator of $\Pic_{Y/B} (B)$.
We say a $V_5$-scheme $Y$ over $B$ is a \emph{split $V_5$-scheme over $B$}
when $Y$ is isomorphic to $Y_{B}$ as above.
Moreover, any $V_5$-scheme over an algebraically closed field or a finite field $k$ is split.
\end{defn-prop}

The following describes the automorphism groups of the split $V_5$-schemes.
\begin{prop}[{\cite[Proposition 4.1 and Remark 4.4]{V5}}]
\label{prop:W5automorphisms}
Let $B$ be a scheme, and $Y \subset \P^6_B$ the split $V_5$-scheme defined by (\ref{eqn:W5}) over $B$.
\begin{enumerate}
    \item 
    Suppose that $B = \Spec \Z[1/2]$. 
    Then we have an isomorphism $\PGL_{2,B} \simeq \Aut_{Y/B}$ such that the composite $\sigma \colon \PGL_{2,B} \simeq \Aut_{Y/B} \hookrightarrow \PGL_{7,B}$ is given by
\[
\begin{pmatrix}
a & b \\
c & d \\
\end{pmatrix}
\mapsto
\]
\begin{equation}
\label{eqn:actionPGL2}
\begin{psmallmatrix}
a^6 & 2a^5b & 10a^4b^2 & 20a^3b^3 & 20a^2b^4 & 8ab^5 & 8b^6 \\
3a^5c &  5a^4bc + a^5d & 20a^3b^2c + 10a^4bd & 30a^2b^3c + 30a^3b^2d & 20ab^4c + 40a^2b^3d & 4b^5c + 20ab^4d & 24b^5d \\
\frac{3}{2}a^4c^2 & 2a^3bc^2 + a^4cd & 6a^2b^2c^2 + 8a^3bcd + a^4d^2 & 6ab^3c^2 + 18a^2b^2cd + 6a^3bd^2 & 2b^4c^2 + 16ab^3cd +12 a^2b^2d^2 & 4b^4cd + 8ab^3d^2 & 12b^4d^2\\
a^3c^3 & a^2bc^3 + a^3c^2d & 2ab^2c^3 + 6a^2bc^2d +2a^3cd^2 &  b^3c^3 + 9ab^2c^2d + 9a^2bcd^2 + a^3d^3 & 4b^3c^2d + 12ab^2cd^2 + 4a^2bd^3 & 4b^3cd^2 + 4ab^2d^3 & 8b^3d^3 \\
\frac{3}{4}a^2c^4 & \frac{1}{2}abc^4 + a^2c^3d & \frac{1}{2}b^2c^4 + 4abc^3d + 3a^2c^2d^2 & 3b^2c^3d + 9abc^2d^2 + 3a^2cd^3 & 6b^2c^2d^2 + 8abcd^3 + a^2d^4 & 4b^2cd^3 + 2abd^4 & 6b^2d^4 \\
\frac{3}{4}ac^5 & \frac{1}{4}bc^5 + \frac{5}{4}ac^4d & \frac{5}{2}bc^4 + 5ac^3d^2 & \frac{15}{2}bc^3d^2 + \frac{15}{2}ac^2d^3 & 10bc^2d^3 + 5acd^4 & 5bcd^4 + ad^5 & 6bd^5 \\
\frac{1}{8}c^6 & \frac{1}{4}c^5d & \frac{5}{4}c^4d^2 &  \frac{5}{2}c^3d^3 & \frac{5}{2}c^2d^4 & cd^5 & d^6 \\
\end{psmallmatrix}
\end{equation}
\item
Suppose that $B = \Spec \F_{2}$.
Then we have an isomorphism $\SL_{2,B} \simeq \Aut_{Y/B, \red}$ 
\footnote{In \cite[Proposition 4.1]{V5}, it is shown that $\Aut_{Y/k}$ is non-reduced in characteristic two. However, since the non-reduced structure is irrelevant to this paper, we consider only the reduced part.}
such that the composite $\sigma' \colon \SL_{2,B} \simeq \Aut_{Y/B, \red} \hookrightarrow \PGL_{7,B}$ is given by
\begin{equation}
\label{eqn:sigma'}
\begin{pmatrix}
a & b \\
c & d \\
\end{pmatrix}
\mapsto 
\begin{pmatrix}
a^3 & 0 &a^2b &0 & ab^2 &0 &b^3\\
0&a^2&0&ab&0&b^2&0\\
a^2c&0&a^2d&0&b^2c&0&b^2d\\
0&0&0&1&0&0&0\\
ac^2&0&bc^2&0&ad^2&0&bd^2\\
0&c^2&0&cd&0&d^2&0\\
c^3&0&c^2d&0&cd^2&0&d^3\\
\end{pmatrix}.
\end{equation}
\item 
Suppose that $B =\Spec  \Z[\sqrt 2]$.
Let $c \colon \PGL_{2,\eta_B} \rightarrow \PGL_{2, \eta_B}$ be the conjugation by
$
\begin{pmatrix}
1 & 0 \\
0 & \sqrt2
\end{pmatrix}.
$
Then the composite of $c$ and \eqref{eqn:actionPGL2} extends to a map
\[
\widetilde{\sigma}\colon \PGL_{2,B} \rightarrow \PGL_{7,B},
\]
and $\widetilde{\sigma}$ factors through $\Aut_{Y/B} \hookrightarrow \PGL_{7, B}$.
On the fiber over $\F_2$, the map $\widetilde{\sigma}_{\F_2}$ factors as $\PGL_{2, \F_{2}}  \xrightarrow{F} \SL_{2,\F_2} \xrightarrow{\sigma'} \PGL_{7,\F_{2}}$,
where $F$ is induced by the Frobenius morphism
$\SL_{2,\F_{2}} \xrightarrow{F} \SL_{2,\F_{2}}$
\end{enumerate}
\end{prop}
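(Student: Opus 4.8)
The plan is to write down $\sigma$ explicitly from the representation theory of $\mathrm{SL}_2$, observe that it descends to the stated base ring, and then prove it is an isomorphism by a fibrewise argument controlling Lie algebras and connected components. For (1), I would identify $\P^{6}$ with $\P(\Sym^{6}E)$ for the tautological rank-two $\mathrm{SL}_2$-module $E$, choosing homogeneous coordinates $a_0,\dots,a_6$ normalised so that \eqref{eqn:W5} takes the displayed form. With respect to the diagonal torus the five quadrics of \eqref{eqn:W5} are homogeneous of weights $-4,-2,0,2,4$, so their span $V\subset\Sym^{2}(\Sym^{6}E)^{\vee}$ is the (unique, multiplicity-one) copy of the irreducible $\Sym^{4}E$, namely the space of quadrics underlying the fourth transvectant $f\mapsto(f,f)_4$. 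Over $\Q$ this makes $\mathrm{SL}_2$ — hence $\PGL_2$, since $-1$ acts trivially on $\Sym^{6}E$ — act on $Y$ preserving $V$; expanding the action in these coordinates yields the matrix \eqref{eqn:actionPGL2}, whose only denominators are powers of $2$, and substituting it back into \eqref{eqn:W5} (or arguing by flatness from the generic fibre) shows that $\PGL_{2}$ still stabilises $Y$ over $\Z[1/2]$. This produces $\sigma\colon\PGL_{2,B}\to\Aut_{Y/B}$ over $B=\Spec\Z[1/2]$, and it is a monomorphism: $Y\hookrightarrow\P^{6}$ is the complete $|{-K_Y/2}|$-embedding with $Y$ non-degenerate, so $\Aut_{Y/B}\hookrightarrow\PGL_{7,B}$, and $\PGL_{2,B}\to\PGL_{7,B}$ is a closed immersion (faithfulness being visible in the first column of \eqref{eqn:actionPGL2}).

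To see $\sigma$ is an isomorphism I would use that $Y$ is cut out scheme-theoretically by $V$ — it is a Pfaffian subvariety of $\P^{6}$ — so $\Aut_{Y/B}=\Stab_{\PGL_{7,B}}(V)$ as $B$-group schemes, and it remains to bound the stabiliser. Fibrewise, I would compute $\operatorname{Lie}\Stab(V)=\{X\in\mathfrak{pgl}_{7}:X\cdot V\subseteq V\}$ by decomposing $\mathfrak{gl}(\Sym^{6}E)=\bigoplus_{j=0}^{6}\Sym^{2j}E$ and checking, on highest-weight vectors, that the $\mathrm{SL}_2$-equivariant obstruction map $\mathfrak{gl}_{7}\to\Hom(V,\Sym^{2}(\Sym^{6}E)^{\vee}/V)$ kills exactly the summands $\Sym^{0}E\oplus\Sym^{2}E=\text{scalars}\oplus\mathfrak{sl}_2$. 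Hence $\Stab(V)$ has $3$-dimensional Lie algebra in every geometric fibre, is therefore smooth of fibre dimension $3$ (in particular reduced over $\Z[1/2]$) with identity component $\PGL_2$; and since $\Aut(V_5)$ over an algebraically closed field is connected — which I would prove via the faithful action on the Hilbert scheme of lines $\Sigma(Y)\cong\P^2$ together with its distinguished conic, or, away from $p=3,5$, via triviality of the centraliser of $\PGL_2$ in $\PGL_7$ — we get $\Stab(V)=\PGL_2$ in each fibre, and $\sigma$ is an isomorphism over $\Z[1/2]$, proving (1).

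For (2), over $\F_2$ the $\Sym^{4}E$-mechanism degenerates; one instead exhibits $\SL_{2,\F_2}$ acting on the $7$-dimensional space $H^{0}(\mathcal{O}_Y(1))^{\vee}$ — now carrying, by Steinberg's tensor product theorem, the module structure $L(3)\oplus\Sym^{2}E=(E\otimes E^{(1)})\oplus\Sym^2 E$ that is exactly the block pattern of \eqref{eqn:sigma'} — and checks by direct substitution that \eqref{eqn:sigma'} leaves each quadric \eqref{eqn:W5} semi-invariant, yielding $\sigma'\colon\SL_{2,\F_2}\to\Aut_{Y/\F_2,\red}$; a separate, more delicate fibre argument (carried out after passing to $\Aut_{\red}$, because of the known non-reducedness in characteristic two) then shows $\sigma'$ is an isomorphism. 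For (3), conjugating the action of (1) by $\diag(1,\sqrt2)$ amounts to the substitution $(a,b,c,d)\mapsto(a,\ b/\sqrt2,\ \sqrt2\,c,\ d)$ in \eqref{eqn:actionPGL2}; since every denominator is a power of $2=(\sqrt2)^{2}$, the entries of the conjugated matrix lie in $\Z[\sqrt2]$ up to the scaling intrinsic to $\PGL_7$, so this extends to $\widetilde\sigma\colon\PGL_{2,B}\to\PGL_{7,B}$ over $B=\Spec\Z[\sqrt2]$, and $\widetilde\sigma$ factors through $\Aut_{Y/B}$ because it does so on the generic fibre by (1) and $\Aut_{Y/B}\subseteq\PGL_{7,B}$ is closed (and $B$ reduced). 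Finally, reducing modulo $\sqrt2$ annihilates exactly the entries divisible by $\sqrt2$ and leaves \eqref{eqn:sigma'} evaluated at the Frobenius substitution $(a,b,c,d)\mapsto(a^{2},b^{2},c^{2},d^{2})$, i.e.\ $\widetilde\sigma_{\F_2}=\sigma'\circ F$ with $F\colon\PGL_{2,\F_2}\to\SL_{2,\F_2}$ the isogeny induced by Frobenius.

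The main obstacle is the characteristic-two analysis: recognising that the symmetry group degenerates from $\PGL_2$ to $\SL_2$, that this $\SL_2$ acts through the Steinberg-type module $L(3)\oplus\Sym^{2}E$ rather than through $\Sym^{6}E$, and that the characteristic-zero action limits to it only after the Frobenius twist of (3); controlling the non-reduced $\Aut_{Y/\F_2}$ throughout is part of this. A secondary difficulty is the integral bookkeeping — verifying that precisely $2$ must be inverted in (1) and that conjugation by $\diag(1,\sqrt2)$ clears all denominators in (3) — together with establishing $\Stab_{\PGL_7}(V)=\PGL_2$ in the characteristics $3$ and $5$ still allowed by $\Z[1/2]$, where $\Sym^{6}E$ fails to be irreducible and the argument via $\Sigma(Y)$ and its distinguished conic is the cleaner route.
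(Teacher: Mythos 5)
This proposition is not proved in the paper at all: it is quoted verbatim from the authors' companion paper \cite{V5} (Proposition 4.1 and Remark 4.4), so the only ``proof'' in the present text is a citation, and your reconstruction has to be judged on its own. The route you take --- identifying $\P^6$ with $\P(\Sym^6E)$, recognising the five quadrics of \eqref{eqn:W5} as the coefficients of the fourth transvectant spanning the multiplicity-one copy of $\Sym^4E$ inside $\Sym^2(\Sym^6E)^{\vee}$, deducing $\Aut_{Y/B}=\Stab_{\PGL_{7,B}}(V)$ from the fact that $Y$ is scheme-theoretically cut out by $V$ and anticanonically embedded, and then bounding the stabiliser by a Lie-algebra computation plus a connectedness argument via $\Sigma(Y)\cong\P^2$ --- is the classical Mukai--Umemura argument and is the natural way to prove the statement; the integral bookkeeping you record (denominators only powers of $2$ in \eqref{eqn:actionPGL2}, the substitution $(a,b,c,d)\mapsto(a,\,b/\sqrt2,\,\sqrt2\,c,\,d)$ clearing them over $\Z[\sqrt2]$, reduction mod $\sqrt2$ landing on $\sigma'\circ F$ once one uses $ad+bc=1$ on $\SL_2$) checks out. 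One point to make explicit in (1): the torus weights $4,2,0,-2,-4$ alone do not show that the span of the five quadrics is an $\SL_2$-submodule; you must actually verify invariance under the raising and lowering operators (equivalently, match the quadrics with the transvectant coefficients) before multiplicity-one identifies $V$ with the $\Sym^4E$-summand.

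The genuine gap is part (2). You identify the correct module structure $(E\otimes E^{(1)})\oplus\Sym^2E$ behind the block pattern of \eqref{eqn:sigma'} and verify that $\sigma'$ lands in $\Aut_{Y/\F_2,\red}$, but surjectivity --- that the reduced automorphism group is no larger than $\SL_2$ --- is deferred to ``a separate, more delicate fibre argument'' that is never supplied. This is the hardest step of the whole proposition, precisely because the tangent-space bound that powers your part (1) is unavailable in characteristic two: $\Aut_{Y/\F_2}$ is non-reduced (as the paper's own footnote records), so $\dim\operatorname{Lie}\Stab(V)$ strictly overestimates $\dim\Aut_{Y/\F_2,\red}$, and one must bound the reduced group through its action on some auxiliary rigid structure instead (the orbit stratification of Proposition \ref{prop:orbitp=2}, or $\Sigma(Y)\cong\P^2$ with its distinguished point and line). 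Until that is done, the isomorphism claim in (2) is asserted rather than proved. The characteristic $3$ and $5$ fibres of (1), where $\Sym^6E$ and the Clebsch--Gordan decomposition of $\mathfrak{gl}_7$ are no longer semisimple, are a second soft spot, but there the fallback you propose through $\Sigma(Y)$ and its invariant conic is a workable repair rather than a missing idea.
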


Next, we review the orbit decomposition of the split $V_{5}$-variety separately for the case of characteristic not equal to two and the case of characteristic two.
\begin{prop}[{\cite[Proposition 6.1]{V5}}]
\label{prop:orbitp>2}
Let $Y \subset \P^6_k$ be the split $V_5$-variety over a field $k$ defined by (\ref{eqn:W5}).
We suppose that $\chara k \neq 2$.
\begin{enumerate}
    \item 
There exists a stratification
$Y = O_3^{(p)} \sqcup O_2^{(p)} \sqcup O_1^{(p)}$
that gives the orbit decomposition with respect to $\sigma$ after the base change to $\overline{k}$.
Here, $O_i^{(p)}$ is an $i$-dimensional orbit.
Each orbit contains a $k$-rational point as follows:
\begin{align*}
& (0:1:0:0:0:1:0) \in O_3^{(p)}, \\
& (0:0:0:0:0:1:0) \in O_2^{(p)}, \\
& (0:0:0:0:0:0:1) \in O_1^{(p)}.
\end{align*}
    \item
$D \coloneqq  O_2^{(p)} \sqcup O_1^{(p)}$ is a prime divisor on $Y$, which is defined over $\Z$ with the equation
\begin{equation}
\label{eqn:D}
    5a_2a_4-4a_1a_5+27a_0a_6=0.
\end{equation}

\item
The normalization of $D$ is $ \P^1_{k} \times_k \P^1_k $.
More precisely, the following map $\nu \colon \P^1_{k} \times_k \P^1_k \rightarrow D$ gives the normalization  of $D$:
\begin{equation}
\label{eqn:nu}
\begin{aligned}
& ((\alpha: \gamma), (\beta: \delta))\\ 
&\mapsto
(8\alpha\beta^5: 4\beta^5\gamma + 20\alpha\beta^4\delta:4\beta^4\gamma\delta + 8\alpha\beta^3\delta^2: 4\beta^3\gamma\delta^2 + 4\alpha\beta^2\delta^3: 4\beta^2\gamma\delta^3 + 2\alpha\beta\delta^4 : 5\beta\gamma\delta^4 + \alpha\delta^5 : \gamma\delta^5).
\end{aligned}
\end{equation}
Moreover, the map $\nu$ is $\PGL_{2,k}$-equivariant via the isomorphism $\PGL_{2,k} \simeq \Aut_{Y/k}$ in Proposition \ref{prop:W5automorphisms}, and the action of $\PGL_{2,k}$ on $\P^1_k \times_k \P^1_k$ given by the product of
\begin{equation}
\label{eqn:PGL2P1action}
\begin{pmatrix}
    a & b \\
    c & d
\end{pmatrix}
\colon \begin{pmatrix}
    \alpha \\
    \gamma
\end{pmatrix}
\mapsto
\begin{pmatrix}
    a & b \\
    c & d
\end{pmatrix}
\begin{pmatrix}
    \alpha\\
    \gamma
\end{pmatrix}.
\end{equation}
\end{enumerate}
\end{prop}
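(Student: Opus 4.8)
The plan is to identify $\P^6$ with the projective space $\P(\Sym^6 V)$ of binary sextics ($V$ the standard $2$-dimensional representation) so that $\sigma$ becomes the usual $\PGL_2$-action, and then to read the orbit structure off the classical dictionary. Computing the diagonal entries of \eqref{eqn:actionPGL2} for $\diag(t,1)$ shows the coordinate $a_i$ has weight $6-i$; since $\chara k\neq 2$ is exactly what makes the entries $\tfrac32 a^4c^2,\ \tfrac34 ac^5,\ \tfrac18 c^6,\dots$ meaningful, this exhibits $\sigma$ as $\Sym^6$ of the standard representation, with $a_i$ (up to rescaling each coordinate) the coefficient of $x^{6-i}y^i$. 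The $\PGL_2$-orbit of $[f]$ is then determined by the multiset of root multiplicities of $f$ together with the modulus of its configuration of distinct roots; in particular the orbits of dimension $\le 2$ are exactly the four orbits whose members have at most two distinct roots, namely $O([6])$ (the closed orbit, a rational normal sextic curve, dimension $1$) and $O([5,1])$, $O([4,2])$, $O([3,3])$ (dimension $2$). As the stratification in (1) and the listed points are defined over $k$ (e.g.\ $O_3^{(p)}=Y\setminus D$, and $O_1^{(p)}$ is the $\PGL_{2,k}$-orbit closure of $(0:\cdots:0:1)$), for (1) it suffices to argue over $\overline k$.

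Next I would exhibit the three orbits. Substituting the three listed points into \eqref{eqn:W5} shows they lie on $Y$; they are $[y^6]$, $[xy^5]$ and $[x^5y+xy^5]=[xy(x^4+y^4)]$. Reading stabilizers off \eqref{eqn:actionPGL2}: $(0:\cdots:0:1)$ is fixed exactly by the Borel stabilizing the corresponding point of $\P^1$, so $O_1:=\PGL_2\cdot(0:\cdots:0:1)=O([6])$ has dimension $1$; $(0:\cdots:0:1:0)$ is fixed exactly by the diagonal torus, so $O_2:=\PGL_2\cdot(0:\cdots:0:1:0)=O([5,1])$ has dimension $2$; and since $x^4+y^4$ has four distinct roots when $\chara k\neq 2$, the sextic $xy(x^4+y^4)$ has six distinct roots, so $O_3:=\PGL_2\cdot(0:1:0:0:0:1:0)$ is fixed by no one-dimensional torus (whose fixed points in $\P(\Sym^6 V)$ are the $[\ell_1^i\ell_2^{6-i}]$) and by no $\Ga$ (whose only fixed point is a $[\ell^6]$); since every positive-dimensional connected subgroup of $\PGL_2$ contains a $\Gm$ or a $\Ga$, the stabilizer of $O_3$ is finite and $\dim O_3=3$. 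The $k$-rationality assertions are immediate from the displayed representatives.

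Since $Y$ is integral of dimension $3$ and contains the $3$-dimensional orbit $O_3$, we get $\overline{O_3}=Y$; hence $Y\setminus O_3$ is closed, $\PGL_2$-invariant, of dimension $\le 2$, so it is a union of orbits of dimension $\le 2$ and is contained in $O([6])\cup O([5,1])\cup O([4,2])\cup O([3,3])$. Each of these four orbits is either contained in $Y$ or disjoint from $Y$ ($Y$ being $\PGL_2$-invariant in $\P^6$); now $O([6])=O_1\subseteq Y$ and $O([5,1])=O_2\subseteq Y$, whereas the points $(0:0:1:0:0:0:0)=[x^4y^2]$ and $(0:0:0:1:0:0:0)=[x^3y^3]$ violate respectively the first and the third equation of \eqref{eqn:W5}, so $O([4,2])$ and $O([3,3])$ do not meet $Y$. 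Therefore $Y\setminus O_3\subseteq O_1\cup O_2$, which together with $O_1,O_2\subseteq Y$ yields the stratification $Y=O_3\sqcup O_2\sqcup O_1$ of (1); moreover the boundary $\overline{O_2}\setminus O_2$ is a $\le 1$-dimensional union of orbits in $Y$, hence equals $O_1$, so $D:=\overline{O_2}=O_2\sqcup O_1=Y\setminus O_3$ is an integral divisor on $Y$, which proves (1) and the primality statement in (2).

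For (3) and the equation in (2) I would analyze the map $\nu$ of \eqref{eqn:nu}. Substituting \eqref{eqn:nu} into \eqref{eqn:W5} shows $\nu$ maps into $Y$; a direct check (or the interpretation $\nu(\ell_1,\ell_2)=[\ell_1\ell_2^5]$, up to the coordinate rescalings) shows $\nu$ is $\PGL_2$-equivariant for the action \eqref{eqn:PGL2P1action}, so $\nu(\P^1\times\P^1)$ is a closed, $\PGL_2$-invariant, $2$-dimensional subset containing $\nu((1:0),(0:1))=(0:\cdots:0:1:0)\in O_2$, hence equals $D$. The seven coordinates of \eqref{eqn:nu} are bihomogeneous of bidegree $(1,5)$ with no common zero on $\P^1\times\P^1$, so $\nu$ is a base-point-free morphism with $\nu^*\cO(1)=\cO(1,5)$; it is injective off the diagonal (one recovers the quintuple root and the simple root) and restricts on the diagonal to an isomorphism onto $O_1$ (a degree-$6$ Veronese embedding), so $\nu$ is bijective, hence finite and birational from the smooth (normal) surface $\P^1\times\P^1$; by Zariski's main theorem $\nu$ is the normalization of $D$, which is (3). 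Finally $\deg D=D\cdot H^2=\int_{\P^1\times\P^1}\nu^*(H)^2=\cO(1,5)^2=10$ ($H$ the hyperplane class), and since $\Pic(Y)=\Z H$ with $H^3=\deg Y=5$ (from $(-K_Y)^3=40$, $-K_Y=2H$) this forces $D\sim 2H$; on the other hand, substituting \eqref{eqn:nu} into $q:=5a_2a_4-4a_1a_5+27a_0a_6$ gives $\nu^*q\equiv 0$, so $q$ vanishes along $D$, while $q$ equals $-4$ at $(0:1:0:0:0:1:0)\in Y$, so $\operatorname{div}_Y(q)$ is an effective divisor in $|2H|$ containing $D\in|2H|$, hence equals $D$; as the coefficients in \eqref{eqn:W5} and \eqref{eqn:D} lie in $\Z$, $D$ is defined over $\Z$, completing (2). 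The main obstacle, I expect, is exactly the step of showing that $Y\setminus O_3$ has no component beyond $\overline{O_2}$: the orbit enumeration together with the two explicit non-membership checks $[x^4y^2],[x^3y^3]\notin Y$ settles this set-theoretically, and the numerical identity $\deg D=10=\deg\operatorname{div}_Y(q)$ in $\Pic(Y)=\Z H$ is what pins down both the scheme structure of $D$ and its defining equation \eqref{eqn:D}.
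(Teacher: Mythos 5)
This proposition is imported verbatim from \cite{V5} (Proposition 6.1); the present paper gives no proof, so your argument can only be measured against the statement itself. Your route — identifying $\sigma$ with $\P(\Sym^6 V)$ and reading orbits off root multiplicities of binary sextics — is clean and does prove (1)--(3) when $\chara k\neq 2,5$: the rescaling constants work out (one checks $a_j=\lambda_j b_j$ with $(\lambda_0:\dots:\lambda_6)=(1:\tfrac12:\tfrac1{10}:\tfrac1{20}:\tfrac1{20}:\tfrac18:\tfrac18)$, under which $\nu$ becomes exactly $(\ell_1,\ell_2)\mapsto[\ell_1\ell_2^5]$), the non-membership checks $[x^4y^2],[x^3y^3]\notin Y$ are correct, and the degree count $\deg D=\cO(1,5)^2=10=\deg 2H$ together with $\nu^*q=0$ does pin down \eqref{eqn:D}. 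Minor slips: the representative $(0:1:0:0:0:1:0)$ corresponds to $[xy(x^4+4y^4)]$, not $[xy(x^4+y^4)]$ (harmless, as it is still squarefree for $\chara k\neq2$), and in small characteristic the $\Ga$-fixed locus on $\P(\Sym^6V)$ need not be a single point — the correct finiteness argument is that a connected $\Gm$ or $\Ga$ stabilizing a squarefree $[f]$ must fix each of its six roots individually, forcing too few distinct roots.

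The genuine gap is characteristic $5$, which the hypothesis $\chara k\neq 2$ allows. The diagonal change of coordinates above has entries with denominators divisible by $5$ (equivalently, $\diag(1,2,10,20,20,8,8)$ is singular mod $5$), so it does not identify $\sigma_{\F_5}$ with $\P(\Sym^6V)$; and since $\Sym^6V$ is a reducible, non-semisimple $\PGL_2$-module in characteristic $5$ (composition factors $L(6)$ and $L(2)$), your observation that the torus weights are $6,5,\dots,0$ determines only the composition factors, not the module, and hence does not "exhibit $\sigma$ as $\Sym^6$." Consequently the entire dictionary you rely on — enumeration of the $\le 2$-dimensional orbits by root multiplicities, the stabilizer computations, the interpretation of $\nu$ as $[\ell_1\ell_2^5]$ and its injectivity — is unjustified in characteristic $5$, even though the explicit formulas \eqref{eqn:W5}, \eqref{eqn:actionPGL2}, \eqref{eqn:nu} all still make sense there (and indeed several coefficients of $\nu$ vanish mod $5$, so the geometry must be rechecked). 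To close the gap you would either have to verify the orbit and stabilizer claims in characteristic $5$ by direct computation with the matrix \eqref{eqn:actionPGL2}, or argue that the reduction mod $5$ of the relevant lattice still has the expected orbit structure; as written, the proof covers $\chara k\neq 2,5$ only.
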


\begin{prop}[{\cite[Proposition 6.3]{V5}}]
\label{prop:orbitp=2}
Let $Y \subset \P^6_k$ be the split $V_5$-variety over a field $k$ defined by (\ref{eqn:W5}).
Suppose $\chara k = 2$.
\begin{enumerate}
    \item 
    There exists a stratification
    \[
    Y = O_3^{(2)} \bigsqcup O_2^{(2)} \bigsqcup O_1^{(2)} \bigsqcup O_1^{(2)'},
    \]
    that gives the orbit decomposition with respect to the action $\sigma'$ after the base change to $\overline{k}$.
    Here, $O_i^{(2)}$ and $O_i^{(2)'}$ are $i$-dimensional orbits.
    Moreover, each stratum contains a $k$-rational point as follows:
     \begin{align*}
&(1:0:0:1:0:0:1) \in O_3^{(2)},\\
&(0:0:0:0:0:1:1) \in O_2^{(2)},\\
&(0:0:0:0:0:0:1) \in O_1^{(2)},\\
&(0:0:0:0:0:1:0)  \in O_1^{(2)'}.
\end{align*}
    \item 
    Let $D$ be the divisor on $Y$ defined by (\ref{eqn:D}).
    Then $D$ is a double-divisor, and we have $D_{\mathrm{red}} = O_2^{(2)} \sqcup O_1^{(2)} \sqcup O_1^{(2)'}$.
    \item
The normalization of $D_{\mathrm{red}}$ is $\mathrm{Bl}_{(0:0:1)} \P^2_{k}$.
More precisely, the normalization map $\nu' \colon \mathrm{Bl}_{(0:0:1)} \P^2_{k} \rightarrow D_{\red}$ is induced by the following map $\P^2_{k} \dashrightarrow D_{\red}$:
\[
 (x:y:z) \mapsto (x^3 : x^2 z: x^2y:0:xy^2:y^2z:y^3).
\]
The map $\nu'$ is $\SL_{2,k}$-equivariant via the isomorphism $\SL_{2,k} \simeq \Aut_{Y/k, \red}$ in Proposition \ref{prop:W5automorphisms}, and the action of $\SL_{2,k}$ on the domain induced by that on $\P^2_k$  given as
\begin{equation}
\label{eqn:SL2P2action}
\begin{pmatrix}
    a & b \\
    c & d
\end{pmatrix}
\colon \begin{pmatrix}
    x \\
    y \\
    z
\end{pmatrix}
\mapsto
\begin{pmatrix}
    a & b &0 \\
    c & d &0 \\
    0 & 0 & 1
\end{pmatrix}
\begin{pmatrix}
    x\\
    y\\
    z
\end{pmatrix}.
\end{equation}

\item $O_1^{(2)'}$ is a twisted cubic given by the image of $\nu'|_{\{z=0\}} \colon \{z =0 \} \rightarrow D_{\F_{2}, \red}$ that sends
\[
(x:y:0) \mapsto (x^3:0:x^2y:0:xy^2:0:y^3).
\]
$O_{1}^{(2)}$ is the line given by the image of $\nu'|_{E} \colon E \simeq \P^1 \rightarrow D_{\F_{2},\red}$ that sends
\[
 (x,y) \mapsto (0:x^2:0:0:0:y^2:0).
\]
\item  $\nu'$ is a bijective.
\end{enumerate}
\end{prop}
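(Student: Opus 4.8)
The plan is to reduce the whole statement to explicit computations with the linear $\SL_{2,k}$-action $\sigma'$ of Proposition~\ref{prop:W5automorphisms}(2). The case $\chara k \neq 2$ is Proposition~\ref{prop:orbitp>2}; the two new features here --- the extra one-dimensional orbit $O_1^{(2)'}$ and the non-reducedness of $D$ --- arise because the $7$-dimensional module underlying $\sigma'$ is not semisimple. I would start by locating the boundary divisor. Reading off \eqref{eqn:sigma'}, the coordinate $a_3$ is $\SL_{2,k}$-fixed, so the loci $\{a_3 = 0\}$ and $\{a_3 \neq 0\}$ are $\SL_{2,k}$-invariant; and using the third relation in \eqref{eqn:W5}, one has on $Y$ (in characteristic $2$) the identity $a_2a_4 + a_0a_6 = a_3^2$, so the divisor $D$ of \eqref{eqn:D}, which reduces modulo $2$ to $\{a_2a_4 + a_0a_6 = 0\}$ (since $5 \equiv 27 \equiv 1$ and $4 \equiv 0$), satisfies $D = \operatorname{div}_Y(a_3^2) = 2\operatorname{div}_Y(a_3)$. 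This gives that $D$ is a double divisor with $D_{\red} = Y \cap \{a_3 = 0\}$ a hyperplane section. I would then compute the stabilizer in $\SL_{2,k}$ of $(1:0:0:1:0:0:1)$ (a point with $a_3 \neq 0$), check that it is finite, and conclude --- using that an orbit is open in its closure --- that $Y \setminus D_{\red}$ is a single $3$-dimensional orbit $O_3^{(2)}$.

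Next I would analyze the hyperplane section $D_{\red} = Y \cap \{a_3 = 0\}$. Running the listed $k$-points through $\sigma'$, one finds that their stabilizers have dimensions $1$, $2$, $2$, giving strata $O_2^{(2)}$, $O_1^{(2)}$, $O_1^{(2)'}$ of dimensions $2$, $1$, $1$; one checks these are pairwise disjoint and cover $D_{\red}$, and that $O_1^{(2)}$ is a line while $O_1^{(2)'}$ is a twisted cubic, with the stated parametrizations. For the normalization I would take the rational map $\mu \colon \P^2_k \dashrightarrow \P^6_k$, $(x:y:z) \mapsto (x^3:x^2z:x^2y:0:xy^2:y^2z:y^3)$, check by substitution into \eqref{eqn:W5} that its image lies in $Y$ (and in $\{a_3 = 0\}$, hence in $D_{\red}$), note that its base locus is exactly $(0:0:1)$, so that $\mu$ extends to a morphism $\nu' \colon \mathrm{Bl}_{(0:0:1)}\P^2_k \to D_{\red}$, and verify $\SL_{2,k}$-equivariance for \eqref{eqn:SL2P2action} by a direct check against \eqref{eqn:sigma'}. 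Since $\mathrm{Bl}_{(0:0:1)}\P^2_k$ is smooth, it then remains to see that $\nu'$ is finite and birational: finite because no curve is contracted (the exceptional curve goes to the line $O_1^{(2)}$, the strict transform of $\{z = 0\}$ to the cubic $O_1^{(2)'}$, and $\nu'$ is visibly quasi-finite over the dense orbit), and birational because on the dense orbit the coordinates $a_0 = x^3$, $a_1 = x^2z$, $a_2 = x^2y$ recover $(x:y:z)$ generically. Hence $\nu'$ is the normalization of $D_{\red}$.

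For bijectivity (part~(5)): by equivariance $\nu'$ carries the three $\SL_{2,k}$-orbits of $\mathrm{Bl}_{(0:0:1)}\P^2_k$ --- the dense orbit, the exceptional $(-1)$-curve $E$, and the strict transform $\widetilde{L}$ of $\{z = 0\}$ --- onto the three disjoint orbits $O_2^{(2)}$, $O_1^{(2)}$, $O_1^{(2)'}$, so it suffices to prove injectivity on each orbit, which by equivariance is a one-point check. On the dense orbit this is the birationality above (a dominant equivariant morphism of a single orbit that is generically injective is injective); on $\widetilde{L}$, $\nu'$ is $(x:y) \mapsto (x^3:x^2y:xy^2:y^3)$, the injective twisted-cubic embedding; and on $E$, $\nu'$ is $(x:y) \mapsto (x^2:y^2)$, the Frobenius of $\P^1$, which is bijective on points. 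Hence $\nu'$ is bijective. I expect the main obstacle to be the orbit bookkeeping in the second step --- establishing that $D_{\red}$ has exactly these three orbits and that the one-dimensional ones meet neither each other nor $O_3^{(2)}$ --- together with spotting the identity $a_2a_4 + a_0a_6 = a_3^2$ on $Y$, which is what makes $D$ a double divisor and accounts for the difference from the characteristic $\neq 2$ picture; as a consistency check one can compare with the $\Spec \Z[\sqrt{2}]$-degeneration of Proposition~\ref{prop:W5automorphisms}(3), in which $\widetilde{\sigma}_{\F_2}$ factors through the Frobenius of $\PGL_{2,\F_2}$.
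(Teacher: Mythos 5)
This proposition is not proved in the paper at all: it is quoted verbatim from the authors' companion paper \cite[Proposition 6.3]{V5}, so there is no in-text argument to compare your route against. Your computational approach is the natural one, and its key observations check out: the coordinate $a_3$ is indeed fixed by $\sigma'$ (fourth row of \eqref{eqn:sigma'}), the third equation of \eqref{eqn:W5} gives $a_2a_4+a_0a_6=a_3^2$ on $Y$ in characteristic $2$, so $D=2\operatorname{div}_Y(a_3)$; the parametrization $(x:y:z)\mapsto(x^3:x^2z:x^2y:0:xy^2:y^2z:y^3)$ does land in $Y\cap\{a_3=0\}$, its base ideal at $(0:0:1)$ is $(x^2,y^2)$, which becomes principal after blowing up the reduced point, and $\nu'|_E$ is the Frobenius $(x:y)\mapsto(x^2:y^2)$, which is why $\nu'$ is bijective without being an isomorphism.

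Two steps are under-argued as written. First, your deduction that $Y\setminus D_{\red}$ is a single orbit from ``an orbit is open in its closure'' does not work: knowing that the orbit of $(1:0:0:1:0:0:1)$ is open tells you it is dense in $Y$, not that it equals the invariant open set $\{a_3\neq 0\}\cap Y$; you need an actual transitivity check there (e.g.\ normalize $a_3=1$, solve \eqref{eqn:W5} to parametrize that affine piece, and move any point to the base point by an explicit element of $\SL_2(\overline{k})$). Second, you verify that the image of $\nu'$ lies in $D_{\red}$ but never that it is all of $D_{\red}$; without surjectivity the ``three orbits cover $D_{\red}$'' claim, the normalization claim, and part (5) are all incomplete. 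This is again a finite computation (solve \eqref{eqn:W5} with $a_3=0$: if $a_0\neq0$ one recovers $(x:y:z)=(a_0:a_2:a_1)$, and if $a_0=0$ one is forced onto one of the two lines $\{(0:0:0:0:0:s:t)\}$ or $\{(0:s:0:0:0:t:0)\}$, both visibly in the image), or alternatively a degree count using $(3H-2E)^2=5=\deg(Y\cap\{a_3=0\})$. Finally, be aware that parts (1) and (4) of the statement as printed swap the labels $O_1^{(2)}$ and $O_1^{(2)'}$ on the line and the twisted cubic (compare with Proposition \ref{prop:SigmaofW5}(3)(a), which agrees with part (1)); your identification follows part (4), so fix the labels consistently when writing this up.
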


Finally, we summarize the classification and some properties of lines on $V_5$-varieties.

\begin{prop}[{\cite[(4.0.4) and Corollary 5.6]{V5}}]
\label{prop:SigmaofW5}
Let $k$ be an algebraically closed field, and $Y \subset \P^6$ the split $V_5$-variety over $k$ defined by (\ref{eqn:W5}).
Then the following hold.
\begin{enumerate}
\item 
We have an isomorphism $f \colon \Sigma(Y) \simeq \P^2_k$.
Moreover, if $\chara k \neq 2$, the isomorphism $f$ is $\PGL_{2,k}$-equivariant
with respect to the isomorphism in Proposition~\ref{prop:W5automorphisms};
if $\chara k = 2$, it is $\SL_{2,k}$-equivariant.
Here, when $\chara k \neq 2,$ the action of $\PGL_{2,k}$ on $\P^2$ is given by
\[
 \begin{pmatrix}
 a&b\\c&d
\end{pmatrix}
\mapsto
\begin{pmatrix}
 a^2&2b^2&-2ab\\
 \frac{1}{2}c^2&d^2&-cd\\
 -ac&-2bd&ad+bc
\end{pmatrix}.
\]
When $\chara k =2$, the action of $\SL_{2,k}$ on $\P^2$ is given by \eqref{eqn:SL2P2action}.
For any point $(x:y:z) \in \P^2 (k)$, we denote the line corresponding to $f^{-1} (x:y:z)$ by $l_{(x:y:z)}$.

\item 
Suppose that $\chara k \neq 2$.
Then there exist two $\Aut_{Y/k}$-orbits on $\Sigma (Y) \simeq \P^2_k$, which is written as
\[
V (z^2 -2xy) \bigsqcup (\P^2 \setminus V (z^2-2xy)).
\]
\begin{enumerate}
\item The line $[l_{(0:1:0)}] \in V (z^2 -2xy)$ is the image of $\P^1 \to Y \subset \P^6$ defined by
\begin{equation}
\label{eqn:line56}
(s\colon t) \mapsto (0: 0: 0: 0: 0:s:t).
\end{equation}
\item The line $[l_{(0:0:1)}] \not \in V (z^2 -2xy)$ is the image of  $\P^1 \to Y$ defined by
\begin{equation}
\label{eqn:line15}
(s\colon t) \mapsto (0: s: 0: 0: 0:t:0).
\end{equation}
\end{enumerate}

\item 
Suppose that $\chara k =2$.
Then there exist three $\Aut_{Y/k,\red}$-orbits on $\Sigma (Y) \simeq \P^2$, which is written as 
\[
\{(0:0:1)\} \bigsqcup V(z) \bigsqcup \P^2 \setminus (V(z) \sqcup \{(0:0:1)\}).
\]
\begin{enumerate}
 \item The line $l_{(0:0:1)}$ is  $O_1^{(2)'}$, which is the image of $\P^1 \to Y$ defined by
\begin{equation}
\label{eqn:line15p=2}
(s\colon t) \mapsto (0: s: 0: 0: 0:t:0).
\end{equation}
 \item The line $[l_{(0:1:0)}]\in V(z)$  is the image of  $\P^1 \to Y$ defined by
\begin{equation}
\label{eqn:line56p=2}
(s\colon t) \mapsto (0: 0: 0: 0: 0:s:t).
\end{equation}
\item The line $[l_{(1:1:1)}]  \not \in V(z) \sqcup \{(0:0:1)\}$  is the image $\P^1 \to Y$ defined by
\begin{equation}
\label{eqn:line1p2}
    (s:t ) \mapsto (s:t:t:s+t:t:t:s).
\end{equation}
\end{enumerate}
\end{enumerate}
\end{prop}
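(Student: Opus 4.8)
The plan is to (i) identify $\Sigma(Y)$ with $\P^{2}$ together with its universal line, (ii) deduce the equivariance of this identification from the representation theory of $\PGL_{2}$/$\SL_{2}$, and (iii) read off the orbit decomposition and the named lines from the explicit group action. For (i): deformation theory first gives that $\Sigma(Y)$ is smooth and projective of pure dimension $2$, since for any line $\ell$ on a $V_{5}$-variety one has $N_{\ell/Y}\cong\cO_{\ell}\oplus\cO_{\ell}$ or $\cO_{\ell}(1)\oplus\cO_{\ell}(-1)$ (a standard computation with the conormal sequence, using $-K_{Y}=2H$ and $(H\cdot\ell)=1$, which can also be read off from the model \eqref{eqn:W5}), whence $h^{0}(N_{\ell/Y})=2$ and $h^{1}(N_{\ell/Y})=0$ in every characteristic. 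To identify $\Sigma(Y)$ with $\P^{2}$ I would argue directly on \eqref{eqn:W5}: the lines in $\P^{6}_{B}$ contained in $Y_{B}$ form the closed subscheme of $\Gr(2,7)$ cut out by requiring each of the five quadrics of \eqref{eqn:W5} to vanish identically along the line, and computing this on the affine charts of $\Gr(2,7)$ exhibits $\Sigma(Y_{B})\simeq\P^{2}_{B}$ together with an explicit universal family $\mathcal{U}_{Y}\to\P^{2}$, hence also the isomorphism $f$. (Alternatively: $\Sigma(Y)$ is a smooth, geometrically integral, geometrically rational projective surface whose lines cover $Y$, and the invariants $h^{i}(\cO)=0$ for $i>0$, $K^{2}=9$ force $\Sigma(Y)\simeq\P^{2}$ by the classification of surfaces, valid in all characteristics.)

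For (ii): $\Aut_{Y/k}$ — resp.\ $\Aut_{Y/k,\red}$ in characteristic two — acts on $\Sigma(Y)\simeq\P^{2}$, so by Proposition~\ref{prop:W5automorphisms} we obtain a homomorphism $\PGL_{2,k}\to\PGL_{3,k}$, resp.\ $\SL_{2,k}\to\PGL_{3,k}$. I would determine it by computing the image of one convenient line on $Y$ — for instance the image of $(s:t)\mapsto(0:s:0:0:0:t:0)$, which one checks lies on $Y_{B}$ — under the actions \eqref{eqn:actionPGL2} and \eqref{eqn:sigma'}, and reading off how the corresponding point of $\P^{2}$ moves; this produces the displayed matrices. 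For orientation: in characteristic $\neq 2$ the homomorphism is necessarily the projectivization of the adjoint module $\Sym^{2}$ of $\PGL_{2}$, as that is the only nontrivial three-dimensional projective representation of $\PGL_{2}$; in characteristic $2$ the answer is instead the ``standard $\oplus$ trivial'' representation \eqref{eqn:SL2P2action} of $\SL_{2}$, which is genuinely different — this is precisely where the non-reducedness of $\Aut_{Y/k}$ recorded in Proposition~\ref{prop:W5automorphisms} manifests.

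For (iii): with the explicit action the orbit decomposition is immediate. In characteristic $\neq 2$, the $\Sym^{2}$-action on $\P^{2}$ has exactly the invariant conic $V(z^{2}-2xy)$ — the image of the Veronese $\P^{1}\hookrightarrow\P^{2}$, i.e.\ the ``rank $\leq 1$'' binary quadratics — and the single open orbit $\P^{2}\setminus V(z^{2}-2xy)$, which gives~(2). In characteristic $2$, the action \eqref{eqn:SL2P2action} of $\SL_{2}$ fixes $(0:0:1)$, acts on the invariant line $V(z)\simeq\P^{1}$ as the standard representation (one orbit), and on the complementary affine plane $\P^{2}\setminus V(z)$ linearly through $\mathrm{std}$, giving the two orbits $\{(0:0:1)\}$ and $\P^{2}\setminus(V(z)\cup\{(0:0:1)\})$; this gives~(3). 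The lines attached to the named points of $\P^{2}$ are then obtained by evaluating the universal family $\mathcal{U}_{Y}\to\P^{2}$ (equivalently $f^{-1}$) over them, yielding \eqref{eqn:line56}, \eqref{eqn:line15}, \eqref{eqn:line15p=2}, \eqref{eqn:line56p=2}, \eqref{eqn:line1p2}; finally one checks by inspection that $(0:1:0)\in V(z^{2}-2xy)$ while $(0:0:1)\notin V(z^{2}-2xy)$, and that in characteristic $2$ the line $l_{(0:0:1)}$ is the unique $\Aut_{Y/k,\red}$-invariant stratum occurring in Proposition~\ref{prop:orbitp=2}, by matching coordinates with Propositions~\ref{prop:orbitp>2} and~\ref{prop:orbitp=2}.

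The main obstacle is the characteristic-$2$ part of steps (i)--(ii): one must verify that $\Sigma(Y)$ does not degenerate — it remains a smooth $\P^{2}$ even though $\Aut_{Y/k}$ is non-reduced — and that it is $\SL_{2,k}=\Aut_{Y/k,\red}$, and not $\PGL_{2,k}$, that acts on it, which is exactly what produces the fixed point and the invariant line. Everything else reduces to routine linear algebra with the matrices \eqref{eqn:actionPGL2}, \eqref{eqn:sigma'} and the equations \eqref{eqn:W5}.
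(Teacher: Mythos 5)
This proposition is not proved in the paper at all: it is recalled verbatim from the companion paper \cite{V5} (items (4.0.4) and Corollary 5.6 there), so the paper's ``proof'' is a citation and there is nothing internal to compare your argument against. That said, your outline is a sensible reconstruction of how the result is actually established in \cite{V5} and in the classical characteristic-zero literature: one really does identify $\Sigma(Y)$ with $\P^2$ by an explicit computation of the lines on the model \eqref{eqn:W5} inside $\Gr(2,7)$, obtains the universal family, and then reads off the equivariant structure and the orbits from \eqref{eqn:actionPGL2} and \eqref{eqn:sigma'}; your verifications of the invariance of $V(z^2-2xy)$, of the three strata in characteristic $2$, and of the membership of $(0:1:0)$ and $(0:0:1)$ are all correct. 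Two caveats. First, your ``alternative'' route to $\Sigma(Y)\simeq\P^2$ via surface classification is not free: you would need to establish $K_{\Sigma(Y)}^2=9$ (or minimality plus rationality) independently, which is no easier than the direct Grassmannian computation, so it should not be presented as a genuine shortcut. Second, the representation-theoretic remark that the map $\PGL_{2}\to\PGL_{3}$ ``must'' be $\P(\Sym^2)$ presupposes that the action on $\Sigma(Y)$ is nontrivial (indeed faithful), which itself has to be extracted from the geometry; your primary method --- tracking the orbit of one explicit line such as \eqref{eqn:line15} --- is the right one, but note that it only determines the matrix once the explicit universal family over $\P^2$ is already in hand, so steps (i) and (ii) cannot really be decoupled. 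With those points acknowledged, the skeleton you give is the standard and correct one; the remaining work is the (lengthy but routine) chart computation that \cite{V5} carries out.
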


\begin{defn}
Let $Y \subset \P^6$ be the split $V_5$-variety over an algebraically closed field $k$, and let the notations be as in Proposition~\ref{prop:SigmaofW5}.
\begin{enumerate}
    \item Suppose that $\chara k \neq 2$.
    A line $l \subset Y$ is said to be \emph{ordinary} if $[l]
     \in \Sigma(Y)$ is contained in the open orbit.
     Otherwise $l$ is called \emph{special}.
     \item Similarly, when $\chara k = 2$, an \emph{ordinary} line $l \subset Y$ is a line $[l] \in \Sigma(Y)$ which is contained in the open orbit.
     A \emph{Special} line $l$ is a line $[l] \in \Sigma(Y)$ which is in the $1$-dimensional orbit $V(z)$.
     The \emph{exceptional} line is the line $l_{(0:0:1)}$.
\end{enumerate}
\end{defn}

\begin{prop}[{\cite[Propositions 6.7 and 6.10]{V5}}]
\label{prop:numberoflines}
Let $Y \subset \P^6_k$ be the split $V_5$-variety over an algebraically closed field $k$ defined by (\ref{eqn:W5}).
We consider the actions $\sigma$ or $\sigma'$ in Proposition \ref{prop:W5automorphisms}.
Let $P \in Y(k)$.
\begin{enumerate}
    \item 
    Suppose that $\chara k \neq 2$.
Then the number of lines passing through $P$ is 
\[
\begin{cases}
3  & \textup{if } P\in O_3^{(p)}(k), \\
2  & \textup{if } P\in O_2^{(p)}(k), \\
1  & \textup{if } P\in O_1^{(p)}(k).
\end{cases}
\]
In the first case, three lines are ordinary.
In the second case, one line is ordinary, and the other is special.
In the third case, the line is special.
\item 
Suppose that $\chara k = 2$.
Then the number of lines passing through $P$ is 
\[
\begin{cases}
3  & \textup{if } P\in O_3^{(2)}(k), \\
2  & \textup{if } P\in O_2^{(2)}(k), \\
1  & \textup{if } P\in O_1^{(2)}(k), \\
2  & \textup{if } P \in O_1^{(2)'}(k).
\end{cases}
\]
In the first case, three lines are ordinary.
In the second case, one line is ordinary, and the other is special.
In the third case, the line is special.
In the fourth case, one line is specail, and the other is exceptional.
\end{enumerate}
\end{prop}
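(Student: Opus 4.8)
The plan is to combine the $\PGL_{2}$- (resp.\ $\SL_{2}$-) equivariance of the isomorphism $\Sigma(Y)\simeq\P^{2}$ from Proposition~\ref{prop:SigmaofW5} with the orbit decompositions of $Y$ in Propositions~\ref{prop:orbitp>2} and~\ref{prop:orbitp=2}, and then to finish by an explicit computation. Indeed, the number of lines through $P\in Y(k)$, and the type of each of them, are constant along the $\Aut_{Y/k,\red}^{\circ}$-orbit of $P$, so it suffices to treat one rational representative of each orbit — precisely the points written down in Propositions~\ref{prop:orbitp>2}(1) and~\ref{prop:orbitp=2}(1).

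For a fixed representative $P$, I would parametrize a line through $P$ as $\{[P+tQ]\mid t\in\P^{1}\}$ with $Q\in k^{7}$ not proportional to $P$, substitute into the five quadrics~\eqref{eqn:W5}, and require that each resulting polynomial in $t$ vanish identically. For the one- and two-dimensional orbits this elimination is very short: it forces all but two homogeneous coordinates of $Q$ to vanish and leaves at most two choices of $Q$ modulo adding a multiple of $P$. For instance, through $(0:0:0:0:0:0:1)$ one obtains only the line~\eqref{eqn:line56}, while through $(0:0:0:0:0:1:0)$ one obtains exactly the two lines~\eqref{eqn:line56} and~\eqref{eqn:line15}; and for the open orbit, say through $(0:1:0:0:0:1:0)$, the same procedure yields three lines. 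The count over the open orbit can also be seen conceptually: the universal line $p\colon\mathcal{U}_{Y}\to Y$ is finite (no point of $Y$ lies on infinitely many lines) and $\mathcal{U}_{Y}$ is a $\P^{1}$-bundle over $\Sigma(Y)\simeq\P^{2}$, so $\mathcal{U}_{Y}$ and $Y$ are smooth threefolds and $p$ is flat of degree $3$; its branch divisor is $\Aut_{Y/k,\red}^{\circ}$-stable, hence is a union of orbit closures of codimension $1$, hence is contained in $\overline{O_{2}}=D_{\red}$, so $p$ is étale over the open orbit and there are exactly three distinct lines there.

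To read off the type of each line produced I would use Proposition~\ref{prop:SigmaofW5}: compute the point of $\Sigma(Y)\simeq\P^{2}$ corresponding to each line and locate it relative to the orbit stratification of $\P^{2}$ — the conic $V(z^{2}-2xy)$ when $\chara k\neq2$, and $\{(0:0:1)\}\sqcup V(z)$ when $\chara k=2$. A convenient shortcut is that the union of all special lines equals $p(q^{-1}(V(z^{2}-2xy)))$ (resp.\ $p(q^{-1}(V(z)))$), which is an $\Aut_{Y/k,\red}^{\circ}$-stable closed subset of dimension $\le2$ containing a translate of $l_{(0:1:0)}$, hence is contained in $D_{\red}$; likewise, in characteristic two the exceptional line is $l_{(0:0:1)}=O_{1}^{(2)'}\subset D_{\red}$. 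Consequently no special or exceptional line meets the open orbit, so the three lines through a point of the open orbit are all ordinary; the single line through a point of the one-dimensional orbit is the special line $l_{(0:1:0)}$; and the two lines through a point of the two-dimensional orbit are $l_{(0:1:0)}$ (special) and $l_{(0:0:1)}$ (ordinary). In characteristic two the same analysis, applied to the four representatives of $O_{3}^{(2)},O_{2}^{(2)},O_{1}^{(2)},O_{1}^{(2)'}$, gives $3$, $2$, $1$, $2$ lines respectively, with the additional line through the $O_{1}^{(2)'}$-point being the exceptional line.

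The hard part is the behaviour over the boundary divisor $D$, where the generic picture fails: the length-$3$ fiber of $p$ becomes non-reduced (the special line occurs with multiplicity two in the fiber over $O_{2}$, and with multiplicity three over $O_{1}$), so the explicit elimination is genuinely needed there to determine the number of \emph{distinct} lines and to recognize which of them is special — or, in characteristic two, exceptional. The characteristic-two case carries the additional subtleties that the acting group is $\SL_{2}$ via the representation~\eqref{eqn:sigma'} (which differs from~\eqref{eqn:actionPGL2} by a Frobenius twist), that the orbit pictures of both $Y$ and $\Sigma(Y)$ change, and that the extra orbit $O_{1}^{(2)'}$ and the exceptional line must be tracked throughout.
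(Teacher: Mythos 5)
The paper itself gives no proof of this proposition --- it is imported verbatim from \cite[Propositions~6.7 and 6.10]{V5} --- but the way the proofs of Propositions~\ref{prop:SigmaZp>2} and~\ref{prop:SigmaZp=2} later invoke ``the proof of \cite[Proposition 6.7]{V5}'' precisely for the explicit lists of lines through orbit representatives shows that the source's argument is exactly yours: reduce to one rational representative per orbit by equivariance, eliminate explicitly on the pencil $uP+sQ$ against the quadrics \eqref{eqn:W5}, and read off ordinary/special/exceptional by locating the resulting points of $\Sigma(Y)\simeq\P^2$ in the orbit stratification of Proposition~\ref{prop:SigmaofW5}. Your proposal is correct (the eliminations at $(0{:}0{:}0{:}0{:}0{:}0{:}1)$, $(0{:}0{:}0{:}0{:}0{:}1{:}0)$ and $(0{:}1{:}0{:}0{:}0{:}1{:}0)$ indeed yield $1$, $2$ and $3$ lines of the stated types) and takes essentially the same approach.
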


Finally, we recall the following useful lemma.

\begin{lem}
\label{lem:split_after_finite_extension}
\begin{enumerate}
\item
Let $R$ be a local ring, and $\mathcal{Y}$ a $V_5$-scheme over $R$.
Then there exists a finite free extension $R'$ of $R$ such that $\mathcal{Y}_{R'}$ is a split $V_5$-scheme.
\item 
Let $R$ be a discrete valuation with $2 \in R^{\times},$ $\mathcal{X}$ a $V_{5}$-scheme over $R$. 
If $\mathcal{X}_{\Frac R}$ is a split $V_5$-variety over $\Frac R$, then $\mathcal{X}$ is a split $V_5$-scheme over $R$.
\end{enumerate}
\end{lem}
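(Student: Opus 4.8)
For part (1), the plan is to use the structure of $V_5$-schemes recalled above: a $V_5$-scheme $\mathcal{Y} \to \Spec R$ comes equipped with the relatively ample generator $H \in \Pic_{Y/R}(R)$, which is a genuine line bundle, and $\pi_{\ast}H$ is a locally free sheaf of rank $7$ on $\Spec R$. Since $R$ is local, $\pi_\ast H$ is free, so we obtain a closed embedding $\mathcal{Y} \hookrightarrow \P^6_R$ by the complete linear system $|H|$. Now it is a classical fact (over a field, due to Mukai--Umemura and others; in the relative form this is recorded in \cite{V5}) that the ideal of a $V_5$ in this anticanonical-halved embedding is generated by a net (three-dimensional space) of quadrics whose $2\times 2$ "Pfaffian-like" structure is rigid. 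More concretely, the first step is: after the embedding $\mathcal{Y} \subset \P^6_R$, the degree-$2$ part of the homogeneous ideal is a rank-$5$ free $R$-submodule $W \subset H^0(\P^6_R, \O(2))$ cutting out $\mathcal{Y}$, and the pair $(H^0(\O(1)), W)$ is, fppf- or even étale-locally on $\Spec R$, isomorphic to the standard pair appearing in \eqref{eqn:W5}. One then argues that the obstruction to making this isomorphism rational over $R$ — i.e. to choosing a basis of $H^0(\O(1))$ and of $W$ putting the equations in the form \eqref{eqn:W5} — is a torsor under the automorphism group of the split model, which by Proposition \ref{prop:W5automorphisms} is $\PGL_2$ (or at least a smooth affine group scheme of finite type over $R$ after inverting nothing, being a closed subgroup-scheme of $\PGL_7$). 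A torsor under a smooth affine group scheme over a local ring becomes trivial after a finite étale (in particular finite free) extension $R'/R$; alternatively, one can simply base-change to the strict henselization or to a finite extension splitting the relevant cohomology class and then clear denominators, noting that everything is of finite presentation so descends to a finite free sub-extension. The main obstacle here is formulating precisely "the moduli of such embedded $V_5$'s is a single orbit, hence a homogeneous space under $\Aut$": over a field this is exactly the statement that all $V_5$-varieties are isomorphic, proved in \cite{V5}, and in characteristic $2$ one must be careful because $\Aut_{Y/k}$ is non-reduced — but $\Aut_{Y/k,\red} = \SL_2$ is still smooth, and smoothness of the reduced automorphism group scheme is what drives the torsor-triviality argument, so the $p=2$ case causes no extra trouble.

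For part (2), suppose $R$ is a DVR with $2 \in R^\times$ and $\mathcal{X} \to \Spec R$ a $V_5$-scheme whose generic fiber $\mathcal{X}_{\Frac R}$ is split. The plan is: first, by part (1) applied to the local ring $R$ itself, there is a finite free extension $R'/R$ with $\mathcal{X}_{R'}$ split; we want to descend the splitting from $R'$ to $R$. Equivalently, the isomorphism class of $\mathcal{X}$ among $V_5$-schemes over $R$ is classified by a class in a pointed cohomology set $H^1_{\et}(\Spec R, \Aut_{Y_R/R})$ (or the fppf version in characteristic $2$), and being split means this class is trivial. We know the image of this class in $H^1(\Frac R, \Aut)$ is trivial by hypothesis. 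So it suffices to show that the restriction map $H^1(\Spec R, \mathcal{G}) \to H^1(\Frac R, \mathcal{G})$ is injective, where $\mathcal{G} = \Aut_{Y_R/R}$. Since $2 \in R^\times$, by Proposition \ref{prop:W5automorphisms}(1) we have $\mathcal{G} \cong \PGL_{2,R}$, which is smooth and affine. The injectivity of $H^1(R, \PGL_{2,R}) \to H^1(\Frac R, \PGL_{2,R})$ for $R$ a DVR is a standard fact: a torsor under $\PGL_2$ over $R$ corresponds to an Azumaya algebra of degree $2$ (a quaternion order), equivalently a conic bundle $C \to \Spec R$, and such a conic with a rational point on the generic fiber — equivalently, trivial Brauer class at the generic point — has trivial Brauer class over $R$ by purity for the Brauer group of a regular two-dimensional-or-less scheme (here dimension $1$, so purity is elementary), hence has an $R$-point and is trivial. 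Thus the splitting descends. The step I expect to be the crux is precisely this cohomological descent: identifying the relevant torsor set correctly and invoking the right injectivity statement. One subtlety to flag is the hypothesis $2 \in R^\times$, which is exactly what is needed for $\Aut_{Y_R/R}$ to be the nice smooth group $\PGL_2$ rather than a non-reduced group in residue characteristic $2$; the lemma is genuinely false (or at least requires a different proof) without it, which is why the authors impose it.

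Throughout, I would lean on the explicit models: the equations \eqref{eqn:W5} give a canonical split form, the automorphism computations of Proposition \ref{prop:W5automorphisms} identify $\Aut$ with $\PGL_2$ away from characteristic $2$, and the relative Picard statement $H \in \Pic_{Y/B}(B)$ is a line bundle (from \cite[Lemma 2.4]{V5}) lets me reduce to embedded statements. If a fully self-contained argument for part (1) is wanted rather than the torsor formalism, an alternative is: choose any $R$-point of $\mathcal{Y}$ after a finite free extension (possible since $\mathcal{Y} \to \Spec R$ is smooth projective with nonempty special fiber, so has sections étale-locally), then show any $V_5$-scheme with a suitable "marked" configuration — e.g. a line through the point, or a flag — is rigidly isomorphic to the split model by running through the linear algebra of the defining quadrics; but this is more calculation than the torsor argument and I would only fall back on it if the clean version hits a snag in characteristic $2$.
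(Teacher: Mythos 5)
Your part (2) is essentially correct and, once unwound, amounts to the same mechanism the paper uses: the paper delegates to \cite[Proposition 7.9]{V5}, which detects splitness of a $V_5$-scheme over $R$ by the triviality of an associated Brauer--Severi (conic) curve, and then notes that this conic has a generic section by hypothesis and hence an $R$-section by properness of the conic over the DVR. Your injectivity of $H^1(R,\PGL_2)\to H^1(\Frac R,\PGL_2)$ via $\Br(R)\hookrightarrow\Br(\Frac R)$ for regular $R$ is a correct repackaging of that, and you correctly locate where $2\in R^{\times}$ enters (it makes $\Aut_{Y_R/R}\simeq\PGL_{2,R}$ via Proposition \ref{prop:W5automorphisms}(1)). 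Note, however, that your part (2) presupposes part (1), since you need $\Isom_R(\mathcal{X},Y_R)$ to actually be a torsor, i.e.\ fppf-locally nonempty.

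The genuine gap is in part (1), exactly in the residue characteristic $2$ case that you wave away. The scheme $\Isom_R(\mathcal{Y},Y_R)$ is a torsor under the \emph{full} automorphism group scheme $\Aut_{Y_R/R}$, not under its reduced part, and by Proposition \ref{prop:W5automorphisms}(2) this group scheme is non-reduced, hence non-smooth, on fibres of characteristic $2$. Your trivialization argument (find a point of the special fibre over a finite extension of the residue field and lift it) needs formal smoothness of the torsor itself; replacing the group by $\Aut_{Y/k,\red}\simeq\SL_{2}$ does not repair this, because the Isom scheme is not a torsor under the reduced group and its own non-smoothness is what blocks the lifting. Even away from characteristic $2$, your blanket claim that a torsor under a smooth affine group scheme over an arbitrary local ring is split by a finite \'etale extension is not justified: $R$ is not assumed henselian, so lifting a special-fibre point is not formal, and one must use something specific to $\PGL_2$ (e.g.\ maximal \'etale subalgebras of Azumaya algebras over local rings, or the conic picture). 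The paper sidesteps all of this by citing \cite{V5}, where the relevant object is the associated Brauer--Severi curve: a smooth conic over $R$ in every characteristic, which acquires a section after a finite free extension of degree at most $2$ and whose triviality is equivalent to splitness. Your proposed fallback (rigidify by a marked point, line, or flag and do the linear algebra) is plausibly the right repair, but it is not carried out, so as written part (1) is unproved when the residue characteristic is $2$.
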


\begin{proof}
(1) follows from the proof of \cite[Lemma 7.10]{V5}.
(2) follows from \cite[Proposition 7.9]{V5} since the corresponding Brauer-Severi curve admits a section over $R$.
\end{proof}

\section{Quintic curves with large stabilizers in $V_5$ varieties}
\label{section:quintics_in_V5}

Smooth rational quintic curves in $V_5$-varieties are important to study $V_{22}$-varieties (cf.\ Subsection \ref{subsection:tworaylines}).
In this section, we classify smooth rational quintic curves whose stabilizers have positive dimensions. 
Moreover, we study the smooth degeneration of such quintic curves.
Since it is necessary to examine the action of automorphisms in detail, we divide the discussion here into the cases where the characteristic is not two and where it is two.

\subsection{Characteristic $\neq 2$}

\begin{thm}[Classification of quintic curves]
\label{thm:BGaquintic>2}
Let $k$ be a field of characteristic $p\neq 2$,
$Y=Y_{k} \subset \P^6 = \P^6_k$ the split $V_5$-variety over $k$ given by (\ref{eqn:W5}), $\sigma$ the action of $\Aut_{Y/k} \simeq \PGL_{2,k}$ on $Y$ given by Proposition \ref{prop:W5automorphisms}, and $\nu = \nu_{k} \colon \P^1 \times \P^1 \to D$ the normalization as in Proposition \ref{prop:orbitp>2}.

Let $Z \subset Y$ be a smooth rational quintic curve
and $G$ the stabilizer of $Z$ with respect to $\sigma$.
Suppose that $\dim G \geq 1$.
Then $G$ is reduced, and exactly one of the following holds.
\begin{enumerate}
    \item 
We have $p\neq 5$, $G \subset \PGL_{2,k}$ is a Borel subgroup, and $Z$ is equal to $\nu (\{ P \} \times \P^1) $ for some $P \in \P^1(k)$.
In particular, by an element of $\PGL_{2,k} (k)$ via the action $\sigma$, $Z$ is mapped to $Z_{\mathrm{MU}} :=\nu (\{(1:0)\}\times \P^1)$, which is a smooth rational quintic curve defined by the map
\begin{equation}
\label{eqn:ZMU}
       \P^1 \rightarrow Y \subset \P^6,\quad
        (s,t) \mapsto (8s^5: 20s^4t: 8s^3t^2: 4s^2t^3: 2st^4: t^5:0).
\end{equation}
Moreover, $Z$ is equal to (\ref{eqn:ZMU}) if and only if $G$ is equal to the upper triangular Borel subgroup of $\PGL_{2,k}$.
    \item 
We have $p \neq 5$. $G \simeq \Ga \rtimes \mu_4$ with respect to 
\begin{equation}
\label{eqn:semi-directGa}
\mu_4 \rightarrow \Aut_{\Ga/k},\ \zeta \mapsto (\ast \mapsto \zeta \ast ),
\end{equation}
and 
$Z$ is mapped to
the smooth rational quintic curve $Z_{\xi}^{(\mathrm{a})}$ defined by
\begin{equation}
\label{eqn:ZGageneral}
\P^1 \rightarrow Y \subset \P^6, \quad
(s\colon t) \mapsto 
(8s^5+2 \xi st^4: 20s^4t+ \xi t^5: 8s^3t^2: 4s^2t^3: 2st^4:t^5:0)
\end{equation}
for some $\xi \in k^{\times}$ by an element of $\PGL_{2.k} (k)$ via the action $\sigma$.
Moreover, if $k= \overline{k}$, then $Z$ is mapped to the smooth rational quintic curve $Z_1^{(\mathrm{a})}$ by an element of $\PGL_{2,k} (k)$ via the action $\sigma$.
Furthermore, $Z$ is equal to $Z_{\xi}^{(\mathrm{a})}$ for some $\xi \in k^{\times}$ if and only if $G$ is equal to the unipotent subgroup of upper triangular matrices in $\PGL_{2,k}$.
    \item
We have $G \simeq \Gm$, 
and
$Z$ is mapped to $Z_u$ for some
\[
u =(u_0: u_1) \in \P^1 (k) \setminus \{ (1:0), (1:1), (0:1), (5:4)\}
\]
(note that, $(4:5) = (1:0)$ if $p=5$) by an element of $\PGL_{2,k} (k)$ via the action $\sigma$.
Here, $Z_u$ is the smooth rational quintic curve defined by the following morphism.
\begin{equation}
\label{eqn:ZGm}
\P^1 \rightarrow Y \subset \P^6,
\quad
(s\colon t) \mapsto 
(s^5 (u-1): s^4tu: s^3t^2: s^2 t^3: st^4: t^5:0 ),
\end{equation}
where we identify $u$ with $u_0/u_1$.
Moreover, $Z$ is equal to $Z_u$ or $Z_u'$ for some $u$ if and only if $G$ is equal to the diagonal torus of $\PGL_{2,k}$.
Here, $Z_u'$ is the smooth rational quintic curve $\sigma
\begin{pmatrix}
    0 &1\\
    1 &0
\end{pmatrix}
 (Z_u),
$
which is equal to the image of 
\begin{equation}
\label{eqn:ZGm'}
\P^1 \rightarrow Y \subset \P^6,
\quad 
(s\colon t) \mapsto 
(0:t^5:st^4: s^2t^3:s^3t^2:s^4tu:s^5(u-1)).
\end{equation}
In particular, for any $v \neq u,$ $Z_u$ cannot be mapped to $Z_{v}$ by an element of $\PGL_{2,k} (k)$ via the action $\sigma$.
\end{enumerate}
Conversely, the cases (1), (2), and (3) actually occur, i.e.,\ quintic curves appearing in these cases are smooth rational quintic curves with stabilizers as indicated, provided that the respective assumptions on $p$ are satisfied.
\end{thm}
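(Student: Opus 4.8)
The plan is to identify $G$, via $\Aut_{Y/k}\simeq\PGL_{2,k}$ (Proposition~\ref{prop:W5automorphisms}), with a subgroup scheme of $\PGL_{2,k}$; to read off its geometric type from the orbit decomposition of Proposition~\ref{prop:orbitp>2} by distinguishing whether $Z$ lies on the divisor $D$; to pin down $Z$ in each case by explicit computation with \eqref{eqn:actionPGL2} and \eqref{eqn:nu}; and finally to descend to a general $k$ by Galois cohomology. After base change to $\overline k$, $G^{\circ}_{\overline k,\red}$ is a positive-dimensional connected reduced subgroup of $\PGL_{2,\overline k}$, hence a maximal torus, a maximal unipotent, a Borel, or $\PGL_2$. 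The last is impossible: a $\PGL_2$-invariant irreducible curve in $Y$ must be the unique closed orbit $O_1^{(p)}$, which by \eqref{eqn:nu} is the image of the diagonal of $\P^1\times\P^1$ under $\nu$, a rational normal sextic, hence not of degree~$5$. Moreover the generic $\PGL_2$-stabilizer on the open orbit $O_3^{(p)}$ is finite, so every $B$-orbit in $O_3^{(p)}$ is $2$-dimensional and a Borel-invariant curve cannot meet $O_3^{(p)}$; combined with the pullback argument below this yields the dichotomy: $G^{\circ}_{\overline k,\red}$ is a Borel exactly when $Z_{\overline k}\subseteq D_{\overline k}$. In that case I pull $Z_{\overline k}$ back along the normalization $\nu$: by \eqref{eqn:nu} one ruling of $\P^1\times\P^1$ maps to quintics, the other to lines, and the diagonal to a sextic, so $Z_{\overline k}=\nu(\{P\}\times\P^1)$ for the $B$-fixed point $P$; as $P$ is intrinsic to $Z$ it is $k$-rational, and conjugating it to $(1:0)$ over $k$ gives $Z=Z_{\mathrm{MU}}$ of \eqref{eqn:ZMU}, with $G_{\red}=N_{\PGL_2}(B)=B$. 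This is case~(1), and it forces $p\ne5$ because \eqref{eqn:ZMU} acquires a cusp in characteristic~$5$.

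If instead $Z_{\overline k}\not\subseteq D_{\overline k}$, conjugate $G^{\circ}_{\overline k,\red}$ to the standard unipotent $U$ or maximal torus $T$ and use \eqref{eqn:actionPGL2}. For $U$: the orbit closure $\overline{U\cdot v}$ has degree $\ge6$ unless $v_6=0$, one needs $v_5\ne0$ for degree~$5$, and a further $U$-translation normalizes $v$ to $(0:\xi:0:0:0:1:0)\in Y$ with $\xi\in k^{\times}$, whose orbit closure is exactly $Z_{\xi}^{(\mathrm{a})}$ of \eqref{eqn:ZGageneral} (again smooth only for $p\ne5$). For $T$: $\diag(\lambda,1)$ acts on $\P^6$ with pairwise distinct weights $6,5,4,3,2,1,0$, so the $T$-fixed points are the coordinate points and a $T$-invariant irreducible quintic is the orbit closure of a point supported on $\{0,\dots,5\}$ or $\{1,\dots,6\}$; intersecting with $Y$ and normalizing by $T$ yields exactly $Z_u$ of \eqref{eqn:ZGm}, resp.\ $Z_u'$ of \eqref{eqn:ZGm'}, which are smooth of degree~$5$ for $u\notin\{(1:0),(1:1),(0:1),(5:4)\}$, the value $(5:4)$ being excluded because $Z_{(5:4)}=Z_{\mathrm{MU}}$ (which has a Borel stabilizer). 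The identities $\sigma(\diag(\lambda,1))(Z_{\xi}^{(\mathrm{a})})=Z_{\lambda^4\xi}^{(\mathrm{a})}$ and $\sigma(\diag(\lambda,1))(Z_u)=Z_u$, together with $N_{\PGL_2}(U)=B$ and $N_{\PGL_2}(T)=T\rtimes\Z/2\Z$, identify $G_{\red}$ with $\Ga\rtimes\mu_4$ (acting by \eqref{eqn:semi-directGa}) in the first case and with $\Gm$ in the second, and show $Z_{\xi}^{(\mathrm{a})}\simeq Z_{\xi'}^{(\mathrm{a})}$ iff $\xi/\xi'\in(k^{\times})^4$, and $Z_u\not\simeq Z_v$ for $u\ne v$.

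To descend to a general $k$, compare $Z$ over $\overline k$ with the relevant one of $Z_{\mathrm{MU}}$, $Z_1^{(\mathrm{a})}$, $Z_u$ and observe that the transporter $\{g\in\PGL_{2,k}:g(Z)=Z_{\mathrm{norm}}\}$ is a torsor under $\Stab(Z_{\mathrm{norm}})$; since $H^1(k,B)=H^1(k,\Gm)=0$ this torsor is trivial in cases~(1) and~(3), so $Z$ is $\PGL_{2,k}(k)$-conjugate to $Z_{\mathrm{MU}}$, resp.\ to a unique $Z_u$, and $G$ is $B$, resp.\ $\Gm$, already over $k$, whereas in case~(2) the torsor under $\Ga\rtimes\mu_4$ is classified by $H^1(k,\mu_4)=k^{\times}/(k^{\times})^4$, producing the parameter $\xi$ (and a single class, $Z_1^{(\mathrm{a})}$, over $\overline k$). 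That $G$ is reduced follows by checking $\dim_k\mathrm{Lie}\,G=\dim G_{\red}$ in each case: $\mathrm{Lie}\,\Aut_{(Y,Z)/k}$ is the kernel of $\mathfrak{pgl}_2=H^0(Y,T_Y)\to H^0(Z,N_{Z/Y})$, and from the explicit parametrizations one verifies directly that the generators of $\mathfrak{pgl}_2$ outside $\mathrm{Lie}\,G_{\red}$ — for instance the vector field attached to $\begin{psmallmatrix}0&0\\1&0\end{psmallmatrix}$ restricted to $Z_{\mathrm{MU}}$ — are not tangent to $Z$. The converse assertions are immediate from the constructions above (each listed curve lies on $Y$ and is a rational normal quintic in the relevant $\P^5$ or $\P^4$ with the stated stabilizer), and in the forward direction, when $p=5$ the geometric type can be neither Borel nor $\Ga$ precisely because the only candidate curves $Z_{\mathrm{MU}}$ and $Z_{\xi}^{(\mathrm{a})}$ fail to be smooth there, forcing case~(3).

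The genuinely laborious part is the $\Ga$-case. Unlike a torus, whose orbits off $D$ are $0$-dimensional only at the coordinate points, a one-parameter unipotent group has one-dimensional orbits throughout $O_3^{(p)}$, so there is a two-parameter family of $U$-invariant curves (before quotienting by the normalizer $B$) that must be analyzed via \eqref{eqn:actionPGL2}, imposing simultaneously that $v\in Y$ and that $\overline{U\cdot v}$ is smooth, in order to see that precisely the curves $Z_{\xi}^{(\mathrm{a})}$ arise and that their stabilizer picks up exactly the extra $\mu_4$; keeping track of the resulting $k^{\times}/(k^{\times})^4$ of twisted forms over non-closed fields, and of the degeneration at $p=5$, is where most of the effort goes.
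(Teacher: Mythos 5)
Your proof is correct in substance and, over $\overline{k}$, follows the same skeleton as the paper's: the dichotomy via the divisor $D$ (Borel-invariant curves cannot meet $O_3^{(p)}$ because stabilizers there are finite, and the only quintics inside $D$ are the curves $\nu(\{P\}\times\P^1)$), then explicit normal forms from \eqref{eqn:unipaction}-type and \eqref{eqn:diagonalaction}-type computations, then the normalizer computations $N(U)=B$, $N(T)=T\rtimes\langle w\rangle$ to pin down the full stabilizer. Where you genuinely diverge is in the descent to a non-closed field and in the proof of reducedness. The paper does not use the transporter-torsor/Galois-cohomology formalism at all: in its Step C it first shows that $G$ is geometrically reduced (by quoting the $\overline{k}$-case), then identifies the identity component $H$ intrinsically over $k$ --- in the $\Ga$-case by exhibiting the unique $H$-fixed point as $(Z\cap D)_{\red}$, getting $H\simeq\mathbb{A}^1$ as a $k$-scheme and invoking Totaro's lemma to conclude $H\simeq\Ga$; in the $\Gm$-case by showing the two fixed points are $k$-rational (as $(Z\cap O_i^{(p)})(\ksep)$, with a deliberate detour through $\ksep$ to avoid inseparability issues), getting $H\simeq\mathbb{A}^1\setminus\{0\}$ and ruling out unitary groups by a compactification argument --- and only then conjugates $H$ into the standard subgroup via Conrad's theorem. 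Your route via $H^1(k,B)=H^1(k,\Gm)=0$ and $H^1(k,\Ga\rtimes\mu_4)\simeq k^\times/k^{\times 4}$ is cleaner, but it has two prerequisites you pass over quickly: (i) the torsor argument needs the target normal form to be defined over $k$, i.e.\ you must first show $P\in\P^1(k)$ in case (1) and $u\in k$ in case (3); Galois descent from the uniqueness of $u$ handles this over $\ksep$, but the theorem is stated for arbitrary $k$ of characteristic $\neq 2$, and the purely inseparable direction is exactly what the paper's fixed-point/orbit-stratum argument is designed to control; (ii) the torsor is under the \emph{scheme-theoretic} stabilizer of the normal form, so your smoothness claim must be established for the normal forms first --- your Lie-algebra criterion $\dim_k\ker\bigl(\mathfrak{pgl}_2\to H^0(Z,N_{Z/Y})\bigr)=\dim G_{\red}$ is a valid (and different) way to do this, whereas the paper gets reducedness from the explicit scheme-theoretic stabilizer computations in its Step B. Neither point is a fatal flaw, but both need to be written out for the argument to cover the full generality of the statement.
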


\begin{proof}
Since the proof over a general field is a bit involved, we prove it in several steps.

\noindent{\bf Step A. The case where $\dim G \geq 2$.}\\
Essentially, the strategy is the same as in \cite[Lemma 5.2.9]{Kuznetsov-Prokhorov-Shramov}.
Clearly, we have $Z \subset O_1^{(p)} \sqcup O_2^{(p)} = D_{k}$.
Since $Z$ is a quintic curve, by the definition of $\nu$, $Z$ is given by $\nu (\{P\}\times \P^1 )$ for some $P \in \P^1 (k)$, which is mapped to 
$\nu(\{(1:0) \} \times \P^1)$ by an element of $\PGL_{2,k} (k)$ via the action $\sigma$.
By using (\ref{eqn:nu}), we can show that the equation of $\nu (\{ (1:0) \} \times \P^1)$ is given by (\ref{eqn:ZMU}).
Note that, if (and only if) $p=5$, then 
$\nu (\{P\} \times \P^1)$ is not smooth.
Since $\nu$ is $\PGL_{2,k}$-equivariant, we can easily show that $G$ is reduced and equal to a Borel subgroup of $\PGL_{2,k}$.
Moreover, since the stabilizer of $P$ is equal to the upper triangular Borel subgroup if and only if $P= (1:0)$, the final statement in (1) also holds true.
Therefore, (1) holds in this case.

\noindent{\bf Step B. The case where $H= G^\circ_{\red}$ is conjugate to the standard subgroup.}\\
By Step A, we may assume $\dim G =1$.
Here, we deal with the case where $H \coloneqq G^\circ_{\red}$ is conjugate under $G(k)$ to one of the following closed subscheme:
\begin{enumerate}[label=\textbf{(B-\arabic*)}]
    \item \label{B-1}
The unipotent subgroup $U=
\left\{\begin{pmatrix}
    1&b\\
    0&1\\
\end{pmatrix}\right\}
\hookrightarrow \PGL_{2,k}$.
    \item \label{B-2}
The diagonal torus $T=
\left\{\begin{pmatrix}
    a&0\\
    0&1\\
\end{pmatrix}\right\}
\hookrightarrow \PGL_{2,k}$.
\end{enumerate}
In this case $H$ is geometrically reduced, and thus $H \subset  \PGL_{2,k}$ is a subgroup scheme.

\noindent{\bf Case \ref{B-1}.}
Assume that $H =U$.
The restriction of $\sigma$ in Proposition \ref{prop:W5automorphisms} to $U$ is given by
\begin{eqnarray}
\label{eqn:unipaction}
\begin{pmatrix}
1 & b \\
0 & 1 \\
\end{pmatrix}
\mapsto
\begin{pmatrix}
1 & 2b & 10b^2 & 20b^3 & 20b^4 & 8b^5 & 8b^6 \\
0 & 1 & 10b & 30b^2 & 40b^3 & 20b^4 & 24b^5 \\
0 & 0 & 1 & 6b & 12b^2 & 8b^3 & 12b^4 \\
0 & 0 & 0 & 1 & 4b & 4b^2 & 8b^3 \\
0 & 0 & 0 & 0 & 1 & 2b & 6b^2 \\
0 & 0 & 0 & 0 & 0 & 1 & 6b \\
0 & 0 & 0 & 0 & 0 & 0 & 1 \\
\end{pmatrix}.
\end{eqnarray}
Therefore, clearly the action of $U$ on $Z$ is not trivial.
Take a point $P \in Z(k)$ with a 1-dimensional $U$-orbit.
Since $Z$ is a quintic curve, 
$P$ is contained in
\begin{equation}
\label{eqn:A2parammor}
\mathbb{A}^2 \overset{\Phi}\simeq Y \cap \{ a_6 =0\} \cap \{ a_5 \neq 0 \} \subset \P^6.
\end{equation}
Here, the isomorphism $\Phi$ is given by
\begin{equation}
\label{eqn:A2param}
(x,y) \mapsto (xy-y^5: x: y^3:y^2:y:1:0).
\end{equation}
Write $P= (x_Py_P-y_P^5: x_P: y_P^3:y_P^2:y_P:1:0)$.
Since we have
\begin{equation}
\label{eqn:actPxi}
\sigma 
\begin{pmatrix}
    1 & -(y_P/2) \\
    0 &1
\end{pmatrix}
( P )= 
(0: x_P - \frac{5}{4}y_P^4:0: 0: 0: 1:0),
\end{equation}
we may assume that 
\[
P= P_{\xi} := (0:\xi: 0:0:0:1:0)
\]
for some $\xi \in k$.
In this case, by (\ref{eqn:unipaction}), the quintic curve $Z$ is equal to $(\ref{eqn:ZGageneral})$ (we use the same equation even when $\xi =0$).
When $\xi =0$, this is equal to (\ref{eqn:ZMU}), which is a quintic curve in Step A.
Therefore, we have $\xi \in k^{\times}$ and $Z = Z_{\xi}^{(\a)}$.
Note that $Z_{\xi}^{(\a)}$ is smooth if and only if $p\neq5$.
Moreover, since we have
\[
\sigma 
\begin{pmatrix}
    \xi^{1/4} & 0 \\
    0 &1
\end{pmatrix} (P_1) = P_{\xi},
\]
$Z_{\xi}^{(\a)}$ is mapped to $Z_{1}^{(\a)}$ by an element of $\PGL_{2,k} (k)$ via the action $\sigma$ if $k=\overline k$.
Next, we shall compute the stabilizer $G$ of $Z=Z_{\xi}^{(\a)}$.
By computing the orbit of $(0:0:0:0:0:0:1)\in Z$, we have $G\subset B$, where $B$ is the upper triangular Borel subgroup.
By computing the $B$-orbit of $P_{\xi}$, we obtain $G \simeq U \rtimes \mu_4$, where $\mu_4$ is the kernel of fourth power map on the diagonal torus $T$.
By the above argument, the final statement of (2) also holds.
Therefore, (2) holds in this case.\\
\noindent{\bf Case \ref{B-2}.}
Assume that $H=T$.
The restriction of $\sigma$ in Proposition \ref{prop:W5automorphisms} to $T$ is given by
\begin{eqnarray}
\label{eqn:diagonalaction}
\begin{pmatrix}
a & 0 \\
0 & 1 \\
\end{pmatrix}
\mapsto
\diag(a^6, a^5, a^4, a^3, a^2, a, 1).
\end{eqnarray}
Therefore, the action of $T$ on $Z$ is not trivial.
Take a point $P \in Z(k)$ with a 1-dimensional $T$-orbit.
Since $Z$ is a quintic curve, $P$ is contained in either 
\begin{equation}
\label{eqn:formerY}
Y \cap \{ a_6 =0 \} \cap \{ a_5 \neq 0 \} \cap \{ a_0 \neq 0 \} \subset \P^6
\end{equation}
or 
\[
Y \cap \{ a_0=0 \} \cap \{ a_1 \neq 0 \} \cap \{ a_6 \neq 0 \} \subset \P^6.
\]
Since they are interchanged by the action of
\[
\begin{pmatrix}
    0 &1 \\
    1 &0\\
\end{pmatrix},
\]
which normalizes $T$, we may assume that $P$ lies in the former.
We write
\[
P =(x_P y_P-y_P^5:x_P:y_P^3:y_P^2:y_P:1:0)
\]
by using the parametrization given by (\ref{eqn:A2param}).
Then we have $y_P \neq 0$ and $x_P - y_P^4 \neq 0$
since $P$ is contained in (\ref{eqn:formerY}).
By replacing $P$ by $\sigma (\diag(1/y_P,1)) P$, we may assume that 
\[
P = (u-1: u :1:1:1:1:0)
\]
for some $u \in k \setminus \{1\}$.
In this case, by (\ref{eqn:diagonalaction}), $Z$ is equal to (\ref{eqn:ZGm}) (we use the same equation even when $u = 0$, $1$, $5/4$).
When $u = 5/4$, $Z$ is equal to (\ref{eqn:ZMU}), so we have $u \neq 5/4$.
When $u=0$, $Z$ is singular, so we have $u \neq 0$.
Therefore, $Z$ is equal to $Z_u$ as in the statement of (3).
Note that, conversely, (\ref{eqn:ZGm}) is a smooth rational quintic curve when $u\neq 0,1,5/4.$
Next, we shall compute the stabilizer $G$ of $Z =Z_u$.
By computing the orbit of $(1:0:0:0:0:0:0)$, we obtain $G \subset B$, where $B$ is the upper triangular Borel subgroup.
By computing the $B$-orbit of $(0:0:0:0:0:1:0)$, we obtain $G \simeq T$.
By the above argument, the final statement of (3) also holds.
Therefore, (3) holds in this case.\\
\noindent{\bf Step C. General case.}
Here we deal with the general case with $\dim G =1$.
If $k=\overline {k}$, then the assertion follows by Step B.
Note that, in this case, $G$ is reduced.
Let $k$ be a general field.
Since $G_{\overline{k}}$ is the stabilizer of $Z_{\overline{k}}$ with respect to $\sigma_{\overline{k}}$, $G_{\overline{k}}$ is also reduced.\footnote{Since the reduced part of a group scheme is not necessarily a group scheme in general, we should treat the case when $k= \overline{k}$ firstly.}
Therefore, $G$ is geometrically reduced.
For the identity component $H$ of $G$, we have one of the following;
\begin{enumerate}[label=\textbf{(C-\arabic*)}]
    \item \label{C-1}
$H_{\overline{k}} \simeq \G_{a,\overline{k}}$.
    \item \label{C-2}
$H_{\overline{k}} \simeq \G_{m,\overline{k}}$.
\end{enumerate}

\noindent{\bf Case \ref{C-1}.}
We show that $H$ is conjugate to $U$ by an element of $\PGL_{2,k} (k)$.
By \cite[Corollary 7.2.4]{conrad2020reductive}, it suffices to show that $H \simeq \Ga$.
Note that, the scheme theoretic intersection $Z_{\overline{k}} \cap D_{\overline{k}}$ is one 10-fold point, which is the $H_{\overline{k}}$-fixed point of $Z_{\overline{k}}$
(this can be checked by the direct computation using (\ref{eqn:ZGageneral}) and (\ref{eqn:D})).
Therefore, $(Z \cap D)_{\red}$ is one $k$-rational point $Q$.
We may take a point $P \in Z \setminus \{Q\}  (k) \simeq \mathbb{A}^1 (k)$.
The $H$-action on $Z$ induces an isomorphism  $H = H\cdot P \simeq \mathbb{A}^1$ of $k$-schemes.
Since $\Pic (\mathbb{A}^1) = 0$, by \cite[Lemma 9.3]{Totaro-pseudo-abelian}, we obtain $H \simeq \Ga$ as desired.

\noindent{\bf Case \ref{C-2}.}
we show that $H=G$ is conjugate to the diagonal torus $T$ by an element of $\PGL_{2,k} (k)$.
By \cite[Corollary 7.2.4]{conrad2020reductive}, it suffices to show that $H \simeq \Gm$.
Note that we have $H_{\ksep} \simeq \Gm$ \cite[Proposition 8.11]{Borellinear}.
Therefore, $Z_{\ksep}$ satisfies (2).
In particular, $Z_{\ksep}$ contains two $\ksep$-rational points that are $H_{\ksep}$-fixed
(to show this, we may assume that $Z_{\ksep}$ is given by (\ref{eqn:ZGm}) for $u \in \ksep \setminus \{0,1,5/4\}$, and then the two points are $(1:0:0:0:0:0:0)$ and $(0:0:0:0:0:1:0)$).
Moreover, these two points are given by 
$(Z_{\ksep} \cap O^{(p)}_{1,\ksep} )(\ksep)$ and
$(Z_{\ksep} \cap O^{(p)}_{2,\ksep}) (\ksep)$.
Therefore, these two points descend to two $k$-rational points $Q_1$, $Q_2$ on $Z$.\footnote{Here, in order to ensure that no inseparable extension occurs, we discuss $\ksep$ first.}
We may take a point $P \in Z \setminus \{Q_1, Q_2\}$.
Then $H$-action induces an isomorphism$H = H\cdot P \simeq \mathbb{A}^1 \setminus  \{0\}$
of $k$-schemes.
Note that unitary group is not isomorphic to $\mathbb{A}^1 \setminus \{0\}$.
Indeed, the unitary group with respect to $k(\sqrt d)/k$ is isomorphic to 
\[
\Spec k[x,y]/(x^2-dy^2-1),
\]
whose complement of regular compactification is $\Spec k(\sqrt d)$.
Therefore, we have $H \simeq \Gm$ as desired.
\end{proof}

\begin{rem}
\label{rem:stabilizeractiononZ_p>2}
In the above proof, we can show that the restriction morphism $G \rightarrow \Aut_{Z/k}$ is a closed immersion.
\end{rem}

\begin{defn}
\label{defn:typeofquintic}
Under the assumption in Theorem \ref{thm:BGaquintic>2}, we say:
\begin{enumerate}
    \item 
    $Z$ is of Mukai--Umemura type when Theorem \ref{thm:BGaquintic>2} (1) holds (i.e.,\ $G$ is a Borel subgroup).
    \item 
    $Z$ is of $\Ga$-type when Theorem \ref{thm:BGaquintic>2} (2) holds (i.e.,\ $G$ is isomorphic to $\Ga \rtimes \mu_4$).
    \item
    $Z$ is of $\Gm$-type when Theorem \ref{thm:BGaquintic>2} (3) holds (i.e.,\ $G$ is isomorphic to $\Gm$).
\end{enumerate}
Note that, by Theorem \ref{thm:BGaquintic>2}, precisely one of (1), (2), and (3) holds.
\end{defn}

The following describes the lines that intersect with the quintic curves $Z$  in Theorem \ref{thm:BGaquintic>2}:
\begin{prop}
\label{prop:SigmaZp>2}
Let the notations be as in Theorem \ref{thm:BGaquintic>2}. 
Further, we assume that $k=\overline{k}$.
As in Definition \ref{defn:Sigma}, we denote the universal line on the Hilbert scheme of lines on $Y$ by
$q \colon \mathcal{U}_{Y} \rightarrow \Sigma(Y)$,
and let $p \colon \mathcal{U}_{Y} \rightarrow Y$ be the natural projection.
Let $f\colon \P^2 \simeq \Sigma(Y)$ be as in Proposition \ref{prop:SigmaofW5}. 
Then we have the following.
\begin{enumerate}
\item 
Suppose that $p \neq 5$, and $Z$ is of Mukai--Umemura type.
Then $\Sigma_{Z} (Y) \subset \Sigma (Y) \overset{f^{-1}}\simeq \P^2$ is the union of a line $L$ and a conic, which are tangent to each other at a point $[l] \in \Sigma (Y)$.
\footnote{If we equip a non-reduced scheme structure on $\Sigma_Z (Y)$ as in \cite{Kuznetsov-Prokhorov-Shramov}, then this conic becomes a double-conic (see \cite[Lemma 5.4.1]{Kuznetsov-Prokhorov-Shramov}) at least in characteristic zero.}
\item 
Suppose that $p \neq 5$, and $Z$ is of $\Ga$-type.
Then $\Sigma_{Z} (Y) \subset \Sigma (Y) \overset{f^{-1}}\simeq \P^2$ is a union of a line $L$ and two conics, where two conics are $4$-tangent at one point $[l] \in \Sigma (Y)$, and the line $L$ is tangent to both conics at the point $[l] \in \Sigma (Y)$.
\item
Suppose that $Z$ is of $\Gm$-type.
Then $\Sigma_{Z} (Y) \subset \Sigma (Y) \overset{f^{-1}}\simeq \P^2$ is a union of a line $L$ and two conics, where two conics are tangent at two points $[l] \in \Sigma (Y)$ and $[l'] \in \Sigma (Y)$, and the line $L$ is tangent to both conics at the point $[l] \in \Sigma (Y)$. 
\end{enumerate}
In each case, $l$ is the unique bisecant line of $Z$.
Moreover, $\{ [m]\in \Sigma_{Z} (Y) \mid l \cap m \neq \emptyset \} =L$. 
\end{prop}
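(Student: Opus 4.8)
The plan is to reduce everything to explicit parametrized computations in the three model curves $Z_{\mathrm{MU}}$, $Z_\xi^{(\mathrm a)}$, $Z_u$ of Theorem~\ref{thm:BGaquintic>2}, using the $\Aut_{Y/k}$-equivariant identification $f\colon\Sigma(Y)\simeq\P^2$ and the $\PGL_{2,k}$-action on $\P^2$ recorded in Proposition~\ref{prop:SigmaofW5}(1). Since $k=\overline k$, by Theorem~\ref{thm:BGaquintic>2} we may move $Z$ into exactly one of the three normal forms, so it suffices to compute $\Sigma_Z(Y)$ in each case. Concretely, a point $(x:y:z)\in\P^2$ lies in $\Sigma_Z(Y)$ iff the line $l_{(x:y:z)}\subset Y$ meets $Z$; one gets equations for $\Sigma_Z(Y)$ by writing down the universal line $\mathcal U_Y\to\Sigma(Y)\times Y$ explicitly near the orbit representatives in Proposition~\ref{prop:SigmaofW5}, substituting the parametrization of $Z$ (equations \eqref{eqn:ZMU}, \eqref{eqn:ZGageneral}, \eqref{eqn:ZGm}), and eliminating the curve parameter $(s:t)$. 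The resulting subscheme of $\P^2$ is a plane curve of degree $2\cdot 5 = 10$ (each line meets $Z$ in finitely many points, and the incidence correspondence has the expected degree), which we then must identify as $L\cup(\text{conic})$ with multiplicities, resp.\ $L\cup(\text{two conics})$.

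First I would exploit the stabilizer symmetry to cut down the computation. The group $G$ (Borel, $\Ga\rtimes\mu_4$, or $\Gm$) acts on $\Sigma_Z(Y)\subset\P^2$ through the representation in Proposition~\ref{prop:SigmaofW5}(1), so $\Sigma_Z(Y)$ is $G$-invariant, and its irreducible components are permuted by $G$; since each component is a line or conic through a common point, the $G$-action pins down their equations up to a few constants, which are then fixed by checking one or two incidences by hand. The point $[l]$ — the class of the unique bisecant line of $Z$ — should emerge as the unique $G$-fixed point of $\Sigma_Z(Y)$ (or, in the $\Gm$-case, one of the two $\Gm$-fixed points, the other being $[l']$); that $Z$ has a \emph{unique} bisecant line follows because the bisecant variety of a rational normal quintic in $\P^5$ is classically a surface of the expected type and its intersection with $Y$ (after the two-ray-game linear-algebra description) is a single point, but more directly one computes the secant lines of the parametrized $Z$ and checks that exactly one of them lies on $Y$ and is $G$-fixed.

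For the final sentence itself, once $[l]$ is identified as the bisecant class and its coordinates are known, the claim $\{[m]\in\Sigma_Z(Y)\mid l\cap m\neq\emptyset\}=L$ is proved as follows. The locus of lines on $Y$ meeting a \emph{fixed} line $l\subset Y$ is a known curve in $\Sigma(Y)\simeq\P^2$: by Proposition~\ref{prop:SigmaofW5} and the standard description of the Hilbert scheme of lines on $V_5$, for an ordinary line this incidence locus is a line in $\P^2$ (the ``dual'' line), and for a special line it is a conic; one then checks that the bisecant line $l$ here is ordinary (it meets $Z$ hence lies in $D$ only at finitely many points — actually one verifies $[l]$ lies in the open orbit of $\Sigma(Y)$ directly from its coordinates), so $\{[m]:l\cap m\neq\emptyset\}$ is exactly one line $L_0\subset\P^2$. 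Finally I would check $L_0\subseteq\Sigma_Z(Y)$: every line meeting $l$ meets $Z$ because $l$ is bisecant to $Z$ and... this needs care — a line meeting $l$ need not meet $Z$ — so instead one argues that $L_0\cap\Sigma_Z(Y)$ is a $G$-invariant subscheme of the line $L_0$ containing the component $L$, and a dimension/degree count (comparing $\deg\Sigma_Z(Y)=10$ against $\deg L + \deg(\text{conics})$) forces $L=L_0$.

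The main obstacle I expect is the bookkeeping of multiplicities and tangency orders: distinguishing ``tangent'' from ``$4$-tangent'', and ``tangent at one point'' from ``tangent at two points'', requires computing the scheme-theoretic intersection of the conic components (and of $L$ with each conic) at $[l]$, i.e.\ local intersection multiplicities, rather than just the set-theoretic picture. This is where the characteristic-$\neq 2$, $\neq 5$ hypotheses genuinely enter — the elimination producing $\Sigma_Z(Y)$ has coefficients involving $2$ and $5$ in denominators (visible already in \eqref{eqn:actionPGL2} and \eqref{eqn:nu}) — so I would carry out the tangency computation in a local chart at $[l]$, expand to the needed order in the uniformizer, and read off the intersection multiplicities, treating the three cases in parallel since they differ only in the number of conic components and the location of the second tangency.
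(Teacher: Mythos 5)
Your overall strategy --- reduce to the normal forms of Theorem~\ref{thm:BGaquintic>2} and exploit the explicit $\PGL_{2}$-action on $\Sigma(Y)\simeq\P^2$ from Proposition~\ref{prop:SigmaofW5} --- is the right one, but the paper's execution is simpler than the elimination you propose: since $Z$ is the closure of a single orbit $G\cdot P$ of its stabilizer, $\Sigma_Z(Y)$ is the closure of $q(p^{-1}(G\cdot P))$, i.e.\ the union of the $G$-orbits of the two or three explicit lines through the one point $P$ (Proposition~\ref{prop:numberoflines}). Each component is thus literally parametrized by the group parameter, the equations $V(y)$, $V(2xy-z^2)$, $V(2(x\pm 2y)y-z^2)$, etc.\ drop out, and the tangency orders at $[l]$ are read off directly; no incidence-degree bookkeeping is needed. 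Your degree claim is in any case wrong: $\Sigma_Z(Y)$ has degree $5$, not $2\cdot 5=10$ (reduced it is $3$ or $5$; with the scheme structure of \cite{Kuznetsov-Prokhorov-Shramov} it is a line plus a double conic, or a line plus two conics), so the degree count you lean on at the end would not close even if the rest were in place.

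The genuine gap is in the final assertion. First, the bisecant is $l_{(1:0:0)}$, and $(1:0:0)$ satisfies $z^2-2xy=0$, so $l$ lies in the \emph{closed} orbit of $\Sigma(Y)$: it is a special line, not an ordinary one. Your planned check that ``$[l]$ lies in the open orbit'' fails, and the dichotomy you invoke (ordinary line $\Rightarrow$ incidence locus is a line; special line $\Rightarrow$ a conic) --- which is not proved anywhere in the paper --- would then hand you a conic for $L_0$, which cannot coincide with the line $L$. Second, the argument that ``$L_0\cap\Sigma_Z(Y)$ is a $G$-invariant subscheme of $L_0$ containing the component $L$'' is circular: that it contains $L$ is exactly the inclusion $L\subseteq L_0$ you are trying to establish. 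The paper's route avoids all of this: because each component of $\Sigma_Z(Y)$ is a $G$-orbit closure and $l=l_{(1:0:0)}$ is $G$-stable, the condition ``meets $l$'' is constant along each orbit, so one only checks the explicit representatives --- $l_{(0:0:1)}$ meets $l_{(1:0:0)}$, while $l_{(0:1:0)}$, $l_{(2:\pm 1:0)}$, $l_{(\omega^{\pm 1}:\omega^{\mp 1}:1)}$ do not --- and then handles the finitely many boundary points of the orbit closures. You would need to supply this (or an equivalent) verification to make the last two sentences of the proposition hold.
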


\begin{proof}
In the proof, we freely use the notation $l_{(x:y:z)}$ in Proposition \ref{prop:SigmaofW5}.
If $\chara k =0$, these statements follow from \cite[Lemma 5.4.1]{Kuznetsov-Prokhorov-Shramov}.

We first show (1).
We may assume that $Z$ equals to the curve (\ref{eqn:ZMU}), which is given by the Zariski closure of 
\[
B \cdot (0:0:0:0:0:1:0) = U \cdot (0:0:0:0:0:1:0), 
\]
where $B \subset \PGL_{2,k}$ is the upper triangular Borel subgroup, and $U$ is the unipotent subgroup of upper triangular unipotent matrices.
By Propositions \ref{prop:SigmaofW5} and \ref{prop:numberoflines},
the lines passing through $(0:0:0:0:0:1:0)$ are written by  (\ref{eqn:line56}) and (\ref{eqn:line15}), which is equal to $l_{(0:1:0)}$ and $l_{(0:0:1)}$ respectively.
Therefore, as a set, we have
\[
q (p^{-1} ( U \cdot (0:0:0:0:0:1:0)) =  \{(-2s:0:1)\} \cup \{(2s^2 : 1: -2s)\},
\]
where $s$ is a $(1,2)$-entry of $U$.
Since $p$ and $q$ are flat, $q (p^{-1} (Z)$ is the closure of $q (p^{-1} ( U \cdot (0:0:0:0:0:1:0))$, which is the union $V (y) \cup V(2xy - z^2)$, and (1) follows.

Note that we have $Z \setminus (U \cdot (0:0:0:0:0:1:0)= (1:0:0:0:0:0:0) \}$, and the line passing through this point is unique, which is $l_{(1:0:0)}$ given by the image of the following map $\P^1 \to Y$  (see Propositions \ref{prop:SigmaofW5} and \ref{prop:numberoflines}):
\begin{equation}
\label{eqn:line01}
        (s:t) \mapsto (s:t:0:0:0:0:0).
\end{equation}

Next, we shall show (2).
We set 
\begin{equation}
\label{eqn:Pcoord}
P = (0:-4:0:0:0:1:0).
\end{equation}
We may assume that $Z$ is given by the Zariski closure of $U\cdot P$.
By Proposition \ref{prop:numberoflines} (more precisely, the proof of \cite[Proposition 6.7]{V5}), the lines passing through $P$ are $l_{(0:0:1)}$ and $l_{(2:\mp1:0)}$.
The latter are given by maps $\P^1 \to Y$ which sends 
    \begin{equation}
    \label{eqn:line15conj}
(s\colon t) \mapsto (8s: -4t: \pm 4s: 0: -2s: t: \mp s).
    \end{equation}
By the same computation using the $U$-action, we have 
\[
q (p^{-1} ( U \cdot P)) =  \{(-2s:0:1)\} \cup \{(2s^2-2:1:-2s)\} \cup \{(2s^2+2: 1: -2s)\}
\]
where $s$ is a (1,2)-entry of $U$.
Therefore, $\Sigma_{Z} (Y) \subset \P^2_{[x:y:z]}$ is given by the union $V(y) \cup V(2(x+2y)y-z^2) \cup V(2(x-2y)y-z^2)$ and (2) follows.
Note that we have $Z \setminus (U \cdot P) = \{ (1:0:0:0:0:0:0)\}$
(cf. \ref{eqn:actPxi}), and we have only one line $l_{(1:0:0)}$ passing through this point (see \eqref{eqn:line01}).

Next, we shall show (3).
We may assume that $Z= Z_u$ ($u \in k \setminus\{0,1,5/4\}$), where $Z_u$ is as in (\ref{eqn:ZGm}).
Take $v \in k$ satisfying $v^{-4} = 5-4u$, and let $P_v \in Z_u(k)$ be the point
\[
P_{v} \coloneqq (8v^5-8v: 20v^4-4:8v^3:4v^2:2v:1:0) = \sigma
\begin{pmatrix}
    1 & v \\
    0& 1
\end{pmatrix}
P,
\]
where $P$ is as in (\ref{eqn:Pcoord}).
Let $T \subset \PGL_{2,k}$ be the diagonal torus.
Then we have
\[
q(p^{-1}(T\cdot P_{v})) = \{ (-2t^2v: 0: t) \} \cup \{(2 t^2(v^2-1 ): 1: -2tv) \} \cup \{(2t^2 (v^2 +1) : 1: -2tv)\}
\]
where $t$ is the coordinate of $T$ obtained by dividing the (1,1)-entry by the (2,2)-entry.
Therefore, 
$q (p^{-1} (Z)) \subset \P^2_{[x:y:z]}$ is the union $V(y) \cup V(2xyv^2 - z^2 (v^2-1) ) 
\cup V(2xyv^2 - z^2 (v^2+1))$, and (3) follows.
Note that we have
\[
Z \setminus (T \cdot P_{v})
= \{ (0:0:0:0:0:1:0), (1:0:0:0:0:0:0) \}.
\]
By the proof of (1), the lines passing through $ (0:0:0:0:0:1:0)$
are $l_{(0:0:1)}$ and $l_{(0:1:0)}$, and the line passing through $(1:0:0:0:0:0:0)$ is $l_{(1:0:0)}$.

The uniqueness and the description of a bisecant line clearly follow from the above proofs.
Note that, if $Z$ is written as (\ref{eqn:ZMU}), (\ref{eqn:ZGageneral}) with $\xi=1$, or (\ref{eqn:ZGm}), then the bisecant line is given by $l_{(1:0:0)}$ ((\ref{eqn:line01})).
Moreover, the line $l_{(0:0:1)}$ ((\ref{eqn:line15})) intersects the line $l_{(1:0:0)}$ ((\ref{eqn:line01})).
On the other hand, lines $l_{(0:1:0)}$ ((\ref{eqn:line56})), $l_{(2:-1:0)}$, and $l_{(2:1:0)}$ ((\ref{eqn:line15conj})) do not intersect $l_{(1:0:0)}$ ((\ref{eqn:line01})).
Therefore, the remaining statements follow.
\end{proof}

\subsection{Characteristic $2$}

\begin{thm}[Classification of quintic curves]
\label{thm:BGaquintic2}
Suppose that $Y=Y_{k} \subset \P^6 = \P^6_k$ is the split $V_5$-variety over a field  of characteristic two, and let the notations be as in 
Proposition \ref{prop:orbitp=2}.
Let $\sigma'$ be the action of $\Aut_{Y/k, \red} = \SL_{2,k}$ on $Y$ given by Proposition \ref{prop:W5automorphisms}.

Let  $Z \subset Y$ be a smooth rational quintic curve,
and $G$ the identity component of the stabilizer of $Z$ with respect to $\sigma'$.
Assume that $\dim G \geq 1$.
Then $G \simeq \Gm$, and $Z$ is mapped to $Z_u$ for some
\[
u =(u_0: u_1) \in \P^1 (k) \setminus \{ (1:0), (1:1), (0:1)\},
\]
by an element of $\SL_{2,k} (k)$ via the action $\sigma'$.
Here, $Z_u$ is a smooth rational quintic curve defined by the same equation as (\ref{eqn:ZGm}).
Moreover, $Z$ is equal to $Z_u$ or $Z_u'$ if and only if $G$ is equal to the diagonal torus of $\SL_{2,k}$.
Here, $Z_u'$ is defined by the same equation as (\ref{eqn:ZGm'}).
In particular, for any $u \neq u'$, $Z_u$ cannot be mapped to $Z_{u'}$ by an element of $\SL_{2,k} (k)$ via the action $\sigma'$.
Conversely, the quintic curve $Z_u$ as above is a smooth rational quintic curve with the stabilizer $\Gm$.
\end{thm}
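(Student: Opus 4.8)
The plan is to adapt the argument of Theorem~\ref{thm:BGaquintic>2}, the one new ingredient being a degree count special to characteristic two. As in Step~C there, I would first treat the case $k=\overline{k}$ and descend afterwards, so assume $k=\overline{k}$. Over $\overline{k}$ the reduction $H\coloneqq G^{\circ}_{\red}$ is a smooth connected subgroup of $\SL_{2,k}$, hence, up to conjugation by $\SL_{2,k}(k)$, either the unipotent subgroup $U$, the diagonal torus $T$, a Borel subgroup, or $\SL_{2,k}$ itself. Inspecting \eqref{eqn:sigma'}, the restriction of $\sigma'$ to $U$ is given by a matrix all of whose entries are polynomials of degree at most $3$ in the parameter, and a short computation shows its fixed locus on $\P^6$ meets $Y$ exactly in the line $l_{(1:0:0)}$ of \eqref{eqn:line01}. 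Consequently every one-dimensional $U$-orbit in $Y$ has degree at most $3$, and $Y$ contains no $U$-invariant irreducible curve of degree $5$: it would be neither the fixed line nor the closure of a one-dimensional $U$-orbit. Since a Borel subgroup and $\SL_{2,k}$ each contain a conjugate of $U$, this rules out $\dim G\geq 2$ and the case $H\cong\Ga$ at once, so $H$ is, after conjugation, the diagonal torus $T$.

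For the normal form I would argue as in Case~\ref{B-2}. The restriction of $\sigma'$ to $T=\{\mathrm{diag}(a,a^{-1})\}$ acts on $\P^6$ projectively as $\mathrm{diag}(a^{6},a^{5},a^{4},a^{3},a^{2},a,1)$, so a point of $Z$ whose $T$-orbit is one-dimensional with quintic closure must lie in $Y\cap\{a_6=0\}\cap\{a_5\neq 0\}\cap\{a_0\neq 0\}$ or in its image under the Weyl element $w=\begin{psmallmatrix}0&1\\1&0\end{psmallmatrix}\in\SL_{2,k}(k)$, which normalises $T$; as $w$ interchanges these charts we may assume the former. The chart parametrisation \eqref{eqn:A2param} is defined over $\Z$, hence valid here; normalising by $T$ reduces the point to $P=(u-1:u:1:1:1:1:0)$ for some $u\in k$, one checks directly that $P\in Y$, and then $Z=\overline{T\cdot P}$ is the curve \eqref{eqn:ZGm}. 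A direct computation with the parametrisation shows \eqref{eqn:ZGm} is a smooth rational quintic precisely for $u\notin\{(0:1),(1:1),(1:0)\}$ (at $(1:1)$ and $(1:0)$ the image degenerates to a quartic, respectively a line; at $(0:1)$ it acquires a cusp), and the value $(5:4)$ excluded in characteristic $\neq 2$ has disappeared because $4=0$ here.

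Next I would pin down the full stabiliser. The curve $Z_u$ has exactly two $T$-fixed points, $Q_1=(1:0:\dots:0)$ and $Q_2=(0:\dots:0:1:0)$; by Proposition~\ref{prop:orbitp=2}, $Q_1$ lies on the twisted-cubic orbit and $Q_2$ on the line orbit. These two one-dimensional orbits, though of equal dimension, are not interchanged by any automorphism of $Y$ (they have different degrees in $\P^6$; alternatively use Proposition~\ref{prop:numberoflines}), so any automorphism stabilising $Z_u$ fixes $Q_1$ and $Q_2$ individually. The scheme-theoretic stabiliser in $\SL_{2,k}$ of a point on either orbit is a Borel subgroup, hence smooth (read off from the $\SL_{2,k}$-equivariant parametrisation of that orbit), and the intersection of the two Borel subgroups is the diagonal torus $T$; since $T$ does stabilise $Z_u$, the stabiliser of $Z_u$ equals $T$, so $G=T\simeq\Gm$ is reduced (compare Remark~\ref{rem:stabilizeractiononZ_p>2}). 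For separation, writing $Z_u=\phi_u(C_0)$ with $C_0$ the standard rational normal quintic in $\{a_6=0\}$ and $\phi_u=\mathrm{diag}(u-1,u,1,1,1,1)$, the identity $Z_u=Z_v$ forces $\phi_v^{-1}\phi_u$ to stabilise $C_0$, hence $u=v$; and $Z_u'=\sigma'(w)(Z_u)$ of \eqref{eqn:ZGm'} is the only other $T$-invariant member of the $\SL_{2,k}$-orbit of $Z_u$ (as $N_{\SL_{2,k}}(T)=T\cup wT$), which gives the equivalence between $Z\in\{Z_u,Z_u'\}$ and $G=T$. The converse assertion is exactly the smoothness and stabiliser verifications just made.

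Finally, descent to an arbitrary field $k$ proceeds as in Step~C: since $G_{\overline{k}}$ has been identified with $\Gm$, the group $G$ is geometrically reduced, so $H=G^{\circ}_{\red}$ is a smooth connected $k$-group with $H_{\overline{k}}\cong\Gm$, i.e.\ a rank-one torus. The points $Q_1,Q_2$ lie on distinct, geometrically characterised strata (twisted cubic and line), so $\Gal(\overline{k}/k)$ cannot interchange them and each descends to a $k$-rational point of $Z$; choosing $P\in Z\setminus\{Q_1,Q_2\}$, the $H$-action identifies $H$ with $\mathbb{A}^1\setminus\{0\}$ as a $k$-scheme, which a non-split rank-one torus is not (compare complements of regular compactifications, as in the proof of Theorem~\ref{thm:BGaquintic>2}), so $H$ is split and, by \cite[Corollary~7.2.4]{conrad2020reductive}, $\SL_{2,k}(k)$-conjugate to $T$; the normal form and the remaining statements over $k$ then follow from the case $k=\overline{k}$. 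The step I expect to be the main obstacle is not the degree count itself (elementary once \eqref{eqn:sigma'} is written out) but the bookkeeping of the two equal-dimensional one-dimensional orbits in characteristic two—verifying they are genuinely non-interchangeable, which is what forces the stabiliser to be exactly $T$ and makes $u$ an invariant; in characteristic $\neq 2$ the two fixed points already sit in orbits of different dimension, so that subtlety does not arise.
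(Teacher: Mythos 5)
Your proposal follows the paper's overall architecture (reduce to $k=\overline{k}$; eliminate $\Ga$ and the groups of dimension $\geq 2$; run the normal-form computation for the diagonal torus as in Case (B-2) of Theorem \ref{thm:BGaquintic>2}; descend to general $k$ using that the two torus-fixed points of $Z$ lie on the two distinct one-dimensional strata $O_1^{(2)'}$ and $O_1^{(2)}$, hence are Galois-stable, and that a non-split rank-one torus is not isomorphic to $\mathbb{A}^1\setminus\{0\}$). The one place you genuinely diverge is the case $\dim G\geq 2$: the paper handles it by a separate geometric argument inside $D_{\red}$, pulling $Z$ back along the normalization $\mathrm{Bl}_{(0:0:1)}\P^2\to D_{\red}$ and computing $Z'\cdot(3H-2E)=1$ to contradict $\deg Z=5$, whereas you observe that any connected subgroup of dimension $\geq 2$ contains a conjugate of $U$, so the degree-$\leq 3$ bound read off from \eqref{eqn:sigma'} — together with the check that the pointwise-fixed locus of $U$ on $Y$ is the single line $l_{(1:0:0)}=\{a_2=\cdots=a_6=0\}$ — kills the cases $\dim G\geq 2$ and $H\simeq\Ga$ simultaneously. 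That is a valid and arguably cleaner route; what the paper's version buys is independence from the fixed-locus computation, at the cost of invoking the orbit geometry of $D_{\red}$.

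There is one step you should repair: your claim that ``the scheme-theoretic stabiliser in $\SL_{2,k}$ of a point on either orbit is a Borel subgroup, hence smooth.'' This is true for $Q_1=(1:0:\cdots:0)$, where the relevant column of \eqref{eqn:sigma'} is $(a^3,0,a^2c,0,ac^2,0,c^3)^{T}$ and the ideal $(a^2c,ac^2,c^3)$ equals $(c)$ on $\SL_2$; but for $Q_2=(0:\cdots:0:1:0)$ the column is $(0,b^2,0,0,0,d^2,0)^{T}$, so the scheme-theoretic stabiliser is $V(b^2)\cap\SL_{2,k}$, which is \emph{non-reduced} in characteristic two — precisely the phenomenon signalled by the Frobenius-type parametrisation $(x:y)\mapsto(0:x^2:0:0:0:y^2:0)$ of $O_1^{(2)}$. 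Your argument therefore only yields $\Stab(Z_u)\subseteq V(c,b^2)\cap\SL_{2,k}$, a connected non-reduced group scheme containing $T$, which does not by itself give $G\simeq\Gm$ for the scheme-theoretic identity component. The gap is easily closed: applying $\begin{psmallmatrix}1&\epsilon\\0&1\end{psmallmatrix}$ with $\epsilon^2=0$ to the point $(u-1:u:1:1:1:1:0)\in Z_u$ produces $(u-1+\epsilon:u+\epsilon:1:1:1:1:0)$, and matching this against the parametrisation \eqref{eqn:ZGm} over $k[\epsilon]$ forces $\epsilon=0$; hence the $\alpha_2$-direction does not stabilise $Z_u$ and $\Stab(Z_u)=T$. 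With that patch the reducedness claim, the separation of the $Z_u$ via $N_{\SL_{2,k}}(T)=T\cup wT$, and the rest of your argument are correct.
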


\begin{proof}
First, we assume that $k$ is an algebraically closed field.
Let $H$ be the reduced identity component of $G$. Then we have one of the following:
\begin{itemize}
    \item[(1)]
    $\dim H \geq 2$.
    \item[(2)]
    $H \simeq \Ga.$
    \item[(3)]
    $H \simeq \Gm$.
\end{itemize}
\noindent{\bf Case (1).}
In this case, we have $Z \subset O_2^{(2)} \sqcup O_1^{(2)} \sqcup O_1^{(2)'} = D_{ \red}$.
Since $Z$ is a smooth rational quintic curve, $Z$ is not contained in $O_1^{(2)} \cup O_{1}^{(2)'}$.
Let $Z'$ be the strict transform of $Z$ via $(\nu')^{-1}$.
By the above fact, $Z'$ is not equal to $E$ nor $\pi^{-1}(\{ z= 0\})$, where $\pi\colon \mathrm{Bl}_{(0:0:1)} \P^2 \rightarrow \P^2$ is the natural projection.
Moreover, since $\nu'$ is an isomorphism over $O_2^{(2)},$ the morphism $Z' \rightarrow Z$ is a proper birational morphism, so it is an isomorphism.
We take a point $P \in \pi (Z') \cap \{z=0\}$.
Since $G \cdot P$ must be $0$-dimensional and $\dim G \geq 2$, $G$ is equal to the Borel subgroup of $\PGL_{2,k}$ that is a stabilizer of $P$.
We may assume that $P = (1:0:0)$, and hence $G$ is the upper triangular Borel subgroup.
In this case, $Z'$ is the strict transform of a line $\{ y =0\} \subset \P^2$.
Then, we have 
\[
Z \cdot \cO (1) = Z'\cdot( 3H-2E) = 1
\]
and it contradicts the fact that $Z$ is a quintic curve.
Therefore, there is no such $Z$.\\
\noindent{\bf Case (2).} 
Since $\G_a \subset \SL_{2,k}$ is conjugate to the unipotent subgroup of upper triangular matrices $U \subset \SL_{2,k}$, we may assume that $\G_a = U.$
Then the restriction $\sigma'|_{U}$ of $\sigma'$ in Proposition \ref{prop:W5automorphisms} sends 
$
\begin{pmatrix}
1&b\\
0&1\\
\end{pmatrix}
$
to the matrix whose entries are of $b$-degree $\leq 3$.
Therefore, the degree of $Z$ is at most $3$, and a contradiction is obtained.
This completes the proof.\\
\noindent{\bf Case (3).} 
Note that the restriction of $\sigma'$ to the torus $T = 
\begin{pmatrix}
    a &0 \\
    0 & a^{-1} \\
\end{pmatrix} 
\hookrightarrow \SL_{2,k}$
is given by 
\begin{equation}
\label{eqn:diagonalactionp=2}
\begin{pmatrix}
    a &0 \\
    0 & a^{-1} \\
\end{pmatrix} 
\mapsto \diag(a^6, a^5, a^4, a^3, a^2, a, 1).
\end{equation}
Therefore, by the same argument as in Case \ref{B-2} of Theorem \ref{thm:BGaquintic>2},
we obtain the desired result.

Next, we treat the case where $k$ is a general field.
By the above argument, we have $G_{\overline{k}}\simeq\Gm$.
By the same argument as in Case \ref{C-2} of Theorem \ref{thm:BGaquintic>2},
we have $G_{\ksep} = \Gm$.
Note that two $G_{\ksep}$-fixed points on $Z_{\ksep}$ are given by
$(Z_{\ksep} \cap O^{(2)'}_{1,\ksep}) (\ksep)$ and
$(Z_{\ksep} \cap O^{(2)}_{1,\ksep} ) (\ksep)$
(to see this, we may assume that $Z_{\ksep}$ is equal to (\ref{eqn:ZGm})).
Therefore, as in Case \ref{C-2} of Theorem \ref{thm:BGaquintic>2}, we have $H =\Gm$.
The remaining holds from the same argument as in Theorem \ref{thm:BGaquintic>2}.
\end{proof}

\begin{rem}
\label{rem:stabilizeractiononZ_p=2}
In the above proof, we can easily show that the restriction morphism $G \rightarrow \Aut_{Z/k}$ is a closed immersion.
\end{rem}

\begin{defn}
Under the assumption in Theorem \ref{thm:BGaquintic2}, we say a smooth rational quintic curve $Z$ is of $\Gm$-type.
\end{defn}

\begin{prop}
\label{prop:SigmaZp=2}
Let the notations be as in
Theorem \ref{thm:BGaquintic2}.
Further assume $k=\overline{k}$.
As in Definition \ref{defn:Sigma}, we denote the universal line on the Hilbert scheme of lines on $Y$ by
$q \colon \mathcal{U}_{Y} \rightarrow \Sigma(Y)$,
and let $p \colon \mathcal{U}_{Y} \rightarrow Y$ be the natural projection.
Let $f\colon \P^2 \simeq \Sigma(Y)$ be as in Proposition \ref{prop:SigmaofW5}.

Then $\Sigma_{Z} (Y) \subset \Sigma (Y) \overset{f^{-1}}\simeq \P^2$ is a union of a line $L$ and two conics.
These two conics are tangent at two points $[l]\in \Sigma (Y)$ and $[l'] \in \Sigma (Y)$ respectively, and the line $L$ is tangent to both conics at the point $[l]$. 

Moreover, $l$ is the unique bisecant line of $Z$.
Furthermore, $\{[m] \in \Sigma_{Z} (Y) \mid m\cap l \neq \emptyset \} = L$. 
\end{prop}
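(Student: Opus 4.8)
The plan is to follow the proof of Proposition~\ref{prop:SigmaZp>2}(3) step by step, replacing each characteristic-$\neq 2$ ingredient by its characteristic-$2$ counterpart: the $\SL_{2,k}$-action of Proposition~\ref{prop:W5automorphisms}(2), the orbit stratification of Proposition~\ref{prop:orbitp=2}, the description of lines in Proposition~\ref{prop:SigmaofW5}(3), and the count of lines through a point in Proposition~\ref{prop:numberoflines}(2). By Theorem~\ref{thm:BGaquintic2} we may assume $Z = Z_u$ for some $u\in k\setminus\{0,1\}$, parametrized by \eqref{eqn:ZGm} and stabilized by the diagonal torus $T\subset\SL_{2,k}$ acting as in \eqref{eqn:diagonalactionp=2}. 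Substituting \eqref{eqn:ZGm} into the equation \eqref{eqn:D} of $D$, which in characteristic $2$ reads $a_2a_4+a_0a_6=0$, gives $a_2a_4+a_0a_6 = s^4t^6$ on $Z_u$, so $Z_u\cap D_{\red}$ consists exactly of the two $T$-fixed points $P_0 := (1{:}0{:}0{:}0{:}0{:}0{:}0)$ and $P_\infty := (0{:}0{:}0{:}0{:}0{:}1{:}0)$, lying on the two distinct one-dimensional orbits of Proposition~\ref{prop:orbitp=2}. Hence $Z^\circ := Z_u\setminus\{P_0,P_\infty\}$ lies in the open orbit $O_3^{(2)}$, so three ordinary lines pass through each point of $Z^\circ$; through $P_0$ passes only the special line $l_{(1:0:0)}$ of \eqref{eqn:line01}, and through $P_\infty$ pass exactly the special line $l_{(0:1:0)}$ of \eqref{eqn:line56p=2} and the exceptional line $l_{(0:0:1)}$.

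Since $\Sigma_Z(Y) = q(p^{-1}(Z))_{\red}$ (Definition~\ref{defn:Sigma}), $q$ is proper, and $Z^\circ$ is dense in $Z_u$, it suffices to compute the Zariski closure of $q(p^{-1}(Z^\circ))$. The curve $Z^\circ$ is a single $T$-orbit $T\cdot P$, where $P$ is the point of $Z_u$ at $(s:t)=(1:1)$, so using the $\SL_{2,k}$-equivariance of $f\colon\P^2\simeq\Sigma(Y)$ under the action \eqref{eqn:SL2P2action} it is enough to find the three points $[l_1],[l_2],[l_3]\in\P^2=\Sigma(Y)$ of the lines through $P$ and then take the closures of their $T$-orbits. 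To find these lines I would express a general ordinary line as an $\SL_{2,k}$-translate of $l_{(1:1:1)}$ from \eqref{eqn:line1p2} (cf.\ the proof of \cite[Proposition~6.10]{V5}) and solve the incidence $P\in l$ for $[l]$. Each $T$-orbit closure in $\P^2$ for the action $(x{:}y{:}z)\mapsto(\mu^2x{:}y{:}\mu z)$ is one of the coordinate lines $V(x),V(y),V(z)$ or an irreducible conic $V(xy-cz^2)$ with $c\in k^\times$; matching the limits of the three families as $P\to P_0$ and $P\to P_\infty$ with the lines through $P_0$ and $P_\infty$ already excludes $V(x)$, and the computation then shows that one family is the line $L=V(y)$ and the other two are distinct conics $V(xy-c_1z^2)$ and $V(xy-c_2z^2)$ with $c_1\neq c_2$ in $k^\times$ (depending on $u$). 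Thus $\Sigma_Z(Y) = V(y)\cup V(xy-c_1z^2)\cup V(xy-c_2z^2)$.

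With this explicit shape in hand, the tangency claims are formal: the intersection cycle of the two conics is $2[(1{:}0{:}0)]+2[(0{:}1{:}0)]$, so they are tangent exactly at $(1{:}0{:}0)$ and $(0{:}1{:}0)$, while $L=V(y)$ meets each conic in the cycle $2[(1{:}0{:}0)]$ and so is tangent to both at $(1{:}0{:}0)$. Comparing \eqref{eqn:ZGm} with \eqref{eqn:line01} shows $l_{(1:0:0)}$ meets $Z_u$ in a length-two subscheme supported at $P_0$, hence is a bisecant line, with class $[l]=(1{:}0{:}0)$; as in the last paragraph of the proof of Proposition~\ref{prop:SigmaZp>2}, checking a few representative lines together with $T$-equivariance shows it is the only bisecant and that $\{[m]\in\Sigma_Z(Y)\mid m\cap l_{(1:0:0)}\neq\emptyset\}$ equals $V(y)=L$; the remaining tangency point $(0{:}1{:}0)$ is the class $[l']$ of the special line $l_{(0:1:0)}$ through $P_\infty$.

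The only step with genuine content is the computation of the three lines through a general point of $Z_u$ in characteristic $2$ and the ensuing incidence elimination; I expect this to be the main obstacle, the delicate point being to confirm that exactly one of the three $T$-orbit closures is a line and the other two are \emph{distinct} conics. A minor point of care is to work with $D_{\red}$ rather than the double divisor $D$ when invoking the orbit stratification of Proposition~\ref{prop:orbitp=2}, and to keep its two one-dimensional orbits correctly matched with $P_0$ and $P_\infty$.
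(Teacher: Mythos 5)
Your proposal is correct and follows essentially the same route as the paper: reduce to $Z_u$, observe that $Z_u\setminus D_{\red}$ is a single $T$-orbit, pull back the three lines through one point of that orbit, and take $T$-orbit closures in $\P^2$ under the action $(x{:}y{:}z)\mapsto(\mu^2x{:}y{:}\mu z)$, handling the two $T$-fixed points separately. The one computation you defer is exactly what the paper carries out: writing $u=a^2+a$, expressing $P_u=(u-1{:}u{:}1{:}1{:}1{:}1{:}0)$ as the $\sigma'\begin{psmallmatrix}a&a+1\\1&1\end{psmallmatrix}$-translate of $(1{:}0{:}0{:}1{:}0{:}0{:}1)$ whose three lines are known from the proof of Proposition~\ref{prop:numberoflines}, which yields $V(y)\cup V(xy-(a+\omega)z^2)\cup V(xy-(a+\omega^{-1})z^2)$ and confirms your expected shape with $c_1\neq c_2$.
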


\begin{proof}
In the proof, we identify $\Sigma(Y)$ with $\P^2 = \P^2_{[x:y:z]}$ by $f$.
We may assume that $Z= Z_u$ as in (\ref{eqn:ZGm}).
Write $u = u_0/u_1 \in k$, and take $a \in k$ satisfying $u= a^2+a$.
Then $Z_u$ passes the point $P_u \coloneqq (u-1:u:1:1:1:1:0) $, which is equal to
\[
(a^2+a+1: a^2+a:1:1:1:1:0) 
= \sigma'
\begin{pmatrix}
 a & a+1 \\ 1&1 
\end{pmatrix}
(1:0:0:1:0:0:1)
 \in \P^6.
\]
By Proposition \ref{prop:numberoflines} (more precisely, the proof of \cite[Proposition 6.10]{V5}), the lines passing through $P = (1:0:0:1:0:0:1)$ are $l_{(1:1:1)}$ given by \ref{eqn:line1p2} and $l_{(\omega^{\mp1}:\omega^{\pm1}:1)}$ given by
\begin{equation}
\label{eqn:lineomega}
 \P^1 \ni   (s,t) \mapsto (s,t, t\omega^{\mp1} , s+t\omega^{\pm1} , t, t \omega^{\mp1}, s).
\end{equation}
Then we have
\[
q(p^{-1}(T \cdot P_{u}))
= \{(t:0:1)\}
\cup 
\{t^2(a+\omega):1:t\}
\cup
\{t^2(a+\omega^{-1}):1:t\}.
\]
Therefore, $q(p^{-1}(Z)) \subset \P^2_{[x:y:z]}$ is the union
$V(y) \cup V(z^2 (a+\omega)-x y) \cup V(z^2(a+\omega^{-1}) - x y)$.

Note that we have $Z \setminus T\cdot P_u =\{(0:0:0:0:0:1:0), (1:0:0:0:0:0:0)\}$.
Moreover, by Propositions \ref{prop:SigmaofW5} and \ref{prop:numberoflines},
the lines passing through $(0:0:0:0:0:1:0) \in O_1^{(2)'}$ are $l_{(0:0:1)}$ ((\ref{eqn:line15p=2})) and $l_{(0:1:0)}$ ((\ref{eqn:line56p=2})).
Also, by Proposition \ref{prop:numberoflines}, we only have one line passing through $(1:0:0:0:0:0:0)$.
This is $l_{(1:0:0)}$, which is the image of a map $\P^1 \to Y \subset \P^6$ given by
\begin{equation}
\label{eqn:line01p=2}
        (s:t) \mapsto (s:t:0:0:0:0:0).
\end{equation}

The uniqueness and the description of bisecant lines follow from the above proof. Note that, if $Z$ is equal to $Z_u$ given by (\ref{eqn:ZGm}), then the bisecant line is $l_{(1:0:0)}$ (\ref{eqn:line01p=2}).
Then $l_{(1:1:1)}$ and $l_{(0:0:1)}$ (\ref{eqn:line15p=2}) intersect $l_{(1:0:0)}$ (\ref{eqn:line01p=2}), but either $l_{(\omega^2:\omega:1)}$, $l_{(\omega:\omega^2:1)}$, nor $l_{(0:1:0)}$ (\ref{eqn:line56p=2}) does not intersect (\ref{eqn:line01p=2}).
Therefore, the remaining statement follows.
\end{proof}

\subsection{Degeneration of rational quintic curves with large stabilizers}
\begin{rem}
\label{rem:ZaZmtoZGU}
\begin{enumerate}
\item
The $\Ga$-type smooth quintic curve (\ref{eqn:ZGageneral}) with $\xi =1$ isotrivially degenerates to (\ref{eqn:ZMU}) via the torus action.
Indeed, the family over $\mathbb{A}^1_{a}$ defined by 
\begin{eqnarray*}
&    \P^1 \rightarrow Y \subset \P^6,\\
 &   (s,t) \mapsto (8s^5+2a^4st^4: 20s^4t+a^4t^5: 8s^3t^2: 4s^2t^3: 2st^4:t^5:0),
\end{eqnarray*}
has general fibers ($a \neq 0$) isomorphic to $Z^{(a)}_1$ and the special fiber ($a=0$) isomorphic to (\ref{eqn:ZMU}). 
See Proposition \ref{prop:degenerationMUorGa} for details.
\item 
The $\Gm$-type smooth rational quintic curves (\ref{eqn:ZGm}) degenerate to (\ref{eqn:ZMU}).
Indeed, $Z_{(5:4)}$ is isomorphic to the curve (\ref{eqn:ZMU}), so we have an obvious degeneration with parameters $u$.
On the other hand, we can show that there exist other degenerations from $\Gm$-type to $\Ga$-type or Mukai--Umemura-type. See Proposition \ref{prop:degenerationGm}.
\end{enumerate}
\end{rem}

\begin{prop}
    \label{prop:degenerationMUorGa}
            Let $R$ be a discrete valuation ring, $K$ the fraction field of $R$, and $k$ the residue field of $R$.
    Suppose that $\chara K \neq 2, 5$.
    Let $Y := Y_{R}$ be the split $V_5$-scheme,
    and $\mathcal{Z} \subset Y$ a relative smooth rational quintic curve such that $\mathcal{Z}_K$ is of $\Ga$-type (resp.\ Mukai--Umemura type).
    Then we have $\chara k \neq 2,5$, and 
    $\mathcal{Z}$ is mapped, by an element of $\PGL_{2,R} (R)$ via the action $\sigma$ (see Proposition \ref{prop:W5automorphisms}), to the quintic curve $\mathcal{Z}_{\xi}^{(\a)}$ defined by
\begin{equation}
\label{eqn:ZGageneraldvr}
\begin{gathered}
\P^1_{R} \rightarrow Y \subset \P^6,\\
(s\colon t) \mapsto 
(8s^5+2 \xi st^4: 20s^4t+ \xi t^5: 8s^3t^2: 4s^2t^3: 2st^4:t^5:0)
\end{gathered}
\end{equation}
    for some $\xi \in R \setminus\{0\}$ (resp.\ $\xi=0$).
    Conversely, the quintic curve $\mathcal{Z}_{\xi}^{(\a)}$ is a smooth rational quintic curve if $\chara k \neq 2,5$.
\end{prop}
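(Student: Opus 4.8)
The plan is to reconstruct $\mathcal{Z}$ from its generic fibre by a two–step normalization (first pin down the unique bisecant line, then read off the curve from Theorem~\ref{thm:BGaquintic>2}), and then to control the parameter by $R$–flatness. The converse direction is immediate: assuming $\chara k\neq 2,5$, the morphism (\ref{eqn:ZGageneraldvr}) lands in $Y$ (substitute into (\ref{eqn:W5})), and its restrictions to the two fibres are closed immersions — over $K$ the image is $Z^{(\mathrm{a})}_\xi$ (resp.\ $Z_{\mathrm{MU}}$) and over $k$ it is $Z^{(\mathrm{a})}_{\bar\xi}$ with $\bar\xi$ the image of $\xi$ (so $Z_{\mathrm{MU}}$ when $\bar\xi=0$), all of which are smooth quintics by Theorem~\ref{thm:BGaquintic>2} since the characteristics are $\neq 2,5$. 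A proper morphism with flat source that is a fibrewise closed immersion is a closed immersion, so (\ref{eqn:ZGageneraldvr}) defines a closed subscheme of $Y_R$ flat over $R$ with smooth rational quintic fibres, i.e.\ a relative smooth rational quintic curve.

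For the forward direction, the first and main obstacle is to rule out $\chara k=2$: a priori $\mathcal{Z}_k$ could itself be of $\Gm$–type (Theorem~\ref{thm:BGaquintic2}), so one cannot conclude merely by semicontinuity of the stabilizer dimension. I would instead spread out the $\Ga$–action through the Frobenius–twisted homomorphism $\widetilde\sigma$ of Proposition~\ref{prop:W5automorphisms}(3). Suppose $\chara k=2$, hence $\chara K=0$; after a finite extension of $R$ (harmless, since $Y_R$ stays split and $\mathcal{Z}$ stays a relative smooth quintic with $\mathcal{Z}_K$ of $\Ga$–type, resp.\ Mukai--Umemura type) we may assume $\sqrt2\in R$, so $\widetilde\sigma\colon\PGL_{2,R}\to\Aut_{Y/R}$ is defined, is an isomorphism over $K$, and over $k$ factors as $\PGL_{2,k}\xrightarrow{F}\SL_{2,k}\xrightarrow{\sigma'}\Aut_{Y_k/k,\red}$. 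Since $\mathcal{Z}_K$ is of $\Ga$–type (resp.\ Mukai--Umemura type), $\Aut_{(Y_K,\mathcal{Z}_K)/K}^{\circ}$ is $\Ga_K$ (resp.\ a Borel), so its preimage under $\widetilde\sigma_K$ contains a $\Ga$–subgroup of $\PGL_{2,K}$; this $\Ga$ fixes a (automatically $K$–rational) point of $\P^1$, and moving that point to the standard one by an element of $\PGL_2(R)$ — using that $\PGL_2(R)$ acts transitively on $\P^1(R)$ — we may assume the $\Ga$ is the standard $U_K$. The scheme–theoretic closure $\overline U$ of $U_K$ in $\PGL_{2,R}$ then lies in the upper–triangular Borel $B_R=T_R\ltimes U_R$; its projection to $T_R$ is trivial on the generic fibre (as $\Hom(\Ga,\Gm)=0$), hence trivial, so $\overline U\subseteq U_R$ and thus $\overline U=U_R$ by flatness. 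Now $\widetilde\sigma_R(U_R)$ is a flat closed subscheme of $\Aut_{Y/R}$ whose generic fibre stabilizes $\mathcal{Z}_K$, hence stabilizes $\mathcal{Z}$; so $\widetilde\sigma_k(U_k)=\sigma'(F(U_k))$, a $\Ga$–subgroup of $\SL_{2,k}=\Aut_{Y_k/k,\red}$, stabilizes the smooth rational quintic curve $\mathcal{Z}_k$. This contradicts Theorem~\ref{thm:BGaquintic2} (Case~(2) of its proof: such a curve has degree $\le 3$). Hence $\chara k\neq 2$, and in particular $2\in R^\times$, so $\Aut_{Y/R}=\PGL_{2,R}$ by Proposition~\ref{prop:W5automorphisms}(1).

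With $2\in R^\times$ available, I would normalize as follows. By Proposition~\ref{prop:SigmaZp>2}, $\mathcal{Z}_{\overline K}$ has a unique bisecant line, which by uniqueness descends to a $K$–rational line and is \emph{special}, i.e.\ its class lies on the conic $V(z^2-2xy)$ in $\Sigma(Y_{\overline K})\simeq\P^2$. Since $\Sigma(Y/R)\simeq\P^2_R$ is proper over $R$, this class extends to a section of $\Sigma(Y/R)$, which by separatedness factors through the closed subscheme $V(z^2-2xy)\subseteq\P^2_R$ (defined over $\Z$); as this is a smooth conic with an $R$–point it is $\simeq\P^1_R$, and $\PGL_{2,R}$ acts on it transitively even on $R$–points. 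So after applying an element of $\PGL_2(R)$ we may assume the bisecant of $\mathcal{Z}_K$ is the standard special line $l_{(1:0:0)}$ of (\ref{eqn:line01}). The reduced identity component of $\Aut_{(Y_K,\mathcal{Z}_K)/K}$ is then a one–dimensional connected subgroup of $\PGL_{2,K}$ preserving the unique bisecant $l_{(1:0:0)}$, hence contained in the Borel fixing $l_{(1:0:0)}$, hence equal to its unipotent radical $U_K$ in the $\Ga$–case (resp.\ equal to that whole Borel in the Mukai--Umemura case, both being $2$–dimensional connected). By Theorem~\ref{thm:BGaquintic>2} and its proof this forces $\mathcal{Z}_K=Z^{(\mathrm{a})}_\xi$ for some $\xi\in K^\times$ (resp.\ $\mathcal{Z}_K=Z_{\mathrm{MU}}=Z^{(\mathrm{a})}_0$).

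It remains to see $\xi\in R$ and to identify $\mathcal{Z}$. If $v_\p(\xi)<0$, multiply the coordinates of (\ref{eqn:ZGageneral}) by a suitable power of a uniformizer and reduce modulo $\p$: using $\chara k\neq2$ the result is proportional to $(2s:t:0:0:0:0:0)$, whose image is $l_{(1:0:0)}$, so the special fibre of the $R$–flat closure of $Z^{(\mathrm{a})}_\xi$ in $Y_R$ contains $l_{(1:0:0)}$; since that fibre is an integral quintic curve ($\mathcal{Z}$ being a relative smooth quintic) it must then equal $l_{(1:0:0)}$, of degree $1\neq5$ — a contradiction. Hence $\xi\in R$, with $\xi\neq0$ in the $\Ga$–case (resp.\ $\xi=0$). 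As $\mathcal{Z}$ is $R$–flat it equals the scheme–theoretic closure of $\mathcal{Z}_K$ in $Y_R$, and the map (\ref{eqn:ZGageneraldvr}) with this $\xi$ realizes $\P^1_R$ as a finite birational cover of that closure; since $\mathcal{Z}$ is smooth over $R$, hence normal, this cover is an isomorphism and $\mathcal{Z}=\mathcal{Z}^{(\mathrm{a})}_\xi$. Finally $\mathcal{Z}_k=Z^{(\mathrm{a})}_{\bar\xi}$ (resp.\ $Z_{\mathrm{MU}}$) is smooth, whereas Theorem~\ref{thm:BGaquintic>2} shows this curve is singular in characteristic $5$; therefore $\chara k\neq5$, completing the proof. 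The delicate step is the characteristic–two case of the second paragraph; what makes the closure argument there work is the rigidity of a $\Ga$–subgroup of $\PGL_2$ — it is determined by its (automatically rational) fixed point on $\P^1$ together with the vanishing of $\Hom(\Ga,\Gm)$ — and a secondary point to be careful about is that the bisecant of $\mathcal{Z}_K$ must first be promoted to an honest relative line over $R$, which is exactly where properness of $\Sigma(Y/R)\simeq\P^2_R$ is used.
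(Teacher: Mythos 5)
Your argument is correct and arrives at the same normal form, but it diverges from the paper's proof in two places. (i) For excluding $\chara k=2$, the paper re-runs its entire Borel-normalization argument with the integral action $\widetilde\sigma$ and only derives the contradiction at the very end, from the non-smoothness of the limit curve $\mathcal{Z}_{\xi}^{(\a)}$ in characteristic two; you instead spread out the unipotent stabilizer directly — closing up $U_K$ to $U_R$ inside the stabilizer scheme and using that $\widetilde\sigma_k$ factors through $\sigma'\circ F$ — and contradict Theorem \ref{thm:BGaquintic2} immediately. This is shorter, and it is in fact the mechanism the paper itself uses later in the proof of Proposition \ref{prop:degenerationGmp=2}. (ii) For the normalization over $R$ when $\chara k\neq 2$, the paper conjugates the Borel $B\supseteq G$ to $B_0$ over $K$ and lifts the resulting point of $(\PGL_{2,R}/B_{0,R})(R)$ to $\PGL_{2,R}(R)$ using the vanishing of $H^1_{\et}(\Spec R,\Ga)$ and $H^1_{\et}(\Spec R,\Gm)$; you instead track the unique (hence $K$-rational) special bisecant line, extend its class to an $R$-section of the conic $V(z^2-2xy)\subset\Sigma(Y/R)$, and move it to $l_{(1:0:0)}$ by transitivity of $\PGL_2(R)$ on the $R$-points of that conic. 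Both routes work; the one assertion you should justify explicitly is that transitivity, which follows from the parametrization $(a:c)\mapsto(2a^2:c^2:-2ac)$ of $V(z^2-2xy)$ (surjective on $R$-points since $2\in R^{\times}$) combined with transitivity of $\PGL_2(R)$ on $\P^1(R)$ for $R$ local. The remaining steps — forcing $\xi\in R$ via the degenerate line in the special fibre, identifying $\mathcal{Z}$ with the closure of $\mathcal{Z}_K$, and deducing $\chara k\neq 5$ from smoothness — coincide with the paper's.
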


\begin{proof}
First, we consider the case where $\chara k \neq 2$.
Let $G \subset \PGL_{2,K}$ be the identity component of the stabilizer of $\mathcal{Z}_K$ in $\PGL_{2,K} \simeq \Aut_{Y_K/K, \red}$ with respect to $\sigma$.
By the assumption, we have $G \simeq \Ga$, or $G$ is isomorphic to a Borel subgroup of $\PGL_{2,K}$.
We take a Borel subgroup $B \subset \PGL_{2,K}$ containing $G$.
Let $B_0 \subset \PGL_{2,K}$ be the upper triangular Borel subgroup.
Then there exists an element $g \in \PGL_{2,K} (K)$ such that $g B g^{-1} =B_{0}$.
Recall that the quotient map $\pi\colon\PGL_{2,R} \rightarrow \PGL_{2,R} /B_{0,R}$
(where $B_{0,R}$ is the upper triangular Borel subgroup scheme over $R$)
is a $B_{0,R}$-torsor and the right-hand side is proper over $R$.
Let $\overline{g} \in \PGL_{2,R} /B_{0,R} (R)$ be the image of $g$.
Then $\overline{g}^{\ast} \pi$ is a $B_{0,R}$-torsor over $R$.
We have $H^1_{\et} (\Spec R, \Ga) = H^1_{\et} (\Spec R, \Gm)=0$ since $R$ is a discrete valuation ring.
Therefore, the torsor $\overline{g}^{\ast} \pi$ is split, and we obtain a lift of $\overline{g}$ in $\PGL_{2,R} (R)$.
By replacing $g$ with this lift, we may assume that $g \in \PGL_{2,R} (R)$.
By applying $\sigma(g)$,
we may assume that $B= B_{0}$ (note that we need $\chara k \neq 2$ for the existence of $\sigma$).
In this case, by Theorem \ref{thm:BGaquintic>2}, $\mathcal{Z}_K$ is equal to $Z_{\xi}^{(\mathrm{a})}$ as in (\ref{eqn:ZGageneral}) for some $\xi \in K^{\times}$ or $Z_{\mathrm{MU}}$ (i.e.,\ $Z_{\xi}^{(\mathrm{a})}$ for $\xi =0$) as in (\ref{eqn:ZMU}).
If $\xi \notin R$, then clearly $\mathcal{Z}_k$ contains the line defined by $a_{i} =0$ for $2\leq i \leq 6$ (\ref{eqn:line01}), and it contradicts the fact that $\mathcal{Z}$ is a relative smooth quintic curve.
Therefore, we obtain $\xi \in R$, and hence $\mathcal{Z}$, which is the Zariski closure of $\mathcal{Z}_K$, is given by (\ref{eqn:ZGageneraldvr}) as desired.
By the direct computation, $\mathcal{Z}_{\xi}^{(\mathrm{a})}$ is smooth if and only if $\chara k \neq 2,5$. This completes the proof for $\chara k \neq 2$.

When $\chara k =2$, it suffices to show the non-existence of $\mathcal{Z}$.
Suppose, for contradiction, that there exists $\mathcal{Z}$, and
we may assume that $R$ is complete and there exists an injection $\Z[\sqrt 2] \rightarrow R$.
By Proposition \ref{prop:W5automorphisms}, we obtain the action $\widetilde \sigma$ of $\PGL_{2,R}$ on $Y$.
We modify the above argument as follows:
\begin{itemize}
\item We set $G$ as the reduced identity component of the stabilizer with respect to $\widetilde \sigma_K$ rather than $\sigma$.
\item 
We choose $g\in \PGL_{2,K} (K)$ such that
\[
gBg^{-1} =  \begin{pmatrix}
    1 & 0\\
    0 & \sqrt2
\end{pmatrix}^{-1} B_0
\begin{pmatrix}
    1 & 0\\
    0 & \sqrt2 
\end{pmatrix},
\]
where the right-hand-side is the stabilizer of $Z_{MU}$ with respect to the action $\widetilde \sigma_K$ of $\PGL_{2,K} (K)$.
\end{itemize}
Then, by the same argument, we can show that $\mathcal{Z}$ is, by an element of $\PGL_{2,R} (R)$ via the action $\widetilde \sigma$, mapped to a quintic curve whose generic fiber is equal to $Z_{\xi}^{\mathrm{(a)}}$ for $\xi \in K.$
Moreover, we obtain $\xi \in R$ in the same way, and we obtain a contradiction since $\mathcal{Z}_{\xi}^{(\mathrm{a})}$ is non-smooth in characteristic two.
\end{proof}

\begin{prop}
    \label{prop:degenerationGm}
        Let $R$ be a discrete valuation ring, $K$ the fraction field of $R$, $k$ the residue field of $R$, $\varpi$ a uniformizer of $R$, and $\Ord$ the normalized additive valuation on $R$.
    Suppose that $\chara k \neq 2$.
    Let $Y := Y_{R}$ be the split $V_5$-scheme, and $\mathcal{Z} \subset Y$ a relative smooth rational quintic curve such that $\mathcal{Z}_K$ is of $\Gm$-type.
   We take $u\in K\setminus \{ 0,1, \frac{5}{4}\}$ such that $\mathcal{Z}_K$ is mapped, by an element of $\PGL_{2,K} (K)$ via the action $\sigma$ (cf.\ Proposition \ref{prop:W5automorphisms}), to $Z_u$, where $Z_u \subset Y_K$ is the smooth rational quintic curve defined by (\ref{eqn:ZGm}) (by Theorem \ref{thm:BGaquintic>2}, we can take such a $u$).
    Then we have one of the following:
    \begin{enumerate}
        \item 
        The relative smooth rational quintic curve $\mathcal{Z}$ is mapped, by an element of $\PGL_{2,R} (R)$ via the action $\sigma$, to the quintic curve $\mathcal{Z}_u$ defined by
        \begin{eqnarray}
        \label{eqn:ZuGmdvr}
        \begin{gathered}
        \P^1_R \rightarrow Y \subset \P^6_R \\
   (s,t) \mapsto   (s^5 (u-1): s^4tu: s^3t^2: s^2 t^3: st^4: t^5:0 ).
        \end{gathered}
        \end{eqnarray}
        In this case, we have $u \in R$ and $\overline{u} \in k$ satisfies $\overline{u} \neq 0$, $1$.
        Conversely, $\mathcal{Z}_u$ is a relative smooth rational quintic curve if $u\in R$ and $\overline{u} \neq 0$, $1$.
        \item
        The relative smooth rational quintic curve $\mathcal{Z}$ is mapped, by an element of $\PGL_{2,R} (R)$ via the action $\sigma$, to the quintic curve $\mathcal{Z}_{u,b}$, where $\mathcal{Z}_{u,b}$ is the Zariski closure of
        \[
        \sigma \begin{pmatrix}
            1 & b \\
            0 & 1
        \end{pmatrix}
        (Z_u) \subset Y_K \subset \P^6_K
        \]
        in $Y$.
        Here, $\sigma$ is the action in Proposition \ref{prop:W5automorphisms} and $b$ is some element of $K \setminus R$.
        In this case, we have $ \chara k \neq 5$ and
        \begin{equation}
        \label{eqn:cord}
        c:=\Ord (u- \frac{5}{4} ) \geq - 4 \Ord (b) \geq 4.
        \end{equation}
        Moreover, $\mathcal{Z}_{u,b,k}$ is equal to 
 \begin{equation}
 \label{eqn:reductiontwist}
 \begin{gathered}
\P^1_k \rightarrow Y_k \subset \P^6_k,\\
(s\colon t) \mapsto 
(8s^5+2 \xi st^4: 20s^4t+ \xi t^5: 8s^3t^2: 4s^2t^3: 2st^4:t^5:0),
\end{gathered}
\end{equation}
  where 
  \[
  \xi = \begin{cases}
      \overline{16 b^{4} (u-\frac{5}{4})} & \textup{ if $c = -4 \Ord (b)$, } \\
      0 & \textup{ if $c > -4 \Ord (b)$. }
  \end{cases}
  \]
  Conversely, a relative curve $\mathcal{Z}_{u,b}$ for $b \in K\setminus R$ is a relative smooth rational quintic curve if $\chara k\neq 5$ and (\ref{eqn:cord}) is satisfied.
Note that $\mathcal{Z}_{u,b,k}$ is a $\Ga$-type smooth rational quintic curve (\ref{eqn:ZGageneral}) when $c = -4 \Ord(b)$,
and a Mukai--Umemura type one (\ref{eqn:ZMU}) when $c > -4 \Ord(b)$.
    \end{enumerate}
\end{prop}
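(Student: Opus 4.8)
The plan is to avoid the two-ray game and argue group-theoretically. The $\Gm$-action on $\mathcal{Z}_K$ has two fixed points on $\P^1$; over $R$ these either reduce to two distinct points of the special fibre, in which case the torus spreads out and one is in alternative (1), or they collide in the special fibre, in which case the symmetry there degenerates to a unipotent one and one is in alternative (2). I would then make both alternatives explicit. \textbf{Step 1 (normal form).} Since $\chara k\neq2$ we have $2\in R^{\times}$, so Proposition \ref{prop:W5automorphisms}(1), base-changed along $\Z[1/2]\to R$, gives $\Aut_{Y/R}\simeq\PGL_{2,R}$ acting via $\sigma$. As $\mathcal{Z}_K$ is of $\Gm$-type, by Theorem \ref{thm:BGaquintic>2}(3) and Remark \ref{rem:stabilizeractiononZ_p>2} the identity component $G_K$ of its stabilizer is a split maximal torus of $\PGL_{2,K}$; let $\{P,Q\}\subset\P^1$ be its pair of fixed points. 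These lie in $\P^1(R)$ by the valuative criterion of properness. Using that $\PGL_{2,R}(R)$ moves any point of $\P^1(R)$ to $\infty=(1:0)$, that the $R$-points of the Borel fixing $\infty$ normalize a second point either to $0=(0:1)$ (if its reduction is $\neq\infty$) or to $(1:\varpi^{-m})$ for some $m<0$ (if its reduction is $\infty$), and absorbing a Weyl-group element (a permutation matrix, hence defined over $R$) together with the classification in Theorem \ref{thm:BGaquintic>2}(3), I would conclude: after replacing $\mathcal{Z}$ by a $\PGL_{2,R}(R)$-translate (and $u$ by its $Z'$-companion if need be), either $\{P,Q\}$ reduces to two distinct points and $\mathcal{Z}$ is the schematic closure $\overline{Z_u}$ of $Z_u$ in $Y_R$, or $\{P,Q\}$ reduces to one point and $\mathcal{Z}=\mathcal{Z}_{u,b}$ for some $b\in K\setminus R$, with $b$ well-defined modulo $R$ and $R^{\times}$.

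\textbf{Step 2 (alternative (1)).} If $\{P,Q\}$ reduces to distinct points, $\overline{Z_u}$ is a relative smooth rational quintic curve. If $u\notin R$ then, just as in the proof of Proposition \ref{prop:degenerationMUorGa}, its special fibre contains the line (\ref{eqn:line01}), a contradiction; hence $u\in R$, the closure is the integral parametrization (\ref{eqn:ZuGmdvr}), i.e.\ $\overline{Z_u}=\mathcal{Z}_u$, and its special fibre is the curve (\ref{eqn:ZGm}) attached to $\overline{u}$, which by Theorem \ref{thm:BGaquintic>2} (distinguishing whether or not $\overline{u}=\overline{5/4}$) is a smooth rational quintic curve precisely when $\overline{u}\neq0,1$. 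The converse is immediate, giving alternative (1).

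\textbf{Step 3 (alternative (2)).} Here I would normalize $b=\varpi^{m}$ with $m=\Ord(b)\le-1$, set $\epsilon=u-\tfrac{5}{4}$ and $c=\Ord(\epsilon)$, and substitute $s=w-2bt$ in the parametrization of $\sigma\begin{psmallmatrix}1&b\\0&1\end{psmallmatrix}(Z_u)$. The $\epsilon$-free part is $\sigma\begin{psmallmatrix}1&b\\0&1\end{psmallmatrix}(Z_{\mathrm{MU}})=Z_{\mathrm{MU}}$, since $Z_{\mathrm{MU}}$ is stabilized by the upper triangular Borel; the $\epsilon$-part contributes coefficients whose valuations descend exactly to $c+4m$, the extremal one being $16\,\epsilon\,b^{4}$. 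If $c\ge-4m$, the resulting $(w,t)$-homogeneous polynomials are integral and $\varpi$-primitive and their reductions span $H^{0}(\P^1_k,\mathcal{O}(5))$, hence define a closed immersion $\P^1_R\hookrightarrow\P^6_R$ with image $\mathcal{Z}_{u,b}\simeq\P^1_R$; so $\mathcal{Z}_{u,b}$ is a relative smooth rational quintic curve iff its special fibre is, and reducing modulo $\varpi$ (after rescaling the source, $w=2s$) that special fibre is exactly (\ref{eqn:reductiontwist}) with $\xi=\overline{16\,b^{4}(u-\tfrac{5}{4})}$: a $\Ga$-type curve (\ref{eqn:ZGageneral}) when $c=-4m$ (so $\xi\in k^{\times}$) and a Mukai--Umemura curve (\ref{eqn:ZMU}) when $c>-4m$ (so $\xi=0$). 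By Theorem \ref{thm:BGaquintic>2} this is smooth iff $\chara k\neq5$, which forces $\chara k\neq5$ and (\ref{eqn:cord}) (the bound $-4\Ord(b)\ge4$ holding because $m\le-1$); conversely, these conditions make $\mathcal{Z}_{u,b}$ a relative smooth rational quintic curve with the stated special fibre. If instead $c<-4m$, then integrality kills the $w^{5}$-coefficient of the first coordinate under every admissible rescaling, and tracking the remaining valuations shows the reduced map $\P^1_k\to\P^6_k$ has image of degree $<5$, contradicting that $\mathcal{Z}$ is a relative quintic; so this subcase does not arise.

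\textbf{Expected main obstacle.} Step 1 is conceptually central but routine once phrased via fixed points, and Step 2 is a minor variant of Proposition \ref{prop:degenerationMUorGa}. The real work is the explicit reduction of Step 3: performing the substitution $s=w-2bt$, recognizing the $\epsilon$-free part as $Z_{\mathrm{MU}}$, and bookkeeping valuations carefully enough both to pin down $\xi=\overline{16\,b^{4}(u-\tfrac{5}{4})}$ with the right constant and to exclude $c<-4\Ord(b)$.
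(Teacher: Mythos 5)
Your overall strategy coincides with the paper's: normalize the stabilizing torus over $R$, split into the case where the translating parameter $b$ is integral (alternative (1)) and the case $b\in K\setminus R$ (alternative (2)), and extract the same invariant $\xi=\overline{16b^4(u-\tfrac{5}{4})}$ with the same dichotomy $c=-4\Ord(b)$ versus $c>-4\Ord(b)$. Your Step 3 is a legitimate and in fact more transparent variant of the paper's argument: the paper computes the flat limit as the closure of the orbit of $P_\xi$ under a degenerating family of group elements, whereas you reparametrize by $s=w-2bt$, observe that the $\epsilon$-free part of $\sigma\begin{psmallmatrix}1&b\\0&1\end{psmallmatrix}\circ\phi_u$ is the Mukai--Umemura parametrization (\ref{eqn:ZMU}) up to the rescaling $w=2s$, and read off both the reduction (\ref{eqn:reductiontwist}) and the exclusion of $c<-4\Ord(b)$ directly from the valuations of the binomial coefficients of $(w-2bt)^4$. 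I checked this computation and it is correct, including the constant $16=2^4$ and the fact that the intermediate terms $b^j\epsilon$, $j\le 3$, all reduce to zero once $c+4\Ord(b)\ge 0$.

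There is, however, one genuine gap, in Step 1. After normalizing the torus to $hTh^{-1}$ with $h=\begin{psmallmatrix}1&b\\0&1\end{psmallmatrix}$, Theorem \ref{thm:BGaquintic>2}(3) only gives $\mathcal{Z}_K=\sigma(h)(Z_u)$ \emph{or} $\sigma(h)(Z'_u)$, and your proposed fix for the second case --- ``absorbing a Weyl-group element (a permutation matrix, hence defined over $R$)'' --- does not work when $b\notin R$. Applying $\sigma(w)$ with $w=\begin{psmallmatrix}0&1\\1&0\end{psmallmatrix}$ turns $\sigma(h)(Z'_u)$ into $\sigma\begin{psmallmatrix}1&0\\b&1\end{psmallmatrix}(Z_u)$, a lower-triangular unipotent translate that destroys your normalization, and the Weyl reflection of the conjugated torus, namely $hwh^{-1}=\begin{psmallmatrix}b&1-b^2\\1&-b\end{psmallmatrix}$, is \emph{not} in $\PGL_2(R)$ (no scaling makes the entries integral with unit determinant). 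What is needed is an $R$-integral element swapping the two fixed points $\infty$ and $b$; the paper exhibits $g=\begin{psmallmatrix}1&1\\b^{-1}&-1\end{psmallmatrix}\in\PGL_2(R)$ (its determinant $-1-b^{-1}$ is a unit since $b^{-1}$ lies in the maximal ideal) and checks $h^{-1}ghw\in T(K)$, whence $\sigma(g)$ carries $\sigma(h)(Z'_u)$ to $\sigma(h)(Z_u)$. Relatedly, the parenthetical ``replace $u$ by its $Z'$-companion'' is not meaningful: $Z'_u$ is $\sigma(w)(Z_u)$ for the \emph{same} $u$, not $Z_{u'}$ for another parameter, and the proposition's conclusion must be reached for that $u$. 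With this one step supplied, your proof is complete.
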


\begin{proof}
Let $G \subset \PGL_{2,K}$ be the stabilizer of $\mathcal{Z}_K$ with respect to $\sigma$.
By Theorem \ref{thm:BGaquintic>2}, we have $G \simeq \Gm$.
We take a Borel subgroup $B \subset \PGL_{2,K}$ containing $G$.
Then by the proof of Proposition \ref{prop:degenerationMUorGa}, 
we can take an element $g \in \PGL_{2,R} (R)$ such that 
$
g B g^{-1}
$
is the upper triangular Borel subgroup $B_0 \subset \PGL_{2,K}$.
Therefore, we may assume $B = B_0$.
Then by Theorem \ref{thm:BGaquintic>2},
there exists $b\in K$ such that
we have
        \begin{equation}
        \label{eqn:bZu}
      \mathcal{Z}_K =       \sigma \begin{pmatrix}
            1 & b \\
            0 & 1
        \end{pmatrix}
        (Z_u) \subset Y_K \subset \P^6_K,
        \end{equation}
        or 
                \begin{equation}
                \label{eqn:bZ'u}
                \mathcal{Z}_K =
        \sigma \begin{pmatrix}
            1 & b \\
            0 & 1
        \end{pmatrix}
        (Z'_u) \subset Y_K \subset \P^6_K.
        \end{equation}
       
        First, we assume that the former (\ref{eqn:bZu}) holds.
        If $b\in R$, then we may assume that $b=0$, and $\mathcal{Z}_K$ is as in (1).
        In this case, if $u\notin R$, $\mathcal{Z}_k$ contains a line defined by $x_i=0$ for $2 \leq i\leq 6$ (\ref{eqn:line01}), and it contradicts the fact that $\mathcal{Z}_k$ is a smooth quintic curve. Therefore, we have $u\in R$ and the statement (1) follows.
In the following, we assume that $b \in K \setminus R$.

Set $P \coloneqq(u-1,u,1,1,1,1,0)$,  $\xi \coloneqq (2b)^4(u-5/4)$, and
 $P_\xi=(0,\xi,0,0,0,1,0)$.
Then $\mathcal{Z}_K$ is the closure of the following orbit of  $T = \left\{
\begin{pmatrix}
a & 0 \\ 0 & 1
\end{pmatrix}
\right\}$:
\[
\begin{pmatrix}
1 & b \\
0 & 1
\end{pmatrix}
T \cdot P
=
\begin{pmatrix}
a & b \\
0 & 1
\end{pmatrix}
P
=
\begin{pmatrix}
a & b \\
0 & 1
\end{pmatrix}
\begin{pmatrix}
1 & 1/2 \\
0 & 1
\end{pmatrix}
\begin{pmatrix}
-1/2b & 0 \\
0 & 1
\end{pmatrix}
P_{\xi}
=
\begin{pmatrix}
-a/2b & (a+2b)/2 \\
0 & 1
\end{pmatrix}
P_{\xi}.
\]
The closure of the limit of 
$\left\{\begin{pmatrix}
-a/2b & (a+2b)/2 \\
0 & 1
\end{pmatrix}\mid a\in K^\times \right\}$
is
$H = \left\{\begin{pmatrix}
1 & B \\
0 & 1
\end{pmatrix} \mid B \in k \right\}$.
Also, the limit of $P_{\xi} = (0,\xi,0,0,0,1,0)$ is
\[
\begin{cases}
P_{\overline \xi} & \xi \in R\setminus (\varpi),\\
(0,0,0,0,0,1,0) & \xi \in (\varpi),\\
(0,1,0,0,0,0,0) & \xi \not \in R.
\end{cases}
\]
In the last case, the limit $\mathcal{Z}_k$ contains a line, which contradicts to the assumption on $\mathcal{Z}$.
Thus the limit $\mathcal{Z}_k$ is $Z_{\overline{\xi}}^{(a)}$, or $Z_{\mathrm{MU}}$ respectively.

Next, we assume that the latter \ref{eqn:bZ'u} holds.
The case where $b \in R$ can be treated by the same way as in the argument for (\ref{eqn:bZu}).
We consider the case where $b \in K \setminus R$.
Note that we have
\[
        \sigma \begin{pmatrix}
            1 & b \\
            0 & 1
        \end{pmatrix}
        (Z'_u) = 
                \sigma 
                \begin{pmatrix}
            1 & b \\
            0 & 1
        \end{pmatrix}
        \sigma \begin{pmatrix}
            0 & 1\\
            1 &0
        \end{pmatrix}
        (Z_u).
\]
Let 
$g:= \begin{pmatrix}
    1 & 1\\
    b^{-1} & -1
\end{pmatrix}
\in \PGL_2 (R)
$.
Since we have
\[
\begin{pmatrix}
    1 & -b \\
    0 & 1
\end{pmatrix}
g 
\begin{pmatrix}
    1 & b \\
    0 & 1
\end{pmatrix}
\begin{pmatrix}
    0 & 1 \\
    1 &0
\end{pmatrix}
= \begin{pmatrix}
    1+b & 0 \\
    0 & b^{-1}
\end{pmatrix}
\in T(K),
\]
we obtain
\[
             \sigma (g)   \sigma 
                \begin{pmatrix}
            1 & b \\
            0 & 1
        \end{pmatrix}
        \sigma \begin{pmatrix}
            0 & 1\\
            1 &0
        \end{pmatrix}
        (Z_u)
        =
        \sigma \begin{pmatrix}
            1 & b \\
            0 & 1
        \end{pmatrix} (Z_u).
\]
Therefore, we can reduce to the case where (\ref{eqn:bZu}) holds.
This completes the proof.
\end{proof}

\begin{rem}
\label{rem:lineGmreduction}
We work in the setting of Proposition \ref{prop:degenerationGm},
and assume further that case (2) of Proposition \ref{prop:degenerationGm} holds.
By the proof of Proposition \ref{prop:SigmaZp>2}.(3), the two singular points of $\Sigma_{\mathcal{Z}_K} (Y)$ correspond to the following lines on $Y \subset \P^6_K$: 
\[
L_1' :=
\sigma \begin{pmatrix}
    1 & b \\
    0 & 1
\end{pmatrix} (l_{(1:0:0)}) , \quad L_2' :=
\sigma \begin{pmatrix}
    1 & b \\
    0 & 1
\end{pmatrix} (l_{(0:1:0)}).
\]
By easy calculation, the flat limit $L'_{1,k}$ of $L_1'$ is the line defined by the same equation as in $l_{(1:0:0)}$.
On the other hand, the flat limit $L'_{2,k}$ of $L'_{2}$ contains a point 
\[
\sigma \begin{pmatrix}
    1 & -a_0 \\
    0 & 1\\
\end{pmatrix} (0:1:0:0:0:0:0)
= (-2a_0:1:0:0:0:0:0).
\]
Indeed, $L_2'$ contains a point
\[
\sigma \begin{pmatrix}
    1 & b \\
    0 & 1
\end{pmatrix}
(0:0:0:0:0:1: - \frac{1}{b}) 
= (0: 4b^5:4b^4:4b^4:4b^2:5b:1)
\]
whose reduction is $(0:1:0:0:0:0:0)$,
and $L_2'$ is stable under the action of 
\[
\begin{pmatrix}
    1+ \varpi^{-\Ord (b)} a_0& -a_0 b \varpi^{- \Ord (b)} \\
    0 & 1
\end{pmatrix}
\]
for any $a_0 \in R^{\times}$
as in the proof of Proposition \ref{prop:degenerationGm}.
Therefore, we have $L_{1,k'} = L_{2,k'}$.
This is in contrast to the degeneration in Proposition \ref{prop:degenerationGm}(1), where two singular points of $\Sigma_{\mathcal{Z}_K}(Y_K)$ degenerate to two disjoint points by the proof of Proposition \ref{prop:SigmaZp>2}.
\end{rem}

\begin{prop}
    \label{prop:degenerationGmp=2}
    Let $R$ be a discrete valuation ring, $K$ the fraction field of $R$ and $k$ the residue field of $R$.
    Suppose that $\chara k =2$.
    Let $Y := Y_{R}$ be the split $V_5$-scheme over $R$, and $\mathcal{Z} \subset Y$ be a relative smooth rational quintic curve such that $\mathcal{Z}_K$ is of $\Gm$-type.
By Theorems \ref{thm:BGaquintic>2} and \ref{thm:BGaquintic2}, $\mathcal{Z}_K$ is mapped to $Z_u$ ($u \in K \setminus \{0,1,\frac{5}{4}\}$) by an element of $\Aut_{Y,K}(K)$, where $Z_u \subset Y_K$ is the smooth rational quintic curve defined by (\ref{eqn:ZGm}).

Suppose further that, if $\chara K =0$, then $R$ contains a square root $\sqrt 2$ of 2, and fix a $\Z[\sqrt 2]$-algebra structure on $R$.

Then we have $u\in R$ and $\overline{u} \in k$ satisfies $\overline{u} \neq 0,1$.
    Moreover, $\mathcal{Z}$ is mapped, by an element of $\Aut_{Y,R} (R)$, to the quintic curve $\mathcal{Z}_{u}$ defined by
      \begin{eqnarray*}
        \begin{gathered}
        \P^1_R \rightarrow Y \subset \P^6_R \\
   (s,t) \mapsto   (s^5 (u-1): s^4tu: s^3t^2: s^2 t^3: st^4: t^5:0 ).
        \end{gathered}
        \end{eqnarray*}
        Conversely, $\mathcal{Z}_u$ is a relative smooth rational quintic curve if $u \in R$ and $\overline{u} \neq 0,1$.
\end{prop}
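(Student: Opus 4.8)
The plan is to adapt the proof of Proposition~\ref{prop:degenerationGm} to residue characteristic two. Since $\sigma$ is not defined over $R$ here, I would instead use the action $\widetilde{\sigma}$ of $\PGL_{2,R}$ from Proposition~\ref{prop:W5automorphisms}(3) when $\chara K = 0$ (defined over $R$ because $R$ is a $\Z[\sqrt{2}]$-algebra) and the action $\sigma'$ of $\SL_{2,R}$ from Proposition~\ref{prop:W5automorphisms}(2) when $\chara K = 2$; write $\tau$ and $\Gamma$ for whichever action and linear $R$-group are in force, so $\Gamma_K = \Aut_{Y_K/K,\red}$. Let $G_K \subseteq \Gamma_K$ be the identity component of the stabilizer of $\mathcal{Z}_K$ with respect to $\tau_K$; since $\mathcal{Z}_K$ is of $\Gm$-type, $G_K \simeq \Gm$ (split, as it is $\Gamma(K)$-conjugate to the diagonal torus), so $G_K$ lies in some Borel $B$ of $\Gamma_K$. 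Exactly as in the proof of Proposition~\ref{prop:degenerationMUorGa} --- using that $\Gamma_R/B_{0,R}$ is proper over $R$ and that every $B_{0,R}$-torsor over the discrete valuation ring $R$ is trivial since $H^1_{\et}(\Spec R, \Ga) = H^1_{\et}(\Spec R, \Gm) = 0$ --- I would produce $g \in \Gamma(R)$ with $gBg^{-1} = B_{0,K}$; after replacing $\mathcal{Z}$ by $\tau(g)(\mathcal{Z})$ we may assume $G_K \subseteq B_{0,K}$, hence $G_K = uT_Ku^{-1}$ for a unique $u = \begin{psmallmatrix} 1 & b \\ 0 & 1 \end{psmallmatrix}$ with $b \in K$. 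Then $\tau(u)^{-1}(\mathcal{Z}_K)$ is $T_K$-stable, so by Theorems~\ref{thm:BGaquintic>2} and \ref{thm:BGaquintic2} it equals $Z_v$ or $Z_v'$ for some $v \in K \setminus \{0,1,\tfrac{5}{4}\}$ (note $\tfrac{5}{4} \notin R$ since $4 \in \varpi R$); the $Z_v'$ case reduces to the $Z_v$ case via the element $\begin{psmallmatrix} 1 & 1 \\ b^{-1} & -1 \end{psmallmatrix} \in \Gamma(R)$ as in the proof of Proposition~\ref{prop:degenerationGm}, so that $\mathcal{Z}_K = \tau(u)(Z_v)$.

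If $b \in R$, then $u \in \Gamma(R)$, and after applying $\tau(u)^{-1}$ we may take $u = 1$, so that $\mathcal{Z}$ is the Zariski closure of $Z_v$ in $Y_R$. If $v \notin R$, the reduction $\mathcal{Z}_k$ would contain the line $(\ref{eqn:line01p=2})$, as in the proof of Proposition~\ref{prop:degenerationGm}(1), contradicting that $\mathcal{Z} \to \Spec R$ is a relative smooth quintic curve; hence $v \in R$, and setting $u := v$ we get $\mathcal{Z} = \mathcal{Z}_u$ with $\mathcal{Z}_k = Z_{\overline{u}}$. Since $Z_w$ over $k$ is a smooth rational quintic curve precisely when $w \neq 0,1$ (recall $(5:4)=(1:0)$ in $k$, so that this condition is automatic here), Theorem~\ref{thm:BGaquintic2} forces $\overline{u} \neq 0,1$. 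For the converse, when $u \in R$ and $\overline{u} \neq 0,1$ the morphism $\P^1_R \to Y_R$ of the statement has no base point on either fiber, restricts to a closed immersion on each geometric fiber, and has $R$-flat image, so it defines a relative smooth rational quintic curve. This settles the case $b \in R$.

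The remaining case $b \notin R$ is the heart of the matter, and I expect it to be the main obstacle. Let $\mathcal{G}$ be the scheme-theoretic closure of $G_K = \tau(uT_Ku^{-1})$ in $\Aut_{Y_R/R}$. Then $\mathcal{G}$ is reduced (the closure of a reduced subscheme), $R$-flat and irreducible, with $\mathcal{G}_K \simeq \Gm$, so $\dim\mathcal{G}_k = 1$; moreover $\mathcal{G}$ preserves $\mathcal{Z}$ (as it does on the schematically dense generic fiber), so $(\mathcal{G}_k)_{\red}$ is a connected, $1$-dimensional subgroup scheme of $\Aut_{(Y_k,\mathcal{Z}_k)/k,\red}^{\circ}$. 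Since $\mathcal{Z}_k$ is a smooth rational quintic curve in the split $V_5$-variety $Y_k$ over a field of characteristic two with $\dim\Aut_{(Y_k,\mathcal{Z}_k)/k} \geq 1$, Theorem~\ref{thm:BGaquintic2} gives $\Aut_{(Y_k,\mathcal{Z}_k)/k,\red}^{\circ} \simeq \Gm$. On the other hand, a direct computation of the scheme-theoretic closure of the conjugated torus $uT_Ku^{-1}$ inside $\Gamma_R$ --- writing $b = \varpi^{-m}b_0$ with $m \geq 1$ and $b_0 \in R^{\times}$, one obtains a flat affine $R$-scheme whose reduced special fiber is the standard unipotent subgroup $\Ga \subset \Gamma_k$ --- shows $(\mathcal{G}_k)_{\red} \simeq \Ga$; in the mixed-characteristic case one additionally uses that $\tau_k = \widetilde{\sigma}_k$ factors through the Frobenius-twisted action $\sigma'\circ F$, so that the reduced special fiber is again a $\Ga$. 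Since $\Ga$ is not a closed subgroup scheme of $\Gm$, this is a contradiction, so $b \in R$, completing the proof.

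The delicate points will all be in the last paragraph. One must check that $\mathcal{G}$, the closure of the stabilizer inside the (in residue characteristic two, non-reduced) scheme $\Aut_{Y_R/R}$, is indeed a subgroup scheme acting on $\mathcal{Z}$, and correctly identify its reduced special fiber; the bookkeeping in the mixed-characteristic case is subtle because $\widetilde{\sigma}$ is an isomorphism on the generic fiber but reduces modulo the maximal ideal to $\sigma'\circ F$. The key point is then the explicit computation showing that the conjugated torus degenerates to $\Ga$ rather than extending to a torus over $R$ --- this is the characteristic-two substitute, there being no $\Ga$-type quintic curve in this case, for the degeneration analysis of Proposition~\ref{prop:degenerationGm}(2). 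Everything else transcribes the proofs of Propositions~\ref{prop:degenerationMUorGa} and \ref{prop:degenerationGm} essentially verbatim.
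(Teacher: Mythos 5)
Your proposal is correct and follows essentially the same route as the paper: reduce to the upper-triangular Borel via the properness/torsor argument over $R$, handle $b \in R$ by taking Zariski closures, and in the case $b \notin R$ show that the degenerating conjugated torus forces $\mathcal{Z}_k$ to be stable under the unipotent subgroup of upper triangular matrices (accounting for the Frobenius factorization of $\widetilde{\sigma}_k$ through $\sigma'$ in mixed characteristic), contradicting Theorem \ref{thm:BGaquintic2}. The paper phrases this last step as a direct computation of limits of the matrices $\begin{pmatrix} \widetilde{a} & \widetilde{b}(1-\widetilde{a}) \\ 0 & 1 \end{pmatrix}$ rather than via the reduced special fiber of the flat closure of the torus, but the computation and the resulting contradiction are the same.
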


\begin{proof}
The argument is similar to that of Proposition \ref{prop:degenerationGm}, but we include it.

We only show (2) since the proof of (1) is similar.
As in the proof of Proposition \ref{prop:degenerationGm}, we may assume that there exists $\widetilde{b}\in K$ such that
we have
        \begin{equation}
      \mathcal{Z}_K =       \widetilde{\sigma} 
      \begin{pmatrix}
            1 & \widetilde{b} \\
            0 & 1
        \end{pmatrix}
        (Z_u)
        = \sigma 
        \begin{pmatrix}
            1 & b\\
            0 &1
        \end{pmatrix}  (Z_u)
        \subset Y_K \subset \P^6_K
        \end{equation}
        or 
                \begin{equation}
                \mathcal{Z}_K =
        \widetilde{\sigma} \begin{pmatrix}
            1 & \widetilde{b} \\
            0 & 1
        \end{pmatrix}
        (Z'_u) \subset Y_K \subset \P^6_K,
        \end{equation}
        where we set $b := \widetilde{b}/\sqrt 2$.
If we have $\widetilde{b} \in R$, by the argument in Proposition \ref{prop:degenerationGm}, we obtain $u\in R$ and the statement holds true.
Therefore, it suffices to show that $\widetilde{b} \in R$.
Suppose, for contradiction, that $\widetilde{b} \in K \setminus R$.
In this case, 
$\mathcal{Z}_K$ is stable under the action of
\[
\begin{pmatrix}
1 & \widetilde{b} \\
0 & 1
\end{pmatrix}
T
\begin{pmatrix}
    1 & -\widetilde{b}\\
    0 & 1
\end{pmatrix},
\]
whose $\overline{K}$-valued points are given by
\[
\left\{\begin{pmatrix}
    \widetilde{a} & \widetilde{b}(1-\widetilde{a})\\
    0& 1
\end{pmatrix} \in \PGL_{2,K} (\overline{K}) \mid \widetilde{a} \in \overline{K}^{^\times}
\right\},
\]
via $\widetilde{\sigma}$.
Here, $T \subset \PGL_{2,K}$ is the diagonal torus.
Therefore,
$\mathcal{Z}_k$ is 
$\widetilde{\sigma}_k(
U_k    
)$-stable, where $U_k \subset \PGL_{2,k}$ is the unipotent subgroup of the upper triangular matrices.
By the definition of $\sigma'$ (see (\ref{eqn:sigma'})),
$\mathcal{Z}_k$ is $\sigma ' (U'_k)$-stable, 
where 
$
U_k' \subset \SL_{2,k}
$
is the unipotent subgroup of the upper triangular matrices.
This contradicts Theorem \ref{thm:BGaquintic2}.
This completes the proof.
\end{proof}

\begin{rem}
\label{rem:lineGmreductionp=2}
    In the setting of Proposition \ref{prop:degenerationGmp=2}, the two singular points of $\Sigma_{\mathcal{Z}_K} (Y_K)$ degenerate to two disjoint points of $\Sigma_{\mathcal{Z}_k} (Y_k)$.
\end{rem}

\section{Two-ray games for $V_{22}$-varieties}
\label{section:two-ray}

\subsection{Generalities on $V_{22}$}
Let $k$ be a field, and $X$ a $V_{22}$-variety over $k$.
Then the anti-canonical divisor $- K_{X}$ is very ample and defines an embedding
$X \hookrightarrow \P^{13}_k$ by
\cite[Theorem 1.1]{Tanaka1}.

We name certain $V_{22}$-varieties according to the structure of their automorphism groups.

\begin{defn}
\label{defn:V22name}
Let $k$ be a field, and $X$ a $V_{22}$-variety over $k$.
\begin{enumerate}
    \item 
    We say $X$ is a \emph{Mukai--Umemura variety (or $V_{22}$-variety of $\PGL_2$-type)} when \\
    $\Aut_{X_{\overline{k}}/\overline{k}, \red}^{\circ} \simeq \PGL_{2, \overline{k}}$.
    \item 
    We say $X$ is a \emph{$V_{22}$-variety of $\Ga$-type} when $\Aut_{X_{\overline{k}}/\overline{k}, \red}^{\circ} \simeq \Ga$.
    \item 
    We say $X$ is a \emph{$V_{22}$-variety of $\Gm$-type} when $\Aut_{X_{\overline{k}}/\overline{k}, \red}^{\circ} \simeq \Gm$. 
\end{enumerate}
\end{defn}

\begin{rem}
\label{rem:autdimgeq1}
\begin{enumerate}
\item
If $k$ is of characteristic zero, this definition of a Mukai--Umemura variety is the same as the classical definition (see \cite[Theorem 5.3.10]{Kuznetsov-Prokhorov-Shramov} for example).
\item 
Let $k$ and $X$ be as in Definition \ref{defn:V22name}.
If $\Aut(X_{\overline{k}})$ is an infinite group, then $X$ satisfies exactly one of the three conditions in Definition \ref{defn:V22name} by Theorem \ref{thm:MukaiUmemuraclassification} and Corollary \ref{cor:noV22Autdim2}.
\end{enumerate}
\end{rem}

\begin{defn}
    Let $X$ be a $V_{22}$-scheme over a scheme $B$.
    We say that $X$ is a \emph{Mukai--Umemura scheme} over $B$ 
if any geometric fiber $X_s$ is a Mukai--Umemura variety over $k(s)$.
\emph{$V_{22}$-schemes of $\Ga$-type} over $B$, and  \emph{$V_{22}$-schemes of $\Gm$-type} over $B$ are defined similarly.
\end{defn}

\subsection{Two-ray game for conics}
Here we describe the two-ray game for conics on $V_{22}$.
Note that, on $V_{22}$, there exists a conic:
\begin{prop}[{\cite[Corollary 10.6]{Prokhorovconicnote}, \cite[Theorem 6.7]{Tanaka2}}]
\label{prop:conicexists}
Let $k$ be an algebraically closed field, and $X$ a $V_{22}$-variety over $k$. 
Then there exists a smooth conic on $X$.
\end{prop}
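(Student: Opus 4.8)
The plan is to produce a smooth conic as an integral member of the Hilbert scheme $\mathcal{C}(X)$ parametrizing closed subschemes of $X\hookrightarrow\P^{13}_{k}$ with Hilbert polynomial $2t+1$ with respect to $-K_{X}$. Every such subscheme is a plane curve of degree $2$ (a smooth conic, a pair of distinct incident lines, or a double line), so a member that is integral is automatically a smooth conic. Hence it suffices to exhibit an irreducible component $Z\subseteq\mathcal{C}(X)$ whose universal family dominates $X$ and to argue that its general member is integral. The whole argument rests on the dichotomy that a $V_{22}$-variety is covered by conics but not by lines.

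\textbf{Step 1: $X$ is not covered by lines.} I would first record that the Hilbert scheme of lines on a prime Fano threefold $X$ with $-K_{X}$ very ample is at most one-dimensional, so that the union $\bigcup_{\ell}\ell$ of all lines has dimension at most $2$ and cannot be dense in the threefold $X$; equivalently, no line passes through a general point of $X$. Over $\C$ this is classical; in positive characteristic it may be extracted from Tanaka's classification, or argued directly from the deformation theory of lines together with the structure of the variety of minimal rational tangents of $X$.

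\textbf{Step 2: $X$ is covered by conics.} Since $X$ is Fano, bend-and-break (Mori's theorem, valid over any field) provides a dominating family of rational curves of $-K_{X}$-degree at most $\dim X+1=4$. Let $d_{0}$ be the least $-K_{X}$-degree of such a dominating family; by Step 1 we have $d_{0}\geq2$. Suppose $d_{0}\geq3$, and fix a dominating family $W$ whose general member has degree $d_{0}$. For a general point $q\in X$, deformation theory gives that the members of $W$ through $q$ move in a family of dimension at least $d_{0}-2\geq1$ (the parametrized maps $\P^{1}\to X$ sending a fixed point to $q$ have local dimension at least $d_{0}$ everywhere, and one quotients by the two-dimensional stabilizer of a point of $\P^{1}$); hence by bend-and-break one such member degenerates to a connected union (with multiplicities) of rational curves of total $-K_{X}$-degree $d_{0}$, a reduced component of which passes through $q$ and has degree in $\{1,\dots,d_{0}-1\}$. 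Letting $q$ vary, these components sweep out $X$, giving a dominating family of degree at most $d_{0}-1$ and contradicting the minimality of $d_{0}$. Therefore $d_{0}=2$, and $\mathcal{C}(X)$ contains a component $Z$ whose universal family dominates $X$.

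\textbf{Step 3 and the main obstacle.} If every member of $Z$ were non-integral, then, since a non-integral plane conic is set-theoretically a union of one or two lines of $X$, the universal family over $Z$ would map into the union of all lines of $X$; as this family dominates $X$, the union of all lines would be dense, i.e.\ $X$ would be covered by lines, contradicting Step 1. Hence the general member of $Z$ is integral, and therefore a smooth conic, which proves the proposition. (For good measure, deformation theory gives $\dim_{[C]}\mathcal{C}(X)\geq\chi(N_{C/X})=2$ for every conic $C$, matching the known fact that the family of conics on a $V_{22}$-variety is a surface.) The only genuinely substantive input, and the one place where a small characteristic could in principle intervene, is Step 1: that the Hilbert scheme of lines is at most one-dimensional, equivalently that a $V_{22}$-variety is not swept out by lines. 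An alternative, once the Mukai--Umemura $V_{22}$-variety is available (which we establish later, for $\chara k\neq2,5$), is to write down smooth conics explicitly on that model and propagate them over the irreducible family of all $V_{22}$-varieties by properness of the relative Hilbert scheme of conics; but this still leaves the characteristics $2$ and $5$ to the bend-and-break argument above.
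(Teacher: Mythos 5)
Your argument is correct in substance, but it follows a genuinely different route from the paper's: the paper does not prove this proposition at all — it outsources it, citing \cite[Corollary 10.6]{Prokhorovconicnote} in characteristic zero and \cite[Theorem 6.7]{Tanaka2} in positive characteristic, and everything downstream (the $W(k)$-lift in Corollary~\ref{cor:V22lift}, hence the existence of lines in Proposition~\ref{prop:lineexists}) rests on that citation. Your proof is the standard ``minimal dominating family'' mechanism: Mori's theorem gives a dominating family of rational curves of anticanonical degree at most $4$ in any characteristic, bend-and-break forces the minimal such degree to be $2$ once lines are known not to sweep out $X$, and an integral degree-$2$ curve in the anticanonical embedding spans a plane and is therefore a smooth conic. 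Two points deserve care. First, the whole argument hinges on Step~1, and your fallback justification ``deformation theory of lines together with the variety of minimal rational tangents'' is not an argument: deformation theory only bounds the Hilbert scheme of lines from \emph{below}, and the VMRT is defined via the minimal dominating family you are in the middle of identifying, so invoking it is circular. The fact you need is exactly \cite[Proposition 5.4]{Tanaka2} (every irreducible component of the Hilbert scheme of lines on a $V_{22}$-variety is $1$-dimensional), which this paper itself uses in the proof of Theorem~\ref{thm:MukaiUmemuraclassification} and which does not presuppose the existence of conics, so citing it introduces no circularity. Second, when you pass from the degree-$2$ dominating family to a component of the Hilbert scheme $\mathcal{C}(X)$, you should rule out the possibility that the general member is a degree-$2$ cover of a line (such a member has image of Hilbert polynomial $t+1$, not $2t+1$); this case is again excluded by Step~1, after which the general member is generically injective with integral image and Step~3 becomes redundant. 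What your approach buys is a self-contained, characteristic-free proof (valid also for $p=2,5$) from general rational-curve theory; what the citation buys the authors is brevity and the finer structural information already established in Tanaka's work.
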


Via the two-ray game for conics, $V_{22}$ varieties in characteristic zero are related to the quadric threefold $Q$ and rational sextic curves in $Q$.
Below, we generalize this two-ray game to the case of positive/mixed characteristic.

\begin{defn}[{\cite{Kuznetsov-ProkhorovGm}}]
\label{defn:quadraticallynormal}
Let $k$ be a field, and $\Gamma \subset \P^4$ a smooth sextic curve of genus $0$ over $k$.
We say $\Gamma$ is \emph{quadratically normal} if restrictions of quadrics form a complete linear system on $\Gamma$, i.e.,\ the map
$H^0 (\P^4, \cO(2)) \rightarrow H^{0} (\Gamma, \cO (2)|_{\Gamma})$
is surjective.
\end{defn}

\begin{defn}
    \label{defn:quadraticthreefold}
  Let $B$ be a scheme.
  A (relative) \emph{quadric threefold} over $B$ is a smooth hypersurface of degree $2$ in some rank 4 projective bundle $\P_{B} (E).$
\end{defn}

\begin{lem}
\label{lem:quadraticallynormal}
Let $k$ be a field, 
and $\Gamma \subset \P^4_k$ a smooth sextic curve of genus $0$ over $k$.
The following are equivalent.
\begin{enumerate}
    \item 
    $\Gamma$ is quadratically normal. 
    \item 
    $\dim H^0 (\mathcal{I}_{\Gamma} (2)) \leq 2$.
    Here, $\mathcal{I}_{\Gamma}$ is the ideal sheaf of $\Gamma \subset \P^4_k$.
\end{enumerate}
Note that (2) implies that $\Gamma$ is linearly normal, i.e.,\ $\Gamma$ is not contained in any hyperplane in $\P^4_k$.
Moreover, if $\Gamma$ is contained in a quadric threefold $Q \subset \P^4_k$, (1) and (2) are equivalent to the following.
\begin{enumerate}
\setcounter{enumi}{2}
    \item 
    $\dim H^0 (\mathcal{I}_{\Gamma,Q} (2)) \leq 1$. Here, $\mathcal{I}_{\Gamma, Q}$ is the ideal sheaf of $\Gamma \subset Q$, and  $\cO_{Q}(1)$ is the hyperplane line bundle on $Q$.
    \end{enumerate}
In this case, we have $\dim H^0 (\mathcal{I}_{\Gamma} (2)) = 2$ and $\dim H^0 (\mathcal{I}_{\Gamma,Q} (2)) = 1$.
\end{lem}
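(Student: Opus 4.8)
The plan is to translate all three conditions into a single numerical invariant, namely $\dim H^0(\P^4_k,\mathcal{I}_{\Gamma}(2))$, by means of two short exact sequences of sheaves, and then to observe that this dimension is automatically at least $2$ for purely cohomological reasons, so that the "$\leq$"-conditions (2) and (3) become equalities. Everything is Riemann--Roch on $\Gamma$ together with the cohomology of line bundles on $\P^4_k$, so the argument works over an arbitrary field and no passage to $\overline{k}$ is needed.

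First I would record the relevant cohomology: $\dim H^0(\P^4_k,\cO(2)) = 15$ and $H^1(\P^4_k,\cO(2)) = 0$; and, since $\Gamma$ is a smooth projective geometrically integral curve of genus $0$ and $\cO_{\Gamma}(2)$ has degree $12 > 2g-2$, Riemann--Roch gives $\dim H^0(\Gamma,\cO_{\Gamma}(2)) = 13$ and $H^1(\Gamma,\cO_{\Gamma}(2)) = 0$ (this is the only place the genus hypothesis enters; the rationality of $\Gamma$ plays no role). Plugging these into the long exact sequence of $0 \to \mathcal{I}_{\Gamma}(2) \to \cO_{\P^4}(2) \to \cO_{\Gamma}(2) \to 0$ shows that the restriction map $H^0(\cO(2)) \to H^0(\cO_{\Gamma}(2))$ has cokernel $H^1(\mathcal{I}_{\Gamma}(2))$ of dimension $\dim H^0(\mathcal{I}_{\Gamma}(2)) - 2$. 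Hence $\dim H^0(\mathcal{I}_{\Gamma}(2)) \geq 2$ with no hypothesis whatsoever, and the restriction map is surjective (i.e.\ $\Gamma$ is quadratically normal) precisely when $\dim H^0(\mathcal{I}_{\Gamma}(2)) = 2$; combined with the universal lower bound, this gives (1) $\Leftrightarrow$ (2) immediately. For the parenthetical remark that (2) implies linear normality, I would argue the contrapositive: if $\Gamma$ lies in a hyperplane $H = V(\ell)$, then multiplication by the linear form $\ell$ embeds $H^0(\cO_{\P^4}(1))$ into $H^0(\mathcal{I}_{\Gamma}(2))$, forcing $\dim H^0(\mathcal{I}_{\Gamma}(2)) \geq 5 > 2$.

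Next I would bring in $Q$. Since $Q \subset \P^4_k$ is a hypersurface of degree $2$ (this is exactly Definition \ref{defn:quadraticthreefold}; smoothness is not used here), $\mathcal{I}_{Q/\P^4} \simeq \cO_{\P^4}(-2)$. The standard exact sequence $0 \to \mathcal{I}_{Q/\P^4} \to \mathcal{I}_{\Gamma} \to \mathcal{I}_{\Gamma,Q} \to 0$ (where $\mathcal{I}_{\Gamma,Q}$ is pushed forward to $\P^4_k$ along $Q \hookrightarrow \P^4_k$, and where the projection formula identifies its twist by $\cO_{\P^4}(2)$ with the push-forward of $\mathcal{I}_{\Gamma,Q}(2)$), twisted by $\cO_{\P^4}(2)$, reads $0 \to \cO_{\P^4} \to \mathcal{I}_{\Gamma}(2) \to \mathcal{I}_{\Gamma,Q}(2) \to 0$. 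Because $H^1(\P^4_k,\cO_{\P^4}) = 0$, taking global sections yields $\dim H^0(\mathcal{I}_{\Gamma}(2)) = 1 + \dim H^0(\mathcal{I}_{\Gamma,Q}(2))$. This gives (2) $\Leftrightarrow$ (3), and, when these equivalent conditions hold, the equalities $\dim H^0(\mathcal{I}_{\Gamma}(2)) = 2$ (from the universal lower bound together with (2)) and $\dim H^0(\mathcal{I}_{\Gamma,Q}(2)) = 1$ follow at once.

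I do not expect a genuine obstacle: the computation is elementary and field-independent. The only points requiring some care are bookkeeping ones — keeping straight which ideal sheaf lives on which space in the sequence $0 \to \mathcal{I}_{Q/\P^4} \to \mathcal{I}_{\Gamma} \to \mathcal{I}_{\Gamma,Q} \to 0$ and correctly pushing forward to $\P^4_k$, and using that $Q$ is cut out by a single degree-$2$ form so that $\mathcal{I}_{Q/\P^4} \simeq \cO_{\P^4}(-2)$ (rather than, say, a complete intersection of two quadrics). Once these are set up, the conclusions are forced by dimension count.
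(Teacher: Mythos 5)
Your proposal is correct and is essentially the argument the paper intends: the paper simply cites \cite[Remark 2.1]{Kuznetsov-ProkhorovGm} and records the key dimension count $\dim H^0(\P^4,\cO(2))=15$, $\dim H^0(\Gamma,\cO_\Gamma(2))=13$, hence $\dim H^0(\mathcal{I}_\Gamma(2))\geq 2$, which is exactly the computation you spell out via the two ideal-sheaf exact sequences. Your observation that only the genus (via Riemann--Roch) and not the rationality of $\Gamma$ is needed is a correct and slightly more careful formulation of the same argument.
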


\begin{proof}
These follow from the same argument as in \cite[Remark 2.1]{Kuznetsov-ProkhorovGm}.
Note that we have $\dim H^0(\P^4,\cO(2)) =15$ and $\dim H^0(\P^1,\cO(12)) =13$, so that $\dim H^0 (\mathcal{I}_{\Gamma} (2)) \geq 2$.
\end{proof}

\begin{lem}
\label{lem:sexticcubic}
Let $k$ be an algebraically closed field, and $\Gamma \subset \P^4_k$ a quadratically normal smooth rational sextic curve over $k$.
Then $\Gamma$ is a scheme theoretic intersection of cubic hypersurfaces.
\end{lem}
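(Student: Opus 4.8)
The plan is to show that the ideal sheaf $\mathcal{I}_\Gamma \subset \O_{\P^4_k}$ is $3$-regular in the sense of Castelnuovo--Mumford, and then to invoke the standard consequence that a $3$-regular coherent sheaf $\mathcal{F}$ on $\P^4$ has $\mathcal{F}(3)$ globally generated. Applied to $\mathcal{F} = \mathcal{I}_\Gamma$ and twisted back by $\O(-3)$, this says that the evaluation map $H^0(\P^4,\mathcal{I}_\Gamma(3)) \otimes_k \O_{\P^4}(-3) \to \mathcal{I}_\Gamma$ is surjective; equivalently, the cubics vanishing on $\Gamma$ generate $\mathcal{I}_\Gamma$, which is precisely the statement that $\Gamma$ is their scheme-theoretic intersection. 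All of this is characteristic-free.

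First I would compute the relevant cohomology of $\O_\Gamma$: since $\Gamma \simeq \P^1_k$ and $\O_\Gamma(1)$ has degree $6$, we have $\O_\Gamma(j) \simeq \O_{\P^1}(6j)$, so $H^1(\Gamma,\O_\Gamma(j)) = 0$ for $j \ge 0$ and $H^i(\Gamma,\O_\Gamma(j)) = 0$ for $i \ge 2$. Feeding this into the long exact sequence attached to $0 \to \mathcal{I}_\Gamma(j) \to \O_{\P^4}(j) \to \O_\Gamma(j) \to 0$, together with the vanishing of the intermediate cohomology of line bundles on $\P^4$, gives $H^1(\mathcal{I}_\Gamma(j)) \simeq \coker\!\big(H^0(\O_{\P^4}(j)) \to H^0(\O_\Gamma(j))\big)$, $H^2(\mathcal{I}_\Gamma(j)) \simeq H^1(\O_\Gamma(j))$, and $H^i(\mathcal{I}_\Gamma(j)) = 0$ for $i \ge 3$ and $j \ge 0$ (here I use $\dim \P^4 = 4$, so $H^3$ and $H^4$ of the relevant line bundles vanish). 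Consequently the $3$-regularity conditions $H^i(\mathcal{I}_\Gamma(3-i)) = 0$ for $i \ge 2$ hold automatically: $H^2(\mathcal{I}_\Gamma(1)) \simeq H^1(\O_{\P^1}(6)) = 0$, and $H^i(\mathcal{I}_\Gamma(3-i)) = 0$ for $i \ge 3$ by the dimension count.

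The only remaining condition is $H^1(\mathcal{I}_\Gamma(2)) = 0$, i.e. surjectivity of $H^0(\P^4,\O(2)) \to H^0(\Gamma,\O(2)|_\Gamma)$; but this is exactly quadratic normality of $\Gamma$. (Alternatively, via Lemma \ref{lem:quadraticallynormal}, quadratic normality means $\dim H^0(\mathcal{I}_\Gamma(2)) \le 2$, and the long exact sequence together with $h^0(\O_{\P^4}(2)) = 15$ and $h^0(\O_{\P^1}(12)) = 13$ forces $\dim H^0(\mathcal{I}_\Gamma(2)) = 2$ and hence $H^1(\mathcal{I}_\Gamma(2)) = 0$.) Thus $\mathcal{I}_\Gamma$ is $3$-regular, and the conclusion follows from the paragraph above.

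I do not expect a genuine obstacle here: the argument is a formal application of Castelnuovo--Mumford regularity plus the elementary cohomology of $\O_{\P^1}$, so it goes through verbatim in positive and mixed characteristic. The only point needing a little care is bookkeeping of which twists of $\mathcal{I}_\Gamma$ have vanishing higher cohomology for free — because $\Gamma$ is a curve and sits in $\P^4$ — so that the $3$-regularity check collapses to the single input $H^1(\mathcal{I}_\Gamma(2)) = 0$ provided by the hypothesis.
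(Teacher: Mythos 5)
Your proposal is correct, and it is genuinely more self-contained than what the paper does: the paper's proof of this lemma is a one-line citation to Kuznetsov--Prokhorov's Lemma 2.7, which in turn rests on the Gruson--Lazarsfeld--Peskine regularity theorem (the paper's only added content is the remark that GLP is characteristic-free). Your computation shows that GLP is not actually needed here: for a smooth rational curve in $\P^4$ the Castelnuovo--Mumford conditions $H^i(\mathcal{I}_\Gamma(3-i))=0$ for $i\ge 2$ are automatic from the long exact sequence, the vanishing of intermediate cohomology of line bundles on $\P^4$ (including $H^4(\P^4,\O(-1))=0$ for the $i=4$ case), and $H^1(\P^1,\O(6))=0$; the single nontrivial condition $H^1(\mathcal{I}_\Gamma(2))=0$ is exactly the quadratic normality hypothesis as defined in Definition \ref{defn:quadraticallynormal}. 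So $\mathcal{I}_\Gamma$ is $3$-regular, $\mathcal{I}_\Gamma(3)$ is globally generated by Mumford's theorem, and twisting back by $\O(-3)$ identifies the ideal sheaf generated by the cubics through $\Gamma$ with $\mathcal{I}_\Gamma$, which is the assertion. Two minor points worth making explicit if you write this up: Mumford's regularity theorem is proved by cutting with a general hyperplane, which is harmless here since $k$ is assumed algebraically closed (hence infinite); and the "dimension count" for $i\ge 3$ should record that you are using both $H^{i-1}(\Gamma,\cdot)=0$ for $i-1\ge 2$ and the vanishing of $H^i(\P^4,\O(3-i))$. What your route buys is independence from the GLP citation chain; what the paper's route buys is brevity and a pointer to a reference where the surrounding geometry of quadratically normal sextics is developed.
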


\begin{proof}
This follows from the same proof as in \cite[Lemma 2.7]{Kuznetsov-ProkhorovGm} (where they use \cite{Gruson-Lazarsfeld-Peskine}).
Note that \cite{Gruson-Lazarsfeld-Peskine} is characteristic-free.
\end{proof}

\begin{lem}
\label{lem:liftquadricsextic}
Let $k$ be a perfect field, $Q \subset \P^4_k$ a quadric threefold over $k$, and $\Gamma \subset Q \subset \P^4_k$ a quadratically normal smooth rational sextic curve.
Then there exists a relative quadric threefold $\mathcal{Q} \subset \P^4_{W (k)}$ over $W(k)$ and a relative smooth rational sextic curve $\widetilde{\Gamma} \subset \mathcal{Q} \subset \P^4_{W(k)}$ that lift $(Q,\Gamma)$.
Moreover, $\widetilde{\Gamma}_{K}$ is quadratically normal. Here, $K$ is the fraction field of $W(k)$.
\end{lem}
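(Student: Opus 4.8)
The plan is to lift $\Gamma$ first, then produce the quadric through it by a short cohomological computation which simultaneously yields the quadratic normality of the generic fibre. We may assume $\chara k = p > 0$, since for $\chara k = 0$ one has $W(k) = k$ and there is nothing to prove. Throughout, I will use freely that the cohomology of $\cO(d)$ on $\P^4$ and of $\cO(d)$ on $\P^1$ is free over the base and commutes with arbitrary base change.

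\emph{Lifting $\Gamma$.} As $\Gamma$ is rational, $\Gamma \simeq \P^1_k$, and the embedding $\Gamma \hookrightarrow \P^4_k$ is given by a base-point-free subspace $V \subseteq H^0(\P^1_k, \cO(6))$ of dimension $5$ that induces a closed immersion. I would choose a rank-$5$ direct summand $\widetilde{V} \subseteq H^0(\P^1_{W(k)}, \cO(6))$ (a free module of rank $7$) reducing to $V$ modulo $p$. Its base locus is a closed subscheme of $\P^1_{W(k)}$, proper over $W(k)$, with empty special fibre, hence empty; so $\widetilde{V}$ is base-point-free and defines $\phi \colon \P^1_{W(k)} \to \P^4_{W(k)}$ with $\phi_k$ a closed immersion. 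Since $\phi$ is proper and fibrewise finite (a non-constant morphism from a curve to a projective space is finite onto its image), $\phi$ is finite; applying Nakayama's lemma to $\cO_{\P^4_{W(k)}} \to \phi_*\cO_{\P^1_{W(k)}}$, using that $\phi_*\cO_{\P^1_{W(k)}}$ is $W(k)$-flat, commutes with base change, and is a quotient of $\cO_{\P^4_k}$ on the special fibre, shows $\phi$ is a closed immersion. Put $\widetilde{\Gamma} \coloneqq \phi(\P^1_{W(k)})$; it is a $W(k)$-flat closed subscheme isomorphic to $\P^1_{W(k)}$ with $\cO_{\P^4}(1)|_{\widetilde{\Gamma}} \simeq \cO_{\P^1}(6)$, lifting $\Gamma$, and every fibre of $\widetilde{\Gamma} \to \Spec W(k)$ is a smooth rational sextic curve.

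\emph{Lifting $Q$.} Take global sections in $0 \to \mathcal{I}_{\widetilde{\Gamma}}(2) \to \cO_{\P^4_{W(k)}}(2) \to \cO_{\widetilde{\Gamma}}(2) \to 0$. Here $H^0(\cO_{\P^4_{W(k)}}(2))$ is free of rank $15$, $H^0(\cO_{\widetilde{\Gamma}}(2)) = H^0(\P^1_{W(k)}, \cO(12))$ is free of rank $13$, and the reduction modulo $p$ of the restriction map between them is the restriction map $H^0(\P^4_k, \cO(2)) \to H^0(\Gamma, \cO_\Gamma(2))$, which is surjective because $\Gamma$ is quadratically normal. By Nakayama the restriction map over $W(k)$ is surjective, so $H^1(\mathcal{I}_{\widetilde{\Gamma}}(2)) = 0$ (as $H^1(\cO_{\P^4_{W(k)}}(2)) = 0$) and $H^0(\mathcal{I}_{\widetilde{\Gamma}}(2))$ is a free $W(k)$-module of rank $2$ reducing modulo $p$ to $H^0(\mathcal{I}_\Gamma(2))$. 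Now write $Q = V(\bar{F})$ for a nonzero quadratic form $\bar{F} \in H^0(\mathcal{I}_\Gamma(2))$, lift it to $F \in H^0(\mathcal{I}_{\widetilde{\Gamma}}(2))$ with $F \not\equiv 0 \bmod p$ (so $F$ is not divisible by $p$ in $H^0(\cO_{\P^4_{W(k)}}(2))$ either), and set $\mathcal{Q} \coloneqq V(F) \subseteq \P^4_{W(k)}$. Then $\mathcal{Q}$ is $W(k)$-flat (its homogeneous coordinate ring is $p$-torsion free, $F$ being a non-$p$-divisible element of the UFD $W(k)[x_0,\dots,x_4]$), $\mathcal{Q}_k = Q$, and $\widetilde{\Gamma} \subseteq \mathcal{Q}$ since $F$ vanishes on $\widetilde{\Gamma}$; moreover $\mathcal{Q}$ is smooth over $W(k)$, because the smooth locus of $\mathcal{Q} \to \Spec W(k)$ is open, contains the special fibre $\mathcal{Q}_k = Q$, and $\mathcal{Q}$ is proper over the local ring $W(k)$. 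Thus $\mathcal{Q}$ is a relative quadric threefold with $\widetilde{\Gamma} \subseteq \mathcal{Q}$ lifting $\Gamma \subseteq Q$.

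\emph{Quadratic normality of the generic fibre, and the main point.} Since $H^0(\mathcal{I}_{\widetilde{\Gamma}}(2))$ sits in the short exact sequence of free $W(k)$-modules above, tensoring with $K = \Frac W(k)$ (which is flat) gives $H^0(\P^4_K, \mathcal{I}_{\widetilde{\Gamma}_K}(2)) = H^0(\mathcal{I}_{\widetilde{\Gamma}}(2)) \otimes_{W(k)} K$, of dimension $2$; here I use that $\widetilde{\Gamma}$ is $W(k)$-flat, so $(\mathcal{I}_{\widetilde{\Gamma}})_K = \mathcal{I}_{\widetilde{\Gamma}_K}$, together with base change of cohomology. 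As $\widetilde{\Gamma}_K \simeq \P^1_K$ is a smooth rational sextic curve, Lemma \ref{lem:quadraticallynormal} shows it is quadratically normal. The two delicate points are (i) that $\phi$ really is a closed immersion — equivalently, that the lift of $\Gamma$ is genuinely a $\P^1_{W(k)}$ and not some degeneration — and (ii) the passage of quadratic normality from the closed to the generic fibre; both hinge on the single vanishing $H^1(\mathcal{I}_{\widetilde{\Gamma}}(2)) = 0$, which is exactly where the quadratic-normality hypothesis on $\Gamma$ is used. Everything else is bookkeeping with free $W(k)$-modules and openness of smoothness.
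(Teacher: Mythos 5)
Your proof is correct and follows essentially the same route as the paper's: lift $\Gamma$ to a relative smooth rational sextic $\widetilde{\Gamma}$ over $W(k)$, show that $H^0(\mathcal{I}_{\widetilde{\Gamma}}(2))$ is free of rank $2$ and reduces to $H^0(\mathcal{I}_{\Gamma}(2))$, lift the defining equation of $Q$, and read off quadratic normality of $\widetilde{\Gamma}_K$ from Lemma \ref{lem:quadraticallynormal}. The only difference is bookkeeping: you justify via Nakayama and $H^1(\mathcal{I}_{\widetilde{\Gamma}}(2))=0$ the base-change identification that the paper invokes more briefly, and you verify explicitly that the lifted parametrization is a closed immersion, whereas the paper takes the scheme-theoretic image of a componentwise lift and uses semicontinuity of $h^0(\mathcal{I}(2))$.
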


\begin{proof}
Note that $\Gamma$ is the image of some map $\gamma \colon\P^1_k \rightarrow \P^4_k$.
By choosing the homogeneous coordinate representation of $\gamma$ and taking the lift of each component, we obtain a morphism $\widetilde{\gamma} \colon \P^1_{W(k)} \rightarrow \P^4_{W(k)}$ that lifts $\gamma$.
Let $\widetilde{\Gamma} \subset \P^4_{W(k)}$ be the scheme theoretic image of $\widetilde{\gamma}$, which is a flat lift of $\Gamma \subset \P^4$.
Let $\mathcal{I}_{\widetilde{\Gamma}}$ be the ideal sheaf of $\widetilde{\Gamma} \hookrightarrow \P^4_{W(k)}$, and $\cO_{\P^4_{W(k)}} (1)$ the hyperplane line bundle on $\P^4_{W(k)}$. 
Since $\mathcal{I}_{\widetilde{\Gamma}} (2)$ on $\P^4_{W(k)}$ is flat over $W(k)$, we obtain
\[
\dim H^0 (\mathcal{I}_{\widetilde{\Gamma}_K} (2))
\leq \dim H^0 (\mathcal{I}_{\Gamma} (2)) \leq 2. 
\]
where the last equality follows from Lemma \ref{lem:quadraticallynormal}, and the assertion follows.
Here, $\mathcal{I}_{\widetilde{\Gamma}_K}$ and $\mathcal{I}_{\Gamma}$ are the ideal sheaves of $\widetilde{\Gamma}_{K} \subset \P^4_{K}$ and $\Gamma\subset \P^4_k$, respectively.

Therefore, $\widetilde{\Gamma}_K \subset \P^4_K$ is a quadratically normal smooth rational sextic curve, and
$H^0 (\mathcal{I}_{\widetilde{\Gamma}})$ is a rank 2 free module over $W(k)$.
The quadric $Q$ corresponds to a section 
\[
s \in H^0(\mathcal{I}_{\Gamma} (2)) = H^0 (\mathcal{I}_{\widetilde{\Gamma}} (2)) \otimes_{W(k)} k.
\]
We can take a lift $\widetilde{s} \in H^0 (I_{\widetilde{\Gamma}} (2))$ of $s$.
Then the corresponding quadric hypersurface $\mathcal{Q} \subset \P^4_{W(k)}$
is a flat lift of $Q$, and contains $\widetilde{\Gamma}$.
This completes the proof.
\end{proof}

\begin{prop}
\label{prop:two-rayV22toQ3}
Let $k$ be a field and $X$ a $V_{22}$-variety over $k$.
Let $C$ be a smooth conic on $X$ and $\mu \colon \widetilde{X} \rightarrow X$ a blow-up of $C$ on $X$.
Then the following hold.
\begin{enumerate}
\item 
The anti-canonical divisor $- K_{\widetilde{X}}$ is base-point free.
We denote the corresponding fiber-space morphism by $\Psi \colon \widetilde{X} \rightarrow V$. 
\item 
The morphism $\Psi$ is a flopping contraction, and the flop of $\Psi$ exists.
We denote the flop by $\Psi^{+} \colon \widetilde{Q} \rightarrow V$.
\item 
Let $\tau \colon \widetilde{Q} \rightarrow Q$ be the contraction of the $K_{\widetilde{Q}}$-negative extremal ray.
Then $\tau$ is the blow-up of a smooth curve $\Gamma$ on $\mathcal{Q}$.
Then $Q_{\overline{k}}$ is a quadric threefold in $\P^4_{\overline{k}}$.
Moreover, $\Gamma_{\overline{k}} \subset Q_{\overline{k}}$ is a quadratically normal smooth rational sextic curve.
\item 
The hyperplane bundle $Q_{\overline{k}} \subset \P^4_{\overline{k}}$ descends to $k$, i.e.,\ $Q$ is a quadric threefold in $\P^4$.
In particular, $\Gamma \subset Q \subset \P^4$ is a quadratically normal smooth sextic curve of genus $0$.
\end{enumerate}
\end{prop}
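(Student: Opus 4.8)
The plan is to transplant to positive and mixed characteristic the two-ray game of \cite{Kuznetsov-ProkhorovGm} and \cite{Kuznetsov-Prokhorov-Shramov}, working first over $\overline k$ and then descending to $k$. Write $H=-K_X$ (very ample, so $X\hookrightarrow\P^{13}$ by \cite{Tanaka1}), let $C\subset X$ be the given smooth conic, and set $\widetilde X=\mathrm{Bl}_C X$, a smooth threefold with $\rho(\widetilde X)=2$, $\Pic(\widetilde X)=\Z\,\mu^{*}H\oplus\Z E$ and $-K_{\widetilde X}=\mu^{*}H-E$. One computes $(-K_{\widetilde X})^{3}=(-K_X)^{3}-2\,H\cdot C-2+2g(C)=22-4-2=16$, so once $-K_{\widetilde X}$ is known to be nef it is also big. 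The anticanonical model $\Psi\colon\widetilde X\to V$ is identified, via $|\mu^{*}H-E|=|\mathcal I_C\otimes H|$, with a resolution of the linear projection of $X\subset\P^{13}$ from the plane $\langle C\rangle$; using the geometry of smooth conics on $V_{22}$-varieties — in particular that $\langle C\rangle\cap X=C$ scheme-theoretically, so $C$ has no bisecant line on $X$ — one shows, exactly as over $\C$, that $-K_{\widetilde X}$ is base-point free, which is (1), and that $\Psi$ is a small (flopping) contraction; its flop $\Psi^{+}\colon\widetilde Q\to V$ exists by \cite{Tanakaflop}, which is (2). Then $\widetilde X\dashrightarrow\widetilde Q$ is an isomorphism in codimension one, so $\widetilde Q$ is a smooth threefold with $\rho(\widetilde Q)=2$, $\Pic(\widetilde Q)\cong\Pic(\widetilde X)$, and $-K_{\widetilde Q}$ nef, big, with $(-K_{\widetilde Q})^{3}=16$.

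Next, for (3), still over $\overline k$: contract the $K_{\widetilde Q}$-negative extremal ray, $\tau\colon\widetilde Q\to Q$. Since the numerical data governing this ray are inherited from $\widetilde X$ through the flop and hence are the same in every characteristic, the characteristic-zero analysis of \cite{Kuznetsov-ProkhorovGm} carries over to show that $\tau$ is the blow-down of a divisor $E'$ onto a smooth curve $\Gamma\subseteq Q$ with $Q$ a Fano threefold of Picard rank $1$, and that $\deg_{\cO_Q(1)}\Gamma=6$ where $\cO_Q(1)$ is the ample generator of $\Pic(Q)$ (using $\deg_H C=2$, $g(C)=0$, $\deg N_{C/X}=0$). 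From $K_{\widetilde Q}=\tau^{*}K_Q+E'$, $(-K_{\widetilde Q})^{3}=16$ and the blow-down formula $(-K_{\widetilde Q})^{3}=(-K_Q)^{3}-2(-K_Q\cdot\Gamma)-2+2g(\Gamma)$ one deduces $(-K_Q)^{3}=54$ and $g(\Gamma)=0$; the classification of Fano threefolds in positive characteristic (\cite{Tanaka1}, \cite{Tanaka2}) then forces $Q_{\overline k}\cong Q_3\subset\P^{4}_{\overline k}$, the unique one of Picard rank $1$ with $(-K)^{3}=54$, via $|\cO_Q(1)|$, so $\Gamma_{\overline k}$ is a smooth rational sextic on the quadric threefold $Q_{\overline k}$. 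Its quadratic normality follows as in the sextic case of \cite{Kuznetsov-ProkhorovGm}, the required cohomological bounds coming from Lemmas \ref{lem:quadraticallynormal} and \ref{lem:sexticcubic}, whose proofs are characteristic-free (the latter via \cite{Gruson-Lazarsfeld-Peskine}).

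Then, for (4), one descends the diagram to $k$. All the morphisms $\mu$, $\Psi$, $\Psi^{+}$, $\tau$ and the subschemes $C$, $\Gamma$ are $\Gal(\overline k/k)$-equivariant, so by the uniqueness of flops and of extremal contractions the whole construction is defined over $k$; thus $Q$ and $\Gamma$ are defined over $k$ with the geometric fibers found in (3). For the hyperplane class: since $X$ has index $1$, already $\Pic(X)=\Z(-K_X)$ over $k$, and $C$ is defined over $k$, so $\Pic(\widetilde X)=\Z\,\mu^{*}H\oplus\Z E=\Pic(\widetilde X_{\overline k})$ — there is no Brauer obstruction for $\widetilde X$ — and transporting this identification through the codimension-one isomorphism $\widetilde X\dashrightarrow\widetilde Q$ gives $\Pic(\widetilde Q)=\Pic(\widetilde Q_{\overline k})$. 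Hence $\tau^{*}\cO_Q(1)\in\Pic(\widetilde Q)$ is defined over $k$, and being numerically trivial on the fibers of $\tau$ it descends to $\cO_Q(1)\in\Pic(Q)$. By flat base change $h^{0}(Q,\cO_Q(1))=h^{0}(Q_{\overline k},\cO_{Q_{\overline k}}(1))=5$, and $|\cO_Q(1)|$ embeds $Q$ in $\P^{4}_k$ because it does so after base change to $\overline k$; therefore $Q\subset\P^{4}_k$ is a quadric threefold and $\Gamma\subset Q\subset\P^{4}_k$ a smooth sextic curve of genus $0$, which is quadratically normal because the surjectivity of $H^{0}(Q,\cO(2))\to H^{0}(\Gamma,\cO(2)|_{\Gamma})$ is insensitive to a faithfully flat base change.

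The main difficulty will lie not in the descent in (4), which is elementary once one has $\Pic(X)=\Z(-K_X)$ over $k$, but in making the geometric core of the two-ray game work in positive and mixed characteristic: the base-point freeness and semiampleness of $-K_{\widetilde X}$, the existence and structure of the flop $\Psi^{+}$, and — most delicately — the identification of $\tau$ as the blow-up of a \emph{smooth} curve in a Fano threefold, since extremal divisorial contractions of smooth threefolds can be pathological in small characteristic. The intended remedies are the minimal model program for threefolds (\cite{Tanakaflop}) and the Fano classification (\cite{Tanaka1}, \cite{Tanaka2}), supplemented — where the direct characteristic-independence argument is not available in very low characteristic — by lifting $(X,C)$ to a mixed-characteristic discrete valuation ring (Fano threefolds being unobstructed) and running the two-ray game relatively, so as to specialize from the characteristic-zero situation.
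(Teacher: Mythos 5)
Your proposal follows essentially the same route as the paper: reduce (1)--(3) to $\overline{k}$, import the positive-characteristic two-ray game from Tanaka's work (\cite{Tanaka2} for base-point-freeness and the structure of the contractions, \cite{Tanakaflop} for the flop), identify $Q$ and $\Gamma$ numerically, and descend the hyperplane class using that $\Pic(\widetilde X)$ is generated over $k$ by $\mu^{*}H$ and $E$ — the paper does this last step more directly by writing $\tau^{*}\cO_Q(1)\sim -K_{\widetilde Q}-D'$ with both classes visibly $k$-rational, but your Galois-descent version is equivalent. Two thin spots: the quadratic normality of $\Gamma$ cannot come from Lemma \ref{lem:sexticcubic} (whose hypothesis is quadratic normality); the actual argument, as in the paper, is the divisor-class computation $2\tau^{*}H-E\sim D'$ (using $E\sim -2K_{\widetilde Q}-3D'$ from \cite{Tanaka2}), giving $h^{0}(2H-\Gamma)=1$ and then Lemma \ref{lem:quadraticallynormal}. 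Likewise, the assertion that the characteristic-zero analysis of the second extremal ray ``carries over'' should be replaced by an explicit citation of \cite{Tanaka2} (or the case-elimination via \cite[Propositions 3.19, 3.20, 3.22]{Tanaka2} that the paper carries out for the analogous contractions), since divisorial contractions of smooth threefolds are not a priori as in characteristic zero.
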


\begin{proof}
First, we show (1)-(3).
To show (1)-(3), we may assume that $k$ is an algebraically closed field.
If $k$ is of characteristic zero, these are proved in \cite[Theorem 2.2]{Kuznetsov-ProkhorovGm}.
Basically, assuming H. Tanaka's results, the proof proceeds in the same way as in the characteristic zero case, but we include the proof here for completeness.
We assume that $\chara k >0$.
(1) follows from \cite[Proposition 7.2]{Tanaka2}.
(2) follows from \cite[Proposition 7.2, Proposition 7.3 and Corollary 7.9]{Tanaka2} and \cite[Theorem 1.1]{Tanakaflop}.
We shall show (3).
By \cite[Theorem 7.4]{Tanaka2}, $Q_{\overline{k}}$ is a quadric threefold in $\P^4_{\overline{k}}$, and $\Gamma_{\overline{k}} \subset Q_{\overline{k}}$ is a smooth rational sextic curve.
Therefore, it suffices to show that $\Gamma_{\overline{k}}$ is quadratically normal.
Let $H$ be a hyperplane section of $Q_{\overline{k}} \subset \P^4_{\overline{k}}$, and $E$ the exceptional divisor of $\tau$. 
We denote the $\mu$-exceptional divisor by $D$, and its strict transform on $\widetilde{Q}$ by $D'$.
By \cite[Table 3]{Tanaka2}, we obtain $E \sim -2 K_{\widetilde{Q}} - 3 D'$.
Since 
$\tau^{\ast} (3H) \sim -K_{\widetilde{Q}} + E \sim -3K_{\widetilde{Q}}-3D'$,
We have $\tau^{\ast} (H) \sim -K_{\widetilde{Q}}-D'$. (Note that $\Pic (\widetilde{Q})$ is torsion-free.
Note also that the equivalence holds true even if $\chara k =0$ (see \cite[Section 2]{Kuznetsov-ProkhorovGm}.)
Hence, we obtain
\[
2 \tau^{\ast} (H) - E \sim 2(-K_{\widetilde{Q}}-D') -(-2K_{\widetilde{Q}}-3D') = D'.
\]
Therefore, we obtain 
  $h^0(2H-\Gamma)= h^0 (2 \tau^{\ast} (H)- E)  = h^0 (D) =1$,
which means $\Gamma$ is quadratically normal by Lemma \ref{lem:quadraticallynormal}.

Finally, we show (4). 
Since $\tau^{\ast} (H) \sim -K_{\widetilde{Q}}-D'$, we have
$\tau_{\ast} (-K_{\widetilde{Q}} -D')_{\overline{k}} \sim H_{\overline{k}}$.
Therefore, $\cO (\tau_{\ast} (-K_{\widetilde{Q}} -D'))$ gives an $k$-form of $\cO(H_{\overline{k}})$, which is a very ample line bundle and defines an embedding $Q \hookrightarrow \P^4$.
This completes the proof.
\end{proof}

\begin{prop}
\label{prop:two-rayQ3toV22}
Let $k$ be a field, and $Q \subset \P^4$ a quadric 3-fold over $k$.
Let $\Gamma \subset Q \subset \P^4$  be a quadratically normal smooth sextic curve of genus $0$ over $k$, 
and $\tau \colon \widetilde{Q} \rightarrow Q$ a blow-up of $\Gamma$ on $Q$.
Then the following hold.
\begin{enumerate}
\item 
The anti-canonical divisor $- K_{\widetilde{Q}}$ is base-point free.
We denote the corresponding fiber-space morphism by $\Phi \colon \widetilde{Q} \rightarrow V$. 
\item 
The morphism $\Phi$ is a flopping contraction, and the flop of $\Phi$ exists.
We denote the flop by $\Phi^{+} \colon \widetilde{X} \rightarrow V$.
\item 
Let $\mu \colon \widetilde{X} \rightarrow X$ be the contraction of the $K_{\widetilde{X}}$-negative extremal ray.
Then $X$ is a $V_{22}$-variety over $k$, and $\tau$ is a blow-up of a smooth conic $C$ on $X$.
\item 
This construction gives the converse of the construction in Proposition \ref{prop:two-rayV22toQ3}.
\end{enumerate}
\end{prop}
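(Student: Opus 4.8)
The plan is to run the two-ray game of Proposition~\ref{prop:two-rayV22toQ3} in reverse and then to check that the threefold produced at the end has the invariants of a $V_{22}$-variety. Every step of the construction --- the blow-up $\tau$, the anti-canonical morphism $\Phi$, its flop $\Phi^{+}$, and the $K$-negative extremal contraction $\mu$ --- is canonically determined by the data preceding it, so the whole chain is defined over $k$ and commutes with field extension; hence for the geometric assertions in (3) (that $X$ is a $V_{22}$-variety and that $\mu$ is a blow-up of a smooth conic) it suffices to argue after base change to $\overline{k}$, where one may invoke the analysis of Kuznetsov--Prokhorov \cite[Theorem~2.2]{Kuznetsov-ProkhorovGm} in characteristic zero and the positive-characteristic results of H.~Tanaka \cite{Tanaka1, Tanaka2, Tanakaflop}. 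For (1): on $Q$ one has $-K_{Q}\sim 3H$, so $-K_{\widetilde{Q}}\sim\tau^{*}(3H)-E$; over $\overline{k}$ the curve $\Gamma_{\overline{k}}$ is a quadratically normal smooth rational sextic, hence a scheme-theoretic intersection of cubics by Lemma~\ref{lem:sexticcubic} (which is characteristic-free, via Gruson--Lazarsfeld--Peskine), so the cubics through $\Gamma$ pull back to a base-point-free linear subsystem of $|-K_{\widetilde{Q}}|$ and $-K_{\widetilde{Q}}$ is base-point free; let $\Phi\colon\widetilde{Q}\to V$ be the Stein factorization of the resulting morphism.

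For (2): the same local analysis as in the proof of Proposition~\ref{prop:two-rayV22toQ3} shows that $\Phi$ is a small birational contraction, which, $K_{\widetilde{Q}}$ being $\Phi$-trivial, is a flopping contraction; the flop $\Phi^{+}\colon\widetilde{X}\to V$ then exists, by standard MMP when $\chara k = 0$ and by \cite[Theorem~1.1]{Tanakaflop} in positive and mixed characteristic. For (3): $\widetilde{X}$ has Picard rank two, one of its extremal rays being the $K$-trivial small ray contracted by $\Phi^{+}$ and the other $K$-negative, and we let $\mu\colon\widetilde{X}\to X$ be the contraction of the latter. Running the other half of the two-ray game as in \cite[Theorem~2.2]{Kuznetsov-ProkhorovGm} together with the discrepancy data of \cite[Table~3]{Tanaka2}, the contraction $\mu$ is divisorial and contracts a prime divisor onto a smooth curve $C\subset X$; pushing the classes $H$, $E$, $K$ through the flop (using the linear-equivalence relations recalled in the proof of Proposition~\ref{prop:two-rayV22toQ3}, together with torsion-freeness of $\Pic$) one computes that $X$ is smooth, $-K_{X}$ is ample, $\rho(X)=1$, $i(X)=1$ and $(-K_{X})^{3}=22$, so $X$ is a $V_{22}$-variety, and that $(C, -K_{X})=2$ with $\mu$ the blow-up of $C$.

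For (4): applying Proposition~\ref{prop:two-rayV22toQ3} to the pair $(X, C)$ reproduces exactly the chain of maps constructed above, since each stage --- blow-up along a smooth curve, anti-canonical morphism and its Stein factorization, flop of a threefold flopping contraction, $K$-negative extremal contraction in Picard rank two --- has a unique outcome; thus the composite of the two constructions is the identity and one recovers the pair $(Q,\Gamma)$, which is the assertion.

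The step I expect to be the main obstacle is (3) in positive and mixed characteristic: one must be certain that the $K$-negative contraction really yields a smooth Fano threefold with the $V_{22}$-invariants rather than a singular or otherwise degenerate variety, and this cannot be read off from the characteristic-zero classification of Fano threefolds --- it requires combining H.~Tanaka's structural results on Fano threefolds and their flops with the explicit numerical bookkeeping of the two-ray game. A secondary, essentially formal point is that uniqueness of flops of threefold flopping contractions is what lets the entire chain, and hence the varieties $X$ and $C$, be defined over $k$ and commute with base change.
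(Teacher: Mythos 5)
Your overall route is the same as the paper's: reverse the two-ray game, get base-point freeness of $-K_{\widetilde{Q}}$ from Lemma \ref{lem:sexticcubic}, obtain the flop from \cite[Theorem 1.1]{Tanakaflop}, and identify $X$ by pushing intersection numbers through the flop. Steps (1), (4), and the broad strategy for (3) match. However, there is a genuine gap in your treatment of (2).

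You assert that ``the same local analysis as in the proof of Proposition \ref{prop:two-rayV22toQ3} shows that $\Phi$ is a small birational contraction,'' but no such transferable analysis exists: the proof of Proposition \ref{prop:two-rayV22toQ3} in positive characteristic simply cites Tanaka's structural results for the blow-up of a $V_{22}$-variety along a conic, which is a different variety, and those results say nothing about the blow-up of $Q$ along $\Gamma$. Two things must actually be proved here. First, that $-K_{\widetilde{Q}}$ is \emph{not ample}, i.e.\ that $\Phi$ contracts something at all; in positive characteristic the paper gets this only by constructing a $W(k)$-lift of the pair $(Q,\Gamma)$ via Lemma \ref{lem:liftquadricsextic} and invoking the characteristic-zero statement \cite[Theorem 2.6]{Kuznetsov-ProkhorovGm} to produce a contracted curve on the generic fibre, whence one on the special fibre. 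Your sketch never produces a $K$-trivial curve. Second, that $\Phi$ is \emph{small} rather than divisorial: the paper shows that any $\Phi$-contracted divisor would have to be the strict transform $F_0$ of the unique quadric section through $\Gamma$ (using quadratic normality, Lemma \ref{lem:quadraticallynormal}, and irreducibility of $F_0$ via linear normality of $\Gamma$), and then computes $(-K_{\widetilde{Q}})^2\cdot(2\tau^{*}(H)-mE)=24-20m\neq 0$, contradicting $(-K_{\widetilde{Q}})^2\cdot F_0=0$. Without these two arguments the claim that $\Phi$ is a flopping contraction — on which (2), (3), and (4) all depend — is unsupported. (Your step (3) is also only a sketch: one must still rule out $\dim X\le 2$ and the contraction-to-a-point case, the latter requiring the cubic-threefold divisibility argument; but you correctly flag this as the place needing Tanaka's numerical classification, so I regard that as incompleteness rather than a wrong turn.)
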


\begin{proof}
All the assertions can be reduced to the case where $k$ is algebraically closed.
If $k$ is of characteristic zero, these are proved in \cite[Theorem 2.6]{Kuznetsov-ProkhorovGm}.
Basically, a similar proof works, but we also use a new input: the numerical classification of Fano 3-folds in positive characteristic (see \cite{Tanaka1} and \cite{Tanaka2}). Therefore, we provide a complete proof.

We assume that $\chara k>0$.
Let $H$ be a hyperplane section of $Q \subset \P^4$, and $E$ the exceptional divisor of $\tau$.
By Lemma \ref{lem:sexticcubic}, the linear system $|-K_{\widetilde{Q}}| = |3\tau^{\ast}(H)-E|$ on $\widetilde{Q}$ is base-point free, i.e.,\ (1) follows.

Since $(-K_{\widetilde{Q}})^3 = 16$ by \cite[Lemma 3.21]{Tanaka2}, $-K_{\widetilde{Q}}$ is big.
Let $(\mathcal{Q}, \widetilde{\Gamma})$ be a $W(k)$-lift of $(Q, \Gamma)$ given by Lemma \ref{lem:liftquadricsextic}.
Let $\widetilde{\tau} \colon \widetilde{\mathcal{Q}} \rightarrow \mathcal{Q}$ be the blow-up of $\widetilde{\Gamma}$ on $\mathcal{Q}$.
Then $-K_{\widetilde{\mathcal{Q}}_K}$ contracts a curve by \cite[Theorem 2.6]{Kuznetsov-ProkhorovGm}.
Therefore, $-K_{\widetilde{Q}}$ also contracts a curve, i.e.,\ $-K_{\widetilde{Q}}$ is not ample.

Assume that $\Phi$ is a divisorial contraction.
For any $\Phi$-contracted curve $f$, we have $-K_{\widetilde{Q}} \cdot f =0$.
Since $\tau^{\ast}(H)-K_{\widetilde{Q}}$ is ample, we have $\tau^{\ast}(H) \cdot f>0$ and $E \cdot f>0$.
By Lemma \ref{lem:quadraticallynormal}, $|2 \tau^{\ast}(H)-E|$ consists of a 1 point.
Let $m \geq 1$ be the maximal integer such that $|2 \tau^{\ast}(H)-mE|$ is a 1 point, and $F_0$ be the unique member.
Since 
\[
(2 \tau^{\ast}(H) - m E) \cdot f < (3 \tau^{\ast} (H)-E)\cdot f = -K_{\widetilde{Q}} \cdot f =0,
\]
we obtain $f \subset F_0$.
By the choice of $m$, $F_0$ is the strict transform of the quadric $Q'|_{Q}$ on $Q$ passing through $\Gamma$.
Since $\Gamma$ is not contained in a hyperplane, $Q'|_{Q}$ is irreducible, and so is $F_0$.
Therefore, $\Phi$ contracts $F_0$.
On the other hand, we have 
\[
(-K_{\widetilde{Q}})^3 = 16, (-K_{\widetilde{Q}})^2\cdot E = 20, (-K_{\widetilde{Q}})\cdot E^2 = -2, \textup{ and } E^3 = -16
\]
by \cite[Lemma 3.21]{Tanaka2}.
Therefore, we have
\[
(-K_{\widetilde{Q}})^2 \cdot (2 \tau^{\ast} (H)-mE)
= 
(-K_{\widetilde{Q}})^2 \cdot (\frac{2}{3}(-K_{\widetilde{Q}}+E) -mE)
=24-20m,
\]
which contradicts $(-K_{\widetilde{Q}})^2\cdot F_0 =0$.
As a result, $\Phi$ is a flopping contraction, and we obtain (2) by the existence of flop (\cite[Theorem 1.1]{Tanakaflop}).

Next, we shall prove (3).
As in (3), let $\mu \colon \widetilde{X} \rightarrow X$ be a $K_{\widetilde{X}}$-negative extremal contraction (,which is unique since $\rho (\widetilde{X}) =2$ and $-K_{\widetilde{X}}$ is not ample but nef and big).
Since $\rho (\widetilde{X}) =2$, we have $\dim X \neq 0$.

We assume that $\dim X =1$ or $2$.
In this case, $X$ is $\P^1$ or $\P^2$ (see \cite[Proposition 3.19 and 3.20]{Tanaka2}).
Let $D$ be a divisor on $\widetilde{X}$ that is the pull-back of an ample generator of $\Pic (X)$. 
Let $F$ be the unique member of $|2\tau^{\ast} (H)-E|$ (cf.\ Lemma \ref{lem:quadraticallynormal}).
Note that $\Pic (\widetilde{Q}) (\simeq \Pic (\widetilde{X}))$ is generated by $ \cO_{\widetilde{Q}} (\tau^{\ast}(H))$ and $\cO_{\widetilde{Q}} (E)$. 
Therefore, they are generated by $\cO_{\widetilde{Q}} (-K_{\widetilde{Q}})$ and $\cO_{\widetilde{Q}} (F)$.
We may write $D \sim \alpha (-K_{\widetilde{X}}) - \beta F'$ for $\alpha, \beta \in \Z$, where $F'$ is the strict transform of $F$ on $\widetilde{X}$.
Note that we have
\begin{equation}
\label{eqn:intersectionflopQ3}
(-K_{\widetilde{X}})^i \cdot (F')^{3-i} = (-K_{\widetilde{Q}})^i \cdot (F)^{3-i}
\end{equation}
for $1 \leq i \leq 3$ (see the proof of \cite[Lemma 4.5 (2)]{Tanaka2}).
Moreover, we have
\[
(-K_{\widetilde{Q}})^3 = 16, (-K_{\widetilde{Q}})^2\cdot F = 4, (-K_{\widetilde{Q}})\cdot F^2 = -2, \textup{ and } F^3 = -4.
\]
By \cite[Proposition 3.19 and Proposition 3.20]{Tanaka2},
\[
(-K_{\widetilde{X}})\cdot D^2 = (-K_{\widetilde{X}}) \cdot (\alpha (-K_{\widetilde{X}}) - \beta F')^2 = 16 \alpha^2 -8 \alpha \beta -2 \beta^2
\]
 is equal to 0 or 2.
These equations have no integral solutions, and we obtain a contradiction.

Therefore, we obtain $\dim X =3$, and $\mu$ is a contraction of a divisor $D$ to a point or a curve (cf.\ \cite[Proposition 3.22]{Tanaka2}).
As before, we set $D \sim \alpha (-K_{\widetilde{X}}) - \beta F'$.
First, we suppose that $D$ is contracted to a point.
By \cite[Proposition 3.22]{Tanaka2}, we have
\[
(-K_{\widetilde{X}})\cdot D^2 = 16 \alpha^2 -8 \alpha \beta - 2 \beta^2 =-2,
\]
\[
(-K_{\widetilde{X}})^2\cdot D = 16 \alpha -4\beta \in \{1,2,4\},
\]
\[
D^3 \in \{1,2,4\}.
\]
The first and the second equations have the unique solution $\alpha=0, \beta=-1$. 
{\cora
In this case, $\mu$ is a blow-up at a point in a smooth Fano threefold.
By \cite[Lemma 3.21]{Tanaka2}, $X$ equals a cubic threefold.
But then $\frac{1}{2}\mu^{\ast} (-K_{X}) \sim -\frac{1}{2}K_{\widetilde{X}} + F'$ is integral, and we obtain a contradiction.
}

Therefore, $\mu$ is a contraction of $D$ to a smooth curve $C$ on the smooth variety $X$ (cf.\ \cite[Proposition 3.22]{Tanaka2}).
Then, by the same argument to \cite[Lemma 2.2]{CM13}, we have $\beta = -r$.
Since $D \sim \alpha (-K_{\widetilde{X}}) + r F'$ and $D$ is a fixed prime divisor, we obtain $\alpha=0$ and $r=1$, and so $D=F'$. Then we have
\[
(\mu^{\ast} (-K_{X}))^3 = (-K_{\widetilde{X}})\cdot (-K_{\widetilde{X}} + D)^2
= 16 + 2 \cdot 4 -2 =22,
\]
which means that $X$ is a $V_{22}$-variety over $k$.
Moreover, we have
\[
\deg C = (-K_{X}) \cdot C = - (-K_{\widetilde{X}}+D) \cdot D^2 = - ((-K_{\widetilde{X}}+D)\cdot(K_{\widetilde{X}})\cdot D)=2,
\]
where the second equality follows from \cite[the proof of Lemma 3.21 (2)]{Tanaka2}, the third equality follows from 
$(\mu^{\ast} (-K_{X}))^2 \cdot D=0$,
and the fourth equality follows from 
(\ref{eqn:intersectionflopQ3}) and $D=F'$.
This completes the proof of (2) and (3).

The assertion (4) is clear from the construction and the uniqueness of the flop.
\end{proof}

\begin{prop}
\label{prop:integraltwo-rayV22toQ3}
Let $B$ be a connected excellent Dedekind scheme and $K$ the fraction field of $B$.
We denote the residue field of a closed point $s\in B$ by $k_s$.
Let $\mathcal{X}$ be a $V_{22}$-scheme over $B$,
$\mathcal{C}$ a relative smooth conic on $\mathcal{X}$, and $\mu \colon \widetilde{\mathcal{X}} \rightarrow \mathcal{X}$ a blow-up of $\mathcal{C}$ on $\mathcal{X}$.
Then the following hold.
\begin{enumerate}
\item 
The relative anti-canonical divisor $- K_{\widetilde{\mathcal{X}}/B}$ is semi-ample.
We denote the corresponding fiber-space morphism by $\Psi \colon \widetilde{\mathcal{X}} \rightarrow \mathcal{V}$. 
\item 
The morphisms $\Psi$, $\Psi_K$, and $\Psi_{k_s}$ for closed points $s\in B$ are flopping contractions.
Moreover, the flop $\Psi^{+} \colon \widetilde{\mathcal{Q}} \rightarrow \mathcal{V}$ exists, and $\Psi^{+}_K$ and $\Psi^{+}_{k_s}$ are the flops of $\Psi_K$ and $\Psi_{k_s}$, respectively.
\item 
There exists a $K_{\widetilde{\mathcal{Q}}/B}$-negative extremal ray contraction $\tau \colon \widetilde{\mathcal{Q}} \rightarrow \mathcal{Q}$ such that $\mathcal{Q}$ is a quadric threefold over $B$, and $\tau$ is a blow-up of a relative smooth sextic curve $\widetilde{\Gamma} \subset \mathcal{Q}$ of genus $0$.
Moreover, $\widetilde{\Gamma}$ is quadratically normal on each fiber.
\item 
The generic fiber and special fibers of this construction are the same as those we obtained in Proposition \ref{prop:two-rayV22toQ3}.
\end{enumerate}
\end{prop}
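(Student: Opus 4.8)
The plan is to derive this relative statement from its fiberwise counterpart, Proposition~\ref{prop:two-rayV22toQ3}, by a spreading-out argument: each step of the two-ray game is realized as a relative $\Proj$ over $B$ of a suitable sheaf of graded $\mathcal{O}$-algebras, and one checks that the formation of these algebras commutes with restriction to the fibers of $B$ via cohomology and base change. Write $\pi\colon\widetilde{\mathcal{X}}\to B$ for the structure morphism and $\mathcal{D}\subset\widetilde{\mathcal{X}}$ for the $\mu$-exceptional divisor. Since all assertions are local on $B$, one may freely localize $B$ at a closed point or pass to completions; in particular $B$ is irreducible with generic point $\Spec K$.

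For (1): by Proposition~\ref{prop:two-rayV22toQ3}(1) every geometric fiber $\widetilde{\mathcal{X}}_s$ is a weak Fano threefold with $-K_{\widetilde{\mathcal{X}}_s}$ base-point free, and the needed higher cohomology $H^i(\widetilde{\mathcal{X}}_s,\mathcal{O}(-mK_{\widetilde{\mathcal{X}}_s}))$ vanishes for $i>0$ and $m\gg 0$, this vanishing being available in positive characteristic from \cite{Tanaka1,Tanaka2}, on which Proposition~\ref{prop:two-rayV22toQ3} itself rests. Cohomology and base change then show that $\pi_*\mathcal{O}_{\widetilde{\mathcal{X}}}(-mK_{\widetilde{\mathcal{X}}/B})$ is locally free for such $m$ with base-change-compatible formation; hence $-K_{\widetilde{\mathcal{X}}/B}$ is semi-ample, and the induced $B$-morphism $\Psi\colon\widetilde{\mathcal{X}}\to\mathcal{V}:=\Proj_B\bigoplus_{m\ge 0}\pi_*\mathcal{O}_{\widetilde{\mathcal{X}}}(-mK_{\widetilde{\mathcal{X}}/B})$ restricts over $K$ and each $k_s$ to the contractions of Proposition~\ref{prop:two-rayV22toQ3}(1) (which are independent of $m$ since $-K$ is already fiberwise semi-ample). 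Note $\mathcal{V}$ is $B$-flat.

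For (2): by Proposition~\ref{prop:two-rayV22toQ3}(2) the morphisms $\Psi_K$ and $\Psi_{k_s}$ are flopping contractions, so $\Psi$ is small, as a $\Psi$-contracted divisor would already be contracted on some fiber of $B$. The flop is produced as $\widetilde{\mathcal{Q}}:=\Proj_{\mathcal{V}}\bigoplus_{m\ge 0}\Psi_*\mathcal{O}_{\widetilde{\mathcal{X}}}(m\mathcal{D}^{\sharp})$, where $\mathcal{D}^{\sharp}=\pm\mathcal{D}$ is the Cartier divisor whose sign makes it anti-ample for $\Psi$ along the flopping locus. Finite generation of this graded algebra, and the compatibility of its formation with passage to the fibers over $B$, reduces to the fibers — where the flop exists by \cite{Tanakaflop} — after twisting by a sufficiently $\Psi$-positive divisor so that the relevant higher direct images on the fibers vanish; one then concludes that $\widetilde{\mathcal{Q}}\to\mathcal{V}$ is small, that $\widetilde{\mathcal{Q}}\to B$ is smooth and projective, and that $\Psi^{+}_K$ and $\Psi^{+}_{k_s}$ are the flops of $\Psi_K$ and $\Psi_{k_s}$ by the uniqueness of flops. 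This step is the main obstacle: over a mixed-characteristic Dedekind base one lacks the MMP, so the relative flop must be built by hand as a relative $\Proj$, and the delicate points are exactly the finite generation together with the base-change compatibility, for which one leans on the fiberwise case \cite{Tanakaflop}, as in \cite{IKTT}.

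For (3) and (4): let $\mathcal{D}'\subset\widetilde{\mathcal{Q}}$ be the strict transform of $\mathcal{D}$ and $\pi'\colon\widetilde{\mathcal{Q}}\to B$ the structure morphism. The computation in the proof of Proposition~\ref{prop:two-rayV22toQ3}(3) gives, on each fiber, $\tau_s^{*}(H_s)\sim -K_{\widetilde{\mathcal{Q}}_s}-\mathcal{D}'_s$ with $H_s$ a hyperplane section of the quadric threefold $Q_s$; hence $\mathcal{L}:=\mathcal{O}_{\widetilde{\mathcal{Q}}}(-K_{\widetilde{\mathcal{Q}}/B}-\mathcal{D}')$ is relatively semi-ample, and $\tau\colon\widetilde{\mathcal{Q}}\to\mathcal{Q}:=\Proj_B\bigoplus_{m\ge 0}(\pi')_*\mathcal{L}^{\otimes m}$ is the desired $K_{\widetilde{\mathcal{Q}}/B}$-negative contraction, with $\mathcal{Q}\to B$ smooth and projective and fiberwise a quadric threefold by cohomology and base change. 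Pushing $\mathcal{L}$ forward to the regular scheme $\mathcal{Q}$ yields a line bundle, and then to $B$ a locally free sheaf $E$ of rank $5$ together with a closed immersion $\mathcal{Q}\hookrightarrow\P_B(E)$ exhibiting $\mathcal{Q}$ as a relative quadric threefold — the relative form of Proposition~\ref{prop:two-rayV22toQ3}(4). Setting $\widetilde{\Gamma}:=\tau(\mathcal{D}')$, its fibers are the sextic curves $\Gamma_s$, all of the same Hilbert polynomial, so $\widetilde{\Gamma}$ is $B$-flat; checking fiberwise that $\mathcal{I}_{\widetilde{\Gamma}}\cdot\mathcal{O}_{\widetilde{\mathcal{Q}}}$ is invertible identifies $\tau$ with the blow-up of $\widetilde{\Gamma}$, and quadratic normality of $\widetilde{\Gamma}$ on each fiber is Proposition~\ref{prop:two-rayV22toQ3}(3). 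Finally (4) is immediate, since every object constructed above was, by design, compatible with base change to the generic and closed points of $B$.
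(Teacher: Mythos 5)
Your overall architecture --- localize at a closed point of $B$, realize each step of the two-ray game as a relative $\Proj$ over the base, and control everything by the fiberwise Proposition \ref{prop:two-rayV22toQ3} --- is the same strategy the paper follows (it delegates the details to the analogous argument of \cite[Proposition 4.12]{IKTT}). The gap is in your step (1), and it sits exactly at the point that makes the positive and mixed characteristic cases nontrivial. To run cohomology and base change for $\pi_*\mathcal{O}_{\widetilde{\mathcal{X}}}(-mK_{\widetilde{\mathcal{X}}/B})$ you assert that $H^i(\widetilde{\mathcal{X}}_s,\mathcal{O}(-mK_{\widetilde{\mathcal{X}}_s}))=0$ for $i>0$ and $m\gg 0$ on every fiber, claiming this is ``available from \cite{Tanaka1}, \cite{Tanaka2}.'' That vanishing is not established there: $-K_{\widetilde{\mathcal{X}}_s}$ is only nef and big (the fiber is a weak Fano threefold admitting a flopping contraction), and Kodaira/Kawamata--Viehweg-type vanishing is precisely what cannot be taken for granted on fibers of residue characteristic $2,3,5,\dots$, which are the characteristics this paper is about; upper semicontinuity also goes the wrong way, so vanishing on a characteristic-zero generic fiber says nothing about a special fiber. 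This is why the paper does not prove (1) by cohomology and base change outside equi-characteristic zero: it invokes the relative semi-ampleness theorem \cite[Theorem 1.1]{Cascini-Tanaka} in equi-characteristic $p$ and Witaszek's mixed-characteristic result \cite{Witaszek} in the mixed case, reserving the Kawamata--Viehweg/base-change route for equi-characteristic $0$ only. The same unproved vanishing is smuggled into your step (2), where base-change compatibility of $\Psi_*\mathcal{O}(m\mathcal{D}^{\sharp})$ is again reduced to ``the relevant higher direct images on the fibers vanish.''

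The rest of your argument (smallness of $\Psi$ because a contracted divisor would be seen on a fiber, uniqueness of flops, descent of the hyperplane bundle on $\mathcal{Q}$ from the generic fiber so that no extension of the base is needed, flatness of $\widetilde{\Gamma}$ via constancy of Hilbert polynomials) is consistent with what the paper does. The proposal is therefore repairable rather than wrong in conception: replace the vanishing claim in (1) by the citations above, and justify the base-change statements in (2) either by applying the same relative semi-ampleness results to $\pm\mathcal{D}$ on $\widetilde{\mathcal{X}}\to\mathcal{V}$ or by the explicit analysis of the flopping curves carried out in \cite{IKTT}.
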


\begin{proof}
We may assume that $B$ is the spectrum of a Henselian discrete valuation ring $R$.
Then this follows from the same argument as in \cite[Proposition 4.12]{IKTT} by using Proposition \ref{prop:two-rayV22toQ3}.
Note that, in the equi-characteristic $p >0$ case, to prove (1), we also use \cite[Theorem 1.1]{Cascini-Tanaka} instead of \cite{Witaszek}.
Also, in the equi-characteristic $0$ case, semi-ampleness is reduced to that of fibers by the Kawamata-Viehweg vanishing $H^{i} (-mK_{\widetilde{X}_b}) = 0$ for $i>0$, $m\geq1$, and $b\in B$.
We also note that, in loc. cit.,\ we extend $R$ to show the existence of a relative smooth quintic curve and to ensure that the resulting scheme is a quadric threefold.
In our situation, we assume that there exists a conic $\mathcal{C}$.
Moreover, by Proposition \ref{prop:two-rayV22toQ3}, we know the generic and the special fibers of the resulting scheme $\mathcal{Q}$ are quadric threefolds.
Therefore, the closure of a hyperplane section of $\mathcal{Q}_{K}$ gives a very ample line bundle associating $\mathcal{Q} \hookrightarrow \P^4_R$,
and we do not need to extend $R$.
\end{proof}

\begin{prop}
\label{prop:integraltwo-rayQ3toV22}
Let $B$ be a connected excellent Dedekind scheme, and $K$ the fraction field of $B$.
We denote the residue field of a closed point $s\in B$ by $k_s$.
Let $\mathcal{Q} \subset \P_B(E)$ be a quadric threefold over $B$, 
$\widetilde{\Gamma} $ a relative smooth sextic curve of genus $0$ on $\mathcal{Q}$, and $\tau \colon \widetilde{\mathcal{Q}} \rightarrow \mathcal{Q}$ a blow-up of $\widetilde{\Gamma}$ on $\mathcal{Q}$.
We assume that $\widetilde{\Gamma}_K$ and $\widetilde{\Gamma}_{k_s}$ for closed points $s \in B$ are quadratically normal.
Then the following hold.
\begin{enumerate}
\item 
The relative anti-canonical divisor $- K_{\widetilde{\mathcal{Q}}/B}$ is base-point free.
We denote the corresponding fiber-space morphism by $\Phi \colon \widetilde{\mathcal{Q}} \rightarrow \mathcal{V}$. 
\item 
The morphisms $\Phi, \Phi_K$, and $\Phi_{k_s}$ for closed points $s \in B$ are flopping contractions.
Moreover, the flop $\Phi^+ \colon \widetilde{\mathcal{X}} \rightarrow \mathcal{V}$ exists, and $\Phi^+_K$ and $\Phi^+_{k_s}$ are the flops of $\Phi_K$ and $\Phi_{k_s}$, respectively.
\item 
There exists a $K_{\widetilde{X}/B}$-negative extremal ray contraction $\mu \colon \widetilde{\mathcal{X}}\rightarrow \mathcal{X}$ such that $\mathcal{X}$ is a $V_{22}$-scheme over $B$, and $\mu$ is a blow-up of a relative smooth conic $\mathcal{C} \subset \mathcal{X}$.
\item 
The generic fiber and special fibers of this construction are the same as those we obtained in Proposition \ref{prop:two-rayQ3toV22}.
\item 
This construction gives the converse of Proposition \ref{prop:integraltwo-rayV22toQ3}.
\end{enumerate}
\end{prop}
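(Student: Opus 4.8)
The plan is to run the two-ray game of Proposition~\ref{prop:two-rayQ3toV22} in families, following the strategy used in \cite[Proposition~4.12]{IKTT} for the relative two-ray game attached to lines and conics, in the same way that Proposition~\ref{prop:integraltwo-rayV22toQ3} was deduced from Proposition~\ref{prop:two-rayV22toQ3}. As a first step I would reduce to the case $B = \Spec R$ with $R$ a Henselian discrete valuation ring: base-point-freeness and the extremal contractions are local on $B$, and the existence of the flop over an excellent base reduces to the Henselian (indeed complete) case and descends, exactly as in loc.\ cit. Write $s$ for the closed point of $\Spec R$, put $k = k_s$, let $\tau \colon \widetilde{\mathcal{Q}} \to \mathcal{Q}$ be the blow-up of $\widetilde{\Gamma}$ with exceptional divisor $E$, and let $H$ be the relative hyperplane class of $\mathcal{Q} \subset \P^4_R$; note that $\mathcal{Q} \to B$ is smooth, hence so is $\widetilde{\mathcal{Q}} \to B$.

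For (1) the task is to promote to a relative statement the fiberwise base-point-freeness of $-K_{\widetilde{\mathcal{Q}}_b} \sim 3\tau^{*}H_b - E_b$, which holds on geometric fibers by Lemma~\ref{lem:sexticcubic} (the sextic $\widetilde{\Gamma}_b$ being a scheme-theoretic intersection of cubics). Since $-K_{\widetilde{\mathcal{Q}}_b}$ is nef and big --- $(-K_{\widetilde{\mathcal{Q}}_b})^{3} = 16$ by \cite[Lemma~3.21]{Tanaka2} --- one has $H^{i}(\widetilde{\mathcal{Q}}_b, -K_{\widetilde{\mathcal{Q}}_b}) = 0$ for $i > 0$: in equicharacteristic $0$ by Kawamata--Viehweg, in equicharacteristic $p$ by \cite[Theorem~1.1]{Cascini-Tanaka}, and in mixed characteristic fiber-by-fiber. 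Hence $\pi_{*}\mathcal{O}_{\widetilde{\mathcal{Q}}}(-K_{\widetilde{\mathcal{Q}}/B})$ (with $\pi \colon \widetilde{\mathcal{Q}} \to B$) is locally free with formation commuting with base change, and Nakayama's lemma upgrades fiberwise global generation of $-K_{\widetilde{\mathcal{Q}}_b}$ to relative base-point-freeness of $-K_{\widetilde{\mathcal{Q}}/B}$, producing $\Phi \colon \widetilde{\mathcal{Q}} \to \mathcal{V}$. For (2), Proposition~\ref{prop:two-rayQ3toV22}(2) gives that $\Phi_K$ and $\Phi_{k}$ are flopping contractions, hence so is $\Phi$; the relative flop $\Phi^{+} \colon \widetilde{\mathcal{X}} \to \mathcal{V}$ then exists by \cite[Theorem~1.1]{Tanakaflop}, and the identification of $\Phi^{+}_K$ and $\Phi^{+}_{k}$ with the flops of $\Phi_K$ and $\Phi_{k}$ is carried out as in \cite[Proposition~4.12]{IKTT}.

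For (3), relative MMP furnishes a $K_{\widetilde{\mathcal{X}}/B}$-negative extremal contraction $\mu \colon \widetilde{\mathcal{X}} \to \mathcal{X}$, and by Proposition~\ref{prop:two-rayQ3toV22}(3) it restricts on the generic and the closed fiber to the blow-up of a smooth conic on a $V_{22}$-variety; in particular $\mu$ contracts a prime divisor onto a closed subscheme $\mathcal{C} \subset \mathcal{X}$, flat over $B$, each of whose geometric fibers is a smooth conic of degree $2$ in $\mathcal{X}_b$. As the flatness and regularity of $\mathcal{X}$ are preserved through the construction by the argument of \cite[Proposition~4.12]{IKTT}, and the fibers of $\mathcal{X} \to B$ are $V_{22}$-varieties by Proposition~\ref{prop:two-rayQ3toV22}(3), $\mathcal{X}$ is a $V_{22}$-scheme over $B$ and $\mathcal{C}$ is a relative smooth conic, which is (3). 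Assertion (4) is immediate from the construction together with Proposition~\ref{prop:two-rayQ3toV22}, and (5) follows from the uniqueness of the flop and the fact that on each fiber the present construction is inverse to that of Proposition~\ref{prop:integraltwo-rayV22toQ3} by Proposition~\ref{prop:two-rayQ3toV22}(4). The step I expect to be the main obstacle is (2): showing that the relative flop restricts fiberwise to the flops of the fibers (which is not a formal consequence of base change for flops), and likewise that $\mu$ restricts fiberwise to the asserted blow-ups; but this is precisely the type of compatibility already established in \cite[Proposition~4.12]{IKTT}, so the proof reduces to transporting that argument with Proposition~\ref{prop:two-rayQ3toV22} and Lemma~\ref{lem:sexticcubic} as the fiberwise inputs.
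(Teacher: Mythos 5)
Your proposal follows essentially the same route as the paper: the paper's proof simply reduces to a Henselian discrete valuation ring and invokes the argument of \cite[Proposition 4.12]{IKTT} with Proposition \ref{prop:two-rayQ3toV22} as the fiberwise input, using \cite[Theorem 1.1]{Cascini-Tanaka} (resp.\ Kawamata--Viehweg vanishing) to globalize base-point-freeness in equicharacteristic $p$ (resp.\ $0$), exactly as you outline. Your write-up is a faithful (and somewhat more detailed) expansion of that same argument, so it is correct.
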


\begin{proof}
This follows from the same argument as in \cite[Proposition 4.12]{IKTT} by using Proposition \ref{prop:two-rayQ3toV22}.
See also the proof of Proposition \ref{prop:integraltwo-rayV22toQ3}.
\end{proof}

As a corollary, we obtain a $W(k)$-liftability of $V_{22}$-varieties, which was proved in \cite[Proposition 10.10]{Tanaka2}.

\begin{cor}
\label{cor:V22lift}
Let $k$ be an algebraically closed field of characteristic $p>0$, and $X$ a $V_{22}$-variety over $k$. 
Then $X$ is $W(k)$-liftable, and $b_i(X)$ is 1 (resp.\ 0) if $i$ is even (resp.\ odd).
\end{cor}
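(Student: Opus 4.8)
The plan is to construct a $W(k)$-lift of $X$ by transporting the problem across the two-ray game to a quadric threefold, where lifting is elementary, and then transporting back in a relative setting over $\Spec W(k)$; the assertion on Betti numbers will then follow by base change from the generic fiber.

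First I would choose a smooth conic $C\subset X$, which exists by Proposition~\ref{prop:conicexists}. Applying Proposition~\ref{prop:two-rayV22toQ3} to the blow-up of $C$ yields a quadric threefold $Q\subset\P^4_k$ together with a quadratically normal smooth rational sextic curve $\Gamma\subset Q$, and by Proposition~\ref{prop:two-rayQ3toV22}~(4) the two-ray game run backwards from $(Q,\Gamma)$ recovers $(X,C)$. Next, by Lemma~\ref{lem:liftquadricsextic} (here we use that $k$ is perfect) there is a relative quadric threefold $\mathcal{Q}\subset\P^4_{W(k)}$ and a relative smooth rational sextic curve $\widetilde{\Gamma}\subset\mathcal{Q}$ lifting $(Q,\Gamma)$, with $\widetilde{\Gamma}_K$ quadratically normal, where $K=\Frac W(k)$; its special fiber $\widetilde{\Gamma}_k=\Gamma$ is quadratically normal as well.

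Since $k$ is algebraically closed, $W(k)$ is a complete (hence excellent) discrete valuation ring, so $\Spec W(k)$ is a connected excellent Dedekind scheme with a single closed point of residue field $k$, and the hypotheses of Proposition~\ref{prop:integraltwo-rayQ3toV22} are met by $(\mathcal{Q},\widetilde{\Gamma})$. Applying that proposition produces a $V_{22}$-scheme $\mathcal{X}$ over $W(k)$ together with a relative smooth conic $\mathcal{C}\subset\mathcal{X}$, and by parts~(4)--(5) its special fiber $(\mathcal{X}_k,\mathcal{C}_k)$ is exactly the output of the absolute two-ray game of Proposition~\ref{prop:two-rayQ3toV22} applied to $(Q,\Gamma)$, namely $(X,C)$. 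Hence $\mathcal{X}$ is a smooth projective $W(k)$-lift of $X$, which is the first assertion. For the Betti numbers, the geometric generic fiber $\mathcal{X}_{\overline{K}}$ is a $V_{22}$-variety over the algebraically closed field $\overline{K}$ of characteristic $0$, whose Hodge numbers are $h^{0,0}=h^{1,1}=h^{2,2}=h^{3,3}=1$ with all others (in particular $h^{1,2}$) vanishing by the classical classification of prime Fano threefolds of genus $12$ (see \cite{Iskovskikh-Prokhorov}); thus $b_i(\mathcal{X}_{\overline{K}})=1$ for $i$ even and $0$ for $i$ odd. Smooth proper base change for $\ell$-adic cohomology ($\ell\neq p$) gives $b_i(X)=b_i(X_{\overline{k}})=b_i(\mathcal{X}_{\overline{K}})$, completing the proof (this second assertion is also contained in \cite[Proposition~10.10]{Tanaka2}).

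I do not expect a serious obstacle, since all the geometric content is already packaged in the cited propositions. The one point requiring care is the identification of the special fiber of $\mathcal{X}$ with the original $X$, which is precisely what the ``converse'' clauses of Propositions~\ref{prop:two-rayQ3toV22} and~\ref{prop:integraltwo-rayQ3toV22} provide, together with checking that the hypotheses of the relative two-ray game — excellence of $W(k)$ and quadratic normality of $\widetilde{\Gamma}$ on both the generic and the special fiber — have been secured, respectively, by the perfectness of $k$ and by Lemma~\ref{lem:liftquadricsextic} combined with the construction of $\Gamma$.
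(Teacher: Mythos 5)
Your proposal is correct and follows essentially the same route as the paper's proof: pass to $(Q,\Gamma)$ via the conic two-ray game, lift with Lemma \ref{lem:liftquadricsextic}, run the relative two-ray game of Proposition \ref{prop:integraltwo-rayQ3toV22} over $W(k)$, and identify the special fiber using the converse clauses. The extra care you take with the hypotheses of the relative construction and the smooth proper base change argument for the Betti numbers only makes explicit what the paper leaves implicit.
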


\begin{proof}
By Proposition \ref{prop:conicexists}, there exists a smooth conic $C$ on $X$.
By applying the two-ray game (Proposition \ref{prop:two-rayV22toQ3}) to $(X,C)$, we obtain quadric threefold and quadratically normal smooth rational sextic curve $(Q,\Gamma)$. 
By Lemma \ref{lem:liftquadricsextic}, we can take $W(k)$-lift $(\mathcal{Q}, \widetilde{\Gamma})$ of $(Q,\Gamma)$.
By applying the two-ray game (Proposition \ref{prop:integraltwo-rayQ3toV22}) to $(\mathcal{Q}, \widetilde{\Gamma})$, we obtain a desired $W(k)$-lift of $X$. 
The assertion on Betti numbers follows from the result over $\C$, which is well-known.
\end{proof}

\begin{prop}
\label{prop:lineexists}
Let $k$ be an algebraically closed field, and $X$ a $V_{22}$-variety over $k$. 
Then there exists a line $L$ on $X$.
\end{prop}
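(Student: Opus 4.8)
The plan is to reduce to characteristic $0$ by a lifting argument: over a field of characteristic $0$ a $V_{22}$-variety is classically known to contain a line, and the relative Hilbert scheme of lines over $W(k)$ is proper, so a line on the generic fibre must specialize to one on $X$.

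First I would apply Corollary~\ref{cor:V22lift} to fix a $W(k)$-lift $\mathcal{X}$ of $X$; by its construction via Proposition~\ref{prop:integraltwo-rayQ3toV22}, $\mathcal{X}$ is a $V_{22}$-scheme over $W(k)$. Set $K=\Frac W(k)$. Next I would form the relative Hilbert scheme of lines $\Sigma(\mathcal{X}/W(k))$ of Definition~\ref{defn:Sigma}. Since the subschemes parametrized are exactly those with the Hilbert polynomial $P(t)=t+1$ of a line, $\Sigma(\mathcal{X}/W(k))$ is a union of connected components of the full Hilbert scheme of $\mathcal{X}$ over $W(k)$, hence projective --- in particular proper --- over $W(k)$. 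Moreover, a one-dimensional closed subscheme of a projective space with Hilbert polynomial $t+1$ has degree $1$ and no embedded or isolated points, so it is a reduced line; thus every point of every fibre of $\Sigma(\mathcal{X}/W(k))\to\Spec W(k)$ really parametrizes a genuine line on the corresponding $V_{22}$-variety, and in particular a flat limit of lines is again a line.

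Then I would invoke the characteristic-$0$ case: the geometric generic fibre $\mathcal{X}_{\overline{K}}$ is a $V_{22}$-variety over the algebraically closed field $\overline{K}$ of characteristic $0$, and over $\C$ --- hence over $\overline{K}$ --- every $V_{22}$-variety contains a line (see, e.g., \cite{Iskovskikh-Prokhorov}; cf.\ also \cite{Kuznetsov-ProkhorovGm}). Consequently $\Sigma(\mathcal{X}_K)$ is non-empty, so $\Sigma(\mathcal{X}/W(k))$ has a point over the generic point $\eta\in\Spec W(k)$. Since $\Sigma(\mathcal{X}/W(k))\to\Spec W(k)$ is proper, its image is closed and contains $\eta$, hence equals all of $\Spec W(k)$; in particular $\Sigma(X)=\Sigma(\mathcal{X}_k)$ is non-empty, and a point of $\Sigma(X)$ gives the desired line $L$ on $X$.

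The argument is short; the only points requiring care are that the relevant Hilbert scheme is genuinely proper over $W(k)$ and that it contains no component whose members fail to be lines --- both resolved by pinning down the Hilbert polynomial, as above --- together with the invocation of the classical non-emptiness of the Hilbert scheme of lines on a complex $V_{22}$-variety. (Alternatively, one could appeal directly to the positive-characteristic existence results for lines on Fano threefolds in \cite{Tanaka2}, in the same spirit as Proposition~\ref{prop:conicexists}.)
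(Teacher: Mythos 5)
Your proposal is correct and follows essentially the same route as the paper: the paper also cites the characteristic-zero existence result and reduces the general case to it via the $W(k)$-lift of Corollary \ref{cor:V22lift}, with the properness/specialization argument for the Hilbert scheme of lines left implicit. Your write-up simply makes that specialization step explicit.
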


\begin{proof}
When $k$ is of characteristic zero, this is proved in \cite[Corollary 10.6]{Prokhorovconicnote}.
The general case is reduced to this case by Corollary \ref{cor:V22lift} (or \cite[Proposition 10.10]{Tanaka2}).
\end{proof}

\subsection{Two-ray game for lines}
\label{subsection:tworaylines}

\begin{prop}
\label{prop:two-rayV22toW5}
Let $k$ be a field, and $X$ a $V_{22}$-variety over $k$.
Let $L$ be a line on $X$, and $\mu \colon \widetilde{X} \rightarrow X$ a blow-up of $L$ on $X$.
Then the following hold.
\begin{enumerate}
\item 
The anti-canonical divisor $- K_{\widetilde{X}}$ is base-point.
We denote the corresponding fiber-space morphism by $\Psi \colon \widetilde{X} \rightarrow V$. 
\item 
The morphism $\Psi$ is a flopping contraction, and the flop of $\Psi$ exists.
We denote the flop by $\Psi^{+} \colon \widetilde{Y} \rightarrow V$.
\item 
Let $\tau \colon \widetilde{Y} \rightarrow Y$ be the contraction of the $K_{\widetilde{Y}}$-negative extremal ray.
Then $Y$ is a $V_5$-variety over $k$, and $\tau$ is a blow-up of a smooth rational quintic curve $Z$ on $Y$.
\end{enumerate}
\end{prop}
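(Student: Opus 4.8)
The plan is to run the same argument as in Propositions~\ref{prop:two-rayV22toQ3} and~\ref{prop:two-rayQ3toV22}, with the smooth conic replaced by the line $L$ and the quadric threefold replaced by a $V_5$-variety. All three assertions are insensitive to base change, since $\mu$, the anti-canonical linear system on $\widetilde X$, the flop $\Psi^{+}$, and the $K_{\widetilde Y}$-negative extremal contraction $\tau$ are canonically attached to the pair $(X,L)$; thus $Y$, $\tau$, and $Z$ descend from their geometric counterparts, so $Y$ is a $V_5$-variety over $k$ and not merely geometrically, and $Z$ is rational over $k$ by Remark~\ref{rem:genus0rational} once one knows it has odd degree $5$ and genus $0$. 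Hence I would first reduce to $k=\overline k$. In characteristic zero all of this is classical (it goes back to Iskovskikh and is part of the circle of ideas in \cite{Kuznetsov-Prokhorov-Shramov}), so from now on assume $\chara k=p>0$.

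For~(1): writing $D$ for the $\mu$-exceptional divisor, $-K_{\widetilde X}=\mu^{*}(-K_X)-D$, and from $-K_X\cdot L=1$ and $g(L)=0$ one computes
\[
(-K_{\widetilde X})^{3}=18,\qquad (-K_{\widetilde X})^{2}\cdot D=3,\qquad (-K_{\widetilde X})\cdot D^{2}=-2,\qquad D^{3}=1,
\]
so $-K_{\widetilde X}$ is big. Base-point-freeness of $-K_{\widetilde X}$ (hence nefness, and the existence of the contraction $\Psi\colon\widetilde X\to V$) then follows from H.~Tanaka's analysis of anti-canonical systems on blow-ups of curves on Fano threefolds in positive characteristic, exactly as in the conic case of Proposition~\ref{prop:two-rayV22toQ3} (see \cite[Proposition~7.2]{Tanaka2}; cf.\ \cite[Theorem~1.1]{Cascini-Tanaka}).

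For~(2) and~(3): $\Psi$ is birational since $-K_{\widetilde X}$ is big, and, following the proof of Proposition~\ref{prop:two-rayQ3toV22}, I would rule out that $\Psi$ is divisorial by locating the unique possible contracted divisor and contradicting the intersection numbers above; hence $\Psi$ is a flopping contraction, the flop $\Psi^{+}\colon\widetilde Y\to V$ exists by \cite[Theorem~1.1]{Tanakaflop}, $\rho(\widetilde Y)=2$, and $-K_{\widetilde Y}$ is nef and big but not ample. Let $\tau\colon\widetilde Y\to Y$ be the remaining (hence $K_{\widetilde Y}$-negative) extremal contraction. Using the numerical classification of extremal contractions of Fano-type threefolds in positive characteristic \cite{Tanaka1,Tanaka2} together with the flop-invariance of the relevant triple intersection numbers (the analogue of \eqref{eqn:intersectionflopQ3}), I would eliminate $\dim Y\le 2$ (the Mori fibre space outcomes) and the case where $\tau$ contracts a divisor to a point, leaving that $\tau$ is the blow-up of a smooth curve $Z$ in a smooth threefold $Y$; then $(-K_Y)^{3}=40$, $\rho(Y)=1$, and $i(Y)=2$, so $Y$ is a $V_5$-variety, while the residual intersection arithmetic forces $\deg_{H}Z=5$ and $g(Z)=0$, i.e.\ $Z$ is a smooth rational quintic curve.

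The hard part is this last case analysis. In positive characteristic neither the base-point-freeness in~(1) nor the elimination of the non-$V_5$ outcomes for $Y$ in~(3) comes for free: both must be forced through by combining Tanaka's numerical and MMP results with intersection-theoretic bookkeeping carried across the flop, precisely the pattern already developed for the conic/quadric-threefold game in Propositions~\ref{prop:two-rayV22toQ3} and~\ref{prop:two-rayQ3toV22}.
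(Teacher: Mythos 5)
Your route diverges substantially from the paper's: the paper's entire proof of this proposition is a citation — (1) is \cite[Proposition 5.1]{Tanaka2}, (2) is \cite[Propositions 5.1, 5.2, Corollary 8.3]{Tanaka2} together with \cite[Theorem 1.1]{Tanakaflop}, and (3) is \cite[Theorem 8.1]{Tanaka2}, i.e.\ Tanaka has already carried out the double projection from a line on a genus-$12$ prime Fano threefold in positive characteristic, including the identification of the output as a quintic curve in a $V_5$. (The paper reserves the hands-on intersection-theoretic case analysis for the \emph{converse} direction, Proposition \ref{prop:two-rayW5toV22}, and for the conic game.) Your intersection numbers $(-K_{\widetilde X})^{3}=18$, $(-K_{\widetilde X})^{2}\cdot D=3$, $(-K_{\widetilde X})\cdot D^{2}=-2$, $D^{3}=1$ are correct, and your reduction to $\overline k$ plus the parity remark for rationality of $Z$ is fine (indeed slightly more explicit than the paper). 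Note, though, that for lines the relevant base-point-freeness reference is \cite[Proposition 5.1]{Tanaka2}, not the conic statement \cite[Proposition 7.2]{Tanaka2}.

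The genuine gap is in your step (3), at the phrase ``the residual intersection arithmetic forces $(-K_Y)^3=40$, $i(Y)=2$.'' The template you invoke from Propositions \ref{prop:two-rayQ3toV22} and \ref{prop:two-rayW5toV22} pins down the final contraction by writing the contracted divisor as $\alpha(-K)-\beta G'$ for an auxiliary divisor $G'$, getting $\beta=-r$ from \cite[Lemma 2.2]{CM13}, and then concluding $\alpha=0$, $r=1$ from rigidity of $G'$; this works there precisely because the target has index $1$ and the auxiliary divisor was chosen so that its strict transform \emph{is} the exceptional divisor. In your direction the natural auxiliary divisor is the strict transform $D''$ of the $\mu$-exceptional divisor $D$, and the $\tau$-exceptional divisor is $E\sim -K_{\widetilde Y}-2D''$, not $D''$: the verbatim template would instead output the index-$1$ solution $E=D''$, $r=1$, $(-K_Y)^3=(-K_{\widetilde Y}+D'')^2\cdot(-K_{\widetilde Y})=18+6-2=22$, i.e.\ ``$Y$ is another $V_{22}$ and $Z$ is a line'' — numerically consistent but wrong. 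Excluding it requires an input you do not supply, for instance: every flopping curve $\gamma$ satisfies $D\cdot\gamma>0$ (since $-K_{\widetilde X}\cdot\gamma=0$ and $\mu^*(-K_X)\cdot\gamma>0$), so $D''^{3}>D^{3}=1$ on $\widetilde Y$, which is incompatible with $D''$ being the exceptional divisor of a blow-up of a line ($E^3=1$); alternatively one must exhibit and prove rigidity of the divisor $\mu^*(-K_X)-3D$ whose strict transform is the true exceptional divisor. Either supply such an argument or, as the paper does, simply quote \cite[Theorem 8.1]{Tanaka2}.
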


\begin{proof}
If $k$ is of characteristic zero, this is a classical result (\cite{Iskovskikhdoubleprojection}, \cite{Prokhorovtworay}, \cite[Theorem 4.3.7]{Iskovskikh-Prokhorov}).
(1) follows from \cite[Proposition 5.1]{Tanaka2}.
(2) follows from \cite[Proposition 5.1, Proposition 5.2 and Corollary 8.3]{Tanaka2} and \cite[Theorem 1.1]{Tanakaflop}.
(3) follows from \cite[Theorem 8.1]{Tanaka2}.
\end{proof}

\begin{lem}
\label{lem:normalityofrationalquintic}
Let $k$ be an algebraically closed field, and $Y \subset \P^6$ a $V_5$-variety over $k$.
Let $Z$ be a smooth rational quintic curve on $Y.$
Then $Z$ is normal, i.e.,\ $Z$ spans a linear subspace $\P^5 \subset \P^6$.
\end{lem}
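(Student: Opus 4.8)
Write $H$ for the hyperplane class of $Y\subset\P^6$, so $\cO_Y(1)=\cO_Y(H)$, $K_Y=-2H$, and recall that $Y$ is embedded in $\P^6$ by the complete linear system $|H|$. Since $Z\cong\P^1$ is a quintic curve, $\cO_Z(1)\cong\cO_{\P^1}(5)$, so the inclusion $Z\hookrightarrow\P^6$ is induced by a linear subsystem of $|\cO_{\P^1}(5)|$, a system of projective dimension $5$; hence $Z$ spans a linear subspace of dimension at most $5$. It therefore suffices to prove that $Z$ is \emph{not} contained in any $4$-plane $\Lambda\cong\P^4\subset\P^6$. Assume for contradiction that $Z\subset\Lambda$.

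\emph{Step 1: $W:=Y\cap\Lambda$ (scheme-theoretically) is a curve.} It is enough to show that $Y$ contains no surface lying in a $4$-plane. Let $S\subset Y$ be an integral surface. Since $\rho(Y)=1$ and the Picard group of a Fano threefold is torsion free, $\Pic Y=\Z H$; as $S$ is a prime divisor, $S\in|dH|$ for some $d\ge1$ and $\mathcal{I}_{S/Y}\cong\cO_Y(-dH)$. Pushing the exact sequence
\[
0\to\mathcal{I}_{Y/\P^6}(1)\to\mathcal{I}_{S/\P^6}(1)\to\mathcal{I}_{S/Y}(H)\to0
\]
of sheaves on $\P^6$ to cohomology and using $h^0(\mathcal{I}_{Y/\P^6}(1))=0$ (linear normality of $Y\subset\P^6$), we get $h^0(\mathcal{I}_{S/\P^6}(1))\le h^0(\cO_Y((1-d)H))\le1$, the last inequality because $(1-d)H$ is trivial ($d=1$) or anti-ample ($d\ge2$). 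Hence $\dim\langle S\rangle=6-h^0(\mathcal{I}_{S/\P^6}(1))\ge5$, so $S\not\subset\Lambda$. Consequently $Y\cap\Lambda$ has no $2$-dimensional component; it is not $3$-dimensional, as $\Lambda\not\subset Y$; and it contains $Z$. Thus $\dim W=1$.

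\emph{Step 2: the genus contradiction.} Write $\Lambda=H_1\cap H_2$ with $H_1,H_2$ hyperplanes, so $W=Y\cap H_1\cap H_2$ is cut out on the smooth (hence Cohen--Macaulay) threefold $Y$ by the two divisors $D_i:=H_i|_Y\in|H|$. Since $\dim W=1=\dim Y-2$, the pair $(D_1,D_2)$ is a regular sequence on $Y$ along $W$, so $W$ is a complete intersection in $Y$: it is Cohen--Macaulay, pure of dimension $1$, without embedded or isolated points, $\deg W=H^3=5$, and by adjunction $\omega_W\cong\cO_Y(K_Y+D_1+D_2)|_W\cong\cO_Y(-2H+2H)|_W=\cO_W$. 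Hence $\chi(\cO_W)=0$, i.e.\ the Hilbert polynomial is $P_W(n)=5n$. But $Z\subset W$ yields a surjection $\cO_W\twoheadrightarrow\cO_Z$, so $P_W(n)\ge P_Z(n)=5n+1$ for $n\gg0$, a contradiction. Therefore $Z$ lies in no $4$-plane, and together with the first paragraph $Z$ spans exactly a $\P^5$.

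\emph{Remark on the difficulty.} The one genuinely substantial point is Step 1 --- that $Y\cap\Lambda$ cannot jump to dimension $2$, equivalently that a $V_5$-variety contains no surface lying in a $\P^4$ --- and the ensuing complete-intersection description of $W$, which is exactly what legitimises the adjunction identity $\omega_W\cong\cO_W$; once this is in place the numerical step is immediate. I would also stress that the argument is characteristic-free: it uses only $\Pic Y=\Z H$, the linear normality of $Y\subset\P^6$ (both of which hold because $Y$ is a smooth Fano threefold of Picard rank $1$ embedded by a complete linear system), and the smoothness of $Y$; in particular no Kodaira-type vanishing is invoked.
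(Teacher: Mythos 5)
Your proof is correct and follows essentially the same route as the paper's: both reduce to the observation that if $Z$ lay in a $\P^4$, then $Z$ would be (contained in) the degree-$5$ curve $Y\cap\P^4$, whose dualizing sheaf is trivial by adjunction (using $K_Y=-2H$), contradicting the rationality of $Z$. The paper's version is terser --- it simply asserts $Z=Y\cap\P^4$ --- whereas you carefully rule out surface components of $Y\cap\Lambda$ and finish with a Hilbert-polynomial comparison instead of identifying $Z$ with the complete intersection; these are worthwhile details the paper leaves implicit.
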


\begin{proof}
Suppose that $Z \subset \P^4$ for some linear subspace $\P^4 \subset \P^6$.
Then we have 
$Z \subset Y \cap \P^4 \subset \P^6$.
Since $Y$ is a threefold and $Z$ is a quintic curve, we obtain $Z =Y\cap \P^4$.
Since $\cO(K_{Y}) = \cO (-2H)$, we obtain $\cO_{Z}(K_{Z}) = \cO_{Z}$ which contradicts the fact $Z$ is rational.
This completes the proof.
\end{proof}

\begin{prop}
\label{prop:two-rayW5toV22}
Let $k$ be a field,
and $Y$ a $V_5$-variety over $k$, which is embedded into $\P^6$.
Let $Z$ be a smooth rational quintic curve on $Y$, and $\tau \colon \widetilde{Y} \rightarrow Y$ be a blow-up of $Z$ on $Y$.
Then the following hold.
\begin{enumerate}
    \item 
    The anti-canonical divisor $-K_{\widetilde{Y}}$ is base-point free.
    We denote the corresponding fiber space morphism by $\Phi \colon \widetilde{Y} \rightarrow V$.
    \item
    $\Phi$ is a flopping contraction, and the flop of $\Phi$ exists.
    We denote the flop by $\Phi^{+} \colon \widetilde{X} \rightarrow V$.
    \item 
    Let $\mu \colon \widetilde{X} \rightarrow X$ be the contraction of the $K_{\widetilde{X}}$-negative extremal ray.
    Then $X$ is a $V_{22}$-variety over $k$, and $\mu$ is  a blow-up of a line $L$ on $X$.
    \item 
    This construction gives the converse of the construction in Proposition \ref{prop:two-rayV22toW5}.
\end{enumerate}
\end{prop}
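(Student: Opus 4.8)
The plan is to run, almost verbatim, the argument of Proposition~\ref{prop:two-rayQ3toV22}, with the quintic del Pezzo threefold $Y\subset\P^6$ (so $-K_Y=2H$ for the hyperplane class $H$) in place of the quadric threefold, and $-K_{\widetilde Y}=2\tau^{\ast}H-E$ in place of $-K_{\widetilde Q}=3\tau^{\ast}H-E$. First I would reduce every assertion to the case $k=\overline k$: base-point-freeness, being a flopping contraction, existence and fibrewise compatibility of the flop, and the conclusion that the resulting threefold is a $V_{22}$-variety with $\mu$ the blow-up of a line are all geometric. For $\chara k=0$ these are classical (\cite{Iskovskikhdoubleprojection}, \cite{Prokhorovtworay}, \cite[Theorem~4.3.7]{Iskovskikh-Prokhorov}), so I assume $\chara k=p>0$; then $Y=Y_{\overline k}$ is split, and by Lemma~\ref{lem:normalityofrationalquintic} the curve $Z$ spans a hyperplane $\langle Z\rangle=\P^5\subset\P^6$.

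For (1): by Gruson--Lazarsfeld--Peskine (\cite{Gruson-Lazarsfeld-Peskine}, characteristic-free, as already used for Lemma~\ref{lem:sexticcubic}) the ideal of the smooth degree-$5$ curve $Z$ in $\langle Z\rangle$ is generated in degree $\le 5-5+2=2$; adjoining the linear form defining $\langle Z\rangle$ and restricting to $Y$ shows that $\mathcal I_{Z/Y}(2)$ is globally generated, so $|{-}K_{\widetilde Y}|=|2\tau^{\ast}H-E|$ is base-point free, defining $\Psi\colon\widetilde Y\to V$ (alternatively one may cite \cite{Tanaka2}). From $H^3=5$, $-K_Y\cdot Z=10$, $g(Z)=0$ one computes $E^3=-8$ and hence $(-K_{\widetilde Y})^3=18$; since $h^0(\mathcal I_{Z/Y}(1))=1$, the system $|\tau^{\ast}H-E|$ consists of a single member $F$, the strict transform of the surface $Y\cap\langle Z\rangle$, which is irreducible because $\rho(Y)=1$, with $(-K_{\widetilde Y})^2\cdot F=3$, $(-K_{\widetilde Y})\cdot F^2=-2$, $F^3=-2$.

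For (2): since $(-K_{\widetilde Y})^3=18>0$ and $-K_{\widetilde Y}$ is nef, it is big. To see it is not ample, lift $(Y,Z)$ to $W(\overline k)$ by taking the split model $\mathcal Y:=Y_{W(\overline k)}$ and lifting a homogeneous parametrization of $Z$ exactly as in the proof of Lemma~\ref{lem:liftquadricsextic}, obtaining a relative smooth rational quintic $\widetilde{\mathcal Z}\subset\mathcal Y$; on the blow-up the generic fibre admits the classical two-ray game, so $-K$ there contracts a curve, and by specialization $-K_{\widetilde Y}$ contracts a curve. If $\Psi$ were divisorial, a $\Psi$-contracted curve $f$ would satisfy $-K_{\widetilde Y}\cdot f=0$; it cannot be $\tau$-exceptional (the fibre lines of $E\to Z$ meet $-K_{\widetilde Y}$ positively), so $\tau^{\ast}H\cdot f>0$, whence $E\cdot f=2\tau^{\ast}H\cdot f$ and $(\tau^{\ast}H-E)\cdot f=-\tau^{\ast}H\cdot f<0$; thus $f\subset F$, so $\Psi$ contracts the prime divisor $F$ and $(-K_{\widetilde Y})^2\cdot F=0$, contradicting $(-K_{\widetilde Y})^2\cdot F=3$. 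Hence $\Psi$ is a flopping contraction and its flop $\Psi^{+}\colon\widetilde X\to V$ exists by \cite[Theorem~1.1]{Tanakaflop}.

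For (3) and (4): on $\widetilde X$ one has $\rho(\widetilde X)=2$ and $-K_{\widetilde X}$ nef, big and not ample, so there is a unique $K_{\widetilde X}$-negative extremal contraction $\mu\colon\widetilde X\to X$ other than $\Psi^{+}$. Using H.~Tanaka's numerical classification of extremal contractions \cite[Propositions~3.19,~3.20,~3.22]{Tanaka2}, together with the flop invariance $(-K_{\widetilde X})^i\cdot(F')^{3-i}=(-K_{\widetilde Y})^i\cdot F^{3-i}$ for $i\ge 1$ (\cite[Lemma~4.5]{Tanaka2}) where $F'$ is the strict transform of $F$, one argues as in Proposition~\ref{prop:two-rayQ3toV22}: $\dim X=0$ is impossible; if $\dim X\in\{1,2\}$ (so $X=\P^1$ or $\P^2$), or if $\mu$ is divisorial contracting a divisor to a point, then writing the relevant divisor as $\alpha(-K_{\widetilde X})-\beta F'$ and imposing the numerical constraints yields no admissible integral solution; therefore $\mu$ contracts a prime divisor $D$ to a smooth curve $L$ in the smooth threefold $X$, and, as in \cite[Lemma~2.2]{CM13}, the fact that $D$ is a fixed prime divisor forces $D=F'$. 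Then $(-K_X)^3=(-K_{\widetilde X})\cdot(-K_{\widetilde X}+D)^2=18+2\cdot3-2=22$, so $X$ is a $V_{22}$-variety, and $\deg L=(-K_{\widetilde X})^2\cdot F'+(-K_{\widetilde X})\cdot(F')^2=3-2=1$, so $\mu$ is the blow-up of a line; assertion (4), that this inverts Proposition~\ref{prop:two-rayV22toW5}, follows from the uniqueness of the flop. I expect the numerical case analysis ruling out $\dim X\le 2$ and the divisorial-to-a-point cases to be the main obstacle, since it requires the full positive-characteristic classification of extremal contractions; the base-point-freeness in (1) and the $W$-lift in (2) are routine once the analogues for Proposition~\ref{prop:two-rayQ3toV22} are in hand.
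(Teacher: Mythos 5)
Your overall architecture — reduce to $k=\overline k$, quote the classical results in characteristic zero, and in positive characteristic run the numerical two-ray-game analysis using Tanaka's classification of extremal contractions — is exactly the paper's, and parts (1), (3), (4) of your argument, including the computation $(-K_{\widetilde Y})^3=18$, the intersection numbers of $F$, the exclusion of $\dim X\le 2$ and of a divisor-to-point contraction, the identification $D=F'$ via the index argument of \cite{CM13}, and the final computations $(-K_X)^3=22$, $\deg L=1$, match the paper's proof essentially line by line. Your exclusion of a divisorial contraction in (2) (every $\Phi$-contracted curve $f$ has $(\tau^{\ast}H-E)\cdot f<0$, hence lies in the irreducible divisor $F$, contradicting $(-K_{\widetilde Y})^2\cdot F=3$) is a slightly cleaner variant of the paper's computation (which instead writes $D\sim\alpha(-K_{\widetilde Y})-\beta F$ and derives $D=E$), and is correct.

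The one genuine gap is your proof that $-K_{\widetilde Y}$ is not ample. You propose to lift $(Y,Z)$ to $W(\overline k)$ "exactly as in the proof of Lemma \ref{lem:liftquadricsextic}", but that mechanism does not transfer: in that lemma the curve is first lifted freely in the ambient $\P^4$ (componentwise lift of a parametrization) and only afterwards is a quadric found through the lifted curve, using $h^0(\mathcal I_\Gamma(2))=2$ and flatness. Here you fix $\mathcal Y:=Y_{W(\overline k)}$ in advance, and a componentwise lift of a parametrization of $Z$ will not land in $\mathcal Y$, which is cut out by five quadrics rather than a single hypersurface; lifting those five quadrics through the lifted curve and showing their intersection is again a flat smooth $V_5$-scheme is not addressed, and deforming $Z$ inside the fixed $\mathcal Y$ instead would require an unobstructedness statement such as $H^1(Z,N_{Z/Y})=0$, which you do not establish for an arbitrary smooth rational quintic (the paper only ever lifts the specific curves $Z_{\mathrm{MU}}$, $Z^{(\mathrm a)}_\xi$, $Z_u$, which are given by explicit equations with coefficients in $W(k)$). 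The paper sidesteps this entirely: if $-K_{\widetilde Y}$ were ample, $\widetilde Y$ would be a smooth Fano threefold with $\rho=2$ and $(-K_{\widetilde Y})^3=18$ admitting a divisorial contraction to $V_5$ blowing down to a rational quintic, and no such entry occurs in the classification table \cite[Table 1.2]{Tanaka3}. Replacing your lifting step by this table lookup repairs the argument; as written, the step is not justified.
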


\begin{proof}
Note that, $Z$ is normal, i.e.,\ $Z$ spans a linear subspace $\P^5 \subset \P^6$ by Lemma \ref{lem:normalityofrationalquintic}.
All the assertions can be reduced to the case where $k$ is algebraically closed, so we show in this case.

If $k$ is of characteristic zero, this is proved by Prokhorov (\cite{Prokhorovtworay}).
In the general case, essentially the same proof works, combining with the recent classification of Fano 3-folds (\cite{Tanaka1}, \cite{Tanaka2}).
We sketch the proof.

Let $H$ be a hyperplane section of $Y \subset \P^6$, and $F_Y \sim H$ a particular hyperplane section of $Y$ containing $Z$ (which is spanned by $Z$).
Let $E$ be the exceptional divisor of $\tau$, and $F$ the strict transform of $F_Y$.
Then, we obtain
$(-K_{\widetilde{Y}})^{3} =18, (-K_{\widetilde{Y}})^2 \cdot F =3, (-K_{\widetilde{Y}})\cdot F^2 =-2, F^3 = -2$
(see \cite[Lemma 1.1 and 1.2]{Prokhorovtworay}. See also \cite[Lemma 3.21]{Tanaka2}).
Note that we have $\Pic (\widetilde{Y}) = \Z K_{\widetilde{Y}} + \Z F.$

(1) follows from the fact that any rational normal curve is an intersection of quadrics (see \cite[Lemma 1.3 (1)]{Prokhorovtworay}).

Next, we show that $\Phi$ is either a flopping contraction or an isomorphism.
Since $-K_{\widetilde{Y}}$ is big by (1) and $(-K_{\widetilde{Y}})^3 =18$, $\Phi$ is a birational morphism.
Suppose that there exists an effective irreducible divisor $D$ which is contracted by $\Phi$.
We may write $D \sim \alpha (-K_{\widetilde{Y}}) - \beta F$
for $\alpha, \beta \in \Z$.
By the assumption, we have $D \cdot (-K_{\widetilde{Y}})^{2} =0$, which implies $\beta = 6 \alpha$.
Since $\kappa ( -6 \alpha F) = \kappa (D + \alpha K_{\widetilde{Y}}) \geq \kappa (\alpha K_{\widetilde{Y}})$ and $-K_{\widetilde{Y}}$ is big, we obtain $\alpha >0$.
We have $D \sim \alpha (-4 \tau^{\ast}(H) + 5 E)$.
Since $f\cdot K_{\widetilde{Y}} = -1$ for any fiber $f$ of $E \rightarrow Z$, we obtain $D \cdot f < -1$.
This implies $D = E$, but it contradicts 
\[
(-K_{\widetilde{Y}})^2 E
= (-\tau^{\ast} K_{Y} - E)^2 E = -2 (K_{Y} \cdot Z) + (E)^3 =12
\]
(see \cite[Lemma 3.21 (2)]{Tanaka2}).

{\cora By \cite[Table 1.2]{Tanaka3} and $(-K_{\widetilde{Y}})^3=18$, we can show that $-K_{\widetilde{Y}}$ is not ample. Therefore, $\Phi$ is a flopping contraction.}
{\cora  Therefore, there exists a flop $\Phi^{+}\colon \widetilde{X} \rightarrow V$ of $\Phi$ by \cite[Theorem 1.1]{Tanakaflop}.
 Let $\mu \colon \widetilde{X} \rightarrow X$ be a $K_{\widetilde{X}}$-negative extremal contraction (which is unique since $\rho (\widetilde{X}) =2$ and $- K_{\widetilde{X}}$ is not ample but nef and big).
}
Since $\rho (\widetilde{X}) =2$, we have $\dim X \neq 0$.

Next, we assume that $\dim X= 1$ or $2$.
In this case, $X$ is $\P^1$ or $\P^2$ (see \cite[Proposition 3.19 and 3.20]{Tanaka2}). 
Let $D$ be a divisor on $\widetilde{X}$ which is the pull-back of an ample Cartier divisor which is a generator of $\Pic (X)$.
Let $F'$ be the strict transform of $F$ on $\widetilde{X}.$
We may write $D \sim \alpha (- K_{\widetilde{X}}) - \beta F'$ for $\alpha, \beta \in \Z$.
Note that we have
\begin{equation}
\label{eqn:intersectionflop}
(-K_{\widetilde{X}})^i \cdot (F')^{3-i} = (-K_{\widetilde{Y}})^i \cdot (F)^{3-i}
\end{equation}
for $1 \leq i \leq 3$ (see the proof of \cite[Lemma 4.5 (2)]{Tanaka2}).
By \cite[Proposition 3.19 and 3.20]{Tanaka2}, $(-K_{\widetilde{X}}) \cdot D^2 = 18 \alpha^2 - 6\alpha \beta -2 \beta^2$ is equal to 0 or 2.
These equations have no integral solutions, and we obtain a contradiction.

Therefore, we obtain $\dim X =3$, and $\mu$ is a contraction of a divisor $D$ to a point or a curve (cf. \cite[Proposition 3.22]{Tanaka2}).
As before, we set $D \sim \alpha (- K_{\widetilde{X}}) - \beta F'$.
First, we suppose that $D$ is contracted to a point.
Since we have 
\[
(-K_{\widetilde{X}})^2\cdot D =
18 \alpha - 3 \beta \in \{1,2,4\}
\]
by \cite[Proposition 3.22]{Tanaka2}, we obtain a contradiction.
Therefore, $\mu$ is a contraction of $D$ to a smooth curve $L$ on the smooth variety $X$  (cf.\ \cite[Proposition 3.22]{Tanaka2}), which is a smooth Fano threefold over $k$.
Let $r$ be the index of $X$.
Then, by the same argument to \cite[Lemma 2.2]{CM13}, we have $\beta = -r$.
Since $D \sim \alpha (-K_{\widetilde{X}}) + r F'$ and $D$ is a fixed prime divisor, we obtain $\alpha =0$ and $r =1$, and so $D=F'$.
Then we have
\[
(\mu^{\ast} (-K_{X}))^3 
=(-K_{\widetilde{X}}) \cdot (-K_{\widetilde{X}}+D)^2
= 18+2\cdot 3 + (-2) =22,
\]
this means $X$ is a $V_{22}$-variety over $k$.
Note that we use (\ref{eqn:intersectionflop}) for the third equality.
Moreover, we have
\[
\deg L = (-K_{X}) \cdot L = - (-K_{\widetilde{X}}+D) \cdot D^2 = - ((-K_{\widetilde{X}}+D) \cdot (K_{\widetilde{X}}) \cdot D) = 1
\]
where the second equality follows from \cite[the proof of Lemma 3.21 (2)]{Tanaka2}, the third equality follows from 
$(\mu^{\ast} (-K_{X}))^2 \cdot D=0$,
and the fourth equality follows from 
(\ref{eqn:intersectionflop}) and $D=F'$.
This completes the proof of (2) and (3).

The assertion (4) is clear from the construction and the uniqueness of the flop.
\end{proof}

\begin{prop}
\label{prop:integraltwo-rayV22toW5}
Let $B$ be a connected excellent Dedekind scheme, and $K$ the fraction field of $B$.
We denote the residue field of a closed point $s\in B$ by $k_s$.
Let $\mathcal{X}$ be a $V_{22}$-scheme over $B$,
$\mathcal{L}$ a relative line on $\mathcal{X}$, and $\mu \colon \widetilde{\mathcal{X}} \rightarrow \mathcal{X}$ a blow-up of $\mathcal{L}$ on $\mathcal{X}$.
Then the following hold.
\begin{enumerate}
    \item 
    The relative anti-canonical divisor $-K_{\widetilde{\mathcal{X}}/B}$ is semi-ample.
    We denote the corresponding fiber-space morphism by $\Psi \colon \widetilde{\mathcal{X}} \rightarrow \mathcal{V}$.
    \item 
    The morphisms $\Psi$, $\Psi_K$, and $\Psi_{k_s}$ for closed points $s \in B$ are flopping contractions.
    Moreover, the flop $\Psi^{+} \colon \widetilde{\mathcal{Y}} \rightarrow \mathcal{V}$ exists, and $\Psi^{+}_{K}$ and $\Psi^+_{k_s}$) are the flops of $\Psi_K$ and $\Psi_{k_s}$, respectively.
    \item 
    There exists a $K_{\widetilde{\mathcal{Y}}/B}$-negative extremal ray contraction 
    $\mu \colon \widetilde{\mathcal{Y}} \rightarrow \mathcal{Y}$ such that $\mathcal{Y}$ is a $V_5$-scheme over $B$ , and $\mu$ is a blow-up of a relative smooth rational quintic curve $\mathcal{Z}$ on $\mathcal{Y}$.
    \item 
    The generic fiber and the special fibers of this construction are the same as those we obtained from Proposition \ref{prop:two-rayV22toW5}.
    \end{enumerate}
\end{prop}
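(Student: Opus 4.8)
The plan is to run the same argument as for conics (Proposition~\ref{prop:integraltwo-rayV22toQ3}), i.e.\ to imitate \cite[Proposition~4.12]{IKTT}, but with the fibrewise input Proposition~\ref{prop:two-rayV22toW5} in place of Proposition~\ref{prop:two-rayV22toQ3}. First I would reduce to the case $B = \Spec R$ with $R$ a Henselian discrete valuation ring: every operation in the statement — the blow-up $\mu$, passing to the fibre space of a relatively semi-ample divisor, the flop, and the $K$-negative extremal contraction — commutes with the flat base change $\Spec \cO_{B,s}^{\mathrm{h}} \to B$, and all the resulting morphisms are unique (the flop of a given flopping contraction is unique, and the $K$-negative extremal contractions are forced since $\rho = 2$ with $-K$ nef, big and non-ample), so local solutions glue to a solution over $B$; the generic-fibre statements are Proposition~\ref{prop:two-rayV22toW5}.

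For (1), I would prove relative semi-ampleness of $-K_{\widetilde{\mathcal X}/R}$ by reducing it to semi-ampleness on the generic and special fibres, which is Proposition~\ref{prop:two-rayV22toW5}(1) (there it is even base-point-freeness, since a rational normal curve is an intersection of quadrics). In equicharacteristic $0$ this reduction is the usual one via the Kawamata--Viehweg vanishing $H^i(\widetilde{\mathcal X}_b, -mK_{\widetilde{\mathcal X}_b}) = 0$ for $i>0$, $m\ge 1$, $b\in B$; in equicharacteristic $p>0$ one invokes \cite[Theorem~1.1]{Cascini-Tanaka}; in mixed characteristic one argues as in \cite[Proposition~4.12]{IKTT}. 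This gives $\Psi$.

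For (2), the intersection numbers $(-K_{\widetilde Y})^3 = 18$, $(-K_{\widetilde Y})^2\cdot F = 3$, $(-K_{\widetilde Y})\cdot F^2 = -2$, $F^3 = -2$ recorded in the proof of Proposition~\ref{prop:two-rayV22toW5} show that $-K_{\widetilde{\mathcal X}/R}$ is relatively big but not relatively ample, so $\Psi$, $\Psi_K$ and $\Psi_{k_s}$ are small (flopping) contractions; the relative flop $\Psi^{+}\colon\widetilde{\mathcal Y}\to\mathcal V$ then exists by the argument of \cite[Proposition~4.12]{IKTT} (using \cite[Theorem~1.1]{Tanakaflop}), and being unique it restricts on the generic and special fibres to the flops of $\Psi_K$ and $\Psi_{k_s}$. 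For (3), I would run the $K_{\widetilde{\mathcal Y}/R}$-negative extremal contraction $\mu\colon\widetilde{\mathcal Y}\to\mathcal Y$ relatively (again it is unique, $\rho=2$ and $-K_{\widetilde{\mathcal Y}/R}$ nef, big, non-ample), and identify its fibres using Proposition~\ref{prop:two-rayV22toW5}(3): $\mathcal Y_K$ and $\mathcal Y_{k_s}$ are $V_5$-varieties and the centre of $\mu$ cuts out a smooth rational quintic curve on each. Since the construction keeps $\mathcal Y$ projective and flat over $R$ with smooth $V_5$ fibres, $\mathcal Y$ is a $V_5$-scheme over $R$, and the centre $\mathcal Z$ of $\mu$ — flat over $R$ with smooth rational quintic fibres — is a relative smooth rational quintic curve; this is (3), and (4) is then immediate, each step having been compatible with restriction to $K$ and to $k_s$.

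I expect the main obstacle to be exactly step (1) in characteristic $p>0$ and in mixed characteristic: upgrading the fibrewise base-point-freeness of $-K$ to relative semi-ampleness, which is where the positive-characteristic semi-ampleness result \cite[Theorem~1.1]{Cascini-Tanaka} and the argument of \cite[Proposition~4.12]{IKTT} are needed, together with checking that the relative flop supplied by \cite[Theorem~1.1]{Tanakaflop} has the expected special fibre so that the fibrewise pictures of Proposition~\ref{prop:two-rayV22toW5} really assemble into the relative one.
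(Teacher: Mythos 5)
Your proposal is correct and follows essentially the same route as the paper: the paper's proof simply reduces to the Henselian DVR case and invokes the argument of \cite[Proposition 4.12]{IKTT} with the fibrewise input Proposition \ref{prop:two-rayV22toW5}, handling relative semi-ampleness exactly as you describe (Kawamata--Viehweg vanishing in equicharacteristic $0$, \cite[Theorem 1.1]{Cascini-Tanaka} in equicharacteristic $p>0$, and the mixed-characteristic argument of loc.\ cit.), and the relative flop via \cite[Theorem 1.1]{Tanakaflop}. Your identification of step (1) as the main technical point matches the paper's own emphasis.
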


\begin{proof}
This follows from the same argument as in \cite[Proposition 4.12]{IKTT} by using Proposition \ref{prop:two-rayV22toW5}. See also the proof of Proposition \ref{prop:integraltwo-rayV22toQ3}.
\end{proof}

\begin{prop}
\label{prop:integraltwo-rayW5toV22}
Let $B$ be a connected excellent Dedekind scheme, and $K$ the fraction field of $B$.
We denote the residue field of a closed point $s\in B$ by $k_s$.
Let $\mathcal{Y}$ be a $V_5$-scheme over $B$,
$\mathcal{Z}$ a smooth rational quintic curve on $\mathcal{Y}$, and $\mu \colon \widetilde{\mathcal{Y}} \rightarrow \mathcal{Y}$ a blow-up of $\mathcal{Z}$ on $\mathcal{Y}$.
Then the following hold.
\begin{enumerate}
    \item 
    The relative anti-canonical divisor $-K_{\widetilde{\mathcal{Y}}/B}$ is semi-ample.
    We denote the corresponding fiber-space morphism by $\Phi \colon \widetilde{\mathcal{Y}} \rightarrow \mathcal{V}$.
    \item 
    The morphisms $\Phi$, $\Phi_K$, and $\Phi_{k_s}$ for closed points $s\in B$ are flopping contractions.
    Moreover, the flop $\Phi^{+} \colon \widetilde{\mathcal{X}} \rightarrow \mathcal{V}$ exists, and $\Phi^{+}_{K}$ and $\Phi^+_{k_s}$ are the flops of $\Phi_K$ and $\Phi_{k_s}$, respectively.
    \item 
    There exists a $K_{\widetilde{\mathcal{X}}/B}$-negative extremal ray contraction 
    $\mu \colon \widetilde{\mathcal{X}} \rightarrow \mathcal{X}$ such that $\mathcal{X}$ is a $V_{22}$-scheme over $B$ , and $\mu$ is a blow-up of a relative line $\mathcal{L}$ on $\mathcal{X}$.
    \item 
    The generic fiber and the special fibers of this construction are the same as those we obtained in Proposition \ref{prop:two-rayV22toW5}.
    \item 
    This construction gives the converse of Theorem \ref{prop:integraltwo-rayV22toW5}.
    \end{enumerate}
\end{prop}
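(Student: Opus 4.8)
The plan is to reduce at once to the local case and then transport the fibrewise statements of Proposition~\ref{prop:two-rayW5toV22} along the base, exactly as in the proof of \cite[Proposition 4.12]{IKTT} and of Proposition~\ref{prop:integraltwo-rayV22toW5}. All assertions are local on $B$, so we may assume $B=\Spec R$ for an excellent Henselian (indeed complete) discrete valuation ring $R$, with closed point $s$ and generic point $\eta$. Recall also that by Lemma~\ref{lem:normalityofrationalquintic} every geometric fibre $\mathcal{Z}_b$ spans a $\P^5$ in the ambient $\P^6$-bundle, so the fibrewise inputs of Proposition~\ref{prop:two-rayW5toV22} apply on each fibre.

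First I would establish semi-ampleness of $-K_{\widetilde{\mathcal{Y}}/B}$ and construct $\Psi$. By Proposition~\ref{prop:two-rayW5toV22}(1) the divisor $-K_{\widetilde{\mathcal{Y}}_b}$ is base-point free for every $b\in B$. To promote this to relative semi-ampleness, in the equicharacteristic $0$ case one uses the Kawamata--Viehweg vanishing $H^i(-mK_{\widetilde{\mathcal{Y}}_b})=0$ ($i>0$, $m\ge 1$) on fibres, so cohomology and base change makes $-K_{\widetilde{\mathcal{Y}}/B}$ relatively base-point free; in equicharacteristic $p>0$ and mixed characteristic one invokes the relative semi-ampleness theorem \cite[Theorem 1.1]{Cascini-Tanaka} together with the fibrewise base-point-freeness, in place of \cite{Witaszek}. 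This gives the fibre-space morphism $\Psi\colon\widetilde{\mathcal{Y}}\to\mathcal{V}:=\Proj_R\bigoplus_{m\ge 0}(\text{structure map})_{\ast}\cO(-mK_{\widetilde{\mathcal{Y}}/B})$, whose formation commutes with base change to $\eta$ and $s$, proving (1).

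Next I would handle the flop and the final contraction. Each of $\Psi,\Psi_K,\Psi_{k_s}$ is small and contracts a $K$-trivial ray: the fibres are flopping contractions by Proposition~\ref{prop:two-rayW5toV22}(2), and a relative contraction with small fibres is small. The relative flop $\Phi^{+}\colon\widetilde{\mathcal{X}}\to\mathcal{V}$ exists by \cite[Theorem 1.1]{Tanakaflop} applied over $R$, and since flops commute with base change its restrictions over $\eta$, $s$ are the flops of $\Phi_K$, $\Phi_{k_s}$ and coincide with those of Proposition~\ref{prop:two-rayW5toV22}(2); this is (2). For (3), on each fibre $\rho(\widetilde{\mathcal{X}}_b)=2$ and $-K_{\widetilde{\mathcal{X}}_b}$ is nef and big but not ample, so there is a unique $K$-negative extremal ray, contracted by a divisorial contraction to a smooth curve by \cite[Proposition 3.22]{Tanaka2}; these fibrewise contractions glue to a morphism $\mu\colon\widetilde{\mathcal{X}}\to\mathcal{X}$ over $R$ by the relative-MMP argument of \cite[Proposition 4.12]{IKTT}. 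The scheme $\mathcal{X}$ is $R$-flat and proper with each geometric fibre the $V_{22}$-variety of Proposition~\ref{prop:two-rayW5toV22}(3), hence smooth over $R$, i.e.\ a $V_{22}$-scheme; the exceptional divisor of $\mu$ maps to a closed subscheme $\mathcal{L}\subset\mathcal{X}$ whose fibres all have the Hilbert polynomial of a line, so $\mathcal{L}$ is $R$-flat and $\mu$ is the blow-up of the relative line $\mathcal{L}$. Assertion (4) is then immediate since every step above commutes with base change to $\eta$ and $s$, and (5) follows because, fibrewise, this construction inverts that of Proposition~\ref{prop:integraltwo-rayV22toW5} by Proposition~\ref{prop:two-rayW5toV22}(4), hence it does so over $R$ by the uniqueness of the relative flop and of the extremal contraction.

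The main obstacle is the relative semi-ampleness of $-K_{\widetilde{\mathcal{Y}}/B}$ in mixed characteristic and the gluing of the fibrewise extremal contractions $\mu_b$ into a morphism over $R$; both are already available through \cite{Cascini-Tanaka} and the relative-MMP formalism of \cite{IKTT}, and they are the only non-formal inputs. Once $\Psi$, the relative flop, and $\mu$ as a morphism over $R$ are in hand, the intersection-number computations identifying $\mathcal{X}$ as a $V_{22}$-scheme and $\mathcal{L}$ as a relative line are literally the absolute ones of Proposition~\ref{prop:two-rayW5toV22} carried out on each fibre.
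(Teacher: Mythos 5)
Your proposal is correct and follows essentially the same route as the paper, whose proof simply reduces to a Henselian discrete valuation ring and invokes the relative two-ray-game argument of \cite[Proposition 4.12]{IKTT} together with the fibrewise Proposition \ref{prop:two-rayW5toV22}, with Kawamata--Viehweg vanishing in equicharacteristic $0$ and \cite[Theorem 1.1]{Cascini-Tanaka} in equicharacteristic $p$ for the relative semi-ampleness. The only slip is attributing the mixed-characteristic semi-ampleness to \cite{Cascini-Tanaka}, which only treats equicharacteristic $p>0$; in mixed characteristic the paper (via \cite{IKTT}) still relies on \cite{Witaszek}.
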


\begin{proof}
This follows from the same argument as in \cite[Proposition 4.12]{IKTT} by using Proposition \ref{prop:two-rayW5toV22}.
See also the proof of Proposition \ref{prop:integraltwo-rayV22toQ3}.
\end{proof}

\begin{defn}
Let $k$ be an algebraically closed field, and $X$ a $V_{22}$-variety over $k$.
By \cite[Proposition 4.2.1]{Iskovskikh-Prokhorov} and \cite[Proposition 5.1]{Tanaka2}, we have either
\[
N_{L/X} \simeq \cO_L \oplus \cO_L (-1) \quad \textup{or} \quad
N_{L/X} \simeq \cO_L (1) \oplus \cO_L (-2).
\]
In the former case, we say $L$ is an \emph{ordinary line}, and the latter case, we say $L$ is a \emph{special line}.
\end{defn}

\begin{prop}
\label{prop:tworayfloppinglocus}
Let $k$ be an algebraically closed field, and $Y \subset \P^6$ a $V_5$-variety over $k$.
Let $Z \subset Y$ be a smooth rational quintic curve which is of Mukai--Umemura type, $\Ga$-type, or $\Gm$-type.
Let 
\[
Y \xleftarrow{\tau} \widetilde{Y} \xrightarrow{\Phi} V \xleftarrow{\Phi^{+}} \widetilde{X} \xrightarrow{\mu}X
\]
be as in Proposition \ref{prop:two-rayW5toV22},
where $\mu$ is a blow-up of a line $L$ on $X$.
Then the following hold.
\begin{enumerate}
\item 
The exceptional locus of $\Phi$ in $\widetilde{Y}$ is a union of strict transforms of a bisecant line of $Z \subset Y$. Moreover, this locus is irreducible.
\item 
The exceptional locus of $\Phi^{+}$ in $\widetilde{X}$ is an exceptional section of the $\mu$-exceptional divisor $D \subset \widetilde{X}$.
Moreover, $L$ is a special line.
\item
There is no line (other than $L$) on $X$ that intersects $L$.
\end{enumerate}
\end{prop}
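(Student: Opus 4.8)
The plan is to run the two-ray game of Proposition~\ref{prop:two-rayW5toV22} explicitly, identify the loci $\operatorname{Exc}(\Phi)$ and $\operatorname{Exc}(\Phi^{+})$ by intersection-theoretic bookkeeping transported across the flop, and then deduce~(3) formally from~(2).

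\emph{Part~(1).} By Proposition~\ref{prop:two-rayW5toV22} (and its proof) $\Phi$ is a flopping contraction, so $\operatorname{Exc}(\Phi)$ is a curve, and each of its components $\widetilde C$ has $(-K_{\widetilde Y})\cdot\widetilde C=0$. Writing $-K_{\widetilde Y}=2\tau^{\ast}H-E$, such a $\widetilde C$ is not a fibre of $E\to Z$ (those meet $-K_{\widetilde Y}$ in $1$), so it dominates a curve $C=\tau(\widetilde C)\subset Y$ with $2(H\cdot C)=(-K_Y)\cdot C=E\cdot\widetilde C=\operatorname{length}(C\cap Z)$. Since $\rho(\widetilde Y)=2$, all such components lie on a single extremal ray, and that ray is generated by the strict transform $\widetilde l$ of the unique bisecant line $l$ of $Z$ provided by Propositions~\ref{prop:SigmaZp>2} and~\ref{prop:SigmaZp=2} (indeed $(-K_{\widetilde Y})\cdot\widetilde l=2-2=0$, so $\widetilde l\subset\operatorname{Exc}(\Phi)$). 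I would then use intersection theory on $Y$, together with the structure of $V_5$-varieties, to rule out components contained in $E$ and to show that a curve on $Y$ meeting $Z$ in a scheme of length twice its degree is necessarily a line; this forces $C=l$, hence $\operatorname{Exc}(\Phi)=\widetilde l$, which is irreducible. In characteristic~$0$ this is classical (\cite{Prokhorovtworay}, \cite[Ch.~4]{Iskovskikh-Prokhorov}), and the argument is characteristic-free once Proposition~\ref{prop:two-rayW5toV22} is available.

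\emph{Part~(2).} As a secant or tangent line of $Z$, the line $l$ lies in the linear span $\langle Z\rangle\cong\P^{5}$ (Lemma~\ref{lem:normalityofrationalquintic}), hence $l\subset F_Y:=\langle Z\rangle\cap Y$, and therefore $\widetilde l\subset F$, where $F$ is the strict transform of $F_Y$ used in the proof of Proposition~\ref{prop:two-rayW5toV22}. On the $\widetilde X$-side, $D=F'$ is the strict transform of $F$, and since $\widetilde l=\operatorname{Exc}(\Phi)\subset F$, the flopped curve $\operatorname{Exc}(\Phi^{+})$ is contained in $D$ --- this inclusion is the crux. Granting it, I argue on $D$: it is a ruled surface over $L\cong\P^{1}$, and since $\deg N_{L/X}=-K_X\cdot L-2=-1$ with $L$ ordinary or special, $D\cong\mathbb F_{e}$ with $e\in\{1,3\}$; write $C_{0}$ for the negative section and $f$ for a fibre. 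Using $D=F'$, the flop-invariance in~\eqref{eqn:intersectionflop}, and the numbers $(-K_{\widetilde Y})^{2}\cdot F=3$, $(-K_{\widetilde Y})\cdot F^{2}=-2$ from the proof of Proposition~\ref{prop:two-rayW5toV22}, a short computation on $D$ gives $\mu^{\ast}(-K_X)|_{D}\equiv f$, $D|_{D}\equiv-\tfrac{1+e}{2}f-C_{0}$, and $(-K_{\widetilde X})|_{D}\equiv\tfrac{3+e}{2}f+C_{0}$, so that $(-K_{\widetilde X})\cdot(\gamma f+\delta C_{0})=\gamma+\tfrac{3-e}{2}\delta$. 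For $e=1$ there is no nonzero effective $(-K_{\widetilde X})$-trivial class on $D$; as $\operatorname{Exc}(\Phi^{+})$ is such a class, this forces $e=3$, i.e.\ $L$ is a special line. For $e=3$ the only nonzero effective $(-K_{\widetilde X})$-trivial reduced curve on $D$ is $C_{0}$, so $\operatorname{Exc}(\Phi^{+})=C_{0}$, the exceptional section of $D$; this is~(2).

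\emph{Part~(3).} Suppose $L'\neq L$ is a line on $X$ with $L'\cap L\neq\emptyset$. As $-K_{\widetilde X}$ is nef, $L'$ cannot be tangent to $L$, so $L'\cap L$ is a single reduced point, whence the strict transform $\widetilde{L'}$ satisfies $D\cdot\widetilde{L'}=1$ and $(-K_{\widetilde X})\cdot\widetilde{L'}=(-K_X)\cdot L'-D\cdot\widetilde{L'}=1-1=0$. Thus $\widetilde{L'}\subseteq\operatorname{Exc}(\Phi^{+})=C_{0}\subset D$ by~(2); but $\mu(\widetilde{L'})=L'\neq L=\mu(D)$ gives $\widetilde{L'}\not\subset D$, a contradiction. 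Hence no line other than $L$ meets $L$.

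The main obstacle is the inclusion $\operatorname{Exc}(\Phi^{+})\subset D$ in Part~(2). It requires a local analysis of how the flop $\widetilde Y\dashrightarrow\widetilde X$ transports the divisor $F$ through the contraction of $\widetilde l$ (which the normal-bundle computation will reveal to be a length-$2$ flop, not an ordinary Atiyah flop), and it is intertwined with the exclusion in Part~(1) of higher-degree and $E$-contained components of the flopping locus.
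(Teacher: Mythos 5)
Your part (3) reproduces the paper's argument, and your part (1) is essentially a sketch of the argument of \cite[Lemma 5.2.5]{Kuznetsov-Prokhorov-Shramov}, which is all the paper does for (1) (citing that lemma and then using the uniqueness of the bisecant from Propositions \ref{prop:SigmaZp>2} and \ref{prop:SigmaZp=2} for irreducibility). But the step you defer in (1) --- ruling out components of the flopping locus inside $E$ and showing that an irreducible curve $C\subset Y$ with $\mathrm{length}(C\cap Z)=2\deg C$ must be a line (i.e.\ excluding $4$-secant conics, $6$-secant cubics, etc.) --- is the entire content of that lemma, and you do not supply it.

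The genuine gap is in part (2), exactly where you flag it: the inclusion $\mathrm{Exc}(\Phi^{+})\subset D$. Nothing in your setup forces the flopped curve $C^{+}$ to lie in the strict transform $F'=D$ merely because the flopping curve $\widetilde l$ lies in $F$. The natural numerical check points the other way: $F\cdot\widetilde l=(\tau^{\ast}H-E)\cdot\widetilde l=1-2=-1$, so after the flop $D\cdot C^{+}=+1$, which is perfectly consistent with $C^{+}\not\subset D$ meeting $D$ transversally in a single point --- in which case $\mu(C^{+})$ would be a line meeting $L$, precisely the configuration that (3) is supposed to exclude, so you cannot appeal to (3) to close the loop. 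Without this inclusion the case analysis $e\in\{1,3\}$ on $D\cong\mathbb F_{e}$ never starts, and the local study of the length-two flop that you postpone is genuinely needed. The paper takes an entirely different route that avoids this: it normalizes $(Y,Z)$ to the explicit equations, lifts the pair to $W(k)$, lifts the whole two-ray diagram via Proposition \ref{prop:integraltwo-rayW5toV22}, quotes the known characteristic-zero statement \cite[Proposition 5.4.3]{Kuznetsov-Prokhorov-Shramov} on the generic fibre, and specializes: $L$ is special by upper semicontinuity from $\mathcal L_{\Frac(W(k))}$, and the exceptional section of $\mathcal D_{\Frac(W(k))}$ contained in $\mathrm{Exc}(\Phi^{+}_{\Frac(W(k))})$ closes up to an exceptional section of $D$ inside $\mathrm{Exc}(\Phi^{+}_{k})$, which is irreducible by (1). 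You should either carry out the local flop analysis in full or adopt this specialization argument.
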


\begin{proof}
If $\chara k =0$, these are proved in \cite[Proposition 5.4.3]{Kuznetsov-Prokhorov-Shramov}.
Therefore, we assume that $\chara k >0.$
We shall show (1).
The first part of (1) follows from the same argument as in \cite[Lemma 5.2.5]{Kuznetsov-Prokhorov-Shramov}.
Then the second part follows since a bisecant line of $Z \subset X $ is unique by Propositions \ref{prop:SigmaZp>2} and \ref{prop:SigmaZp=2}.

Next, we shall show (2).
Note that, by (1), the exceptional locus of $\Phi^{+}$ is irreducible.
We may assume that $Y \subset \P^6$ is given by (\ref{eqn:W5}), and
$Z$ is given by (\ref{eqn:ZMU}), (\ref{eqn:ZGageneral}) with $\xi=1$, or (\ref{eqn:ZGm}).
In this case, we can take a characteristic zero lift $(\mathcal{Y}, \mathcal{Z})$ of $(Y,Z)$, where $\mathcal{Y} \subset \P^6_{W(k)}$ is defined by (\ref{eqn:W5}) and $\mathcal{Z}$ is defined by the same equations as in (\ref{eqn:ZMU}), (\ref{eqn:ZGageneral}) with $\xi=1$, or (\ref{eqn:ZGm}).
Then by Proposition \ref{prop:integraltwo-rayW5toV22}, we can lift the two-ray game diagram as
\[
\mathcal{Y} \xleftarrow{\tau} \mathcal{\widetilde{Y}} \xrightarrow{\Phi} \mathcal{V} \xleftarrow{\Phi^{+}} \mathcal{\widetilde{X}} \xrightarrow{\mu} \mathcal{X},
\]
where $\mu$ is a blow-up of a relative line $\mathcal{L}$ on $\mathcal{X}$.
Since $\mathcal{L}_{\Frac (W(k))}$ is special, $\mathcal{L}_k =L$ is special by the upper-semi continuity.
Then the $\mu$-exceptional divisor $\mathcal{D}$ is a relative Hirzebruch surface $\Sigma_3$.
Moreover, the exceptional locus of $\Phi^{+}_{\Frac (W(k))}$ contains a exceptional section of $\mathcal{D}_{\Frac (W (k))}$.
By taking closure, the exceptional locus of $\Phi^{+}_{k}$ contains an exceptional section of $D$, and we obtain the result.

Finally, we show (3).
Let $L'$ be a line on $X$ that intersects $L$.
We denote its strict transform on $\widetilde{X}$ by $\widetilde{L'}$.
Then we have 
\[
(- K_{\widetilde{X}}, \widetilde{L'}) = (-K_{X}, L') - (D, \widetilde{L'}) = 1-1 =0,
\]
that means $\widetilde{L'}$ is a flopping curve other than an exceptional section of $D$.
It contradicts (2), so there is no such $L'$.
\end{proof}

\begin{prop}
\label{prop:tworaySigma}
In the situation in Proposition \ref{prop:tworayfloppinglocus}, we have an isomorphism
\[
\Sigma_{Z} (Y)^{\circ} \simeq \Sigma (X)_{\red}\setminus \{L\} 
\]
induced by a rational map $ \mu \circ (\Phi^{+})^{-1} \circ \Phi\circ \tau^{-1}$.
Here, $\Sigma_{Z} (Y)^{\circ} \subset \Sigma_{Z} (Y)$ is the open subscheme that parameterizes lines that are neither bisecants of $Z$, nor intersect any bisecant of $Z$.
\end{prop}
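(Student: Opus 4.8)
The plan is to produce two mutually inverse morphisms between $\Sigma_Z(Y)^\circ$ and $\Sigma(X)_{\red}\setminus\{L\}$ by transporting lines through the two-ray-game diagram
\[
Y\xleftarrow{\ \tau\ }\widetilde Y\xrightarrow{\ \Phi\ }V\xleftarrow{\ \Phi^{+}\ }\widetilde X\xrightarrow{\ \mu\ }X
\]
and its reverse, and then to note that, all schemes in sight being reduced, it is enough to check that these two morphisms are inverse on $k$-points. After base change (and since $k=\overline k$ every $V_5$-variety is split) I may assume by Theorems \ref{thm:BGaquintic>2} and \ref{thm:BGaquintic2} that $Y=Y_k$ is the split $V_5$-variety and that $Z$ is one of the normal forms \eqref{eqn:ZMU}, \eqref{eqn:ZGageneral} (with $\xi=1$), \eqref{eqn:ZGm}. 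Then the unique bisecant of $Z$ is $l=l_{(1:0:0)}$ (Propositions \ref{prop:SigmaZp>2}, \ref{prop:SigmaZp=2}), and a direct computation with the displayed equations shows that $l\cap Z$ is supported at the single point $(1:0:\dots:0)$, which by Proposition \ref{prop:numberoflines} (it lies in the $1$-dimensional orbit) carries no line other than $l$ itself. From Proposition \ref{prop:tworayfloppinglocus} I record: the flopping locus of $\Phi$ is the strict transform $\widetilde l\subset\widetilde Y$; the flopping locus of $\Phi^{+}$ is the negative section $\sigma_0$ of the $\mu$-exceptional divisor $D$; and $L$ is a special line, so $D\cong\mathbb F_3$. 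Writing $H$ for the hyperplane class of $Y\subset\P^6$, $E$ for the $\tau$-exceptional divisor and $F\sim\tau^{*}H-E$ for the strict transform of the hyperplane section spanning $Z$, the flop $\psi:=(\Phi^{+})^{-1}\circ\Phi\colon\widetilde Y\setminus\widetilde l\xrightarrow{\ \sim\ }\widetilde X\setminus\sigma_0$ carries $F$ to $D$ and is crepant, so intersection numbers with $\tau^{*}H$, $F$, $-K_{\widetilde Y}$ match those with $\mu^{*}H$, $D$, $-K_{\widetilde X}$ for any curve disjoint from the flopping loci.

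\emph{Forward morphism.} Let $[m]\in\Sigma_Z(Y)^\circ$. Since $m$ is not a bisecant it meets $Z$ transversally in one point, so its strict transform $\widetilde m$ maps isomorphically onto $m$ and $E\cdot\widetilde m=1$; since $m$ misses the unique bisecant $l$ and $\widetilde m$, $\widetilde l$ can only meet over $Z$ but do so in distinct fibres of $E\to Z$ (otherwise $m\cap l\ne\emptyset$), the curves $\widetilde m,\widetilde l$ are disjoint. Hence $\psi$ is a local isomorphism along $\widetilde m$, so $\widehat m:=\psi(\widetilde m)\cong m$ misses $\sigma_0$ and
\[
D\cdot\widehat m=F\cdot\widetilde m=(\tau^{*}H-E)\cdot\widetilde m=0,\qquad (-K_{\widetilde X})\cdot\widehat m=(-K_{\widetilde Y})\cdot\widetilde m=2-1=1.
\]
If $\widehat m$ lay in $D\cong\mathbb F_3$, its class in the rank-two lattice $\operatorname{NS}(D)$ would be orthogonal to $\sigma_0$ (since $\widehat m\cap\sigma_0=\emptyset$) and to $D|_D$ (since $D\cdot\widehat m=0$); a short computation ($D|_D\cdot\sigma_0=1$, $D|_D\cdot f=-1$) shows these classes are independent, so $\widehat m$ would be numerically trivial — absurd. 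Thus $\widehat m\cap D=\emptyset$, and $m':=\mu(\widehat m)\cong m$ is a smooth rational curve on $X$ disjoint from $L$ with $-K_X\cdot m'=(-K_{\widetilde X}+D)\cdot\widehat m=1$, i.e.\ a line $\ne L$. Carrying this out over the universal line on $\Sigma_Z(Y)^\circ$ (every member meets $Z$ transversally once and misses $l$) gives a flat family of lines on $X$, hence — by the universal property of the Hilbert scheme and reducedness of the base — a morphism $\Sigma_Z(Y)^\circ\to\Sigma(X)_{\red}\setminus\{L\}$ induced by $g:=\mu\circ(\Phi^{+})^{-1}\circ\Phi\circ\tau^{-1}$.

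\emph{Backward morphism.} Let $[m']\in\Sigma(X)_{\red}\setminus\{L\}$. By Proposition \ref{prop:tworayfloppinglocus}(3) no line other than $L$ meets $L$, so $m'$ is disjoint from $L$, its strict transform $\widetilde{m'}$ on $\widetilde X$ is disjoint from $D$ (hence from $\sigma_0$), and $D\cdot\widetilde{m'}=0$. Then $\psi^{-1}$ is a local isomorphism along $\widetilde{m'}$, so $\widetilde n:=\psi^{-1}(\widetilde{m'})\cong m'$ misses $\widetilde l$, with $F\cdot\widetilde n=0$ and $(-K_{\widetilde Y})\cdot\widetilde n=(-K_{\widetilde X})\cdot\widetilde{m'}=-K_X\cdot m'-D\cdot\widetilde{m'}=1$. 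From $F\sim\tau^{*}H-E$ and $-K_{\widetilde Y}\sim 2\tau^{*}H-E$ we get $H\cdot n=E\cdot\widetilde n$ and $2H\cdot n-E\cdot\widetilde n=1$, so $H\cdot n=E\cdot\widetilde n=1$ where $n:=\tau(\widetilde n)$; thus $n$ is a line meeting $Z$ transversally in one point, so $[n]\in\Sigma_Z(Y)$ and $n$ is not a bisecant. Finally $n$ misses $l$: a point of $n\cap l$ off $Z$ would give a point of $\widetilde n\cap\widetilde l$, impossible; a point of $n\cap l$ on $Z$ would lie in $l\cap Z=\{(1:0:\dots:0)\}$, through which $l$ is the only line, forcing $n=l$ and contradicting that $n$ is not a bisecant. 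Hence $[n]\in\Sigma_Z(Y)^\circ$, and as before this defines a morphism $\Sigma(X)_{\red}\setminus\{L\}\to\Sigma_Z(Y)^\circ$ induced by $g^{-1}$; compatibility with base change here may also be read off from the relative two-ray game (Proposition \ref{prop:integraltwo-rayW5toV22}).

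The two morphisms are inverse on $k$-points by construction — the assignments $m\mapsto m'$ and $m'\mapsto n$ are computed by running the same diagram forwards and backwards, and the strict transforms involved are isomorphisms there — and, source and target being reduced, they are mutually inverse morphisms; this is the asserted isomorphism, induced by $g$. I expect the only genuinely delicate point to be the verification that the strict transforms remain smooth rational curves avoiding the flopping and exceptional loci at every stage; concretely, the two inputs flagged above: the $\mathbb F_3$-argument ruling out $\widehat m\subset D$ (which rests on $L$ being special, Proposition \ref{prop:tworayfloppinglocus}(2)) and the incidence fact that the single point $l\cap Z$ carries no line but the bisecant — for which one uses the explicit equations of $Z$ and $l$ in each of the three types, with the characteristic-two case handled via the $\SL_2$-action of Proposition \ref{prop:orbitp=2}. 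Once these are established the argument parallels the characteristic-zero treatment in \cite{Kuznetsov-Prokhorov-Shramov}.
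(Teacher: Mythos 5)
Your argument is correct and is essentially the one the paper relies on: the paper's proof simply cites \cite[Lemma 5.2.8]{Kuznetsov-Prokhorov-Shramov}, and what you have written out is that same birational-transport argument, with the characteristic-free inputs (the flopping loci from Proposition \ref{prop:tworayfloppinglocus}, the uniqueness of the bisecant and the incidence data at $l\cap Z$ from Propositions \ref{prop:SigmaZp>2}, \ref{prop:SigmaZp=2} and \ref{prop:numberoflines}) verified explicitly. The intersection-number bookkeeping ($D|_D\cdot f=-1$, $D|_D\cdot\sigma_0=1$, the degree computations for $\widehat m$ and $\widetilde n$) all checks out, so no gap.
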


\begin{proof}
This follows from the same proof as in \cite[Lemma 5.2.8]{Kuznetsov-Prokhorov-Shramov}.
\end{proof}

\begin{lem}
\label{lem:sigmafaithful}
Let $k$ be an algebraically closed field of characteristic $p>0$, $Y$ a $V_5$-variety over $k$, and $Z \subset Y$ a smooth rational quintic curve of Mukai--Umemura type, $\Ga$-type, or $\Gm$-type.
Let $(X,L)$ be a $V_{22}$-variety over $k$ with a line on $X$ obtained by applying the two-ray game to $(Y,Z)$ (Proposition \ref{prop:two-rayW5toV22}).
Then the natural morphism
\[
\rho \colon\Aut_{X/k, \mathrm{red}} \rightarrow \Aut_{\Sigma(X)_{\mathrm{red}}/k}
\]
is injective.
\end{lem}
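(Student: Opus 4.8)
The plan is to carry an element of $\ker\rho$ across the two-ray game to the $V_5$-variety $Y$, where the Hilbert scheme of lines is the concrete $\P^2$ of Proposition~\ref{prop:SigmaofW5} carrying a faithful action of the automorphism group, and then to use the elementary fact that a linear automorphism of $\P^2$ fixing a conic pointwise is the identity.

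Since $k$ is algebraically closed, $\Aut_{X/k,\red}$ is a smooth group scheme, so showing that $\rho$ is a closed immersion amounts to checking that $\ker\rho$ has trivial $k$-points and trivial tangent space at the identity. Both follow from the same argument --- the tangential statement by running everything below over the dual numbers $k[\epsilon]$, with ``automorphism'' replaced by ``vector field tangent to'' --- so I only treat a point $g\in(\ker\rho)(k)$ and show $g=\mathrm{id}_X$. Such a $g$ fixes every line on $X$ as a subscheme; in particular $g$ preserves the special line $L$ of Proposition~\ref{prop:tworayfloppinglocus}, hence $g$ lifts to the blow-up $\widetilde X$ of $L$. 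As the remaining steps of the two-ray game of Propositions~\ref{prop:two-rayV22toW5} and~\ref{prop:two-rayW5toV22} --- the flop and the contraction of the $K$-negative extremal ray --- are canonically attached to $(X,L)$, the automorphism $g$ propagates through the whole diagram and descends to an automorphism $g_Y\in\Aut_{Y/k,\red}(k)$ with $g_Y(Z)=Z$, and the underlying birational map between $X$ and $Y$ intertwines $g$ and $g_Y$.

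Next I translate the hypothesis on $g$ into one on $g_Y$. By Proposition~\ref{prop:tworaySigma} the isomorphism $\Sigma_Z(Y)^{\circ}\simeq\Sigma(X)_{\red}\setminus\{L\}$ is induced by this birational map, hence intertwines the $g_Y$-action with the $g$-action; since $g$ acts trivially on $\Sigma(X)_{\red}$, the automorphism $g_Y$ fixes $\Sigma_Z(Y)^{\circ}$ pointwise. By Propositions~\ref{prop:SigmaZp>2} and~\ref{prop:SigmaZp=2}, the curve $\Sigma_Z(Y)\subset\Sigma(Y)\simeq\P^2$ is, apart from the line through the class of the bisecant of $Z$, a union of one or two smooth conics, and $\Sigma_Z(Y)^{\circ}$ is dense in that union of conics; hence the (closed) fixed locus of $g_Y$ on $\P^2$ contains a smooth conic. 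But a non-identity projective linear automorphism of $\P^2_k$ has fixed locus contained in a line together with finitely many points, so it fixes no conic pointwise; therefore $g_Y$ acts trivially on $\Sigma(Y)\simeq\P^2$. Since the action of $\Aut_{Y/k,\red}$ on $\Sigma(Y)\simeq\P^2$ is faithful --- for $\chara k\neq2$ it is the representation $\PGL_{2,k}\hookrightarrow\PGL_{3,k}$ of Proposition~\ref{prop:SigmaofW5}, and for $\chara k=2$ it is the patently faithful representation \eqref{eqn:SL2P2action} of $\SL_{2,k}$ --- we conclude $g_Y=\mathrm{id}_Y$.

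Finally, the birational map between $X$ and $Y$ intertwines $g$ and $g_Y=\mathrm{id}_Y$, so it is invariant under precomposition with $g$; since it restricts to an isomorphism on a dense open subset of $X$, the automorphism $g$ acts as the identity there, and hence $g=\mathrm{id}_X$ because $X$ is reduced. The step I expect to be the main obstacle is the propagation of $g$ through the two-ray game in the second paragraph: one must check that every stage of the Sarkisov link, as well as the identification of Hilbert schemes of lines in Proposition~\ref{prop:tworaySigma}, is functorial in the pair $(X,L)$, and keep the characteristic-$2$ case (where $\Aut_{Y/k,\red}\simeq\SL_{2,k}$ and the pertinent actions are $\sigma'$ and~\eqref{eqn:SL2P2action}) running in parallel; the tangential version moreover needs an infinitesimal, $k[\epsilon]$-relative form of the two-ray game. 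The other inputs --- that a linear automorphism of $\P^2$ fixing a conic is trivial, and the density of $\Sigma_Z(Y)^{\circ}$ in the conic locus --- are elementary given the cited propositions.
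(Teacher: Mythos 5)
Your argument is correct, and its skeleton matches the paper's: both proofs transfer a kernel element $g$ to an automorphism $g_Y$ of $(Y,Z)$ using the equivariance of the two-ray game and the identification $\Sigma_Z(Y)^{\circ}\simeq\Sigma(X)_{\red}\setminus\{L\}$ of Proposition \ref{prop:tworaySigma}. The final rigidity step, however, is genuinely different. The paper notes that a general line of $X$ corresponds to a line on $Y$ meeting $Z$ in exactly one point; since $g_Y$ stabilizes each such line and $Z$, it fixes infinitely many closed points of $Z\simeq\P^1$, hence acts trivially on $Z$, and one concludes from the fact that $\Aut_{(Y,Z)/k}\to\Aut_{Z/k}$ is a closed immersion (Remarks \ref{rem:stabilizeractiononZ_p>2} and \ref{rem:stabilizeractiononZ_p=2}). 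You instead let the fixed-point argument happen on $\Sigma(Y)\simeq\P^2$: the fixed locus of $g_Y$ is closed and contains $\Sigma_Z(Y)^{\circ}$, which is dense in the smooth conic components of $\Sigma_Z(Y)$ (Propositions \ref{prop:SigmaZp>2} and \ref{prop:SigmaZp=2}), and since the fixed locus of a nontrivial element of $\PGL_3(k)$ is a finite union of linear subspaces it cannot contain a smooth conic; the faithfulness of the representations in Proposition \ref{prop:SigmaofW5} then gives $g_Y=\mathrm{id}$. Both routes ultimately rest on the explicit computations of Section \ref{section:quintics_in_V5}; yours trades the closed-immersion remarks for the faithfulness of the $\Sigma(Y)$-action plus an elementary fixed-locus fact, at the price of invoking the full description of $\Sigma_Z(Y)$ rather than just the one-point intersection of general lines with $Z$. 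One caveat: your opening paragraph promises to also control the tangent space of $\ker\rho$ by an infinitesimal version of the argument, and that part remains a sketch (a $k[\epsilon]$-relative two-ray game is not set up in the paper); but the paper's own proof likewise only treats $k$-points of the kernel, so you are not losing anything relative to the published argument.
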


\begin{proof}
When $\chara k =0$, Prokhorov shows this result without the assumption on $Z$ (see \cite[Claim A.1.1]{Dedieu-Manivel}).
In his proof, he uses the smoothness of a fixed locus of an action of a finite group on a smooth variety.
The authors do not know whether this hold in characteristic $p>0$.
Therefore, 
instead of his method, we use the classification of quintic curves with infinite stabilizer.

By the proof of Proposition \ref{prop:tworaySigma},
the two-ray game maps a general line $L'$ on $X$ to a line $L'_{Y}$ on $Z$ that intersects $Z$ at one point.
By the assumption, any element $g \in \ker \rho (k) \subset \Aut (X,L) \simeq \Aut (Y,Z)$ stabilizes such a point on $Z$.
Since $\Sigma (X)$ is 1-dimensional (cf. Proposition \ref{prop:tworaySigma}),
$g$ has infinitely many fixed closed points on $Z$.
Therefore, $g$ acts trivially on $Z$.
Since $\Aut_{(Y,Z)/k} \rightarrow \Aut_{Z/k}$ is a closed immersion by Remarks \ref{rem:stabilizeractiononZ_p>2} and \ref{rem:stabilizeractiononZ_p=2}, we have $g=1$.
Therefore, $\rho$ is injective.
\end{proof}

\section{$V_{22}$-varieties in positive and mixed characteristic}
\label{section:V_22-variety_in_positive_mixed}

\subsection{Mukai--Umemura varieties}

\begin{lem}
\label{lem:MUSigma}
Let $k$ be an algebraically closed field of characteristic $p>0$, 
$Y$ a $V_5$-variety over $k$, and $Z \subset Y$ a smooth rational quintic curve of Mukai--Umemura type.
Let $(X, L)$ be the $V_{22}$-scheme with the line, that is obtained by applying the two-ray game (Proposition \ref{prop:two-rayW5toV22}) to $(Y,Z)$.
Then $\Sigma (X)_{\red} \setminus \{ L\}$ is isomorphic to $\mathbb{A}^1_k$.
\footnote{In Theorem \ref{thm:MukaiUmemuraclassification}, we show that $\Sigma(X)_{\red}$ is isomorphic to $\P^1$.}
\end{lem}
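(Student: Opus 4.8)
The plan is to combine the two-ray game identification of $\Sigma(X)_{\red}\setminus\{L\}$ from Proposition~\ref{prop:tworaySigma} with the explicit picture of $\Sigma_Z(Y)$ given in Proposition~\ref{prop:SigmaZp>2}(1). First, since $Z$ is of Mukai--Umemura type, Theorem~\ref{thm:BGaquintic>2} forces $p\neq 2,5$: this type is only defined for $\chara k\neq 2$, and part~(1) of that theorem includes $p\neq 5$ (equivalently, a Mukai--Umemura type quintic curve is smooth only when $p\neq 5$). Hence Proposition~\ref{prop:SigmaZp>2} applies to $(Y,Z)$.

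By Proposition~\ref{prop:tworaySigma} there is an isomorphism $\Sigma_Z(Y)^{\circ}\simeq\Sigma(X)_{\red}\setminus\{L\}$ induced by the two-ray game, where $\Sigma_Z(Y)^{\circ}\subset\Sigma_Z(Y)$ is the open locus of lines that are neither bisecants of $Z$ nor meet any bisecant of $Z$. So it suffices to show $\Sigma_Z(Y)^{\circ}\simeq\mathbb{A}^1_k$. By Proposition~\ref{prop:SigmaZp>2}(1) and its final assertions, $Z$ has a unique bisecant line $l$; writing $\Sigma_Z(Y)\simeq L_0\cup C_0\subset\P^2$ with $L_0$ a line and $C_0$ a conic tangent to $L_0$ at $[l]$, we have $\{[m]\in\Sigma_Z(Y)\mid l\cap m\neq\emptyset\}=L_0$, and $[l]\in L_0$ since $l$ meets itself. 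Therefore the locus removed in forming $\Sigma_Z(Y)^{\circ}$ is exactly $L_0$, i.e.\ $\Sigma_Z(Y)^{\circ}=\Sigma_Z(Y)\setminus L_0$.

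It remains to compute $\Sigma_Z(Y)\setminus L_0$. The conic $C_0$ is smooth: in the proof of Proposition~\ref{prop:SigmaZp>2}(1) it is exhibited as $V(2xy-z^2)\subset\P^2$, which is nondegenerate because $\chara k\neq 2$. Since $L_0$ and $C_0$ are tangent, $L_0\cap C_0=\{[l]\}$ set-theoretically, so as reduced schemes $\Sigma_Z(Y)\setminus L_0=(L_0\cup C_0)\setminus L_0=C_0\setminus\{[l]\}$. As $C_0\simeq\P^1_k$ and $[l]$ is a $k$-rational point, $C_0\setminus\{[l]\}\simeq\mathbb{A}^1_k$, and composing with the isomorphism of Proposition~\ref{prop:tworaySigma} gives $\Sigma(X)_{\red}\setminus\{L\}\simeq\mathbb{A}^1_k$.

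The argument is essentially bookkeeping once Propositions~\ref{prop:tworaySigma} and~\ref{prop:SigmaZp>2} are in hand; the only step needing a little care is the identification $\Sigma_Z(Y)^{\circ}=\Sigma_Z(Y)\setminus L_0$, which rests on the fact that the unique bisecant $l$ lies on $L_0$ and that $L_0$ is the entire locus of lines in $\Sigma_Z(Y)$ meeting $l$, so that the conic component loses exactly its single tangency point.
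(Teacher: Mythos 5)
Your proof is correct and follows exactly the route the paper takes: the paper's proof of this lemma is simply the citation of Propositions~\ref{prop:SigmaZp>2} and~\ref{prop:tworaySigma}, and your write-up supplies precisely the bookkeeping those citations leave implicit (the unique bisecant $l$ lies on the line component $L_0$, the removed locus is all of $L_0$, and the smooth conic minus its single tangency point is $\mathbb{A}^1_k$).
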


\begin{proof}
    This follows from Propositions \ref{prop:SigmaZp>2} and \ref{prop:tworaySigma}.
\end{proof}

\begin{thm}
\label{thm:MukaiUmemuraclassification}
Let $k$ be an algebraically closed field of characteristic $p \geq 0$.
Then the following hold.
\begin{enumerate}
\item 
If $p=2$ or $5$, then there is no $V_{22}$-variety $X$ over $k$ such that $\dim \Aut_{X/k} \geq 3$.
\item 
If $p\neq 2,5$, then there is a unique $V_{22}$-variety over $k$ such that $\dim \Aut_{X/k} \geq 3$. 
In this case, $X$ is obtained by applying the two-ray game (Proposition \ref{prop:two-rayW5toV22}) for $(Y,Z)$, where $Y$ is a $V_5$-variety over $k$ and $Z$ is a smooth rational quintic curve on $Y$ of Mukai--Umemura type.
Moreover, $\Sigma(X)_{\red}$ is isomorphic to $\P^1_k$,
$X$ is a Mukai--Umemura variety, and we have $\Aut_{X/k, \red} \simeq \PGL_{2,k}$.
\end{enumerate}
\end{thm}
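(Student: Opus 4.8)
The plan is to prove both parts at once by running the line two-ray game of Proposition~\ref{prop:two-rayW5toV22} on a hypothetical $V_{22}$-variety $X$ over $k$ with $\dim\Aut_{X/k}\ge 3$. First I would pick a line $L\subset X$ (which exists by Proposition~\ref{prop:lineexists}) and let $(Y,Z)$ be the $V_5$-variety together with its smooth rational quintic curve produced by Proposition~\ref{prop:two-rayW5toV22}. Every operation in the two-ray diagram is canonical (blow-ups, the unique flop, and the unique $K$-negative extremal contraction on a Picard-number-two variety with nef but non-ample anticanonical class), so there is a canonical identification of stabilizers $\Aut_{(X,L)/k}\simeq\Aut_{(Y,Z)/k}$, exactly as used in the proof of Lemma~\ref{lem:sigmafaithful}. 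Writing $G=\Aut^{\circ}_{X/k,\red}$, the $G$-orbit of $[L]$ in $\Sigma(X)_{\red}$ has dimension at most $\dim_{[L]}\Sigma(X)\le h^0(N_{L/X})\le 2$ (from the two possibilities $N_{L/X}\simeq\cO_L\oplus\cO_L(-1)$ and $\cO_L(1)\oplus\cO_L(-2)$), hence $\dim\Aut_{(Y,Z)/k}=\dim\Aut_{(X,L)/k}\ge\dim G-2\ge 1$; by Theorem~\ref{thm:BGaquintic>2} (or Theorem~\ref{thm:BGaquintic2} if $p=2$) the curve $Z$ is then of Mukai--Umemura, $\Ga$-, or $\Gm$-type.

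Once $Z$ is of one of these three types, Proposition~\ref{prop:tworaySigma} identifies $\Sigma(X)_{\red}\setminus\{L\}$ with an open subscheme of the one-dimensional scheme $\Sigma_{Z}(Y)$ — a union of a line and conics inside $\Sigma(Y)\simeq\P^2$ by Propositions~\ref{prop:SigmaZp>2} and~\ref{prop:SigmaZp=2}. Thus the $G$-orbit of $[L]$ is at most one-dimensional, and the estimate improves to $\dim\Aut_{(Y,Z)/k}\ge\dim G-1\ge 2$; since the $\Ga$- and $\Gm$-type stabilizers have dimension $1$, the curve $Z$ must be of Mukai--Umemura type. If $p=2$ or $5$ no such $Z$ exists on a $V_5$-variety (Theorems~\ref{thm:BGaquintic2} and~\ref{thm:BGaquintic>2}), a contradiction which proves~(1). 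If $p\neq 2,5$, then $Y$ is split (a $V_5$-variety over an algebraically closed field) and, by Theorem~\ref{thm:BGaquintic>2}(1), $(Y,Z)\simeq(Y,Z_{\mathrm{MU}})$ with $Z_{\mathrm{MU}}$ as in~(\ref{eqn:ZMU}); by the canonicity of the two-ray game this forces $X$ to be unique up to isomorphism. It remains to exhibit one such $X$ and to pin down its automorphism group.

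For existence (still $p\neq 2,5$), run Proposition~\ref{prop:two-rayW5toV22} starting from $(Y,Z_{\mathrm{MU}})$ to obtain a $V_{22}$-variety $X$ with a line $L$; the stabilizer identification and Theorem~\ref{thm:BGaquintic>2}(1) give $\Aut_{(X,L)/k}\simeq\Aut_{(Y,Z_{\mathrm{MU}})/k}$, the reduced two-dimensional Borel subgroup of $\PGL_{2,k}$, so $\dim\Aut_{X/k}\ge 2$. The crux is to upgrade this to $\dim\Aut_{X/k}\ge 3$, and this is the main obstacle; the plan is to lift to mixed characteristic: take the split $V_5$-scheme over $W(k)$ together with the relative curve defined there by the same equations as $Z_{\mathrm{MU}}$, apply the relative two-ray game of Proposition~\ref{prop:integraltwo-rayW5toV22} to get a $V_{22}$-scheme $\mathcal X$ over $W(k)$ with $\mathcal X_k\simeq X$, and observe that over the characteristic-zero fraction field $K$ of $W(k)$ the fibre $\mathcal X_K$ is built by the two-ray game from a Mukai--Umemura-type quintic, hence is the classical Mukai--Umemura threefold with $\dim\Aut_{\mathcal X_K/K}=3$. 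The scheme-theoretic closure in $\Aut_{\mathcal X/W(k)}$ of $\Aut^{\circ}_{\mathcal X_K/K}$ is flat over the discrete valuation ring $W(k)$ and irreducible of dimension $4$, so its special fibre — a closed subscheme of $\Aut_{X/k}=\Aut_{\mathcal X/W(k)}\times_{W(k)}k$ — has dimension $3$, whence $\dim\Aut_{X/k}\ge 3$. The points requiring care are the compatibility of the formation of $\Aut$ with base change, the flatness of the scheme-theoretic closure over $W(k)$, and invoking the classical computation $\Aut(\mathrm{MU})\simeq\PGL_2$ over fields of characteristic $0$.

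Finally, with $\dim\Aut_{X/k}\ge 3$ in hand, $G=\Aut^{\circ}_{X/k,\red}$ has dimension $\ge 3$ and, by Lemma~\ref{lem:sigmafaithful}, embeds into $\Aut_{\Sigma(X)_{\red}/k}$. By Lemma~\ref{lem:MUSigma}, $\Sigma(X)_{\red}$ is an irreducible reduced projective curve containing $\mathbb{A}^1_k$ as a dense open subscheme with complement the single point $[L]$; an irreducible reduced projective curve whose automorphism group contains a connected group of dimension $\ge 3$ must be $\P^1$ (its normalization has genus $0$, and any non-normal point would be fixed by the connected group, confining the latter to a Borel subgroup of $\PGL_2$). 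Hence $\Sigma(X)_{\red}\simeq\P^1_k$, and $\Aut_{X/k,\red}$ is a reduced closed subgroup scheme of $\Aut_{\P^1_k/k}=\PGL_{2,k}$ of full dimension $3$, hence all of $\PGL_{2,k}$. In particular $X$ is a Mukai--Umemura variety in the sense of Definition~\ref{defn:V22name}, which completes the proof of~(2).
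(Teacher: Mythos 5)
Your proposal is correct and follows essentially the same route as the paper's proof: run the line two-ray game to reduce to the classification of quintic curves with positive-dimensional stabilizer (Theorems \ref{thm:BGaquintic>2} and \ref{thm:BGaquintic2}), lift to $W(k)$ to import the characteristic-zero fact that the Mukai--Umemura threefold has $\dim\Aut=3$, and use the faithful action on $\Sigma(X)_{\red}\simeq\P^1_k$ (Lemmas \ref{lem:sigmafaithful} and \ref{lem:MUSigma}) to identify $\Aut_{X/k,\red}$ with $\PGL_{2,k}$. The only cosmetic differences are that you bootstrap the bound on the $G$-orbit of $[L]$ first from $h^0(N_{L/X})\le 2$ and then from the one-dimensionality of $\Sigma_Z(Y)$, where the paper instead cites directly that every component of the Hilbert scheme of lines is one-dimensional, and that you make the semicontinuity step $\dim\Aut_{\mathcal X_k/k}\ge\dim\Aut_{\mathcal X_K/K}$ explicit via a flat scheme-theoretic closure over $W(k)$.
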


\begin{proof}
Let $X$ be a $V_{22}$-variety over $k$ such that $\dim \Aut_{X/k} \geq 3$.
Since there exists a line on $X$ (Proposition \ref{prop:lineexists}), by \cite[Proposition 5.4]{Tanaka2}, any irreducible component of the Hilbert scheme of lines on $X$ is $1$-dimensional.
Take a line $L$ on $X$.
Then the stabilizer $\Aut_{X,L} \subset \Aut_{X}$ of $L$ has dimension greater than or equal to $2$.
By applying the two-ray game (Proposition \ref{prop:two-rayV22toW5}) for $(X,L)$, we obtain a $V_5$-variety $Y$ with a smooth rational quintic curve $Z$. 
By the two-ray game diagram, we have $\Aut_{X,L} \simeq \Aut_{Y,Z}$.
By Theorems \ref{thm:BGaquintic>2} and \ref{thm:BGaquintic2}, we obtain a contradiction when $p=2$ or $5$.
When $p\neq 2, 5$, by the same theorems, 
{\cora $(Y,Z)$ is unique up to isomorphism.}
By the two-ray game diagram again, $X$ is unique up to isomorphism.

Conversely, let $(X_0, L_0)$ be the $V_{22}$-variety with the line obtained by applying the two-ray game to $(Y_0,Z_0)$, where $Y_0$ is a $V_5$-variety over $k$ and $Z_0 \subset Y_0$ is a smooth rational quintic curve of Mukai--Umemura type.
We shall show that $X_0$ is a Mukai--Umemura variety.
This is a classical result if $\chara k=0$ (see \cite{Mukai-Umemura} and \cite[Claim A.1.1]{Dedieu-Manivel}).
By Lemma \ref{lem:MUSigma}, $\Sigma(X_0)_{\red}$ is a one-point compactification of $\mathbb{A}^1_k$.
By Lemma \ref{lem:sigmafaithful}, we have an injective morphism
\[
\rho \colon \Aut_{X_0/k, \red} \hookrightarrow 
\Aut_{\Sigma(X_0)_{\red}/k}.
\]
Note that we may assume that $Y_0 \subset \P^6$ is given by (\ref{eqn:W5}) and $Z_0$ by (\ref{eqn:ZMU}).
Let $(\mathcal{Y}_0, \mathcal{Z}_0)$ be the $W(k)$-lift of $(Y_0,Z_0)$ defined by the same equations as in (\ref{eqn:W5}) and (\ref{eqn:ZMU}).
By Proposition \ref{prop:integraltwo-rayW5toV22}, we obtain a $W(k)$-lift $\mathcal{X}_0$ of $X_0$. 
Since $\Aut_{\mathcal{X}_{0,\Frac (W(k))}/ \Frac (W(k))}$ is 3-dimensional, $\dim \Aut_{X_0/k} \geq 3$.
If $\Sigma (X_0)_{\red}$ has a (unique) singular point, then $\dim \Aut_{\Sigma(X_0)_{\red}/k} \leq 2$, and we obtain a contradiction. 
Therefore, $\Sigma (X_0)_{\red}$ is isomorphic to $\P^1$, and $\rho$ is a bijection.
Since its restriction to the identity component is also a bijection, $\Aut_{X_0/k, \red}$ is connected.
Moreover, by the bijection, $\Aut_{X_0/k, \red}$ is a semi-simple reductive group of semi-simple rank $1$ with a connected center.
Since $p\neq 2$, we have $\Aut_{X/k,\red} \simeq \PGL_{2,k}$.
\end{proof}

\begin{cor}
\label{cor:liftMukaiUmemura}
Let $k$ be an algebraically closed field of positive characteristic, and $X$ a Mukai--Umemura variety over $k$.
Then $X$ admits a $W(k)$-lift $\mathcal{X}$ that is a Mukai--Umemura scheme over $W(k)$.
\end{cor}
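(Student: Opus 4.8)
The plan is to use the classification of Theorem~\ref{thm:MukaiUmemuraclassification} to realize $X$ as the output of the two-ray game applied to a standard pair $(Y_0,Z_0)$, to lift that pair to $W(k)$ verbatim by the same defining equations, and then to run the relative two-ray game of Proposition~\ref{prop:integraltwo-rayW5toV22} over $\Spec W(k)$, finally checking that both fibers of the resulting $V_{22}$-scheme are Mukai--Umemura varieties.

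First I would record that, since $X$ is a Mukai--Umemura variety over the algebraically closed field $k$, we have $\dim \Aut_{X/k} \geq 3$, so Theorem~\ref{thm:MukaiUmemuraclassification}(1) forces $\chara k \neq 2,5$, and then Theorem~\ref{thm:MukaiUmemuraclassification}(2) (together with Proposition~\ref{prop:lineexists} and Proposition~\ref{prop:two-rayV22toW5}) shows that $X$ is, up to isomorphism, the $V_{22}$-variety obtained by the two-ray game of Proposition~\ref{prop:two-rayW5toV22} from the split $V_5$-variety $Y_0 := Y_k \subset \P^6_k$ defined by \eqref{eqn:W5} and the quintic curve $Z_0 := Z_{\mathrm{MU}} \subset Y_0$ of \eqref{eqn:ZMU}. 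Indeed, over $k$ every $V_5$-variety is split, and by Theorem~\ref{thm:BGaquintic>2}(1) every smooth rational quintic curve of Mukai--Umemura type is $\PGL_{2,k}(k)$-equivalent to $Z_{\mathrm{MU}}$.

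Next I would lift the pair to $W(k)$: take $\mathcal{Y}_0 := Y_{W(k)} \subset \P^6_{W(k)}$, the split $V_5$-scheme defined by the same equations \eqref{eqn:W5}, and let $\mathcal{Z}_0 \subset \mathcal{Y}_0$ be the curve defined by the parametrization \eqref{eqn:ZMU}, which lies on $\mathcal{Y}_0$ since the relations \eqref{eqn:W5} hold identically along \eqref{eqn:ZMU}. Because $\chara k \neq 2,5$, the converse direction of Proposition~\ref{prop:degenerationMUorGa} (the case $\xi = 0$) shows that $\mathcal{Z}_0$ is a relative smooth rational quintic curve on $\mathcal{Y}_0$ over $W(k)$. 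Applying Proposition~\ref{prop:integraltwo-rayW5toV22} to $(\mathcal{Y}_0,\mathcal{Z}_0)$ over $\Spec W(k)$ yields a $V_{22}$-scheme $\mathcal{X}_0$ over $W(k)$; by part~(4) of that proposition and Proposition~\ref{prop:two-rayW5toV22} its closed fiber $\mathcal{X}_{0,k}$ is the $V_{22}$-variety produced from $(Y_0,Z_0)$, hence isomorphic to $X$ by the uniqueness in Theorem~\ref{thm:MukaiUmemuraclassification}, so that $\mathcal{X}_0$ is a $W(k)$-lift of $X$. Writing $K := \Frac W(k)$ and base-changing to $\overline{K}$, the geometric generic fiber $\mathcal{X}_{0,\overline{K}}$ is likewise the $V_{22}$-variety produced by the two-ray game from the split $V_5$-variety with a smooth rational quintic curve of Mukai--Umemura type over $\overline{K}$; since $\chara \overline{K} = 0$, Theorem~\ref{thm:MukaiUmemuraclassification}(2) identifies it with the Mukai--Umemura variety over $\overline{K}$, so $\mathcal{X}_{0,K}$ is a Mukai--Umemura variety. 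As the geometric fibers of $\mathcal{X}_0$ over the two schematic points of $\Spec W(k)$ — namely $X$ over $k$ and a Mukai--Umemura variety over $\overline{K}$ — are both Mukai--Umemura, $\mathcal{X}_0$ is a Mukai--Umemura scheme over $W(k)$, which is what we want.

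The hard part here is not conceptual but a matter of careful bookkeeping: one must make sure that the closed fiber of the lifted two-ray game is $X$ itself and not merely \emph{some} $V_{22}$-variety, which is exactly the point where the uniqueness assertion of Theorem~\ref{thm:MukaiUmemuraclassification} is indispensable — the two-ray game determines $X$ only up to isomorphism — and that the relative quintic curve $\mathcal{Z}_0$ stays smooth on the closed fiber, which is guaranteed precisely by the exclusion of characteristics $2$ and $5$ that is already built into the hypothesis that $X$ is Mukai--Umemura.
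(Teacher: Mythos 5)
Your proposal is correct and follows essentially the same route as the paper's own proof: lift the standard pair given by the equations \eqref{eqn:W5} and \eqref{eqn:ZMU} to $W(k)$, run the relative two-ray game of Proposition~\ref{prop:integraltwo-rayW5toV22}, and identify both fibers via the classification and uniqueness in Theorem~\ref{thm:MukaiUmemuraclassification}. The paper states this in three lines; your version merely makes explicit the bookkeeping (smoothness of the relative quintic, uniqueness identifying the closed fiber with $X$) that the paper leaves implicit.
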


\begin{proof}
Let $\mathcal{Y}$ over $W(k)$ be the split $V_5$-scheme defined by $(\ref{eqn:W5})$,
and $\mathcal{Z} \subset \mathcal{Y}$ a relative smooth rational quintic curve defined by (\ref{eqn:ZMU}).
Let $\mathcal{X}$ be the $V_{22}$-scheme over $W(k)$ obtained by applying the two-ray game to $(\mathcal{Y}, \mathcal{Z})$ (Proposition \ref{prop:integraltwo-rayW5toV22}).
Then $\mathcal{X}$ satisfies the desired condition by Theorem \ref{thm:MukaiUmemuraclassification}.
\end{proof}

\begin{lem}
\label{lem:splitMU}
We set $R := \Z[1/10]$.
Let $\mathcal{Y} \subset \P^6_R$ be the split $V_5$-scheme over $R$ defined by (\ref{eqn:W5}), and
$\mathcal{Z} \subset \mathcal{Y}$ be the relative smooth rational quintic curve defined by (\ref{eqn:ZMU}).
Let $\mathcal{X}$ be the $V_{22}$-scheme over $R$ obtained by applying the two-ray game to $(\mathcal{Y},\mathcal{Z})$ (Proposition \ref{prop:integraltwo-rayW5toV22}).
Then for any perfect field $k$ over $R$, we have
$\Aut_{\mathcal{X}_{k}/k, \red} \simeq \PGL_{2,k}$.
\end{lem}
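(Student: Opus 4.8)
The plan is to deduce the structure over $\overline{k}$ from Theorem~\ref{thm:MukaiUmemuraclassification} and then descend, using the split Borel subgroup that acts throughout the two-ray game. Fix a perfect field $k$ together with a ring homomorphism $R \to k$; since $10 \in R^\times$ this forces $\chara k \neq 2,5$. By Proposition~\ref{prop:integraltwo-rayW5toV22}(4), the fiber $\mathcal{X}_k$ is exactly the $V_{22}$-variety obtained by applying the two-ray game (Proposition~\ref{prop:two-rayW5toV22}) to $(\mathcal{Y}_k,\mathcal{Z}_k)$, where $\mathcal{Y}_k$ is the split $V_5$-variety over $k$ of (\ref{eqn:W5}) and $\mathcal{Z}_k \subset \mathcal{Y}_k$ is the curve (\ref{eqn:ZMU}). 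By Theorem~\ref{thm:BGaquintic>2}(1), $\mathcal{Z}_k$ is a smooth rational quintic curve of Mukai--Umemura type whose stabilizer, with respect to the action $\sigma$ of Proposition~\ref{prop:W5automorphisms} (defined over $R$ since $2 \in R^\times$), is exactly the upper-triangular Borel subgroup $B_{0,k}\subset \PGL_{2,k}$.

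First I would run the two-ray game $B_{0,R}$-equivariantly. The action $\sigma$ of $B_{0,R}\subset \PGL_{2,R} = \Aut_{\mathcal{Y}/R}$ preserves $\mathcal{Z}$: this is immediate from the parametrization (\ref{eqn:ZMU}) together with (\ref{eqn:unipaction}) and (\ref{eqn:diagonalaction}) (alternatively, fiberwise from Theorem~\ref{thm:BGaquintic>2}(1)), so $B_{0,R}$ embeds into $\Aut_{(\mathcal{Y},\mathcal{Z})/R}$. Each operation composing the two-ray game — the blow-up of $\mathcal{Z}$, the relative anti-canonical morphism, its flop, the $K$-negative extremal contraction, and the final blow-down to $\mathcal{L}$ — is canonical (uniqueness holds since $\rho = 2$ and the flop is unique), hence $B_{0,R}$-equivariant, so $B_{0,R}\hookrightarrow \Aut_{(\mathcal{X},\mathcal{L})/R}\hookrightarrow \Aut_{\mathcal{X}/R}$. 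Restricting to the fiber over $k$, and noting that a smooth group scheme maps into the reduced part, we get a closed embedding $B_{0,k}\hookrightarrow \Aut_{\mathcal{X}_k/k,\red}$. Set $G \coloneqq \Aut_{\mathcal{X}_k/k,\red}$. Since the automorphism functor commutes with base change and $k$ is perfect, $G_{\overline{k}} = \Aut_{\mathcal{X}_{\overline{k}}/\overline{k},\red}$, and Theorem~\ref{thm:MukaiUmemuraclassification}(2) — applicable because $\chara \overline{k}\neq 2,5$ and $\mathcal{Z}_{\overline{k}}$ is of Mukai--Umemura type — identifies this with $\PGL_{2,\overline{k}}$. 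Hence $G$ is a smooth connected $k$-group which is a form of $\PGL_2$.

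It then remains to see that this form is split. The subgroup $B_{0,k}\subset G$ becomes a Borel subgroup of $G_{\overline{k}} = \PGL_{2,\overline{k}}$ (it is connected, solvable, and two-dimensional), so $B_{0,k}$ is a Borel subgroup of $G$ defined over $k$, whence $G$ is quasi-split; equivalently, $G$ contains the split torus $\Gm\subset B_{0,k}$ as a maximal torus. A quasi-split (equivalently, torus-split) $k$-form of $\PGL_2$ is split — the type $A_1$ Dynkin diagram has no nontrivial automorphisms, so the associated quaternion algebra must be split — and therefore $G\simeq \PGL_{2,k}$, as asserted. The points needing the most care are the equivariance of the \emph{relative} two-ray game (so that $B_{0,R}$, not merely its fibers, acts on $\mathcal{X}$) and the compatibility of the reduced automorphism scheme with base change to $\overline{k}$; both are standard given that $k$ is perfect, and the former may also be replaced by the fiberwise isomorphism $\Aut_{(\mathcal{X}_k,\mathcal{L}_k)/k}\simeq \Aut_{(\mathcal{Y}_k,\mathcal{Z}_k)/k}$ already used in the proof of Theorem~\ref{thm:MukaiUmemuraclassification}.
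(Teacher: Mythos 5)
Your proof is correct and follows essentially the same route as the paper: the paper likewise invokes Theorem \ref{thm:MukaiUmemuraclassification} for the geometric identification with $\PGL_{2,\overline{k}}$, then observes that by the construction of $\mathcal{X}$ and Theorem \ref{thm:BGaquintic>2} the group $\Aut_{\mathcal{X}_k/k,\red}$ admits a split maximal torus over $k$, and concludes that such a form of $\PGL_2$ is split. Your write-up merely makes explicit the equivariance of the relative two-ray game and the descent argument that the paper leaves implicit.
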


\begin{proof}
By Theorem \ref{thm:MukaiUmemuraclassification}, we have
$\Aut_{\mathcal{X}_{\overline{k}}/\overline{k}, \red} \simeq \PGL_{2, \overline{k}}$.
Moreover, by the construction of $\mathcal{X}$ and Theorem \ref{thm:BGaquintic>2},
$\Aut_{\mathcal{X}_k/k, \red}$ admits a split maximal torus over $k$.
Therefore, we obtain the desired assertion.
\end{proof}

\begin{defn}
\label{defn:splitMukai-Umemura}
Let $k$ be a perfect field of characteristic $p\neq 2,5$ and $X$ a Mukai--Umemura variety over $k$.
We say $X$ is a \emph{split Mukai--Umemura variety} if 
$X$ is $k$-isomorphic to the $V_{22}$-variety $\mathcal{X}_k$ defined in Lemma \ref{lem:splitMU}.
\end{defn}

\begin{thm}
\label{thm:Mukai-Umemurageneral}
Let $k$ be a field. 
\begin{enumerate}
\item 
If $k$ is of characteristic two or five, there is no Mukai--Umemura variety over $k$.
\item 
If $k$ is perfect field of characteristic $p \neq 2,5$,
then we have a one-to-one correspondence
\[
H^1 (G_k, \PGL_{2,k} (\overline{k}))
\simeq
\{ 
\textup{Mukai--Umemura variety over }k 
\}/k\textup{-isom}.
\]
Moreover, the trivial class is sent to a split Mukai--Umemura variety over $k$.
\end{enumerate}
\end{thm}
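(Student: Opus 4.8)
The plan is to dispatch (1) directly and reduce (2) to a Galois-descent computation. For (1): if $X$ were a Mukai--Umemura variety over a field $k$ with $\chara k \in \{2,5\}$, then $X_{\overline{k}}$ would be a $V_{22}$-variety over $\overline{k}$ with $\Aut_{X_{\overline{k}}/\overline{k},\red}^{\circ} \simeq \PGL_{2,\overline{k}}$, hence $\dim \Aut_{X_{\overline{k}}/\overline{k}} \geq 3$, directly contradicting Theorem \ref{thm:MukaiUmemuraclassification}(1). This is immediate and needs no further input.

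For (2), I would first note that $\chara k = p \neq 2,5$ makes $k$ a $\Z[1/10]$-algebra, so the split Mukai--Umemura variety $X_0 := \mathcal{X}_k$ of Definition \ref{defn:splitMukai-Umemura} is defined over $k$; since $k$ is perfect (so that passing to the reduced subscheme commutes with base change to $\overline{k}$), Lemma \ref{lem:splitMU} gives $\Aut_{X_{0,\overline{k}}/\overline{k},\red} \simeq \PGL_{2,\overline{k}}$, so $X_0$ is itself a Mukai--Umemura variety. The key reduction is then: a $V_{22}$-variety $X$ over $k$ is a Mukai--Umemura variety if and only if $X_{\overline{k}} \simeq X_{0,\overline{k}}$. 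The ``if'' direction is immediate from the definition (it only involves $\Aut^{\circ}_{\red}$ over $\overline{k}$), and the ``only if'' direction is precisely the uniqueness assertion of Theorem \ref{thm:MukaiUmemuraclassification}(2), which says there is a unique $V_{22}$-variety over $\overline{k}$ with $\dim \Aut \geq 3$.

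Granting this reduction, and using that $k$ perfect gives $\overline{k} = \ksep$, the Mukai--Umemura varieties over $k$ are exactly the $\ksep/k$-forms of the projective variety $X_0$. Standard Galois descent then puts the set of isomorphism classes of such forms in bijection with the pointed set $H^1(G_k, \Aut_{X_{0,\overline{k}}/\overline{k}}(\overline{k}))$, with the trivial class corresponding to $X_0$, i.e.\ to a split Mukai--Umemura variety. It remains to identify the coefficient group: since $\overline{k}$ is reduced, $\Aut_{X_{0,\overline{k}}/\overline{k}}(\overline{k}) = \Aut_{X_{0,\overline{k}}/\overline{k},\red}(\overline{k})$, and base-changing the $k$-isomorphism $\Aut_{X_0/k,\red} \simeq \PGL_{2,k}$ of Lemma \ref{lem:splitMU} to $\overline{k}$ and taking $\overline{k}$-points yields a $G_k$-equivariant isomorphism with $\PGL_{2,k}(\overline{k})$, which completes the proof.

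The point that demands the most care is the bookkeeping around the automorphism group scheme: one must check that possible non-reducedness of $\Aut_{X_0/k}$ is harmless --- it is, because only $\overline{k}$-points enter the descent datum and $\overline{k}$ is reduced --- and that forming the reduced subscheme commutes with base change to $\overline{k}$, which is exactly where perfectness of $k$ is genuinely used (and also where it lets us identify forms over $\overline{k}$ with forms over $\ksep$). Everything else is the formal Galois-descent yoga together with the already-established classification over algebraically closed fields.
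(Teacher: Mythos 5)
Your proposal is correct and follows essentially the same route as the paper: the paper's own proof of part (2) is just the one-line citation of Lemma \ref{lem:splitMU} (together with Theorem \ref{thm:MukaiUmemuraclassification} for part (1) and for uniqueness over $\overline{k}$), and your write-up is a faithful unpacking of exactly that twisting/Galois-descent argument, including the correct observations that perfectness gives $\overline{k}=\ksep$ and that only $\overline{k}$-points of the automorphism scheme enter, so non-reducedness is harmless.
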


\begin{proof}
(1) follows from Theorem \ref{thm:MukaiUmemuraclassification}.
(2) follows from Lemma \ref{lem:splitMU} (cf.\ Definition \ref{defn:splitMukai-Umemura}).
\end{proof}

\begin{prop}
\label{prop:MUsplitcriteria}
Let $X$ be a Mukai--Umemura variety over a perfect field $k$ of characteristic $p \neq 2$, $5$.
Then the following are equivalent.
\begin{enumerate}
    \item 
    $X$ is a split Mukai--Umemura variety.
    \item
    There exists an isomorphism $\Sigma (X)_{\red} \simeq \P^1_k$.
    \item
    There exists a line $L$ on $X$.
    \item 
    We have $\Aut_{X/k, \red} \simeq \PGL_{2,k}$.
\end{enumerate}
\end{prop}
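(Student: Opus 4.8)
The plan is to reduce everything to Galois cohomology and Severi--Brauer geometry. (If $\chara k\in\{2,5\}$ there is no Mukai--Umemura variety over $k$ by Theorem~\ref{thm:Mukai-Umemurageneral}(1), so the statement is vacuous; hence I may assume $\chara k\neq 2,5$.) By Theorem~\ref{thm:MukaiUmemuraclassification} we have $\Aut_{X_{\overline{k}}/\overline{k},\red}\simeq\PGL_{2,\overline{k}}$, and by Theorem~\ref{thm:Mukai-Umemurageneral}(2) the set of Mukai--Umemura varieties over $k$ up to $k$-isomorphism is in bijection with $H^1(G_k,\PGL_{2,k}(\overline{k}))$, the split Mukai--Umemura variety $X_0:=\mathcal{X}_k$ of Lemma~\ref{lem:splitMU} corresponding to the trivial class $c=0$. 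Since $\Aut_{\P^1_{\overline{k}}/\overline{k}}=\PGL_{2,\overline{k}}$, the pointed set $H^1(G_k,\PGL_{2,k}(\overline{k}))$ is also in bijection with the set of $k$-forms of $\P^1$ (Severi--Brauer curves, i.e.\ smooth conics), the split conic $\P^1_k$ corresponding to $c=0$; recall that a Severi--Brauer curve over $k$ is $k$-isomorphic to $\P^1_k$ if and only if it has a $k$-rational point. The key step I would isolate is the claim: if $X$ is a Mukai--Umemura variety over $k$ with associated class $c$, then $\Sigma(X)_{\red}$ is the Severi--Brauer curve over $k$ attached to $c$.

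To prove this claim, I would first observe that $\Sigma(X)_{\red}$ is a $k$-form of $\P^1$: since $k$ is perfect, $(\Sigma(X)_{\red})_{\overline{k}}\simeq\Sigma(X_{\overline{k}})_{\red}$, which is $\P^1_{\overline{k}}$ by Theorem~\ref{thm:MukaiUmemuraclassification}(2). Next, the natural homomorphism $\rho\colon\Aut_{X_{\overline{k}}/\overline{k},\red}\to\Aut_{\Sigma(X_{\overline{k}})_{\red}/\overline{k}}$ is a closed immersion by Lemma~\ref{lem:sigmafaithful}; as both groups are connected of dimension $3$ (the target being $\Aut_{\P^1_{\overline{k}}/\overline{k}}=\PGL_{2,\overline{k}}$), $\rho$ is an isomorphism. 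The formation $X\mapsto\Sigma(X)_{\red}$ is functorial and commutes with the base change to $\overline{k}$, hence with Galois twisting; since $X$ is obtained from $X_0$ by twisting with the cocycle $c$, and since via $\rho$ the $\Aut_{X_{0,\overline{k}}/\overline{k},\red}$-action on $\Sigma(X_{0,\overline{k}})_{\red}\simeq\P^1_{\overline{k}}$ is the standard $\PGL_2$-action on $\P^1$, it follows that $\Sigma(X)_{\red}$ is the twist of $\P^1_k$ by $c$, i.e.\ the Severi--Brauer curve attached to $c$. (For $X_0$ itself this is consistent: the relative line $\mathcal{L}$ produced by the two-ray game in Lemma~\ref{lem:splitMU} gives a $k$-point of $\Sigma(X_0)_{\red}$, forcing $\Sigma(X_0)_{\red}\simeq\P^1_k$.)

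Granting the claim, the equivalences are formal. For $(1)\Leftrightarrow(2)$: $X$ is split iff $c=0$ iff the Severi--Brauer curve $\Sigma(X)_{\red}$ attached to $c$ is $\P^1_k$. For $(2)\Leftrightarrow(3)$: a $k$-point of $\Sigma(X)_{\red}$ is the same as a $k$-point of the Hilbert scheme $\Sigma(X)$, that is, a line on $X$ defined over $k$; and the Severi--Brauer curve $\Sigma(X)_{\red}$ is isomorphic to $\P^1_k$ precisely when it has a $k$-point. For $(2)\Leftrightarrow(4)$: injectivity of a homomorphism of group schemes over $k$ may be checked after the faithfully flat base change to $\overline{k}$, so the closed immersion of Lemma~\ref{lem:sigmafaithful} descends to a closed immersion $\Aut_{X/k,\red}\hookrightarrow\Aut_{\Sigma(X)_{\red}/k}$; comparing dimensions of these connected $k$-groups as above gives $\Aut_{X/k,\red}\simeq\Aut_{\Sigma(X)_{\red}/k}$, and the automorphism group of a Severi--Brauer curve $C$ over $k$ is $\simeq\PGL_{2,k}$ if and only if $C\simeq\P^1_k$. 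Thus $(4)\Leftrightarrow(2)$.

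The main obstacle is the key claim, and within it the bookkeeping that $X\mapsto\Sigma(X)_{\red}$ is compatible with twisting and that the resulting twist of $\P^1$ is exactly the Severi--Brauer curve of $c$; this is routine but a little delicate, and rests entirely on the isomorphism $\Aut_{X_{\overline{k}}/\overline{k},\red}\xrightarrow{\sim}\Aut_{\P^1_{\overline{k}}/\overline{k}}$ extracted from Lemma~\ref{lem:sigmafaithful}. As an alternative for the key implication $(3)\Rightarrow(1)$, one can run the two-ray game of Proposition~\ref{prop:two-rayV22toW5} over $k$ starting from a line $L$ on $X$ to obtain a $V_5$-variety $Y/k$ with a smooth rational quintic curve $Z$ that is of Mukai--Umemura type over $\overline{k}$ and whose unique bisecant line is defined over $k$, and then try to show that $Y$ is the split $V_5$-variety and that $Z$ lies in the standard $\PGL_{2,k}(k)$-orbit of Theorem~\ref{thm:BGaquintic>2}(1); however, pinning down the $k$-form of $Y$ is itself a descent problem, so the twist argument above appears more efficient.
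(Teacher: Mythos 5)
Your proof is correct and takes essentially the same route as the paper's: both identify $\Aut_{X_{\overline{k}}/\overline{k},\red}$ with $\Aut_{\Sigma(X_{\overline{k}})_{\red}/\overline{k}}\simeq\PGL_{2,\overline{k}}$ via the faithfulness of the action on the Hilbert scheme of lines (Lemma~\ref{lem:sigmafaithful} together with Theorem~\ref{thm:MukaiUmemuraclassification}), and then match the class in $H^1(G_k,\PGL_{2,k}(\overline{k}))$ of $X$ with that of the conic $\Sigma(X)_{\red}$ and of the $k$-form $\Aut_{X/k,\red}$ of $\PGL_2$, anchored by the line on the split model from Lemma~\ref{lem:splitMU}. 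The paper's write-up is merely terser about the twisting bookkeeping that you spell out as your ``key claim.''
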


\begin{proof}
Let $X_0$ be a split Mukai--Umemura variety.
By the proof of Theorem \ref{thm:MukaiUmemuraclassification}, the natural morphism
\[
\Aut_{X_0/k, \red} (\overline{k}) \rightarrow \Aut_{\Sigma(X_0)_{\red}/k} (\overline{k}) \simeq \PGL_{2,k} (\overline{k})
\]
is a $G_k$-equivariant bijection.
Also, we note that $\Sigma(X_0)_{\red, \overline{k}} \simeq \P^1_{\overline{k}}$ by Theorem \ref{thm:MukaiUmemuraclassification}.
Moreover, by the construction of $X_0$, there exists a line $L_0$ on $X_0$, so $\Sigma (X_0) _{\red}\simeq \P^1_k$.
Therefore, (1) and (2) are equivalent.

The equivalence of (2) and (3) is clear (e.g.,\ \cite[Proposition 4.5.12]{Poonenrationalpoints}).

The equivalence of (2) and (4) follows from Lemma \ref{lem:splitMU} since the automorphism group of $\PGL_{2,k}$ is the inner automorphism group $\PGL_{2,k} (k)$ (e.g.,\ \cite[(1.5.2)]{Conradreductive}).
\end{proof}

\subsection{$V_{22}$-varieties of $\Ga$-type}

\begin{lem}
\label{lem:GaSigma}
Let $k$ be an algebraically closed field of characteristic $p\neq 2, 5$, 
$Y$ a $V_5$-variety over $k$, and $Z \subset Y$ a smooth rational quintic curve of $\Ga$-type.
Let $(X, L)$ be the $V_{22}$-scheme with the line, that is obtained by applying the two-ray game (Proposition \ref{prop:two-rayW5toV22}) to $(Y,Z)$.
Then $\Sigma (X)_{\red}$ is a union of two rational curves that intersect at one point $L$.
Moreover, the singular locus of $\Sigma (X)_{\red}$ is equal to $\{L\}$.
\end{lem}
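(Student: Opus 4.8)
The plan is to compute $\Sigma(X)_{\red}\setminus\{L\}$ directly from Propositions~\ref{prop:SigmaZp>2} and \ref{prop:tworaySigma}, and then to determine how the single missing point $L$ is attached by invoking the properness of the Hilbert scheme of lines on $X$. First I would apply Proposition~\ref{prop:tworaySigma}, which gives an isomorphism of schemes $\Sigma_Z(Y)^{\circ}\simeq\Sigma(X)_{\red}\setminus\{L\}$, where $\Sigma_Z(Y)^{\circ}\subset\Sigma_Z(Y)$ is the open subscheme parametrizing lines on $Y$ that are neither bisecants of $Z$ nor meet a bisecant of $Z$. By Proposition~\ref{prop:SigmaZp>2}(2) (which applies since $p\neq 2,5$), identifying $\Sigma(Y)$ with $\P^2$ via $f$, we have $\Sigma_Z(Y)=L_0\cup C_1\cup C_2$ with $L_0$ a line and $C_1,C_2$ conics; the two conics are $4$-tangent at the point $[l]$ corresponding to the unique bisecant line $l$ of $Z$, the line $L_0$ is tangent to each $C_i$ at $[l]$, and $L_0=\{[m]\mid l\cap m\neq\emptyset\}$. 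In particular $[l]\in L_0$, so $\Sigma_Z(Y)^{\circ}=\Sigma_Z(Y)\setminus L_0$; and since $L_0\cap C_i=\{[l]\}$ and $C_1\cap C_2=\{[l]\}$ as sets (B\'ezout together with the tangency and $4$-tangency), and the $C_i$ are smooth conics (as one reads off from the explicit equations $2(x\pm 2y)y-z^2=0$ written down in the proof of Proposition~\ref{prop:SigmaZp>2}(2)), we obtain
\[
\Sigma(X)_{\red}\setminus\{L\}\ \simeq\ \bigl(C_1\setminus\{[l]\}\bigr)\ \sqcup\ \bigl(C_2\setminus\{[l]\}\bigr)\ \simeq\ \mathbb{A}^1_k\ \sqcup\ \mathbb{A}^1_k,
\]
a disjoint union of two smooth affine lines $U_1,U_2$.

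Next I would glue back the point $L$. Since $X$ is projective, the Hilbert scheme of lines on $X$ is proper, hence so is the curve $\Sigma(X)_{\red}$; and $U_1,U_2$, being the connected components of the open set $\Sigma(X)_{\red}\setminus\{L\}$, are open and closed in it. Therefore $\overline{U_i}\cap(\Sigma(X)_{\red}\setminus\{L\})=U_i$, so $\overline{U_i}\subseteq U_i\cup\{L\}$, and since $U_i\simeq\mathbb{A}^1_k$ is not proper while $\overline{U_i}$ is, we must have $\overline{U_i}=U_i\cup\{L\}$ for $i=1,2$. Consequently $\Sigma(X)_{\red}=\overline{U_1}\cup\overline{U_2}$ is the union of two irreducible curves, which are distinct (each $U_i$ is dense in $\overline{U_i}$, while $U_1\cap U_2=\emptyset$) and meet precisely at $L$; moreover each $\overline{U_i}$ contains the dense open $U_i\simeq\mathbb{A}^1_k$ and is therefore rational. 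Finally, $\Sigma(X)_{\red}\setminus\{L\}\simeq\mathbb{A}^1_k\sqcup\mathbb{A}^1_k$ is smooth, whereas $L$ lies on two distinct irreducible components of the reduced curve $\Sigma(X)_{\red}$ and is hence a singular point; so the singular locus of $\Sigma(X)_{\red}$ equals $\{L\}$, as claimed.

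The argument is essentially formal once Propositions~\ref{prop:SigmaZp>2} and \ref{prop:tworaySigma} are available, so there is no real obstacle; the two points that require a little care are, first, verifying that the conics $C_1,C_2$ are smooth — this is exactly what guarantees that $\Sigma(X)_{\red}\setminus\{L\}$ is smooth, and hence that the singular locus is precisely $\{L\}$ and not larger — which I would extract from the explicit quadratic equations appearing in the proof of Proposition~\ref{prop:SigmaZp>2}(2); and second, checking that the single point $L$ genuinely attaches to the closures of both components $U_1$ and $U_2$, which I would deduce from properness as above rather than from any a priori connectedness statement about $\Sigma(X)$.
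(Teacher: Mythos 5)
Your proof is correct and follows exactly the route the paper takes: its own proof of this lemma is the one-line deduction from Propositions \ref{prop:SigmaZp>2} and \ref{prop:tworaySigma}, and you have simply filled in the details (smoothness of the two conics, the identification $\Sigma_Z(Y)^{\circ}\simeq\mathbb{A}^1_k\sqcup\mathbb{A}^1_k$, and the properness argument attaching $L$ to the closure of each component). No gaps.
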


\begin{proof}
This follows from Propositions \ref{prop:SigmaZp>2} and \ref{prop:tworaySigma}.
\end{proof}

\begin{thm}
\label{thm:Gaclassification}
Let $k$ be an algebraically closed field of characteristic $p$, and $X$ a $V_{22}$-variety of $\Ga$-type over $k$.
Then 
$p \neq 2,5$, and
$X$ is obtained by applying the two-ray game (Proposition \ref{prop:two-rayW5toV22}) to $(Y,Z)$, where $Y$ is a $V_5$-variety over $k$ and $Z$ is a smooth rational quintic curve on $Y$ of $\Ga$-type.
In particular, there is a unique $V_{22}$-variety of $\Ga$-type over $k$, and $\Sigma (X)_{\red}$ is as in Lemma \ref{lem:GaSigma}.
Moreover, we have
$
\Aut_{X/k, \red} \simeq \Ga \rtimes \mu_4,
$
where the semi-direct product is taken with respect to (\ref{eqn:semi-directGa}).
\end{thm}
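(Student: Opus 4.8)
The plan is to run the same strategy as in the proof of Theorem~\ref{thm:MukaiUmemuraclassification}: transport the problem, via the two-ray game, to the classification of smooth rational quintic curves with large stabilizer on a $V_5$-variety (Theorems~\ref{thm:BGaquintic>2} and~\ref{thm:BGaquintic2}), and then read off $\Aut_{X/k,\red}$ from the $V_5$-side. First I would produce a distinguished line: by Proposition~\ref{prop:lineexists} there is a line on $X$, and by \cite[Proposition 5.4]{Tanaka2} every irreducible component of $\Sigma(X)$ is one-dimensional, so $\Sigma(X)_{\red}$ is a non-empty proper curve on which $G:=\Aut_{X/k,\red}^{\circ}\simeq\Ga$ acts. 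Since $\Ga$ is connected, it preserves each irreducible component of $\Sigma(X)_{\red}$, and Borel's fixed point theorem applied to one such component produces a line $L\subset X$ fixed by $\Ga$. Blowing up $L$ and running the two-ray game (Proposition~\ref{prop:two-rayV22toW5}) yields a $V_5$-variety $Y$ and a smooth rational quintic curve $Z\subset Y$ with $\Aut_{(X,L)/k}\simeq\Aut_{(Y,Z)/k}$ (as in the proof of Theorem~\ref{thm:MukaiUmemuraclassification}); in particular the stabilizer of $Z$ contains a copy of $\Ga$, so it has positive dimension.

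Next I would pin down $Z$ and exclude characteristics $2$ and $5$. The reduced identity component $H$ of the stabilizer of $Z$ is a connected one-dimensional smooth subgroup of $\Aut_{Y/k,\red}$ into which $\Ga$ embeds, hence $H\simeq\Ga$ (it cannot be $\Gm$, which receives no nonconstant homomorphism from $\Ga$). If $\chara k=2$, then by Theorem~\ref{thm:BGaquintic2} $H$ would have to be $\Gm$, a contradiction; so $p\neq 2$. For $p\neq 2$, Theorem~\ref{thm:BGaquintic>2} enumerates the possibilities for $Z$, and $H\simeq\Ga$ rules out both the Mukai--Umemura case (where the stabilizer is a Borel subgroup) and the $\Gm$-case, so $Z$ is of $\Ga$-type; Theorem~\ref{thm:BGaquintic>2}(2) then forces $p\neq 5$. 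Thus $p\neq 2,5$, and $X$ is obtained from $(Y,Z)$ by the two-ray game with $Z$ of $\Ga$-type, as asserted. Since $k=\overline{k}$, $Y$ is split and hence unique, and by Theorem~\ref{thm:BGaquintic>2}(2) $Z$ is $\Aut_{Y/k}$-equivalent to the standard $\Ga$-type curve $Z_1^{(\a)}$; the converse two-ray game (Proposition~\ref{prop:two-rayW5toV22}) then gives uniqueness of $X$ up to isomorphism, and Lemma~\ref{lem:GaSigma} describes $\Sigma(X)_{\red}$. Applying the two-ray game to the split $V_5$-variety together with $Z_1^{(\a)}$ produces such an $X$ whenever $p\neq 2,5$, which (granting the automorphism computation below) establishes existence.

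Finally, for the automorphism group, the point I would exploit is that $[L]$ is the \emph{unique} singular point of $\Sigma(X)_{\red}$ by Lemma~\ref{lem:GaSigma}. As $\Aut_{X/k}$ acts on $\Sigma(X)$ and preserves the singular locus of $\Sigma(X)_{\red}$, every $k$-automorphism of $X$ stabilizes $L$, so over the algebraically closed field $k$ one gets $\Aut_{X/k,\red}=\Aut_{(X,L)/k,\red}$. Combining this with the two-ray game identification $\Aut_{(X,L)/k}\simeq\Aut_{(Y,Z)/k}$ and with Theorem~\ref{thm:BGaquintic>2}(2), which states that this stabilizer is reduced and isomorphic to $\Ga\rtimes\mu_4$ with respect to~(\ref{eqn:semi-directGa}), yields $\Aut_{X/k,\red}\simeq\Ga\rtimes\mu_4$ with the stated semidirect product structure; in particular $\Aut_{X/k,\red}^{\circ}\simeq\Ga$, so the $X$ built above is indeed of $\Ga$-type. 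I expect the main difficulty to lie not in any one estimate but in organising the transfer cleanly: verifying that the two-ray game matches $\Aut_{(X,L)/k}$ with $\Aut_{(Y,Z)/k}$ \emph{compatibly with} the $\Ga\rtimes\mu_4$-structure of Theorem~\ref{thm:BGaquintic>2}(2), and carrying the exclusion of characteristics $2$ and $5$ correctly through the two-ray game; the substantive geometry (base-point-freeness, existence of flops, the shape of $\Sigma_Z(Y)$) is already supplied by Tanaka's results and Section~\ref{section:quintics_in_V5}.
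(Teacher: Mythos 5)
Your proposal is correct and follows essentially the same route as the paper's proof: Borel's fixed-point theorem produces a $\Ga$-stable line, the two-ray game transfers the problem to the classification of quintic curves with positive-dimensional stabilizer (Theorems \ref{thm:BGaquintic>2} and \ref{thm:BGaquintic2}), which excludes $p=2,5$ and pins down $(Y,Z)$, and the identification $\Aut_{X/k,\red}=\Aut_{(X,L)/k,\red}\simeq\Aut_{(Y,Z)/k,\red}\simeq\Ga\rtimes\mu_4$ via the unique singular point of $\Sigma(X)_{\red}$ gives the automorphism group and existence. The only presentational difference is that you justify the exclusion of the Mukai--Umemura case by bounding $\dim H\le\dim\Aut_{X/k,\red}=1$, a point the paper leaves implicit.
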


\begin{proof}
Let $X$ be a $V_{22}$-variety of $\Ga$-type over $k$.
Since there exists a line on $X$ (Proposition \ref{prop:lineexists})
and $\Aut_{X/k, \red}^{\circ}\simeq \Ga$ is a connected solvable algebraic group, there exists a line $L$ on $X$ such that $L$ is $\Ga$-stable  \cite[Theorem VIII.21.2]{Humphreys}.
By applying the two-ray game (Proposition \ref{prop:two-rayV22toW5}) for $(X,L)$, we obtain a $V_5$-variety $Y$ with a smooth rational quintic curve $Z$. 
By the two-ray game diagram, we have $\Aut_{X,L} \simeq \Aut_{Y,Z}$.
By Theorems \ref{thm:BGaquintic>2} and \ref{thm:BGaquintic2}, we obtain a contradiction when $p=2$ or $5$.
When $p\neq 2,5$, by the same theorems, 
{\cora $(Y,Z)$ is unique up to isomorphism,}
and $Z$ is of $\Ga$-type.
By the two-ray game diagram again, $X$ is unique up to isomorphism if it exists.

Conversely, let $(X_0, L_0)$ be the $V_{22}$-scheme with the line obtained by applying the two-ray game to $(Y_0,Z_0)$, where $Y_0$ is a $V_5$-variety over $k$ and $Z_0 \subset Y_0$ is a smooth rational quintic curve of $\Ga$-type.
It suffices to compute $\Aut_{X_0/k, \red}$.
By
Theorem \ref{thm:BGaquintic>2},
we obtain 
\[
(\Ga \rtimes \mu_4 ) \simeq \Aut_{(Y_0,Z_0)/k} \simeq \Aut_{(X_0,L_0)/k}  \hookrightarrow \Aut_{X_0/k}.
\]
Since $L_0$ is a unique singular point of $\Sigma (X_0)_{\red}$, we have
$\Aut_{(X_0,L_0)/k, \red} \simeq \Aut_{X_0/k, \red}$.
Therefore, we obtain $\Aut_{X_0/k,\red} \simeq \Ga \rtimes \mu_4$, and in particular $X_0$ is a $V_{22}$-variety of $\Ga$-type.
\end{proof}

\begin{cor}
\label{cor:liftGa}
Let $k$ be an algebraically closed field of positive characteristic, and $X$ a $V_{22}$-variety of $\Ga$-type over $k$.
Then $X$ admits a $W(k)$-lift $\mathcal{X}$ that is a $V_{22}$-scheme of $\Ga$-type over $W(k)$.
\end{cor}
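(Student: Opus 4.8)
The plan is to run the same argument as in the proof of Corollary~\ref{cor:liftMukaiUmemura}, with the Mukai--Umemura quintic curve (\ref{eqn:ZMU}) replaced by a quintic curve of $\Ga$-type. Since $X$ is of $\Ga$-type, Theorem~\ref{thm:Gaclassification} forces $p = \chara k \neq 2, 5$. Recall that $W(k)$ is a complete discrete valuation ring with residue field $k$ and fraction field of characteristic $0$. Let $\mathcal{Y} \subset \P^6_{W(k)}$ be the split $V_5$-scheme defined by (\ref{eqn:W5}), and let $\mathcal{Z} \subset \mathcal{Y}$ be the relative curve $\mathcal{Z}_\xi^{(\mathrm{a})}$ of (\ref{eqn:ZGageneraldvr}) over $R = W(k)$ with $\xi = 1$. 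The converse part of Proposition~\ref{prop:degenerationMUorGa} shows that $\mathcal{Z}$ is a relative smooth rational quintic curve on $\mathcal{Y}$, the hypothesis $\chara k \neq 2, 5$ being met because the only residue characteristics occurring on $\Spec W(k)$ are $p$ and $0$; moreover, since $\xi = 1$ is a unit, each geometric fibre $\mathcal{Z}_s$ is isomorphic to $Z_1^{(\mathrm{a})}$ and hence of $\Ga$-type by the converse part of Theorem~\ref{thm:BGaquintic>2}(2).

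Next, I would apply the relative two-ray game of Proposition~\ref{prop:integraltwo-rayW5toV22} to $(\mathcal{Y}, \mathcal{Z})$ over $B = \Spec W(k)$. This produces a $V_{22}$-scheme $\mathcal{X}$ over $W(k)$ (together with a relative line), and by parts (2)--(4) of that proposition the construction is compatible with restriction to the generic and closed fibres; that is, $\mathcal{X}_K$ and $\mathcal{X}_k$ are the $V_{22}$-varieties obtained by running the absolute two-ray game of Proposition~\ref{prop:two-rayW5toV22} on $(\mathcal{Y}_K, \mathcal{Z}_K)$ and $(\mathcal{Y}_k, \mathcal{Z}_k)$, respectively. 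Passing to geometric points and invoking Theorem~\ref{thm:Gaclassification}, each geometric fibre $\mathcal{X}_s$ is then a $V_{22}$-variety of $\Ga$-type, so $\mathcal{X}$ is by definition a $V_{22}$-scheme of $\Ga$-type over $W(k)$. Finally, $\mathcal{X}_k$ is a $V_{22}$-variety of $\Ga$-type over the algebraically closed field $k$, and since such a variety is unique up to $k$-isomorphism (Theorem~\ref{thm:Gaclassification}), $\mathcal{X}_k \simeq X$. Hence $\mathcal{X}$ is the desired lift.

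The construction is essentially formal once $(\mathcal{Y}, \mathcal{Z})$ is chosen; the only step that requires attention is checking that every fibre — in particular the \emph{generic} one — of the chosen relative quintic curve is of $\Ga$-type rather than of Mukai--Umemura type. This is exactly why one takes the parameter $\xi$ in (\ref{eqn:ZGageneraldvr}) to be the \emph{unit} $1 \in W(k)^\times$: on each geometric fibre the curve is then $Z_1^{(\mathrm{a})}$ with $\xi = 1 \neq 0$, so by Theorem~\ref{thm:BGaquintic>2}(2) it lies in the $\Ga$-type case and not in the $\xi = 0$ (Mukai--Umemura) case; the explicit degeneration analysis of Propositions~\ref{prop:degenerationMUorGa} and~\ref{prop:degenerationGm} confirms that a $\Ga$-type quintic degenerates further only when the relevant parameter leaves $R^\times$, which does not occur here. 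I do not expect any genuine obstacle beyond this bookkeeping, since the substantial inputs — the relative two-ray games and the classification Theorem~\ref{thm:Gaclassification} — are already in place.
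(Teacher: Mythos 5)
Your proposal is correct and follows essentially the same route as the paper: take the split $V_5$-scheme over $W(k)$ with the relative $\Ga$-type quintic $\mathcal{Z}_{1}^{(\mathrm{a})}$ ($\xi=1$), apply the relative two-ray game of Proposition \ref{prop:integraltwo-rayW5toV22}, and conclude via Theorem \ref{thm:Gaclassification} (including the uniqueness of the $\Ga$-type $V_{22}$-variety over $k$ to identify the special fibre with $X$). The extra bookkeeping you supply — that $\chara k \neq 2,5$ forces the fibres of $\mathcal{Z}_1^{(\mathrm{a})}$ to be smooth and of $\Ga$-type — is exactly what the paper leaves implicit.
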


\begin{proof}
Let $\mathcal{Y}$ over $W(k)$ be the split $V_5$-scheme defined by $(\ref{eqn:W5})$,
and $\mathcal{Z} \subset \mathcal{Y}$ a relative smooth rational quintic curve defined by (\ref{eqn:ZGageneral}) with $\xi =1$.
Let $\mathcal{X}$ be the $V_{22}$-scheme over $W(k)$ obtained by applying the two-ray game to $(\mathcal{Y}, \mathcal{Z})$ (Proposition \ref{prop:integraltwo-rayW5toV22}).
Then $\mathcal{X}$ satisfies the desired condition by Theorem \ref{thm:Gaclassification}.
\end{proof}

By the Galois cohomology argument, $V_{22}$-varieties of $\Ga$-type over a perfect field $k$ can be classified by $H^1(G_k, \mu_{4})$.
Actually, a more specific classification can be made as follows.

\begin{thm}
\label{thm:GaV22overk}
Let $k$ be a perfect field of characteristic $p \neq 2,5$,
and $Y = Y_k \subset \P^6_k$ the split $V_5$-variety over $k$.
For any $\xi \in k^{\times}$,
let $Z_{\xi}^{(\mathrm{a})} \subset Y$ be the $\Ga$-type smooth rational quintic curve defined by (\ref{eqn:ZGageneral}).
Let $(X_{\xi}^{(\mathrm{a})}, L_{\xi})$ be the $V_{22}$-variety with the line obtained by applying the two-ray game construction (\ref{prop:two-rayW5toV22}) to $(Y,Z_{\xi}^{(\mathrm{a})})$.
Then we have a one-to-one correspondence
\begin{eqnarray*}
&k^{\times}/k^{\times 4}
\simeq 
\{ 
\textup{$V_{22}$-variety of $\Ga$-type over $k$}
\}/k\textup{-isom}, \\
& \overline{\xi} \mapsto X_{\xi}^{(\mathrm{a})}.
\end{eqnarray*}

In particular, any $V_{22}$-variety of $\Ga$-type over $k$ produces a split $V_5$-variety over $k$ via the two-ray game.
\end{thm}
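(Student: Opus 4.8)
The plan is to read off the bijection from the two-ray game of Proposition~\ref{prop:two-rayW5toV22}, combined with the classification of $\Ga$-type quintic curves in Theorem~\ref{thm:BGaquintic>2}(2): surjectivity will simultaneously yield the ``in particular'' assertion, and injectivity is a short computation with the diagonal torus.

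First I would prove surjectivity. Let $X$ be any $V_{22}$-variety of $\Ga$-type over $k$. The key preliminary step is to descend a line to $k$. Since $X_{\overline{k}}$ is of $\Ga$-type, it arises by the two-ray game from a $\Ga$-type quintic curve (Theorem~\ref{thm:Gaclassification}), so by Lemma~\ref{lem:GaSigma} the singular locus of $\Sigma(X_{\overline{k}})_{\red}$ is a single reduced point; being Galois-stable, this point descends to a $k$-rational point of $\Sigma(X)_{\red}$, that is, to a line $L$ on $X$ defined over $k$. Applying Proposition~\ref{prop:two-rayV22toW5} over $k$ to $(X,L)$ produces a $V_5$-variety $Y'$ over $k$ with a smooth rational quintic curve $Z'\subset Y'$ and $\Aut_{(X,L)/k}\simeq\Aut_{(Y',Z')/k}$; moreover $\Aut_{X/k}=\Aut_{(X,L)/k}$ because $L_{\overline{k}}$, being the singular point of $\Sigma(X_{\overline{k}})_{\red}$, is fixed by all of $\Aut_{X_{\overline{k}}/\overline{k}}$. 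Hence $\Aut_{(Y',Z')/k,\red}^{\circ}$ is a $k$-form of $\Ga$, so it is $\Ga$ itself because $k$ is perfect, and the closed immersion $\Aut_{(Y',Z')/k}\hookrightarrow\Aut_{Y'/k}$ realizes $\Ga$ as a $k$-subgroup of $\Aut_{Y'/k,\red}$. As $p\neq2$, the latter is a $k$-form of $\PGL_{2}$ by \cite{V5}; such a form is either split or anisotropic, and an anisotropic group contains no copy of $\Ga$, so $\Aut_{Y'/k,\red}\simeq\PGL_{2,k}$ and therefore $Y'$ is a split $V_5$-variety by \cite{V5}. This already proves the final sentence of the theorem. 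Fixing an isomorphism $Y'\simeq Y_k$, the curve $Z'$ becomes a $\Ga$-type smooth rational quintic curve on $Y_k$, so by Theorem~\ref{thm:BGaquintic>2}(2) some $g\in\PGL_{2,k}(k)=\Aut_{Y_k/k}(k)$ carries $Z'$ to $Z^{(\mathrm{a})}_{\xi}$ for a suitable $\xi\in k^{\times}$; feeding $(Y_k,Z^{(\mathrm{a})}_{\xi})$ back through Proposition~\ref{prop:two-rayW5toV22} gives $X\simeq X^{(\mathrm{a})}_{\xi}$. (The assignment lands in the stated target, since each $X^{(\mathrm{a})}_{\xi}$ is of $\Ga$-type over $\overline{k}$ by Theorem~\ref{thm:Gaclassification}.)

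It remains to show that, for $\xi,\xi'\in k^{\times}$, we have $X^{(\mathrm{a})}_{\xi}\simeq X^{(\mathrm{a})}_{\xi'}$ over $k$ if and only if $\xi'/\xi\in k^{\times 4}$. A $k$-isomorphism between them carries the canonical line $L_{\xi}$ to $L_{\xi'}$ (both are the $k$-rational singular point of the relevant $\Sigma(\cdot)_{\red}$ after base change to $\overline{k}$), so by functoriality of the two-ray game (Propositions~\ref{prop:two-rayV22toW5} and \ref{prop:two-rayW5toV22} being mutually inverse) it comes from an element $g\in\PGL_{2,k}(k)$ with $\sigma(g)(Z^{(\mathrm{a})}_{\xi})=Z^{(\mathrm{a})}_{\xi'}$. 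Then $g$ conjugates the stabilizer of $Z^{(\mathrm{a})}_{\xi}$ onto that of $Z^{(\mathrm{a})}_{\xi'}$; by Theorem~\ref{thm:BGaquintic>2}(2) both stabilizers equal $U\rtimes\mu_{4}$, where $U$ is the upper-triangular unipotent subgroup of $\PGL_{2,k}$ and $\mu_{4}$ the $4$-torsion of the diagonal torus, so $g$ preserves the set of unipotent elements of this group, namely $U$; hence $g$ normalizes $U$ and lies in the upper-triangular Borel $B(k)$. Writing $g$ as a product of an element of $U(k)$ and $\diag(\lambda,1)$ with $\lambda\in k^{\times}$, and using that $U$ stabilizes every $Z^{(\mathrm{a})}_{\mu}$ together with the torus action $\diag(\lambda,1)\mapsto\diag(\lambda^{6},\lambda^{5},\lambda^{4},\lambda^{3},\lambda^{2},\lambda,1)$ of (\ref{eqn:diagonalaction}) on the point $(0:\xi:0:0:0:1:0)$, one computes $\sigma(g)(Z^{(\mathrm{a})}_{\xi})=Z^{(\mathrm{a})}_{\lambda^{4}\xi}$, so $\xi'=\lambda^{4}\xi$. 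Conversely, for any $\lambda\in k^{\times}$ the automorphism $\sigma(\diag(\lambda,1))$ of $Y_k$ sends $Z^{(\mathrm{a})}_{\xi}$ to $Z^{(\mathrm{a})}_{\lambda^{4}\xi}$, and the two-ray game yields $X^{(\mathrm{a})}_{\xi}\simeq X^{(\mathrm{a})}_{\lambda^{4}\xi}$ over $k$. Thus $\overline{\xi}\mapsto X^{(\mathrm{a})}_{\xi}$ is a well-defined bijection.

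I expect the splitness of $Y'$ in the middle paragraph to be the crux: it is the one place where the perfectness of $k$ is genuinely used (to rule out wound forms of $\Ga$), and it relies on the structure of $k$-forms of $\PGL_{2}$ together with the classification of $V_5$-varieties over a field in \cite{V5}. The descent of the line and the torus computation for injectivity are then routine bookkeeping with the explicit equations.
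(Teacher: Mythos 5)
Your proof is correct, but it takes a genuinely different route from the paper. The paper's proof is a Galois--cohomological twisting argument: since the $\Ga$-type $V_{22}$-variety is unique over $\overline{k}$ (Theorem \ref{thm:Gaclassification}), its $k$-forms are classified by $H^1(G_k,\Aut(X^{(\mathrm{a})}_{1,\overline{k}}))=H^1(G_k,(\Ga\rtimes\mu_4)(\overline{k}))\simeq H^1(G_k,\mu_4(\overline{k}))\simeq k^{\times}/k^{\times4}$, and the class of $X^{(\mathrm{a})}_{\xi}$ is identified as the Kummer cocycle $\sigma\mapsto\sigma(\xi^{1/4})/\xi^{1/4}$ coming from the isomorphism $\sigma(\diag(\xi^{1/4},1))\colon Z^{(\mathrm{a})}_{1,\overline{k}}\simeq Z^{(\mathrm{a})}_{\xi,\overline{k}}$. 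You instead work entirely over $k$: you descend the distinguished line via the uniqueness of the singular point of $\Sigma(X)_{\red}$, run the two-ray game over $k$, prove the resulting $V_5$-variety is split (a form of $\PGL_2$ containing $\Ga$ must be split, and only the split form has split automorphism group), invoke the over-$k$ classification of Theorem \ref{thm:BGaquintic>2}(2), and settle injectivity by the explicit computation $\sigma(\diag(\lambda,1))(Z^{(\mathrm{a})}_{\xi})=Z^{(\mathrm{a})}_{\lambda^4\xi}$ after reducing, via the normalizer of $U$, to elements of the Borel. Your route has the merit of making the final clause of the theorem (splitness of the associated $V_5$) an explicit intermediate step rather than an afterthought, and of avoiding the descent formalism entirely; the cost is exactly the step you flag as the crux, namely the splitness of $Y'$, which the paper's cohomological shortcut never has to confront because it only ever twists the split model. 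Your torus computation for injectivity is the de-cohomologized version of the paper's identification of the Kummer class, and both ultimately rest on the same weight-$4$ action of the diagonal torus on the parameter $\xi$.
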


\begin{proof}
By Theorem \ref{thm:Gaclassification}, $\Sigma (X)_{\red, \sing}$ consists of a unique $k$-rational point.
By the two-ray game construction, we have an isomorphism of $k$-algebraic groups
\[
\Aut_{X_{1}^{(\mathrm{a})}/k, \red} \simeq \Aut_{(X_1^{(\mathrm{a})},L_1)/k, \red} \simeq \Aut_{(Y,Z_{1}^{(\mathrm{a})})/k, \red} \simeq \G_{a} \rtimes \mu_4,
\]
where
the third isomorphism follows from Theorem \ref{thm:BGaquintic>2} (the semi-direct product is taken with respect to (\ref{eqn:semi-directGa})).
Note that, since $\Ga$ has no non-trivial $k$-form, we have
\[
H^1 (G_k , (\Ga \rtimes \mu_4) (\overline{k}))
\simeq
H^1 (G_k , \mu_4 (\overline{k})) \simeq k^{\times}/ k^{\times 4}.
\]
Moreover, by the computation using the isomorphism $(Y_{\overline{k}},Z_{1, \overline{k}}^{(\mathrm{a})}) \simeq (Y_{\overline{k}}, Z_{\xi, \overline{k}}^{(\mathrm{a})})$ given by the action of 
$\begin{pmatrix}
    \xi^{1/4} & 0 \\
    0&  1 
\end{pmatrix}$,
the class of $H^1 (\Gal (\overline{k}/k), \mu_{4} (\overline{k}))$ corresponding to $(Y, Z_{\xi}^{(\mathrm{a})})$ is given by
\[
\Gal (\overline{k}/k) \ni \sigma \mapsto \sigma (\xi^{1/4})  / \xi^{1/4}  \in \mu_4 (\overline{k}),
\]
which corresponds to $\overline{\xi} \in k^{\times} / k^{\times 4}$.
This completes the proof.
\end{proof}

\subsection{$V_{22}$-varieties of $\Gm$-type}
\label{subsection:Gm-V22}

\begin{lem}
\label{lem:GmSigma}
Let $k$ be an algebraically closed field,
$Y$ a $V_5$-variety over $k$, and $Z \subset Y$ a smooth rational quintic curve of $\Gm$-type.
Let $(X, L)$ be the $V_{22}$-scheme with the line, that is obtained by applying the two-ray game (Proposition \ref{prop:two-rayW5toV22}) to $(Y,Z)$.
Then the following hold.
\begin{enumerate}
    \item 
$\Sigma (X)_{\red}$ is a union of two smooth rational curves that intersect at two points $L$ and $L'$ with multiplicities $2$.
Moreover, $\Gm$ acts on $X$ and $\Gm$-stable lines on $X$ are equal to $L$ or $L'$.
\item
$L$ and $L'$ do not intersect each other.
\item 
There exists an automorphism $\iota \in \Aut(X)$ of order $2$ such that the $\iota$-conjugation on $\Gm$ is the inversion and $\iota (L) = L'$.
\end{enumerate}
\end{lem}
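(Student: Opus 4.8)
The plan is to combine the isomorphism $\Sigma_{Z}(Y)^{\circ}\simeq\Sigma(X)_{\red}\setminus\{L\}$ of Proposition \ref{prop:tworaySigma} with the explicit description of $\Sigma_{Z}(Y)$ in Proposition \ref{prop:SigmaZp>2}(3) when $p\neq 2$ and in Proposition \ref{prop:SigmaZp=2} when $p=2$; after replacing $(Y,Z)$ by the split model with $Z=Z_{u}$, both give the same picture $\Sigma_{Z}(Y)=L_{\Sigma}\cup C_{1}\cup C_{2}$, where $L_{\Sigma}$ is the locus of lines meeting the unique bisecant $[l]$ of $Z$ and $C_{1},C_{2}$ are conics tangent to one another at $[l]$ and at a second point $[l']$ and tangent to $L_{\Sigma}$ at $[l]$. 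Since $L_{\Sigma}$ meets each $C_{i}$ only at $[l]$, the open set $\Sigma_{Z}(Y)^{\circ}=\Sigma_{Z}(Y)\setminus L_{\Sigma}$ is $(C_{1}\cup C_{2})\setminus\{[l]\}$, i.e.\ two affine lines glued at $[l']$ with intersection multiplicity $2$; compactifying by the single point $L$, $\Sigma(X)_{\red}$ becomes two smooth rational curves meeting at $L'$ (the image of $[l']$, with multiplicity $2$) and at $L$, the multiplicity and smoothness at $L$ being deferred to the end. For the $\Gm$-stable lines, recall that the $\Gm$-action on $X$ comes via the two-ray game from the diagonal torus $T\subset\Aut_{Y/k,\red}$ with $T=\Aut_{(Y,Z)/k,\red}$; on $\Sigma(Y)\simeq\P^{2}$ this $T$ has the three fixed points $[l_{(1:0:0)}],[l_{(0:1:0)}],[l_{(0:0:1)}]$, and by the proofs of Propositions \ref{prop:SigmaZp>2} and \ref{prop:SigmaZp=2} the first is the bisecant $[l]$, the third meets the bisecant, and the second is $[l']$; hence the only $\Gm$-fixed point of $\Sigma_{Z}(Y)^{\circ}$ is $[l']$, so the $\Gm$-stable lines on $X$ are exactly $L$ and $L'$. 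Part (2) is then immediate: Proposition \ref{prop:tworayfloppinglocus}(3) says no line other than $L$ meets $L$, and $L'\neq L$, so $L\cap L'=\emptyset$.

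For part (3) I will build $\iota$ by transporting a normalizer element through the two two-ray games. Let $w=\left(\begin{smallmatrix}0&1\\1&0\end{smallmatrix}\right)$, regarded in $\PGL_{2,k}=\Aut_{Y/k}$ for $p\neq2$ and in $\SL_{2,k}=\Aut_{Y/k,\red}$ for $p=2$; it normalizes $T$ with conjugation equal to inversion, and by Theorems \ref{thm:BGaquintic>2}(3) and \ref{thm:BGaquintic2} the automorphism $\sigma(w)$ of $Y$ carries $Z=Z_{u}$ to $Z_{u}'$. Because $L$ and $L'$ are both $\Gm$-stable (part (1)), the two-ray games from $(X,L)$ and from $(X,L')$ are $\Gm$-equivariant; the first returns $(Y,Z_{u})$ (its converse property), and the classification of quintic curves with positive-dimensional stabilizer together with the classification of $\Gm$-type $V_{22}$-varieties (Theorems \ref{thm:BGaquintic>2}, \ref{thm:BGaquintic2}, \ref{thm:Gmclassification}) forces the second to return a $V_{5}$-variety with quintic isomorphic, as a $T$-variety, to $(Y,Z_{u}')$. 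Composing the first two-ray game, $\sigma(w)$, and the inverse of the second then produces an automorphism $\iota\in\Aut(X)$. It conjugates $\Gm$ by inversion (inherited from $\sigma(w)$), and it sends $L$ to $L'$: under the two-ray games $L$ corresponds to the bisecant of $Z_{u}$ (Proposition \ref{prop:tworayfloppinglocus}(1)--(2)), $\sigma(w)$ sends that to the bisecant of $Z_{u}'$, and the line produced from $(X,L')$ is $L'$. Since $\iota$ inverts $\Gm$ it lies outside $\Gm$, and $\iota(L')$ is again $\Gm$-stable, hence $L$ or $L'$; it cannot be $L'$ (otherwise $\iota\in\Aut_{(X,L')/k,\red}(k)\simeq\Gm(k)$), so $\iota$ interchanges $L$ and $L'$.

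It remains to see $\iota$ has order $2$. Here I use that $\sigma(w)$ acts on $\Sigma(Y)\simeq\P^{2}$ by interchanging the first two coordinates up to scaling and fixing the third, so it preserves each of $C_{1},C_{2}$ (their defining equations in Propositions \ref{prop:SigmaZp>2} and \ref{prop:SigmaZp=2} are symmetric in those coordinates); hence $\iota$ preserves each of the two components $C_{1}',C_{2}'$ of $\Sigma(X)_{\red}$. On a component $C_{i}'\simeq\P^{1}$ the map $\iota$ interchanges the two marked points $L,L'$, and any automorphism of $\P^{1}$ interchanging two given points is an involution; thus $\iota^{2}$ acts trivially on $\Sigma(X)_{\red}$, and since $\Aut_{X/k,\red}\hookrightarrow\Aut_{\Sigma(X)_{\red}/k}$ is injective (Lemma \ref{lem:sigmafaithful}), $\iota^{2}=1$. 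Finally, applying $\iota$ to the local picture of $\Sigma(X)_{\red}$ at $L'$, which is already known (two smooth branches meeting with multiplicity $2$, since $L'\in\Sigma_{Z}(Y)^{\circ}$), yields the same picture at $L$, finishing part (1).

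The main obstacle is the assertion that the two-ray games from $(X,L)$ and from $(X,L')$ yield the same $V_{5}$-with-quintic data up to $w$-conjugation; this is precisely what makes $\iota$ a biregular automorphism rather than merely a birational self-map, and it is entangled with the classification of $\Gm$-type $V_{22}$-varieties. I expect the clean way to establish it --- and to exhibit $\iota$ directly --- is through the other two-ray game, relating a $V_{22}$-variety to a quadric threefold (Propositions \ref{prop:two-rayV22toQ3} and \ref{prop:two-rayQ3toV22}): starting from a $\Gm$-stable conic one gets a quadric threefold $Q\subset\P^{4}$ with a $\Gm$-stable rational sextic curve, where the reflection in the orthogonal group of $Q$ normalizing $\Gm$ supplies the involution; alternatively one can transport the characteristic-zero statement of \cite{Kuznetsov-ProkhorovGm} through the $W(k)$-lift of Corollary \ref{cor:liftGm}. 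In all of the above the characteristic-two case is parallel, using $\SL_{2,k}$, the action $\sigma'$, and Proposition \ref{prop:SigmaZp=2} in place of their $p\neq2$ analogues.
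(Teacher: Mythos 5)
Your treatment of parts (1) and (2) is essentially the paper's: the description of $\Sigma(X)_{\red}\setminus\{L\}$ via Propositions \ref{prop:SigmaZp>2}, \ref{prop:SigmaZp=2} and \ref{prop:tworaySigma}, the identification of the $\Gm$-fixed points of $\Sigma_Z(Y)^{\circ}$, and the appeal to Proposition \ref{prop:tworayfloppinglocus}(3) for disjointness are all correct. (The paper completes the local picture at $L$ by running the two-ray game a second time from $L'$ --- since $L'$ is $\Gm$-stable the resulting quintic is again of $\Gm$-type --- which is cleaner than your plan of transporting the picture at $L'$ by $\iota$, since it does not make (1) depend on (3).)

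The genuine gap is in part (3). Your primary construction of $\iota$ needs the two-ray game applied to $(X,L')$ to return $(Y,Z_u')$ up to $T$-equivariant isomorphism, and you justify this by citing Theorem \ref{thm:Gmclassification}. But that theorem --- specifically the injectivity of $u\mapsto X_u$ and the computation $\Aut_{X_u/k,\red}\simeq\Gm\rtimes\Z/2\Z$ --- is itself proved in the paper \emph{using} Lemma \ref{lem:GmSigma}(3): the involution is exactly what lets one replace an isomorphism $X_u\simeq X_{u'}$ by one matching the distinguished lines. Without it, all you can say is that $(X,L')$ yields some $(Y,Z_v)$ of $\Gm$-type with $X_v\simeq X_u$; nothing at this stage forces $v=u$, so the composite you describe is an isomorphism $X_u\to X_v$ rather than an automorphism of $X$. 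The non-circular argument is the one you relegate to a closing speculation: the paper runs the conic two-ray game. Concretely, it first produces a $\Gm$-invariant smooth conic $C\subset X$ as the image of the fiber of the exceptional divisor of $\mathrm{Bl}_{Z}Y$ over the $\Gm$-fixed point $(0:0:0:0:0:1:0)\in Z$, using that this point does not lie on the bisecant $l_{(1:0:0)}$ (so the fiber avoids the flopping locus); then $(X,C)$ gives a quadric $Q=V(vQ_0+Q_1)\subset\P^4$ containing the standard $\Gm$-equivariant sextic, and the coordinate reversal $(x_0:\dots:x_4)\mapsto(x_4:\dots:x_0)$ supplies $\iota$ explicitly, with order $2$ and inversion on $\Gm$ built in. That construction is the actual content of (3) and is entirely missing from your main argument.
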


\begin{proof}
First, we show (1). By Propositions \ref{prop:SigmaZp>2}, \ref{prop:SigmaZp=2}, and \ref{prop:tworaySigma}, we obtain the desired description on $\Sigma (X)_{\red} \setminus \{L\}$.
Note that, by construction of $X$ and Theorems \ref{thm:BGaquintic>2} and \ref{thm:BGaquintic2}, we have
\[
\mathbb{G}_{m,k} \simeq \Aut_{(Y,Z)/k, \red} \simeq \Aut_{(X,L)/k, \red} \hookrightarrow \Aut_{X/k,\red}.
\]
We take a singular point $L'$ of $\Sigma (X)_{\red}\setminus \{L\}$.
Let $(Y',Z')$ be a $V_5$-variety over $k$ with a smooth rational quintic curve $Z'$, that is obtained by the two-ray game (Proposition~\ref{prop:two-rayV22toW5}).
Since $L'$ is $\mathbb{G}_{m,k}$-invariant, $Z' \subset Y'$ is a smooth quintic curve of $\Gm$-type.
Therefore, by applying Propositions \ref{prop:SigmaZp>2}, \ref{prop:SigmaZp=2}, and \ref{prop:tworaySigma} again, we obtain the desired description of $\Sigma (X)_{\red}$.

(2) follows from Proposition \ref{prop:tworayfloppinglocus}.

We shall show (3).
When $\chara k=0$, this follows from \cite[Lemma 4.1]{Kuznetsov-ProkhorovGm}.
Basically, we follow the strategy of \cite{Kuznetsov-ProkhorovGm}.
We may assume that $Y\subset \P^6$ is defined by (\ref{eqn:W5}) and $Z$ is defined by (\ref{eqn:ZGm}).
By applying the two-ray game (Proposition \ref{prop:two-rayW5toV22}) to $({Y}, {Z})$, we obtain $({X}, {L})$, where ${X}$ is a $V_{22}$-variety and ${L} \subset {X}$ is a line.
Since $\Gm$ acts on $({Y}, {Z})$, we have a morphism $\Gm \rightarrow \Aut_{{X}/k}$.

We first prove that there is a  $\Gm$-invariant smooth conic $C$ on $X$.
By the definition of $Z$ (\ref{eqn:ZGm}), there are only two $\Gm$-fixed points $(1:0:0:0:0:0:0)$ and $(0:0:0:0:0:1:0)$ on $Z$.
Further, the exceptional locus of $\Phi$ is the strict transform of the line $l_{(1:0:0)}=\{(s:t:0:0:0:0:0)\}$ by Propositions~\ref{prop:tworayfloppinglocus}, \ref{prop:SigmaZp>2} and \ref{prop:SigmaZp=2}.
Note that $l_{(1:0:0)}$ does not contain $(0:0:0:0:0:1:0)$.
Thus the exceptional curve over the point $(0:0:0:0:0:1:0)$ defines a $\Gm$-invariant smooth conic $C$ on $X$.

By applying the two-ray game (Proposition \ref{prop:two-rayV22toQ3}) to $(X,C)$, we obtain $(Q,\Gamma)$, where $Q \subset \P^4$ is a quadric threefold over $k$ and $\Gamma \subset Q$ is a quadratically normal smooth rational sextic over $k$.
Moreover, we have $\Gm \hookrightarrow \Aut_{(Q,\Gamma)/k}$.
By the same argument as in \cite[Lemma 3.5 and Lemma 3.6]{Kuznetsov-ProkhorovGm}, we may assume the following:
\begin{itemize}
    \item
    The curve $\Gamma \subset \P^4$ is the image of the map $\P^1 \to \P^4$ defined by
\[        (s:t) \mapsto (s^6:s^5t:s^3t^3:st^5:t^6).
\]
    \item
    The quadric $Q \subset \P^4_{x_0,x_1,x_2,x_3,x_4}$ is defined by the equation $v Q_0 + Q_1$ for some $v \in k \setminus \{0,1\}$.
    Here, $Q_0 := x_0 x_4-x_2^2$ and $Q_1:= x_2^2 - x_1x_3$.
    \item 
    $\Gm$ acts on $\P^4$ by $t \cdot (x_0:x_1:x_2:x_3:x_4) = (x_0:tx_1:t^3x_2:t^5x_3:t^6x_6)$.
\end{itemize}
Note that we have an involution 
\begin{equation}
\label{eqn:involutionP4}
\iota \colon \P^4 \rightarrow \P^4, (x_0:x_1:x_2:x_3:x_4) \mapsto (x_4:x_3:x_2:x_1:x_0)
\end{equation}
which stabilizes $Q$ and $\Gamma$.
We also note that $\iota\circ (t\cdot) \circ \iota  = (t^{-1} \cdot)$.
Therefore, we obtain the corresponding isomorphism 
$\iota \in \Aut_{(X,C)/k} (k)$,
whose conjugation on $\Gm \subset \Aut_{(X,C)/k}$ is the inversion.
Since we have 
$\Aut_{(X,L)/k} \simeq \Gm$ by Theorems \ref{thm:BGaquintic>2} and \ref{thm:BGaquintic2},
$\iota$ does not preserve $L$, and hence we have $\iota (L) = L'$ by (1).
This completes the proof.
\end{proof}

\begin{thm}
\label{thm:Gmclassification}
Let $k$ be an algebraically closed field of characteristic $p\geq 0$.
Let $X$ be a $V_{22}$-variety of $\Gm$-type over $k$.
Then $X$ is isomorphic to $X_{u}$ for one 
\begin{equation}
\label{eqn:P1-4points}
u = (u:1) \in \P^1 (k) \setminus \{(0:1), (1:1), (5:4), (1:0)\}.
\end{equation}
Here, $X_u$ is a $V_{22}$-variety obtained by applying the two-ray game (Proposition \ref{prop:two-rayW5toV22}) to $(Y,Z_u)$, where $Z_u$ is a smooth rational quintic curve defined by (\ref{eqn:ZGm}).

Conversely, 
for any $u$ as in (\ref{eqn:P1-4points}),
$X_u$ is a $V_{22}$-variety of $\Gm$-type, so $\Sigma (X_u)_{\red}$ is as in Lemma \ref{lem:GmSigma}.
Furthermore, we have
\[
\Aut_{X_u/k, \red} \simeq\Gm \rtimes \Z/2\Z,
\]
where the semi-direct product is taken with respect to the inversion on $\Gm$, and the action of $\{1\} \rtimes \Z/2\Z$ permutes the singular points of $\Sigma (X_u)_{\red}$.

Moreover, for any elements $u \neq u'$ in the right-hand side of (\ref{eqn:P1-4points}), $X_u$ and $X_{u'}$ are not isomorphic.
Therefore, we obtain a bijection
\[
\{\textup{$V_{22}$-variety of $\Gm$-type over $k$} \}/k\textup{-isom}
\simeq \P^1(k) \setminus \{(0:1),(1:1), (5:4), (1:0)\}.
\]
\end{thm}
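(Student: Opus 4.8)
The plan is to derive the classification from the two-ray game for lines (Propositions~\ref{prop:two-rayV22toW5} and \ref{prop:two-rayW5toV22}), the classification of smooth rational quintic curves with positive-dimensional stabilizer (Theorems~\ref{thm:BGaquintic>2} and \ref{thm:BGaquintic2}), and the structural Lemma~\ref{lem:GmSigma}. For the \emph{surjectivity} of $u \mapsto X_u$: given a $V_{22}$-variety $X$ of $\Gm$-type, Proposition~\ref{prop:lineexists} gives a line on $X$, so the proper curve $\Sigma(X)_{\red}$ is non-empty, and since $\Aut_{X/k,\red}^{\circ} = \Gm$ is a torus, Borel's fixed point theorem produces a $\Gm$-stable line $L \subset X$. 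Applying the two-ray game (Proposition~\ref{prop:two-rayV22toW5}) to $(X,L)$ yields a split $V_5$-variety $Y$ and a smooth rational quintic curve $Z \subset Y$ with $\Aut_{(X,L)/k} \simeq \Aut_{(Y,Z)/k}$; since $L$ is $\Gm$-stable we get $\Aut_{(X,L)/k,\red}^{\circ} = \Gm$, hence $\Aut_{(Y,Z)/k,\red}^{\circ} \simeq \Gm$, so $Z$ is of $\Gm$-type (the other two types in Theorems~\ref{thm:BGaquintic>2} and \ref{thm:BGaquintic2} have a Borel or $\Ga$ as identity component). By those theorems an automorphism of $Y$ then carries $Z$ to $Z_u$ for some $u$ as in \eqref{eqn:P1-4points}, and running the two-ray game backwards (Proposition~\ref{prop:two-rayW5toV22}) gives $X \simeq X_u$.

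Next I would show that each $X_u$ is of $\Gm$-type with $\Aut_{X_u/k,\red} \simeq \Gm \rtimes \Z/2\Z$. Write $A := \Aut_{X_u/k,\red}$. By Lemma~\ref{lem:GmSigma}, $\Sigma(X_u)_{\red}$ is a union of two smooth rational curves meeting in exactly two points $L, L'$, its singular locus is $\{L,L'\}$, and there is an involution $\iota \in \Aut(X_u)$ with $\iota(L) = L'$ whose conjugation action on $\Gm \subset A$ is inversion. Since $A$ acts on $\Sigma(X_u)_{\red}$ preserving its singular locus, we obtain a homomorphism $A \to \Z/2\Z$ recording the permutation of the two-element set $\{L,L'\}$; let $A_0$ be its kernel. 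An element of $A_0$ fixes the line $L$, so $A_0 \subseteq \Aut_{(X_u,L)/k,\red} \simeq \Aut_{(Y,Z_u)/k,\red}$, and by Theorems~\ref{thm:BGaquintic>2} and \ref{thm:BGaquintic2} the latter is the diagonal torus $\simeq \Gm$ (here $Z_u \neq Z_u'$, so the full normalizer does not occur). As the $\Gm$ acting on $X_u$ fixes both $\Gm$-stable lines $L$ and $L'$, we also have $\Gm \subseteq A_0$, whence $A_0 = \Gm$; and $\iota$ maps to the nontrivial element of $\Z/2\Z$. Hence $A = \Gm \rtimes \langle \iota \rangle \simeq \Gm \rtimes \Z/2\Z$ with the inversion action, $\{1\} \rtimes \Z/2\Z$ swaps the singular points of $\Sigma(X_u)_{\red}$, and in particular $\Aut_{X_u/k,\red}^{\circ} = \Gm$, so $X_u$ is of $\Gm$-type.

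For \emph{injectivity} I would argue as follows: suppose $\phi \colon X_u \to X_{u'}$ is an isomorphism. Then $\phi$ conjugates $\Aut_{X_u/k,\red}^{\circ} = \Gm$ onto $\Aut_{X_{u'}/k,\red}^{\circ} = \Gm$, hence carries $\Gm$-stable lines to $\Gm$-stable lines; by Lemma~\ref{lem:GmSigma}(1), $\phi(\{L_u, L'_u\}) = \{L_{u'}, L'_{u'}\}$, and after composing with the involution of $X_{u'}$ if necessary we may assume $\phi(L_u) = L_{u'}$. Via the two-ray game (Proposition~\ref{prop:two-rayW5toV22}, which is its own inverse), $\phi$ then induces an isomorphism of pairs $(Y, Z_u) \simeq (Y, Z_{u'})$, i.e.\ an automorphism of $Y$ taking $Z_u$ to $Z_{u'}$; by the final assertions of Theorems~\ref{thm:BGaquintic>2} and \ref{thm:BGaquintic2} this forces $u = u'$. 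Combining the three steps, $u \mapsto X_u$ is a well-defined bijection from $\P^1(k) \setminus \{(0:1),(1:1),(5:4),(1:0)\}$ (a three-element set when $\chara k \in \{2,5\}$, since then $(5:4)$ coincides with one of the other three points) onto the set of $k$-isomorphism classes of $V_{22}$-varieties of $\Gm$-type.

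The hard part is not in this deduction, which is essentially formal once the earlier results are in hand, but in Lemma~\ref{lem:GmSigma}: the existence of the order-$2$ automorphism $\iota$ acting by inversion on $\Gm$ and exchanging the two $\Gm$-stable lines, obtained by passing through the second two-ray game connecting $X_u$ to a quadric threefold and using explicit normal forms in the spirit of \cite{Kuznetsov-ProkhorovGm}. The points that require genuine care are that this structure, and the identifications $\Aut_{(X,L)/k} \simeq \Aut_{(Y,Z)/k}$ coming from the two-ray game, survive in characteristic $2$, where one works with $\Aut_{Y/k,\red} \simeq \SL_{2,k}$ in place of $\PGL_{2,k}$ and must pass to reduced parts of a priori non-reduced automorphism schemes throughout.
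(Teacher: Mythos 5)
Your proposal is correct and follows essentially the same route as the paper's proof: Borel's fixed point theorem on $\Sigma(X)_{\red}$ to produce a $\Gm$-stable line, the two-ray game to reduce to the classification of $\Gm$-type quintics (Theorems \ref{thm:BGaquintic>2} and \ref{thm:BGaquintic2}), the identification $\Aut_{X_u/k,\red}^{\circ}\hookrightarrow\Aut_{(X_u,L_u)/k,\red}\simeq\Gm$ via the singular points of $\Sigma(X_u)_{\red}$, the split exact sequence onto $\Z/2\Z$ using the involution from Lemma \ref{lem:GmSigma}, and injectivity by composing with $\iota$ to match the distinguished lines and invoking the final assertion of the quintic classification. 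You also correctly identify Lemma \ref{lem:GmSigma}(3) as the genuinely hard input.
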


\begin{proof}
Let $X$ be a $V_{22}$-variety of $\Gm$-type over $k$.
By the same argument as in Theorem \ref{thm:Gaclassification}, there exists a line $L$ on $X$ that is $\Gm$-stable.
By applying the two-ray game (Proposition \ref{prop:two-rayV22toW5}) to $(X,L)$, we obtain a $V_5$-variety $Y$ with a smooth rational quintic curve $Z$.
By the same argument as in Theorem \ref{thm:Gaclassification} again, up to the action of $\PGL_{2,k}(k)$, $Z$ is equal to $Z_u$ as in (\ref{eqn:ZGm}) for some
$u$ as in (\ref{eqn:P1-4points}).
Thus, $X$ is isomorphic to $X_u$.

Next, we shall show that $X_u$ is a $V_{22}$-variety of $\Gm$-type over $k$.
Let $L_u$ be the line on $X_u$ such that $(X_u, L_u)$ corresponds to $(Y,Z_u)$ via the two-ray game.
Since $L_u$ is
an isolated singular point of $\Sigma(X)$ by Lemma \ref{lem:GmSigma}, we obtain
\[
\Aut_{X_u/k, \red}^{\circ} \hookrightarrow \Aut_{(X_u, L_u)/k, \red} \simeq \Aut_{(Y,Z_u)/k, \red} \simeq \Gm, 
\]
where the left arrow is an isomorphism onto the component.
Therefore, $X_u$ is a $V_{22}$-variety of $\Gm$-type.

Next, we specify the algebraic group $\Aut_{X_u/k, \red}$.
By Lemma \ref{lem:GmSigma}, there exists an involution $\iota \in \Aut_{X_u/k} (k)$ which permutes two singular points in $\Sigma (X_u)_{\red}$.
That means the right arrow of the exact sequence
\[
0 \rightarrow \Aut_{(X_u,L_u)/k, \red} \rightarrow \Aut_{X_u/k, \red} \rightarrow \Z/2 \Z 
\]
induced from the action on $\Sigma (X)_{\red}$ (see Proposition \ref{lem:GmSigma})
is a split surjection, so we obtain the desired isomorphism.

Next, we show the final statement.
Let $u \neq u'$ be as in the statement.
Let $(X_u, L_u)$ and $(X_{u'}, L_{u'})$ be as above.
Suppose, for contradiction, that there exists an isomorphism $f \colon X_u \simeq X_{u'}$.
Let $\iota_{u} \in \Aut(X_u)$ be an involution given by Lemma \ref{lem:GmSigma} (3).
By replacing $f$ with $f\circ \iota_u$ if necessary, we may assume that $f^{\ast} (L_{u'}) \simeq L_u$.
Since $(X_u, L_u) \simeq (X_{u'}, L_{u'})$, we have an isomorphism $ (Y,Z_{u}) \simeq (Y, Z_{u'}).$
By Proposition~\ref{prop:W5automorphisms}, $Z_u$ is mapped to $Z_u'$ by an element of 
$\PGL_2(k)$ when $\chara k \neq 2$, and by an element of $\SL_2(k)$ when $\chara k = 2$.
It contradicts Theorems \ref{thm:BGaquintic>2} and \ref{thm:BGaquintic2}.
\end{proof}

\begin{cor}
\label{cor:liftGm}
Let $k$ be an algebraically closed field of positive characteristic and $X$ a $V_{22}$-variety of $\Gm$-type over $k$.
Then $X$ admits a $W(k)$-lift $\mathcal{X}$ that is a $V_{22}$-scheme of $\Gm$-type over $W(k)$.
\end{cor}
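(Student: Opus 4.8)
The plan is to follow the template of Corollaries~\ref{cor:liftMukaiUmemura} and \ref{cor:liftGa}: construct the lift by running the relative two-ray game of Proposition~\ref{prop:integraltwo-rayW5toV22} over $W(k)$, starting from the explicit split $V_5$-scheme together with an explicit lift of the quintic curve attached to $X$. First I would apply Theorem~\ref{thm:Gmclassification} to write $X \simeq X_u$ for a unique $u = (u:1) \in \P^1(k) \setminus \{(0:1),(1:1),(5:4),(1:0)\}$, where $X_u$ is produced by the two-ray game (Proposition~\ref{prop:two-rayW5toV22}) applied to the pair $(Y, Z_u)$ consisting of the split $V_5$-variety $Y \subset \P^6_k$ of (\ref{eqn:W5}) and the $\Gm$-type quintic curve $Z_u$ of (\ref{eqn:ZGm}). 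Then I would pick a set-theoretic lift $\widetilde{u} \in W(k)$ of $u$ (e.g.\ its Teichm\"uller representative); since $\widetilde{u}$ reduces to $u$ modulo the maximal ideal of $W(k)$ and $u \notin \{0,1,5/4\}$, the element $\widetilde{u}$ avoids $\{0,1,5/4\}$ both in $W(k)$ and in its fraction field $K$ (when $\chara k = 2$ the value $5/4$ is $\infty$, which is automatically avoided since $\widetilde{u} \in W(k)$).

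Next I would let $\mathcal{Y} \subset \P^6_{W(k)}$ be the split $V_5$-scheme defined by (\ref{eqn:W5}) and $\mathcal{Z} \subset \mathcal{Y}$ the closed subscheme defined by (\ref{eqn:ZuGmdvr}) with $u$ replaced by $\widetilde{u}$. Its special fiber is $Z_u$, a smooth rational quintic curve of $\Gm$-type by Theorems~\ref{thm:BGaquintic>2} and \ref{thm:BGaquintic2} because $u \neq 0,1,5/4$; its generic fiber is $Z_{\widetilde{u},K}$, a smooth rational quintic curve of $\Gm$-type by Theorem~\ref{thm:BGaquintic>2}(3) because $\widetilde{u} \neq 0,1,5/4$. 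Hence $\mathcal{Z} \cong \P^1_{W(k)}$ is $W(k)$-flat with smooth quintic fibers, so it is a relative smooth rational quintic curve on $\mathcal{Y}$ (this is also the content of the converse assertions in Propositions~\ref{prop:degenerationGm} and \ref{prop:degenerationGmp=2}, whose smoothness part requires no $\sqrt{2}$-structure on the base). Applying the relative two-ray game (Proposition~\ref{prop:integraltwo-rayW5toV22}) to $(\mathcal{Y}, \mathcal{Z})$ then yields a $V_{22}$-scheme $\mathcal{X}$ over $W(k)$ together with a relative line.

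It remains to identify the fibers of $\mathcal{X}$. By part (4) of Proposition~\ref{prop:integraltwo-rayW5toV22}, the generic and special fibers of this relative construction coincide with the absolute two-ray game of Proposition~\ref{prop:two-rayW5toV22} applied to the corresponding fibers of $(\mathcal{Y}, \mathcal{Z})$; in particular the special fiber of $\mathcal{X}$ is $X_u \simeq X$. Moreover each geometric fiber of $\mathcal{X}$ is the $V_{22}$-variety obtained by the two-ray game from a $\Gm$-type quintic curve in a $V_5$-variety, hence is a $V_{22}$-variety of $\Gm$-type by Theorem~\ref{thm:Gmclassification}. Therefore $\mathcal{X}$ is a $V_{22}$-scheme of $\Gm$-type over $W(k)$ lifting $X$, as required. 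The only non-formal point, and the closest thing to an obstacle, is the identification of the special fiber: it relies on the fact that the bijection of Theorem~\ref{thm:Gmclassification} between $\Gm$-type $V_{22}$-varieties and the parameter $u \in \P^1 \setminus \{(0:1),(1:1),(5:4),(1:0)\}$ is realized by precisely the two-ray game we are running in the relative setting, so that the parameter is preserved under specialization; everything else is supplied verbatim by the results recalled above.
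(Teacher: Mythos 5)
Your proposal is correct and follows essentially the same route as the paper: lift the parameter $u$ from Theorem \ref{thm:Gmclassification} to $\widetilde{u}\in W(k)$, take the split $V_5$-scheme over $W(k)$ with the relative quintic curve (\ref{eqn:ZuGmdvr}), and run the relative two-ray game of Proposition \ref{prop:integraltwo-rayW5toV22}, identifying the fibers via Theorem \ref{thm:Gmclassification}. The extra care you take with the special/generic fiber identification and with $\widetilde{u}\neq 0,1,5/4$ is exactly the (implicit) content of the paper's shorter argument.
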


\begin{proof}
By Theorem \ref{thm:Gmclassification}, $X$ is obtained by applying the two-ray game (Proposition \ref{prop:two-rayW5toV22}) to $(Y,Z_u)$, where $Y \subset \P^6$ is a $V_5$-variety given by (\ref{eqn:W5}) over $k$ and $Z_u \subset Y$ is a smooth rational quintic curve of $\Gm$-type defined by (\ref{eqn:ZGm})
for some 
\[
u \in \P^1 (k) \setminus \{ (0:1), (1:1), (5:4), (1:0)\}.
\]
We take $\widetilde{u} \in W(k)$ that is a lift of $u\in k$.
Note that we have $u, u-1, u-\frac{5}{4} \in W(k)^{\times}$ when $\chara k \neq 2$, 
and $u, u-1 \in W(k)^{\times}$ when $\chara k = 2$.
Let $\mathcal{Y}$ be the split $V_5$-scheme over $W(k)$ defined by $(\ref{eqn:W5})$, which is a lift of $Y$, and $\mathcal{Z}_{\widetilde{u}} \subset \mathcal{Y}$ a relative smooth rational quintic curve defined by (\ref{eqn:ZuGmdvr}), which is a lift of $Z_u$.
Let $\mathcal{X}$ be the $V_{22}$-scheme over $W(k)$ obtained by applying the two-ray game to $(\mathcal{Y}, \mathcal{Z}_{\widetilde{u}})$ (Proposition \ref{prop:integraltwo-rayW5toV22}).
Then $\mathcal{X}$ satisfies the desired condition by Theorem \ref{thm:Gmclassification}.
\end{proof}

\begin{cor}
\label{cor:noV22Autdim2}
Let $k$ be an algebraically closed field of characteristic $p$, and $X$ a $V_{22}$-variety over $k$.
Then we have $\dim \Aut_{X/k} \neq 2$.
\end{cor}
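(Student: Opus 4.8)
The plan is to argue by contradiction. Suppose $X$ is a $V_{22}$-variety over an algebraically closed field $k$ with $\dim\Aut_{X/k}=2$. Then $\dim\Aut_{X/k,\red}^\circ = 2$ as well, since the reduced identity component has the same dimension as the group scheme. By Proposition \ref{prop:lineexists} there exists a line $L$ on $X$, and by \cite[Proposition 5.4]{Tanaka2} every irreducible component of $\Sigma(X)$ is one-dimensional; hence the stabilizer $\Aut_{X,L}$ of (the point of $\Sigma(X)$ corresponding to) a suitably chosen line $L$ has $\dim\Aut_{X,L,\red}^\circ\geq 1$. Applying the two-ray game of Proposition \ref{prop:two-rayV22toW5} to $(X,L)$ produces a $V_5$-variety $Y$ and a smooth rational quintic curve $Z\subset Y$, and the two-ray diagram gives $\Aut_{(X,L)/k}\simeq\Aut_{(Y,Z)/k}$. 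In particular the identity component $G=\Aut_{(Y,Z)/k,\red}^\circ$ has $\dim G\geq 1$.

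Next I would invoke the classification of quintic curves with positive-dimensional stabilizer: Theorem \ref{thm:BGaquintic>2} (if $p\neq 2$) or Theorem \ref{thm:BGaquintic2} (if $p=2$). These show that $G$ is one of $\Gm$, $\Ga$, or (only when $p\neq 2,5$) a Borel subgroup of $\PGL_2$; correspondingly $Z$ is of $\Gm$-type, $\Ga$-type, or Mukai--Umemura type. Feeding this back through the two-ray game, Theorems \ref{thm:MukaiUmemuraclassification}, \ref{thm:Gaclassification}, and \ref{thm:Gmclassification} compute $\Aut_{X/k,\red}$ in each case: it is $\PGL_{2,k}$ (dimension $3$) in the Mukai--Umemura case, $\Ga\rtimes\mu_4$ (dimension $1$) in the $\Ga$-case, and $\Gm\rtimes\Z/2\Z$ (dimension $1$) in the $\Gm$-case. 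None of these has dimension $2$, which contradicts the assumption $\dim\Aut_{X/k}=2$.

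The one subtlety to be careful about is the choice of the line $L$: one needs a line whose orbit under $\Aut_{X/k,\red}^\circ$ is not two-dimensional, so that the stabilizer is positive-dimensional. Since $\Sigma(X)_{\red}$ is one-dimensional, the orbit of any point of $\Sigma(X)$ under the two-dimensional group $\Aut_{X/k,\red}^\circ$ has dimension at most $1$, hence the stabilizer of any line already has dimension $\geq 1$; so in fact any line $L$ on $X$ works, and the argument goes through verbatim. I expect the routine bookkeeping — checking that the two-ray game transports the stabilizer isomorphically and that $\dim\Aut_{X,L}=\dim\Aut_{Y,Z}$ — to be the only place requiring care, but this is exactly the content already established in Proposition \ref{prop:two-rayV22toW5} together with Remarks \ref{rem:stabilizeractiononZ_p>2} and \ref{rem:stabilizeractiononZ_p=2}. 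Thus the main obstacle is essentially nil: the corollary is a direct consequence of assembling Theorems \ref{thm:MukaiUmemuraclassification}, \ref{thm:Gaclassification}, and \ref{thm:Gmclassification}, which exhaust all possibilities for a positive-dimensional automorphism group and show that dimension $2$ never occurs.
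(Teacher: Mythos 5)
Your proposal is correct and follows the same overall strategy as the paper: produce a line whose stabilizer is positive-dimensional, transport that stabilizer through the two-ray game to the stabilizer of a smooth rational quintic curve on a $V_5$-variety, and then invoke Theorems \ref{thm:BGaquintic>2}, \ref{thm:BGaquintic2}, \ref{thm:MukaiUmemuraclassification}, \ref{thm:Gaclassification}, and \ref{thm:Gmclassification} to see that $\dim\Aut_{X/k}\in\{1,3\}$. The only genuine difference is the mechanism for finding the stabilized line: the paper takes a maximal connected solvable subgroup $G\subset(\Aut_{X/k}^{\circ})_{\red}$ (necessarily of dimension $\geq 1$) and applies the Borel fixed-point theorem to get a $G$-stable line, whereas you observe that a two-dimensional group acting on the one-dimensional scheme $\Sigma(X)$ forces every line to have stabilizer of dimension $\geq 1$ by the orbit--stabilizer dimension count. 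Your variant is slightly more elementary (no fixed-point theorem needed) and is in fact the same counting argument the paper itself uses in the $\dim\Aut_{X/k}\geq 3$ case of Theorem \ref{thm:MukaiUmemuraclassification}; the paper's solvable-subgroup device is the one that generalizes to the one-dimensional cases (Theorems \ref{thm:Gaclassification} and \ref{thm:Gmclassification}), where the counting argument alone would not suffice.
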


\begin{proof}
Suppose, for contradiction, that $\dim \Aut_{X/k} =2$.
Let $G \subset (\Aut_{X/k}^{\circ})_{\red}$ be a maximal solvable group.
By the assumption, we have $\dim G \geq 1$.
By the fixed-point theorem (\cite[Theorem VIII.21.2]{Humphreys}), there exists a line $L$ on $X$ such that $L$ is $G$-stable.
By applying the two-ray game (Proposition \ref{prop:two-rayV22toW5}) to $(X,L)$, we obtain a $V_5$-variety $Y$ over $k$ with a smooth rational quintic curve $Z$.
Since we have an embedding
$G \hookrightarrow \Aut_{(Y,Z)/k}$, 
$Z$ is Mukai--Umemura type, $\Ga$-type, or $\Gm$-type (see Definition \ref{defn:typeofquintic}).
By Theorems \ref{thm:MukaiUmemuraclassification}, \ref{thm:Gaclassification}, and \ref{thm:Gmclassification},
we have $\dim \Aut_{X/k} \in \{1,3\},$ and this leads to a contradiction.
\end{proof}

{\cred
Next, we shall classify $V_{22}$-varieties of $\Gm$-type over a general field $k$.

\begin{defn}
\label{defn:splitGm}
Let $k$ be a field.
For any 
\[
u = (u:1) \in \P^1 (k) \setminus \{(0:1), (1:1), (5:4), (1:0)\},
\]
let $X_u$ be the $V_{22}$-variety over $k$ obtained by applying the two-ray game (Proposition \ref{prop:two-rayW5toV22}) to $(Y,Z_u)$, where $Y \subset \P^6$ is the split $V_5$-variety over $k$ defined by (\ref{eqn:W5}) and $Z_u$ is a smooth rational quintic curve over $k$ defined by (\ref{eqn:ZGm}).
We say a $V_{22}$-variety of $\Gm$-type $X$ over $k$ is split if $X$ is isomorphic to $X_{u}$ for some $u$ as above.
Note that $u$ is uniquely determined for $X$ by Theorem \ref{thm:Gmclassification}.
\end{defn}

\begin{lem}
\label{lem:Gmsplitformexists}
Let $k$ be a perfect field, and $X$ a $V_{22}$-variety of $\Gm$-type over $k$.
Then there exists a split $V_{22}$-variety of $\Gm$-type $X'$ over $k$ such that 
$X_{\overline{k}} \simeq X'_{\overline{k}}$.
\end{lem}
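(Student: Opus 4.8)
The plan is to reduce the statement to the $V_5$/quintic-curve picture, where splitness has already been characterized. First I would apply the two-ray game (Proposition \ref{prop:two-rayV22toW5}) to the pair $(X,L)$, where $L$ is a $\Gm$-stable line on $X$ (such a line exists by the Borel fixed-point theorem, as in the proof of Theorem \ref{thm:Gmclassification}, using that $\Aut_{X/k,\red}^{\circ}\simeq\Gm$ over $\overline{k}$ descends an action of $\Gm$, or at least of a one-dimensional $k$-torus, on $X$). This yields a $V_5$-variety $Y$ over $k$ together with a smooth rational quintic curve $Z\subset Y$, with $\Aut_{(X,L)/k,\red}\simeq\Aut_{(Y,Z)/k,\red}$; base-changing to $\overline{k}$, the curve $Z_{\overline{k}}$ is of $\Gm$-type. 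The key point is then that the $V_5$-variety $Y$ is automatically \emph{split} over $k$: by Theorem \ref{thm:BGaquintic>2}(3) (resp.\ Theorem \ref{thm:BGaquintic2}) applied after passing to $\overline{k}$, the connected stabilizer of $Z_{\overline{k}}$ is the diagonal torus of $\PGL_{2,\overline{k}}$ (resp.\ $\SL_{2,\overline{k}}$), so that $\Aut_{(Y,Z)/k,\red}^{\circ}$ is a one-dimensional torus embedded as a maximal torus of $\Aut_{Y/k,\red}$; since $\Aut_{(Y,Z)/k,\red}^{\circ}\simeq\Aut_{(X,L)/k,\red}^{\circ}\simeq\Gm$ (a split torus over $k$, because it acts on $X$ over $k$), $\Aut_{Y/k,\red}$ contains a split maximal torus over $k$, hence is an inner form, hence split; so $Y$ is a split $V_5$-variety over $k$, and we may identify $Y$ with $Y_k\subset\P^6_k$ given by (\ref{eqn:W5}).

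Next I would normalize $Z$. Having fixed the split model $Y=Y_k$, the split torus $\Aut_{(Y,Z)/k,\red}^{\circ}\simeq\Gm$ is a maximal torus of $\Aut_{Y/k,\red}$, so after applying an element of $\Aut_{Y/k}(k)$ (this is the subtle point: one needs an element of the \emph{$k$-points}, which is available because conjugacy of split maximal tori in $\PGL_{2,k}$, resp.\ $\SL_{2,k}$, is effected by $k$-rational elements, cf.\ the proof of Proposition \ref{prop:degenerationMUorGa}) we may assume this torus is the standard diagonal torus. Then by the classification in the proof of Theorem \ref{thm:BGaquintic>2} (Case \textbf{(B-2)}, resp.\ the characteristic two analogue), over $k$ the curve $Z$ is carried, by an element of $\PGL_{2,k}(k)$ (resp.\ $\SL_{2,k}(k)$) normalizing the diagonal torus — possibly involving the Weyl element $\begin{psmallmatrix}0&1\\1&0\end{psmallmatrix}$ — to $Z_u$ defined by (\ref{eqn:ZGm}) for some $u\in\P^1(k)\setminus\{(0:1),(1:1),(5:4),(1:0)\}$. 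Here $u$ lies in $k$ (not merely $\overline{k}$) precisely because $Z$, the diagonal torus action, and the two $\Gm$-fixed points on $Z$ are all defined over $k$, so the coordinate $u$ — which is a ratio of coordinates of the $k$-rational point $P=(u-1:u:1:1:1:1:0)$ after normalization — descends to $k$; that it avoids the four excluded values is because $Z$ is smooth and not of Mukai--Umemura type over $\overline{k}$.

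Finally, setting $X':=X_u$ (the split $V_{22}$-variety of $\Gm$-type attached to this $u$ as in Definition \ref{defn:splitGm}), the converse two-ray game (Proposition \ref{prop:two-rayW5toV22}(4), i.e.\ the identification of $(X,L)$ with $(X_u,L_u)$ over $k$) gives a $k$-isomorphism $X\simeq X'$, and in particular $X_{\overline{k}}\simeq X'_{\overline{k}}$; one could alternatively only claim the latter if the Weyl-element ambiguity is not controlled over $k$, but in fact the construction gives the stronger statement $X\simeq X'$ over $k$, which suffices. I expect the main obstacle to be the rationality bookkeeping in the two normalization steps — first that the split $V_5$-model can be chosen over $k$, and second that the subsequent conjugation moving the quintic to the standard form $Z_u$ can be realized by a $k$-point of the automorphism group — rather than any new geometric input; both are handled exactly as in the proof of Proposition \ref{prop:degenerationMUorGa}, replacing the discrete valuation ring there by the field $k$ and using that $H^1(k,\Gm)=0$ and the properness of the flag variety $\PGL_{2,k}/B$.
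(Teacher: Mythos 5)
There is a genuine gap, and it occurs at the very first step. You choose a $\Gm$-stable line $L$ on $X$ \emph{over $k$}, but such a line need not exist: the $\Gm$-stable lines on $X_{\overline{k}}$ are exactly the two lines corresponding to the geometric points of $\Sigma(X)_{\red,\sing}$ (Lemma \ref{lem:GmSigma}), and over $k$ this singular locus may be an irreducible scheme $\Spec l$ for a quadratic extension $l/k$, so that neither line is individually defined over $k$. This is not a corner case — by the later classification (Theorem \ref{thm:Gmclassificationgeneral}(5)), it happens precisely for the non-split forms, i.e.\ exactly in the situation the lemma is really about; when the two singular points are $k$-rational, $X$ is already split and there is nothing to prove. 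A related error is your parenthetical claim that the acting torus is ``a split torus over $k$, because it acts on $X$ over $k$'': a non-split one-dimensional torus such as $U_l(1)$ acts perfectly well on a $k$-variety, and indeed $\Aut_{X/k,\red}^{\circ}$ is $U_l(1)$ for the twisted forms. Finally, your concluding claim that the construction yields the stronger statement $X\simeq X'$ over $k$ cannot be right: the lemma deliberately asserts only $X_{\overline{k}}\simeq X'_{\overline{k}}$, and Theorem \ref{thm:Gmclassificationgeneral} together with Remark \ref{rem:Gmclassificationcanonical}(2) shows that the fiber of $\overline{k}$-forms over a given $u$ can be infinite, so most $\Gm$-type $V_{22}$-varieties are \emph{not} $k$-isomorphic to a split one.

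The paper's proof sidesteps all of this by not trying to work over $k$ directly. It first passes to a finite Galois extension $l/k$ over which $X_l$ becomes split, so $X_l\simeq X_u$ for some $u\in\P^1(l)$ minus the four excluded points. Since $X_u$ admits the $k$-form $X$, one has $\sigma(X_u)\simeq X_u$ for every $\sigma\in\Gal(l/k)$; on the other hand, transporting the two-ray game diagram by $\sigma$ gives $\sigma(X_u)\simeq X_{\sigma(u)}$, and the injectivity of $u\mapsto X_u$ (Theorem \ref{thm:Gmclassification}) forces $\sigma(u)=u$, hence $u\in k$. One then \emph{defines} $X'$ to be the split model $X_u$ built from the two-ray game over $k$ with this $u\in k$; it satisfies $X'_{\overline{k}}\simeq X_{\overline{k}}$ but is in general not $k$-isomorphic to $X$. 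If you want to salvage your approach, you must run your normalization of $(Y,Z)$ over the extension $l$ where the singular points of $\Sigma(X)_{\red}$ become rational, and then supply the descent argument for the parameter $u$ — which is essentially the paper's proof.
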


\begin{proof}
By Theorem \ref{thm:Gmclassification}, there exists a finite Galois extension $l/k$ such that $X_l$ is a split $V_{22}$-variety of $\Gm$-type over $l$, i.e.,\ there exists $u \in \P^1 (l) \setminus \{(0:1), (1:1), (1:0), (5:4)\}$ such that $X_{l}$ is isomorphic to the split $V_{22}$-variety of $\Gm$-type $X_u$ over $l$. 
Since $X_{u}$ has a $k$-form $X$, for any $\sigma \in \Gal (l/k)$, we have an isomorphism $\sigma (X_u) \simeq X_u$.
On the other hand, by pulling back the two-ray game diagram connecting $(X_u, L_u)$ and $(Y,Z_u)$ via $\sigma^{\ast} \colon \Spec k \rightarrow \Spec k$, we have $\sigma (X_u) \simeq X_{\sigma (u)}$.
Here, $L_u$ is the line on $X_u$ such that $(X_u, L_u)$ corresponding to $(Y,Z_u)$ via the two-ray game.
By Theorem \ref{thm:Gmclassification}, $u= \sigma (u)$ for any $\sigma$, hence we have $u \in K$.
Therefore, $X_u$ descends to a split $V_{22}$-variety of $\Gm$-type $X_{u}^{(k)}$ over $k$ as desired.
\end{proof}

\begin{thm}
\label{thm:Gmclassificationgeneral}
Let $k$ be a perfect field.
We set 
\begin{align*}
 &A_k :=    \{
    X\colon \Gm\textup{-}V_{22}\textup{-variety over }k
    \}/k\textup{-isom}.\\
&A_{k,u} := 
\{
    X\colon \Gm\textup{-}V_{22}\textup{-variety over }k \mid
    X_{\overline{k}} \simeq X_{u,\overline{k}} 
\}/k\textup{-isom}.
\end{align*}
Here, $X_u$ is the split $V_{22}$-variety of $\Gm$-type over $k$ defined in Definition \ref{defn:splitGm}.
Then the following hold.
\begin{enumerate}
\item 
We have a decomposition 
\[
A_k = \bigsqcup_{u=(u: 1) \in \P^1 (k)\setminus \{(0:1), (1:1),(1:0), (5:4)\}}
A_{k,u}.
\]
\end{enumerate}
In the following, we fix $u \in k \subset \P^1(k)$ as above.
\begin{enumerate}[resume]
    \item 
    There exists a bijection $\alpha_u \colon
    A_{k.u}
    \simeq 
    H^1 (G_k, \Gm (\overline{k}) \rtimes \Z/2\Z)$.
    \item 
    There exists a surjection 
\[
\Phi_u \colon  A_{k.u}
    \simeq  H^1 (G_k, \Gm (\overline{k}) \rtimes \Z/2\Z) \rightarrow H^1 (G_k, \Z/2\Z) \simeq \{ \Z/2\Z\textup{-torsor over }k\}/k\textup{-isom}
\]
    such that $\Phi_u (X)$ corresponds to the $\Z/2\Z$-torsor $\Sigma(X)_{\red, \sing}$.
    \item 
    We have 
    \[
    \{ \Z/2\Z\textup{-torsor over }k\}/k\textup{-isom} = \{ (\Z/2\Z)_k\} \cup \{ 
    \Spec l \mid [l \colon k]=2
    \},
    \]
    where $(\Z/2\Z)_k$ is the constant group scheme over $k$, which is a trivial torsor, and $l$ varies over all isomorphism classes of quadratic extensions over $k$.
    
    Further, we have $\Phi_u^{-1} ((\Z/2\Z)_k) = \{X_u \}$
    and there exists a bijection
    \[
    \Psi_{u,l} \colon
    \Phi_u^{-1} (\Spec l) \simeq H^1 (G_k, U_{l} (1) (\overline{k})) \simeq k^{\times}/N_{l/k}(l^{\times}),
    \]
    where $U_l (1)$ is the Unitary group of rank 1 associated with $l/k$.
    \item 
    For any $X \in A_{k}$, $\Aut_{X/k, \red}$ is equal to $\Gm \rtimes \Z/2\Z $ or $U_{l} (1) \rtimes \Z/2\Z$
    for some quadratic extension $l/k$.
    Moreover, the following are equivalent.
    \begin{enumerate}
        \item 
        $X$ is split.
        \item
        $\Sigma(X)_{\red, \sing}$ consists of two $k$-rational points.
        \item 
        $\Aut_{X/k, \red} \simeq \Gm \rtimes \Z/2\Z$.
    \end{enumerate}
\end{enumerate}
\end{thm}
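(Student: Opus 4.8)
The plan is to run Galois descent from the algebraically closed case supplied by Theorem~\ref{thm:Gmclassification} and Lemma~\ref{lem:GmSigma}. For (1), fix $X\in A_k$; by Theorem~\ref{thm:Gmclassification} over $\overline{k}$ there is a unique $u\in\P^1(\overline{k})\setminus\{(0:1),(1:1),(5:4),(1:0)\}$ with $X_{\overline{k}}\simeq X_{u,\overline{k}}$. The two-ray game producing $X_u$ starts from data ($Y$ the split $V_5$, which is defined over the prime field, and $Z_u$, which depends on $u$) and is compatible with Galois conjugation, so $\sigma^{\ast}X_{u,\overline{k}}\simeq X_{\sigma(u),\overline{k}}$ for $\sigma\in G_k$; since $X$ is defined over $k$ we have $\sigma^{\ast}X_{\overline{k}}\simeq X_{\overline{k}}$, hence $X_{\sigma(u),\overline{k}}\simeq X_{u,\overline{k}}$ and uniqueness forces $\sigma(u)=u$. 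Thus $u\in\P^1(k)$ and $X\in A_{k,u}$, and disjointness of the $A_{k,u}$ is immediate from uniqueness of $u$; this gives (1).

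For (2), since $k$ is perfect we have $\overline{k}=\ksep$, so every $k$-form of $X_u$ is split by $\overline{k}$, and the set of $k$-isomorphism classes of such forms is $H^1(G_k,\Aut_{X_{u,\overline{k}}/\overline{k}}(\overline{k}))$ by the usual Galois descent for projective varieties. Here $\Aut_{X_{u,\overline{k}}/\overline{k}}(\overline{k})=\Aut_{X_{u,\overline{k}}/\overline{k},\red}(\overline{k})=\Gm(\overline{k})\rtimes\Z/2\Z$ by Theorem~\ref{thm:Gmclassification}, and $A_{k,u}$ is precisely the set of $k$-forms of $X_u$ (any such form is again a $\Gm$-$V_{22}$-variety with the prescribed geometric model), which yields $\alpha_u$. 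The split exact sequence of $G_k$-groups $1\to\Gm(\overline{k})\to\Gm(\overline{k})\rtimes\Z/2\Z\to\Z/2\Z\to1$ then produces a surjection $\Phi_u$ onto $H^1(G_k,\Z/2\Z)$. By Hilbert~90, $H^1(G_k,\Gm(\overline{k}))=0$, so the fiber of $\Phi_u$ over the trivial class is a single point, namely $[X_u]$; this will give $\Phi_u^{-1}((\Z/2\Z)_k)=\{X_u\}$. For a quadratic extension $l/k$, twisting by a cocycle representing $\Spec l$ changes the kernel $\Gm$ into the norm-one torus $U_l(1)$ (the $\Z/2\Z$ acts by inversion, which is exactly the $l/k$-form of $\Gm$), and from $1\to U_l(1)\to R_{l/k}\Gm\to\Gm\to1$ together with Hilbert~90 and Shapiro's lemma one gets $H^1(G_k,U_l(1))\simeq k^{\times}/N_{l/k}(l^{\times})$; the residual $\Z/2\Z$-action on this fiber is by inversion, hence trivial because $k^{\times}/N_{l/k}(l^{\times})$ has exponent $2$ (as $N_{l/k}(a)=a^2$ for $a\in k^{\times}$). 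This gives the bijection $\Psi_{u,l}$ in (4), and the description of $\Z/2\Z$-torsors over a perfect field as either $(\Z/2\Z)_k$ or $\Spec l$ is standard.

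For the geometric content of (3) and for (5), I would first observe that for the split model $X_u$ the two-ray game producing it is defined over $k$, so the $\Gm$-invariant conic $C$, the quadric threefold $Q\subset\P^4$ with its split $\Gm$-action, and the involution $\iota$ of Lemma~\ref{lem:GmSigma}(3) (built from the coordinate involution on $\P^4$) are all defined over $k$; hence the two singular points $L_u,L_u'=\iota(L_u)$ of $\Sigma(X_u)_{\red}$ are $k$-rational, so $\Sigma(X_u)_{\red,\sing}$ is the trivial $\Z/2\Z$-torsor. Since $\Aut_{X_{u,\overline{k}}/\overline{k}}$ acts on the two singular points of $\Sigma(X_{u,\overline{k}})_{\red}$ through the quotient $\Z/2\Z$ (Lemma~\ref{lem:GmSigma}(1): the $\Gm$-part fixes each of $L,L'$), the twisted degree-$2$ étale $k$-scheme $\Sigma(X)_{\red,\sing}$ attached to a cocycle $c$ is the $\Z/2\Z$-torsor classified by the image of $[c]$ in $H^1(G_k,\Z/2\Z)$, which identifies $\Phi_u(X)$ with $[\Sigma(X)_{\red,\sing}]$ and proves (3). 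For (5), twisting the $k$-group $\Gm\rtimes\Z/2\Z$ by $c$ only alters the characteristic subgroup $\Gm$, which becomes $\Gm$ or $U_l(1)$ according to the image of $[c]$ in $H^1(G_k,\Z/2\Z)$, so $\Aut_{X/k,\red}\simeq\Gm\rtimes\Z/2\Z$ or $U_l(1)\rtimes\Z/2\Z$; since $U_l(1)$ is anisotropic for $l/k$ a field, these two groups are non-isomorphic, and combining $\Phi_u^{-1}((\Z/2\Z)_k)=\{X_u\}$, the identification $\Phi_u(X)=[\Sigma(X)_{\red,\sing}]$, and the fact that $\Aut_{X/k,\red}$ has split identity component exactly when $[c]$ maps to the trivial torsor yields (a)$\Leftrightarrow$(b)$\Leftrightarrow$(c).

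The main obstacle is the cohomological bookkeeping in (4): one must track how the split sequence behaves under twisting, recognize the twisted kernel as $U_l(1)$, compute its cohomology, and crucially verify that the leftover $\Z/2\Z$-action on the fiber is trivial so that $\Phi_u^{-1}(\Spec l)$ is literally $H^1(G_k,U_l(1))$ and not a proper quotient; the exponent-$2$ remark is what makes this clean. A secondary point requiring care is the $k$-rationality of $L_u$ and $L_u'$ for the split model (equivalently, that $\iota$ descends to $k$), since Lemma~\ref{lem:GmSigma} is stated over an algebraically closed field; this reduces to checking that the normalization of $(Q,\Gamma)$ used there is achievable over $k$ when the $\Gm$-action is split, which holds for $X_u$ by construction.
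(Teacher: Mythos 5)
Your proposal is correct and follows essentially the same route as the paper: decompose $A_k$ by descending $u$ to $k$, classify forms by $H^1(G_k,\Gm(\overline{k})\rtimes\Z/2\Z)$, project to $H^1(G_k,\Z/2\Z)$ via the action on $\Sigma(X)_{\red,\sing}$, and compute the fiber over $\Spec l$ by twisting to get $U_l(1)$ and using the exponent-$2$ property of $k^{\times}/N_{l/k}(l^{\times})$ to see the fiber is all of $H^1(G_k,U_l(1))$ rather than a quotient — which is exactly the injectivity of $\iota'$ that the paper verifies. The only minor divergence is in showing $\Aut_{X_u/k,\red}\simeq\Gm\rtimes\Z/2\Z$ over $k$ itself: you descend the involution $\iota$ geometrically via the conic/quadric construction (as the paper later does in its integral Lemma \ref{lem:dvrXuaut}), whereas the paper's proof of part (2) gets the splitting more quickly by observing that the non-identity component is a $\Gm$-torsor, trivial by Hilbert 90.
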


In summary, we obtain a bijection
\[
A_k \simeq \bigsqcup_{u\in \P^1(k)\setminus \{(0:1), (1:1), (1:0), (5:4)\}} \left(
\{X_u\} \sqcup \bigsqcup_{[l:k]=2} ( k^{\times}/N_{l/k} (l^{\times}) )
\right)
\]

\begin{proof}
(1) follows from Theorem \ref{thm:Gmclassification} and Lemma \ref{lem:Gmsplitformexists}.

To show (2), it suffices to show that $\Aut_{X_u/k, \red}$ is isomorphic to $\Gm\rtimes \Z/2\Z.$
Since $Z_u$ is of $\Gm$-type, by the construction of $X_u$, we obtain an immersion $\Gm \hookrightarrow \Aut_{X_u/k}$.
By Theorem \ref{thm:Gmclassification}, we have $\Aut_{X_u/k, \red}^{\circ} \simeq \Gm$.
Moreover, $\Aut_{X_u/k,\red} \setminus \Aut_{X_u/k, \red}^{\circ}$ is a $\Gm$-torsor, which is split by Hilbert 90.
Therefore, we obtain $\Aut_{X/k, \red} \simeq \Gm \rtimes \Z/2\Z$ as desired.

We show (3).
Since there exists a section $\Z/2\Z \hookrightarrow \Gm(k) \rtimes \Z/2\Z$, the natural projection $\beta \colon H^1 (G_k, \Gm (\overline{k}) \rtimes \Z/2\Z) \rightarrow H^1 (G_k, \Z/2\Z)$ is surjective.
Let $\gamma  \colon H^1 (G_k, \Z/2\Z) \simeq \{ \Z/2\Z\textup{-torsor over }k\}$ be the usual bijection, and put $\Phi_u := \gamma \circ \beta \circ \alpha_u$.
Since the projection $\Gm \rtimes \Z/2\Z \rightarrow \Z/2\Z$ is isomorphic to
\[
\Aut_{X_u/k,\red} \rightarrow \Aut_{\Sigma(X_u)_{\red,\sing}/k} \simeq \Z/2\Z
\]
by Theorem \ref{thm:Gmclassification} and
$\Sigma (X_u)_{\red}$ is a trivial torsor over $\Z/2\Z$ by the construction,
$\Phi_u$ satisfies the desired property.

Next, we show (4).
The first statement is clear.
We shall compute fibers of $\Phi_u$.
By Hilbert 90 and the exact sequence of pointed sets
\[
H^1 (G_k, \Gm(\overline{k})) \rightarrow H^1 (G_k, \Gm (\overline{k}) \rtimes \Z/2\Z) \rightarrow H^1 (G_k, \Z/2\Z),
\]
we have $\Phi_u^{-1} ((\Z/2\Z)_k) = \{X_u\}$.
Next, we fix a quadratic extension $l/k$.
We take $X \in \Phi_u^{-1} (\Spec l)$ as the variety corresponding to $s(\gamma^{-1} (\Spec l))$, where 
\[
s \colon H^1 (G_k, \Z/2\Z) \rightarrow H^1 (G_k, \Gm \rtimes \Z/2\Z)
\] 
is induced by $1 \rtimes \mathrm{id}$.
Let 
\[
\alpha_X = \overline{c_X} \in H^1(G_k, \Gm (\overline{k}) \rtimes \Z/2\Z)
\]
be the class associated with $X$ (where $c_X \colon G_k \rightarrow \Gm (\overline{k}) \rtimes \Z/2\Z$ is a representative of $\alpha_X$).
Let $(\Gm (\overline{k}) \rtimes \Z/2\Z)^{(c_X)}$ be the inner twists of $\Gm (\overline{k}) \rtimes \Z/2\Z$ with respect to $c_X$, i.e.,\ $\Gm (\overline{k}) \rtimes \Z/2\Z$ with the $G_k$-action $\cdot_{c_X}$ given by 
\[
\sigma \cdot_{c_X} (g) = c_X(\sigma) \sigma (g) c_X (\sigma)^{-1}.
\]
Note that we have a bijection
\begin{align*}
H^1 (G_k, \Gm (\overline{k}) \rtimes \Z/2\Z)&\simeq H^1 (G_k, (\Gm (\overline{k}) \rtimes \Z/2\Z)^{(c_X)}), \\
&c \mapsto c \cdot c_{X}^{-1}.
\end{align*}

By the choice of $X$, we may take a representative $c_X$ such that $c_X$ factors as 
\[
c_X \colon G_k \rightarrow \Z/2\Z \xrightarrow{1 \times \mathrm{id}} \Gm \rtimes \Z/2\Z,
\]
where the kernel of the first map is $G_l \subset G_k$.
By the direct computation, we have
\[
(\Gm (\overline{k}) \rtimes \Z/2\Z)^{(c_X)} \simeq U_l(1) (\overline{k}) \rtimes \Z/2\Z, 
\]
where the $G_k$-action on the right-hand side is induced from the $k$-structure of $U_l (1)$.
Indeed, for $g = (t,a)$, where $t \in \Gm(\overline{k})$ and $a \in \Z/2\Z$, 
we have $\sigma \cdot_{c_X} (t,0) = (\sigma(t),0)$ when $a = 0$, 
and $\sigma \cdot_{c_X} (t,1) = (\sigma(t)^{-1},1)$ when $a = 1$.
This is no other than the action on the $U_l (1)(\overline{k}) \rtimes \Z/2\Z$.

Hence, we obtain the following diagram with horizontal exact sequences:
\[
  \begin{CD}
       H^1 (G_k,\Gm (\overline{k}))  @>{\iota}>>  H^1(G_k, \Gm(\overline{k}) \rtimes \Z/2\Z) @>{\beta}>>  H^1(G_k, \Z/2\Z)   \\
     @.    @V{\simeq}V{\cdot c_{X}^{-1}}V  @V{\simeq}V{\cdot c_{X}^{-1}}V   \\
       H^1(G_k,U_l(1) (\overline{k})) @>{\iota'}>>  H^1 (G_k, U_l(1)(\overline{k}) \rtimes \Z/2\Z) @>>>  H^1 (G_k, \Z/2\Z)
  \end{CD}
\]
Since the right vertical arrow sends $\gamma^{-1} (\Spec l)$ to $0$, we have
\[
\cdot c_{X}^{-1} ( \beta^{-1} (\Spec l)) = \iota' (H^1 (G_k, U_l(1)(\overline{k}))).
\]
We shall show that $\iota'$ is injective.
Since the morphism
\begin{equation}
\label{eqn:H0surj}
H^0 (G_k, U_l(1) (\overline{k}) \rtimes \Z/2\Z) \rightarrow H^0 (G_k, \Z/2\Z)
\end{equation}
is surjective, $\iota^{'-1} (1,0)$ consists of 1 element $1$.
We can show that, for any $\overline{d} \in H^1 (G_k, U_l(1) (\overline{k}))$,
the inner twists $(U_l(1) (\overline{k}) \rtimes \Z/2\Z)^{(d)}$ is isomorphic to $U_{l} (1) (\overline{k}) \rtimes \Z/2\Z$.
Indeed, for any $g = (t,a)$, where $t \in \Gm(\overline{k})$ and $a \in \Z/2\Z$, 
we have $\sigma \cdot_{d} (t,0) = (\sigma(t),0)$ when $a = 0$, 
and $\sigma \cdot_{d} (t,1) = (d(\sigma)^2 \sigma(t), 1)$ when $a = 1$.
Moreover, we have
\[
H^1 (G_k, U_l(1) (\overline{k})) \simeq k^{\times}/N_{l/k} (l^{\times}).
\]
Hence, both components of $(U_l(1) (\overline{k}) \rtimes \Z/2\Z)^{(d)}$ are trivial $U_l(1)$-torsors as desired.
Therefore, the multiplication $d^{-1}$ induces compatible bijections on each term of
\[
  \begin{CD}
       H^1 (G_k,U_l(1) (\overline{k}))  @>{\iota'}>>  H^1(G_k, U_l(1)(\overline{k}) \rtimes \Z/2\Z) @>{\beta}>>  H^1(G_k, \Z/2\Z),
  \end{CD}
\]
which is an exact sequence of pointed sets.
By the surjectivity of (\ref{eqn:H0surj}), $\iota'^{-1} (\iota'(\overline{d}))$ is a singleton $\{\overline{d}\}$.
Therefore, $\iota'$ is injective, and we obtain the desired bijection
\[
\cdot c_{X}^{-1} (\beta^{-1} (\Spec l) ) \simeq  H^1 (G_k, U_l (1)) \simeq k^{\times}/ N_{l/k} (l^{\times}).
\]

Finally, we show (5).
We use the same notation as in (4), i.e.,\ we take $X\in \Phi_{u}^{-1} (\Spec l)$.
Since $X$ depends on $u$ and $l$, we write it as $X_u^l$ from now on.
By definition, the inner twist 
\[
(\Gm (\overline{k}) \rtimes \Z/2\Z)^{(c_X)} \simeq U_l(1) (\overline{k}) \rtimes \Z/2\Z
\]
is isomorphic (as $G_k$-group) to $\Aut_{X_u^l/k, \red} (\overline{k})$ for any quadratic extension $l/k$.
Moreover, for $\overline{d} \in H^1 (G_k, U_l (1) (\overline{k}))$, the inner twist 
$(U_l(1) \rtimes \Z/2\Z)^{(d)} \simeq U_l(1) \rtimes \Z/2\Z$ is isomorphic (as $G_k$-module) to $\Aut_{X_u^{l,(d)}/k, \red} (\overline{k})$, where $X_u^{l,(d)}$ is the twisted form of $X_u^{l}$ with respect to $\iota'(\overline{d})$.
By the proof of (4), we have $A_{k,u} = \{(X_u, X_u^{l,(d)})\}_{l,d}$.
Hence, we obtain the first statement.
Moreover, we obtain the equivalence of (a) and (c).
The implication of (a) $\Rightarrow$ (b) follows from the construction of $X_u$ (see the proof of Lemma \ref{lem:Gmsplitformexists}).
The converse direction follows from (1)-(4).
This completes the proof.
\end{proof}
}
{\cblue
\begin{rem}
\label{rem:Gmclassificationcanonical}
\begin{enumerate}
\item
In the proof of Theorem \ref{thm:Gmclassificationgeneral},
the morphism $\beta \circ \alpha_u$ is canonically defined since it is equal to the composite
\[
A_{k,u} \simeq H^1 (G_k, \Aut_{X_u/k, \red} (\overline{k})) \rightarrow H^1 (G_k, \pi_0 (\Aut_{X_u/k, \red}) (\overline{k})).
\]
Moreover, $\gamma$ is the natural bijection.
Therefore, $\Phi_u$ is a canonical morphism.
On the other hand, a bijection
\[
\Psi_{u,l} \colon \Phi_u^{-1} (\Spec l) \simeq k^{\times}/N_{l/k}(l^{\times})
\]
is non-canonical.
However, the proof shows that we can define $\Psi_{u,l}$ by choosing a section $s$ of 
\[
\Aut_{X_u/k, \red} (k) \rightarrow \pi_0 (\Aut_{X_u/k,\red}) (k) \simeq \Z/2\Z.
\]
If a bijection comes from $s$, then $\Psi_{u,l}^{(-1)} (1)$ is the $V_{22}$-variety of $\Gm$-type corresponding to the cocycle
\[
\Gal (l/k) \simeq \Z/2\Z \xhookrightarrow{s} \Aut_{X_u/k, \red} (k) \hookrightarrow \Aut_{X_u/k, \red} (l).
\]
\item 
In general, $k^{\times}/N_{l/k} l^{\times}$ is an infinite group (e.g.,\ when $k$ is a number field).
In particular, for fixed $V_{22}$-verity of $\Gm$-type $X$ over $k$, and a fixed quadratic extension $l/k$, the set
$\{
X' \mid X'_{l} \simeq X_l
\}/\simeq_k$
is an infinite set in general.
\end{enumerate}
\end{rem}
}

\begin{cor}
\label{cor:Gmclassificationfinite}
Let $\F_q$ be a finite field.
Then we have a surjection
\[
\{\textup{$V_{22}$-variety of $\Gm$-type over $\F_q$} \}/\F_q\textup{-isom}
\rightarrow \P^1(\F_q) \setminus \{(0:1),(1:1), (5:4), (1:0)\}.
\]
Moreover, each fiber over $u= (u:1)$ consists of just two $V_{22}$-varieties of $\Gm$-type $X_u$ and $X_{u}'$, where $X_u$ is as in Definition \ref{defn:splitGm},
and $X_u'$ is a twisted form of $X_u$ with $\Sigma (X_u')_{\red,\sing} \simeq \Spec \F_{q^2}$. 
In particular, 
there is a $V_{22}$-variety of $\Gm$-type over $\F_{q}$ if and only if $q \geq4$.
\end{cor}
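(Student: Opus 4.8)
The plan is to obtain the corollary as a direct specialization of Theorem~\ref{thm:Gmclassificationgeneral} to the finite (hence perfect) field $k=\F_q$. First I would invoke part~(1) of that theorem, giving the decomposition $A_{\F_q}=\bigsqcup_u A_{\F_q,u}$ indexed by $u\in\P^1(\F_q)\setminus\{(0:1),(1:1),(5:4),(1:0)\}$; since the split variety $X_u$ of Definition~\ref{defn:splitGm} lies in $A_{\F_q,u}$, the assignment sending a $\Gm$-$V_{22}$-variety over $\F_q$ to the index $u$ of the stratum containing it is a well-defined surjection onto $\P^1(\F_q)\setminus\{(0:1),(1:1),(5:4),(1:0)\}$, which is the first assertion.

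To identify each fibre $A_{\F_q,u}$ I would use parts~(4) and~(5) of Theorem~\ref{thm:Gmclassificationgeneral}. The only quadratic extension of $\F_q$ is $\F_{q^2}$, so the $\Z/2\Z$-torsors over $\F_q$ are exactly $(\Z/2\Z)_{\F_q}$ and $\Spec\F_{q^2}$, and $A_{\F_q,u}=\Phi_u^{-1}\bigl((\Z/2\Z)_{\F_q}\bigr)\sqcup\Phi_u^{-1}(\Spec\F_{q^2})$. By part~(4) the first piece is $\{X_u\}$, and the second is in bijection with $\F_q^\times/N_{\F_{q^2}/\F_q}(\F_{q^2}^\times)$; since the norm map on finite fields is surjective (the kernel of $x\mapsto x^{q+1}$ on the cyclic group $\F_{q^2}^\times$ has order $q+1$, hence the image has order $q-1$), this quotient is trivial and $\Phi_u^{-1}(\Spec\F_{q^2})=\{X_u'\}$ is a singleton. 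Thus $A_{\F_q,u}=\{X_u,X_u'\}$, with $\Sigma(X_u)_{\red,\sing}$ a pair of $\F_q$-rational points and $\Sigma(X_u')_{\red,\sing}\simeq\Spec\F_{q^2}$ as recorded by $\Phi_u$; in particular $X_u\not\simeq X_u'$ by the equivalence (a)$\Leftrightarrow$(b) of part~(5), and $X_u'$ is a twisted form of $X_u$ because both lie in $A_{\F_q,u}$.

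Finally, existence of a $\Gm$-$V_{22}$-variety over $\F_q$ amounts to non-emptiness of $\P^1(\F_q)\setminus\{(0:1),(1:1),(5:4),(1:0)\}$, so I would record the coincidences among the excluded points: $(5:4)=(1:0)$ when $\chara\F_q=2$ and $(5:4)=(0:1)$ when $\chara\F_q=5$, whereas the four points are pairwise distinct otherwise. Hence the excluded set has $3$ elements when $2\mid q$ or $5\mid q$ and $4$ elements otherwise, so $\#\bigl(\P^1(\F_q)\setminus\{(0:1),(1:1),(5:4),(1:0)\}\bigr)$ equals $q-2$ or $q-3$ accordingly, which is positive precisely for $q\geq 4$ (the borderline cases $q=2,3$ giving the empty set, as all of $\P^1(\F_q)$ is then excluded). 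There is no genuine obstacle here: the statement is essentially book-keeping on top of Theorem~\ref{thm:Gmclassificationgeneral}, the only points requiring (routine) care being the surjectivity of the finite-field norm $N_{\F_{q^2}/\F_q}$ and this small-$q$ collision analysis of the four excluded points of $\P^1$.
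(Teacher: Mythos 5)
Your proposal is correct and follows essentially the same route as the paper: the paper's proof is a one-line citation of the $\Gm$-classification, and you have simply filled in the intended bookkeeping — specializing the decomposition $A_k=\bigsqcup_u A_{k,u}$ of Theorem \ref{thm:Gmclassificationgeneral} to $k=\F_q$, using that $\F_{q^2}$ is the unique quadratic extension and that $N_{\F_{q^2}/\F_q}$ is surjective so each $k^{\times}/N_{l/k}(l^{\times})$ is trivial, and counting the collisions among the four excluded points of $\P^1(\F_q)$ in characteristics $2$ and $5$. All steps check out, including the small-$q$ analysis showing emptiness exactly for $q=2,3$.
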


\begin{proof}
This follows from Theorem \ref{thm:Gmclassification}.
\end{proof}

\begin{cor}
\label{cor:V22overF2}
Let $X$ be a $V_{22}$-variety over $\F_2$.
Then $\Aut(X_{\overline{\F_2}})$ is a finite group.
\end{cor}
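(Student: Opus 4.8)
The plan is to reduce to the classification results already established and use the finiteness of $\P^1(\F_2)$ together with the smallness of all the relevant cohomology groups over $\F_2$. First I would invoke Theorem~\ref{thm:intromainthm1} (equivalently Theorems~\ref{thm:MukaiUmemuraclassification}, \ref{thm:Gaclassification}, \ref{thm:Gmclassification}) to observe that if $\Aut(X_{\overline{\F_2}})$ were infinite, then $X_{\overline{\F_2}}$ would be a $V_{22}$-variety with $\dim \Aut_{X_{\overline{\F_2}}/\overline{\F_2}} \geq 1$, hence (by Remark~\ref{rem:autdimgeq1}~(2) together with Corollary~\ref{cor:noV22Autdim2}) of Mukai--Umemura type, of $\Ga$-type, or of $\Gm$-type. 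By Theorems~\ref{thm:MukaiUmemuraclassification} and \ref{thm:Gaclassification}, the first two cases cannot occur in characteristic $2$. So it remains only to rule out the $\Gm$-type case over $\F_2$, i.e.\ to show there is no $V_{22}$-variety of $\Gm$-type over $\F_2$.

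The key point is then Theorem~\ref{thm:Gmclassification} (or Corollary~\ref{cor:Gmclassificationfinite}): a $V_{22}$-variety of $\Gm$-type over an algebraically closed field is classified by a parameter
\[
u \in \P^1(k) \setminus \{(0:1), (1:1), (5:4), (1:0)\}.
\]
Over $k = \overline{\F_2}$ we have $(5:4) = (1:0)$, so the excluded set is $\{(0:1),(1:1),(1:0)\}$, and a $\Gm$-type $V_{22}$-variety over a field $\F$ of characteristic $2$ with $\F$-rational parameter would require
\[
\P^1(\F) \setminus \{(0:1),(1:1),(1:0)\} \neq \emptyset.
\]
For $\F = \F_2$ this set is empty, since $\P^1(\F_2) = \{(0:1),(1:1),(1:0)\}$ has exactly three points. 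Thus $X$ cannot become of $\Gm$-type after base change to $\overline{\F_2}$, because any $\F_2$-form of a $\Gm$-type variety $X_u$ over $\overline{\F_2}$ would, by Lemma~\ref{lem:Gmsplitformexists} and the Galois-descent argument in its proof, force $u$ to be $G_{\F_2}$-invariant, hence $u \in \P^1(\F_2)$, which is impossible. (Concretely: applying $\sigma \in \Gal(\overline{\F_2}/\F_2)$ to the two-ray game diagram sends $X_u$ to $X_{\sigma(u)}$, and descent to $\F_2$ requires $X_{\sigma(u)} \simeq X_u$ for all $\sigma$; by the injectivity statement in Theorem~\ref{thm:Gmclassification}, $\sigma(u) = u$.)

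Combining the three cases, $\Aut(X_{\overline{\F_2}})$ cannot be infinite, so it is a finite group. I expect the only subtlety to be the bookkeeping in the $\Gm$-case: one must be careful that the classification parameter $u$ of $X_{\overline{\F_2}}$ really descends to $\F_2$ when $X$ itself is defined over $\F_2$, which is exactly the content of the proof of Lemma~\ref{lem:Gmsplitformexists}. Everything else is a direct appeal to the classification theorems and to the triviality of the count $\#\P^1(\F_2) = 3$; in particular Corollary~\ref{cor:Gmclassificationfinite} already records that a $V_{22}$-variety of $\Gm$-type over $\F_q$ exists only for $q \geq 4$, which applied with $q = 2$ finishes the argument.
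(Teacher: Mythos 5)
Your proposal is correct and follows essentially the same route as the paper: the paper's proof simply cites Remark \ref{rem:autdimgeq1}, Theorems \ref{thm:MukaiUmemuraclassification} and \ref{thm:Gaclassification} (ruling out the $\PGL_2$- and $\Ga$-types in characteristic $2$), and Corollary \ref{cor:Gmclassificationfinite} (ruling out the $\Gm$-type over $\F_2$ since $q\geq 4$ is required). Your unpacking of the descent of the parameter $u$ via Lemma \ref{lem:Gmsplitformexists} is just the content already packaged into that corollary.
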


\begin{proof}
This follows from Remark \ref{rem:autdimgeq1}, Theorems \ref{thm:MukaiUmemuraclassification}, \ref{thm:Gaclassification}, and Corollary \ref{cor:Gmclassificationfinite}.
\end{proof}

\subsection{$V_{22}$-variety over $\Z$}

\begin{thm}
\label{thm:noV22withlargeautomoverZ}
There is no smooth projective scheme over $\Z$ whose generic fiber is a Mukai--Umemura variety, $V_{22}$-variety of $\Ga$-type, or a $V_{22}$-variety of $\Gm$-type over $\Q$.
\end{thm}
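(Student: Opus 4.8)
The plan is to localize the problem at the single prime $p=2$ and contradict Corollary~\ref{cor:V22overF2}. Suppose, for contradiction, that $\mathcal{X}$ is a smooth projective scheme over $\Z$ whose generic fiber $\mathcal{X}_{\Q}$ is a $V_{22}$-variety of Mukai--Umemura type, $\Ga$-type, or $\Gm$-type. In all three cases the classification (Theorems~\ref{thm:MukaiUmemuraclassification}, \ref{thm:Gaclassification}, \ref{thm:Gmclassification}) shows that $\Aut_{\mathcal{X}_{\overline{\Q}}/\overline{\Q},\red}^{\circ}$ is one of $\PGL_{2,\overline{\Q}}$, $\Ga$, $\Gm$, and in particular that it is positive-dimensional; since the dimension of an algebraic group scheme is insensitive to base field extension, $\dim\Aut_{\mathcal{X}_{\Q}/\Q}\geq 1$.

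Next I would pass to the fiber over $2$. Since $\mathcal{X}\to\Spec\Z$ is smooth and projective with $V_{22}$ generic fiber, every fiber $\mathcal{X}_{\F_p}$ is again a $V_{22}$-variety over $\F_p$; that is, $\mathcal{X}$ is a $V_{22}$-scheme over $\Z$. (Here one uses that $-K_{\mathcal{X}/\Z}$ is relatively very ample by \cite[Theorem~1.1]{Tanaka1}, that $(-K)^3=22$ and the index are preserved under flat specialization, and that the Picard rank of each fiber is $b_2=1$ by smooth proper base change.) The relative anticanonical polarization in particular exhibits $\Aut_{\mathcal{X}/\Z}\to\Spec\Z$ as a group scheme of finite type. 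By upper semicontinuity of fiber dimension on $\Spec\Z$ we then obtain
\[
\dim\Aut_{\mathcal{X}_{\F_2}/\F_2}\;\geq\;\dim\Aut_{\mathcal{X}_{\Q}/\Q}\;\geq\;1 .
\]

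Finally I would conclude that this is impossible. A group scheme of dimension $\geq 1$ over $\F_2$ acquires, after base change to $\overline{\F_2}$, a reduced component of dimension $\geq 1$, and a reduced scheme of positive dimension over the algebraically closed field $\overline{\F_2}$ has infinitely many closed points; hence $\Aut\big((\mathcal{X}_{\F_2})_{\overline{\F_2}}\big)$ is an infinite group. This contradicts Corollary~\ref{cor:V22overF2}, according to which $\Aut(X_{\overline{\F_2}})$ is finite for every $V_{22}$-variety $X$ over $\F_2$. This contradiction proves the theorem.

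I do not expect a serious obstacle: the entire substance of the statement is already packaged in Corollary~\ref{cor:V22overF2} (equivalently, in the non-existence of Mukai--Umemura and $\Ga$-type $V_{22}$-varieties in characteristic $2$ together with Corollary~\ref{cor:Gmclassificationfinite}). The only points needing care are the routine verifications that $\Aut_{\mathcal{X}/\Z}$ is of finite type over $\Z$, so that semicontinuity of fiber dimension applies, and that $\mathcal{X}_{\F_2}$ is genuinely a $V_{22}$-variety over $\F_2$, so that Corollary~\ref{cor:V22overF2} can be invoked.
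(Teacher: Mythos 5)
Your proposal is correct and follows essentially the same route as the paper's proof: specialize to the fiber over $2$, use upper semicontinuity of the dimension of $\Aut_{\mathcal{X}/\Z}$ along $\Spec\Z$ to get $\dim\Aut_{\mathcal{X}_{\F_2}/\F_2}\geq 1$, and contradict Corollary~\ref{cor:V22overF2}. The extra verifications you flag (that $\mathcal{X}$ is a $V_{22}$-scheme over $\Z$ and that the automorphism scheme is of finite type) are the same routine points the paper leaves implicit.
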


\begin{proof}
Suppose, for a contradiction, that there exists a smooth projective scheme $\mathcal{X}$ over $\Z$ such that $\mathcal{X}_{\Q}$ is a $V_{22}$-variety with $\dim \Aut_{\mathcal{X}_{\Q}/\Q} \geq 1$.
Then we have $\dim \Aut_{\mathcal{X}_{\F_2}/\F_2} \geq 1$.
This contradicts Corollary \ref{cor:V22overF2}. This completes the proof.
\end{proof}

\begin{thm}
\label{thm:V22overZ}
There exists a $V_{22}$-scheme over $\Z$.
\end{thm}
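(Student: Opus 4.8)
The plan is to run the relative two-ray game of Proposition~\ref{prop:integraltwo-rayW5toV22} over $B=\Spec\Z$. One starts from the split $V_5$-scheme $\mathcal{Y}\subset\P^6_{\Z}$ cut out by the equations (\ref{eqn:W5}); as recalled in Section~\ref{section:preliminaries}, this is a $V_5$-scheme over any base, in particular over $\Z$. If a relative smooth rational quintic curve $\mathcal{Z}\subset\mathcal{Y}$ over $\Z$ can be produced, then Proposition~\ref{prop:integraltwo-rayW5toV22} applied to $(\mathcal{Y},\mathcal{Z})$ yields a two-ray game diagram $\mathcal{Y}\leftarrow\widetilde{\mathcal{Y}}\to\mathcal{V}\leftarrow\widetilde{\mathcal{X}}\to\mathcal{X}$ in which $\mathcal{X}$ is a $V_{22}$-scheme over $\Z$, as required. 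So the whole problem reduces to exhibiting one such $\mathcal{Z}$.

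The quintic curves with positive-dimensional stabilizer classified in Section~\ref{section:quintics_in_V5} are of no use here, since by Theorems~\ref{thm:BGaquintic>2} and~\ref{thm:BGaquintic2} each of the Mukai--Umemura-, $\Ga$-, and $\Gm$-type families has singular members in characteristic $2$ or $5$. One needs instead a curve with finite stabilizer that is defined over $\Z$ and smooth in every characteristic simultaneously. The key point is that $\mathcal{Y}$ has an open subscheme isomorphic to $\mathbb{A}^3_{\Z}$: on $\{a_6\neq 0\}$ one may normalize $a_6=1$, and then the last three equations in (\ref{eqn:W5}) become $a_2=a_3a_5-a_4^2$, $a_1=a_2a_5-a_3a_4$, $a_0=a_2a_4-a_3^2$, after which the first two hold identically, so $(a_3,a_4,a_5)$ are free coordinates on this chart. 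Taking the affine line $(a_3,a_4,a_5)=(t,t,t^2)$, the corresponding curve $\mathcal{Z}$ is the image of
\[
\P^1_{\Z}\longrightarrow\P^6_{\Z},\qquad (s:t)\longmapsto\bigl(st^4-s^2t^3-s^3t^2:\,t^5-st^4-s^3t^2:\,s^2t^3-s^3t^2:\,s^4t:\,s^4t:\,s^3t^2:\,s^5\bigr).
\]
By construction the seven displayed homogeneous quintic forms satisfy (\ref{eqn:W5}) as polynomial identities over $\Z$, so $\mathcal{Z}\subset\mathcal{Y}$. Moreover, six of these forms --- all of them but the repeated $s^4t$ --- form a $\Z$-basis of $H^0(\P^1,\cO(5))$, as one sees by reordering them as $s^5$, $s^4t$, $s^3t^2$, $s^2t^3-s^3t^2$, $st^4-s^2t^3-s^3t^2$, $t^5-st^4-s^3t^2$, whose coefficient matrix is lower triangular with $1$'s on the diagonal. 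Hence over every field the seven forms span $H^0(\P^1,\cO(5))$ and so have no common zero, which means that in every fiber over $\Spec\Z$ the displayed morphism is a closed immersion onto a smooth rational normal quintic curve. Therefore $\mathcal{Z}$ is a closed subscheme of $\mathcal{Y}$, flat over $\Z$ (indeed $\mathcal{Z}\cong\P^1_{\Z}$), and a relative smooth rational quintic curve on $\mathcal{Y}$ over $\Z$ in the sense of Definition~\ref{defn:degree}.

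Applying Proposition~\ref{prop:integraltwo-rayW5toV22} to $(\mathcal{Y},\mathcal{Z})$ with $B=\Spec\Z$ then produces a $V_{22}$-scheme $\mathcal{X}$ over $\Z$, which proves the theorem. The one genuine obstacle is the construction of $\mathcal{Z}$: one has to nail down a single explicit quintic curve that stays smooth in characteristics $2$ and $5$, exactly where the large-stabilizer curves of Section~\ref{section:quintics_in_V5} degenerate. Passing to the affine chart $\{a_6\neq 0\}\cong\mathbb{A}^3_{\Z}$ --- which both automatically lands the curve on $\mathcal{Y}$ and removes the coordinate hyperplane sections from the picture --- and then reducing smoothness in all characteristics to the value of a single integer determinant is what makes this step routine. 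Note that, by Theorem~\ref{thm:noV22withlargeautomoverZ}, the generic fiber $\mathcal{X}_{\Q}$ necessarily has finite automorphism group, so in particular $\mathcal{Z}_{\Q}$ is automatically of none of the three special types.
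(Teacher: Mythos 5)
Your construction is correct, but it runs the \emph{other} two-ray game than the one the paper uses. The paper's proof works through Proposition~\ref{prop:integraltwo-rayQ3toV22}: it writes down the quadric threefold $Q\subset\P^4_{\Z}$ defined by $xy+zw+u^2=0$ together with an explicit relative sextic $\Gamma\subset Q$, and must then verify that $\Gamma_s$ is a smooth, rational, \emph{quadratically normal} sextic in every fiber, including characteristics $2$ and $5$. You instead go through Proposition~\ref{prop:integraltwo-rayW5toV22}, starting from the split $V_5$-scheme over $\Z$ and a relative smooth rational quintic. Your route has two concrete advantages: the analogue of quadratic normality is free (Lemma~\ref{lem:normalityofrationalquintic} shows any smooth rational quintic on a $V_5$ is automatically linearly normal), and fiberwise smoothness of $\mathcal{Z}$ reduces to the observation that six of your seven quintic forms have unimodular coefficient matrix over $\Z$, so the map is the degree-$5$ Veronese followed by a linear embedding in every characteristic. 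I checked the details: the chart $\{a_6\neq 0\}\cap\mathcal{Y}\cong\mathbb{A}^3_{\Z}$ is correct (the first two equations of (\ref{eqn:W5}) do become $a_2\cdot 0$ and $a_3\cdot 0$ after the substitutions), the homogenization of $(a_3,a_4,a_5)=(t,t,t^2)$ is as displayed, the five quadrics vanish identically on it, and the triangular coefficient matrix has determinant $1$. Since Proposition~\ref{prop:integraltwo-rayW5toV22} is stated over any connected excellent Dedekind scheme, applying it over $\Spec\Z$ is exactly as legitimate as the paper's application of Proposition~\ref{prop:integraltwo-rayQ3toV22}. The closing remark that $\mathcal{X}_{\Q}$ must have finite automorphism group by Theorem~\ref{thm:noV22withlargeautomoverZ} is a nice consistency check but is not needed for the argument.
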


\begin{proof}
Let $Q \subset \P^4_{\Z}$ with coordinates $(x:y:z:w:u)$ be the closed subscheme defined by $xy+zw+u^2=0$.
Let $\Gamma$ be the scheme theoretic image of the morphism
\begin{eqnarray*}
&\P^1_{\Z} \rightarrow \P^4_{\Z}, \\
&(s:t) \mapsto (-t^6: s^6+s^5t+2s^3t^3+t^6:-s^4t^2:s^6+2s^4t^2+st^5:s^5t+s^3t^3+t^6).
\end{eqnarray*}
Then $\Gamma$ is contained in $Q$.
Moreover, we can show that $\Gamma_{s} \subset \P^4_{s}$ is a smooth rational quadratically normal sextic over $s$ for any point $s \in \Spec \Z$.
By applying the two-ray game (Proposition \ref{prop:integraltwo-rayQ3toV22}) to $(Q,\Gamma)$, we obtain the desired $V_{22}$-scheme.
\end{proof}

\section{Shafarevich conjecture for $V_{22}$-varieties with large automorphism group}
\label{section:Shafarevich}

To discuss the Shafarevich conjecture for $V_{22}$-varieties satisfying $\dim \Aut \geq 1$, we introduce terminology for the types of smooth reductions.

\begin{defn}
\label{defn:V22reduction}
    Let $R$ be a discrete valuation ring, $K$ the fraction field of $R$, and $k$ the residue field of $R$.
    Let $X$ be a $V_{22}$-variety over $K$.
    Suppose that there exists a $V_{22}$-scheme $\mathcal{X}$ over $R$ satisfying $\mathcal{X}_K \simeq X$. In this case, we say $X$ admits \emph{good reduction at $R$}.
    Moreover, if $\mathcal{X}$ is a Mukai--Umemura scheme over $R$, we say $X$ admits \emph{Mukai--Umemura good reduction at $R$}. 
    \emph{$\Ga$-good reduction} and \emph{$\Gm$-good reduction} are defined similarly.
    Moreover, when $X_{\Frac \widehat{R}}$ admits good reduction, we say $X$ admits good reduction at $\widehat{R}$. 
    We use the same terminology for Mukai--Umemura, $\Ga$, and $\Gm$-good reduction as well.
\end{defn}

\begin{rem}
\label{rem:grcomplete}
Let $R, K, k$ be as in Definition \ref{defn:V22reduction}.
Let $X$ be a $V_{22}$-scheme over $K$.
\begin{enumerate}
\item
Suppose that there exists a smooth projective scheme $\mathcal{X}$ over $R$ with $\mathcal{X}_K \simeq X$.
Then $\mathcal{X}$ is a $V_{22}$-scheme over $R$, and hence $X$ has good reduction at $R$, 
since $b_2 (\mathcal{X}_{k}) =1$ 
and $(-K_{\mathcal{X}_k/k})^3 = 22$.
\item 
Suppose that $R$ is excellent.
If $X$ admits good reduction at $\widehat{R}$, then $X$ admits good reduction at $R$ by \cite[Proposition 7.1]{V5} and Corollary \ref{cor:V22lift}.
The same holds for Mukai--Umemura, $\Ga$, and $\Gm$-good reduction.
\end{enumerate}
\end{rem}

\subsection{Shafarevich conjecture for Mukai--Umemura varieties}

\begin{prop}
\label{prop:goodreductioncriteriaMukaiUmemura}
Let $K$ be a $p$-adic field and $X$ a Mukai--Umemura variety over $K$. 
\begin{enumerate}
\item
Suppose that $p =2$ or $5$. 
Then $X$ does not admit good reduction at $\cO_K$.
\item 
Suppose that $p \neq 2,5$.
Then the following are equivalent.
\begin{enumerate}
    \item
    $X$ admits good reduction at $\cO_K$.
    \item 
    $X$ is a split Mukai--Umemura variety over $K$.
\end{enumerate}
\end{enumerate}
\end{prop}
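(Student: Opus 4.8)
## Proof proposal for Proposition~\ref{prop:goodreductioncriteriaMukaiUmemura}

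The plan is to route everything through the two-ray game and the classification of quintic curves on the split $V_5$-variety, using the relative two-ray game over a discrete valuation ring established in Section~\ref{section:two-ray}.

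For part (1), suppose for contradiction that $X$ admits good reduction, i.e.\ there is a $V_{22}$-scheme $\mathcal X$ over $\cO_K$ with $\mathcal X_K\simeq X$. The special fiber $\mathcal X_{k}$ is then a $V_{22}$-variety over a field of characteristic $2$ or $5$. I would first pass to a finite unramified extension if needed so that a line $L$ on $X$ is defined over $K$ (using Proposition~\ref{prop:lineexists} and that the Hilbert scheme of lines is proper, hence has a point after an unramified base change; this does not affect the conclusion since good reduction is insensitive to unramified base change, Remark~\ref{rem:grcomplete}). Actually it is cleaner to observe: since $\dim\Aut_{X_{\overline K}/\overline K}=3$, any good model $\mathcal X$ over $\cO_K$ would have $\dim\Aut_{\mathcal X_{\overline k}/\overline k}\geq 3$ by upper semicontinuity of fiber dimension of $\Aut_{\mathcal X/\cO_K}$; but $\chara k\in\{2,5\}$ and Theorem~\ref{thm:MukaiUmemuraclassification}(1) says no $V_{22}$-variety in characteristic $2$ or $5$ has $\dim\Aut\geq 3$, a contradiction. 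This is the short and robust argument and it is the one I would write.

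For part (2), the implication (b)$\Rightarrow$(a) is immediate: a split Mukai--Umemura variety over $K$ is by definition $\mathcal X_K$ for the scheme $\mathcal X$ of Lemma~\ref{lem:splitMU} base-changed along $\Z[1/10]\to\cO_K$, which makes sense since $10\in\cO_K^\times$ when $p\neq 2,5$; that $\mathcal X$ is a $V_{22}$-scheme over $\cO_K$ gives good reduction. For (a)$\Rightarrow$(b), suppose $X$ has a good model $\mathcal X$ over $R=\cO_K$. I would take a line $L$ on $X$, possibly after a finite \emph{unramified} extension of $K$ (which preserves both good reduction and ``split'', the latter because splitness of the $V_5$-variety produced descends as in Proposition~\ref{prop:MUsplitcriteria} and Lemma~\ref{lem:split_after_finite_extension}), spread it out to a relative line $\mathcal L$ on $\mathcal X$ (using properness of the relative Hilbert scheme of lines and the valuative criterion), then apply the relative two-ray game Proposition~\ref{prop:integraltwo-rayV22toW5} to $(\mathcal X,\mathcal L)$. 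This produces a $V_5$-scheme $\mathcal Y$ over $R$ together with a relative smooth rational quintic curve $\mathcal Z$. By Lemma~\ref{lem:split_after_finite_extension}(2), since $2\in R^\times$, $\mathcal Y$ is automatically a split $V_5$-scheme over $R$ once $\mathcal Y_K$ is split; and $\mathcal Y_K$ is split because over a $p$-adic field the relevant Brauer–Severi curve is trivial (every conic over a $p$-adic field has a rational point), or alternatively because $Y_K$ is the $V_5$-variety attached to the Mukai--Umemura variety $X$ which is split over $K$ by the characteristic-zero-style argument — here I would simply invoke that $X_K$ Mukai--Umemura plus $L$ defined over $K$ forces, via the two-ray game, $Y_K$ split. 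Now $\mathcal Z_K$ is of Mukai--Umemura type, so by Proposition~\ref{prop:degenerationMUorGa} (with $\xi=0$), $\mathcal Z$ is, after applying an element of $\PGL_{2,R}(R)$, the curve $\mathcal Z^{(\mathrm a)}_0$ given by \eqref{eqn:ZGageneraldvr} with $\xi=0$, i.e.\ the relative curve defined by \eqref{eqn:ZMU} over $R$. Feeding $(\mathcal Y,\mathcal Z^{(\mathrm a)}_0)$ back through Proposition~\ref{prop:integraltwo-rayW5toV22} reconstructs $\mathcal X$ and identifies $X=\mathcal X_K$ with $\mathcal X_{0,K}$ for the standard model $\mathcal X_0$ over $\Z[1/10]$ of Lemma~\ref{lem:splitMU}; that is, $X$ is a split Mukai--Umemura variety.

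The main obstacle is the bookkeeping around the line $\mathcal L$: producing a relative line on $\mathcal X$ over $R$ and controlling the finite extension needed to do so while not destroying either ``good reduction'' or ``split''. Concretely, one must check that the finite extension can be taken unramified (so that good reduction of $X$ is equivalent to good reduction after base change, Remark~\ref{rem:grcomplete}), and that splitness of the reconstructed $V_{22}$-variety is detected by a property — existence of a line, or $\Aut_{X/K,\red}\simeq\PGL_{2,K}$, as in Proposition~\ref{prop:MUsplitcriteria} — that is visible already over $K$. Once the relative line is in hand, the rest is a direct application of Propositions~\ref{prop:integraltwo-rayV22toW5}, \ref{prop:degenerationMUorGa}, \ref{prop:integraltwo-rayW5toV22} and Lemma~\ref{lem:split_after_finite_extension}.
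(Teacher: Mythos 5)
Parts (1) and (2)(b)$\Rightarrow$(a) of your proposal are correct and match the paper's argument. The gap is in (2)(a)$\Rightarrow$(b). Your plan is to pass to a finite unramified extension $K'/K$ over which a line exists, run the relative two-ray game over $\cO_{K'}$, and then claim that splitness ``descends'' to $K$. It does not: splitness of a Mukai--Umemura variety over $K$ is the vanishing of a class in $H^1(G_K,\PGL_2(\overline K))\simeq\Br(K)[2]\simeq\Z/2\Z$, and \emph{every} Mukai--Umemura variety over $K$ (split or not) becomes split over the quadratic unramified extension, since the genus-$0$ curve $\Sigma(X)_{\red}$ acquires a rational point there. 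So your argument cannot distinguish the split form from the non-split form, and it would ``prove'' that every Mukai--Umemura variety over $K$ with good reduction is split as well as that the unique non-split one is split --- the latter being exactly what must be ruled out. Neither Proposition~\ref{prop:MUsplitcriteria} nor Lemma~\ref{lem:split_after_finite_extension} supplies the descent you invoke: the former characterizes splitness over a fixed field, and the latter spreads splitness from the generic fibre of a $V_5$-scheme over a DVR to the whole family, not down a field extension. Your fallback claim that ``every conic over a $p$-adic field has a rational point'' is also false (the anisotropic conic attached to the nontrivial quaternion algebra has none), and the alternative you sketch assumes $L$ is defined over $K$, which is equivalent to what is being proved.

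The missing idea is how good reduction forces the existence of a $K$-rational line. The paper does this by working with the Hilbert scheme of lines of the \emph{integral model}: after the unramified quadratic base change it identifies $\Sigma(\mathcal X_{\cO_{K'}}/\cO_{K'})_{\red}\setminus\{\mathcal L\}$ with a relative smooth conic inside $\Sigma(\mathcal Y/\cO_{K'})\simeq\P^2_{\cO_{K'}}$, descends to get an open subscheme $U\subset\Sigma(\mathcal X/\cO_K)_{\red}$ smooth over $\cO_K$ whose special fibre is a smooth genus-$0$ curve over the finite residue field minus a divisor of degree at most $2$; such a curve has a $k$-point (as $\#\P^1(\F_q)=q+1\geq 3$), which lifts by Hensel's lemma to an $\cO_K$-section, i.e.\ a relative line, whose generic fibre is a $K$-rational line on $X$. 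Morally this is the statement that the Brauer class of $\Sigma(X)_{\red}$ is unramified when $X$ has good reduction and $\Br(\cO_K)=0$; some version of this step is indispensable and is absent from your proposal.
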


\begin{proof}
(1) Suppose, for contradiction, that there exists a smooth projective scheme $\mathcal{X}$ over $\cO_K$ such that $\mathcal{X}_K \simeq X$.
Then we have $\dim \Aut_{\mathcal{X}_k/k} \geq 3$ and it contradicts Theorem \ref{thm:Gmclassificationgeneral}.
This completes the proof.

(2) We first show (b) $\Rightarrow$ (a).
Let $\mathcal{Y} \subset \P^6_{R}$ be the split $V_5$-scheme over $R$ defined by (\ref{eqn:W5}). 
Let $\mathcal{Z} \subset \mathcal{Y}$ be the relative smooth quintic curve $\mathcal{Z}_0^{(\mathrm{a})}$ defined by (\ref{eqn:ZGageneraldvr}).
Let $\mathcal{X}$ be the $V_{22}$-scheme over $\cO_K$ obtained by applying the two-ray game (Proposition \ref{prop:integraltwo-rayW5toV22}) to $(\mathcal{Y},\mathcal{Z})$.
By construction, $\mathcal{X}_K$ is a split Mukai--Umemura variety, so a split Mukai--Umemura variety admits good reduction.

Next, we show (a) $\Rightarrow$ (b).
Let $k$ denote the residue field of $K$.
Let $\mathcal{X}$ be a smooth projective scheme over $\cO_K$ with $\mathcal{X}_K \simeq X$.
Let $K'$ be a finite unramified extension of degree $2$ over $K$. 
Since $\Sigma (X/K)_{\red}$ is a curve of genus $0$ over $K$, $\Sigma (X_{K'}/K')_{\red}$ admits a rational point, i.e.,\ $X_{K'}$ is a split Mukai--Umemura variety (see Proposition \ref{prop:MUsplitcriteria}). 
We take a line $L$ on $X_{K'}$.
Let $\mathcal{L} \subset \mathcal{X}_{\cO_{K'}}$ be the Zariski closure of $L$ and $(\mathcal{Y}, \mathcal{Z})$ the $V_5$-scheme over $\cO_{K'}$ with the relative smooth rational quintic curve $\mathcal{Z} \subset \mathcal{Y}$ obtained by applying the two-ray game (Proposition \ref{prop:integraltwo-rayV22toW5}) to $(\mathcal{X}_{\cO_{K'}}, \mathcal{L})$.
Since $p \neq 2$, $\mathcal{Y}$ is a split $V_5$-scheme over $\cO_{K'}$ by 
\cite[Proof of Proposition 7.4]{V5}.
Let $\Sigma (\mathcal{X}_{\cO_{K'}}/\cO_{K'})$ and $\Sigma_{\mathcal{Z}} (\mathcal{Y}/\cO_K')$ be as in Definition \ref{defn:Sigma}.
Then by the same argument as in Proposition \ref{prop:tworaySigma}, we obtain an open immersion
\[
\Sigma (\mathcal{X}_{\cO_{K'}}/\cO_{K'})_{\red} \setminus \{\mathcal{L}\} \hookrightarrow \Sigma_{\mathcal{Z}} (\mathcal{Y}/\cO_{K'}), 
\]
where the $\cO_{K'}$-section of $\Sigma (\mathcal{X}_{\cO_{K'}}/\cO_{K'})$ corresponding to $\mathcal{L}$ will also be denoted by $\mathcal{L}$.

By \cite[Proposition 5.2]{V5}, we obtain an isomorphism 
$
\Sigma (\mathcal{Y}/\cO_{K'}) \simeq \P^2_{\cO_{K'}}.
$
By the composition 
\[
\iota \colon
\Sigma (\mathcal{X}_{\cO_{K'}}/\cO_{K'})_{\red} \setminus \{\mathcal{L}\} \hookrightarrow \Sigma_{\mathcal{Z}} (\mathcal{Y}/\cO_{K'}) \hookrightarrow \Sigma (\mathcal{Y}/\cO_{K'}) \simeq \P^2_{\cO_{K'}}
\]
we regard $\Sigma (\mathcal{X}_{\cO_{K'}}/\cO_{K'})_{\red} \setminus \{\mathcal{L}\}$ as the locally closed subscheme of $\P^2_{\cO_{K'}}$.
Let $\Sigma '$ be the scheme theoretic image of $\iota$, which is contained in $\Sigma_{\mathcal{Z}} (\mathcal{Y}/\cO_{K'})\subset \P^2_{\cO_{K'}}$.
Since $\Sigma'$ is an integral scheme over $\cO_{K'}$ with dense generic fiber, it follows that $\Sigma'$ is flat over $\cO_{K'}$.
By Propositions \ref{prop:SigmaZp>2} and \ref{prop:tworaySigma} (cf. Lemma \ref{lem:MUSigma} and Theorem \ref{thm:MukaiUmemuraclassification}), 
$
\Sigma'_{K'}   \subset \P^2_{K'}
$ is a smooth conic and $\Sigma'_{k'} \subset \P^2_{k'}$ contains a smooth conic as a closed subscheme.
Note that $\Sigma'_{k'} \subset \P^2_{k'}$ is a conic since so is $\Sigma'_{K'}$ and $\Sigma'$ is flat over $\cO_{K'}$.
Therefore, $\Sigma'_{k'} \subset \P^2_{k'}$ equals the smooth conic, and $\Sigma'$ is a relative smooth conic over $\cO_{K'}$.
In particular, $\Sigma (\mathcal{X}_{\cO_{K'}}/ \cO_{K'})_{\red} \setminus \{\mathcal{L}\}$ is smooth over $\cO_{K'}$.
Let $U \subset \Sigma (\mathcal{X}/\cO_{K})_{\red}$ be the complement of the image of $\{\mathcal{L}\}$ under the natural finite morphism
\[
\pi\colon
\Sigma (\mathcal{X}_{\cO_{K'}}/ \cO_{K'})_{\red} \rightarrow \Sigma (\mathcal{X/\cO_{K}})_{\red};
\]
then $U \subset \Sigma (\mathcal{X}/\cO_{K})_{\red}$ is open.
Since $U_{\cO_{K'}}$ is smooth over $\cO_{K'}$, $U$ is smooth over $\cO_{K}$.
Since $\pi ({\mathcal{L}})$ is a finite scheme over $\cO_K$ of at most degree 2,
$U_{k} \hookrightarrow \Sigma (\mathcal{X}_k /k)_{\red}$ is a non-empty open subscheme of smooth genus 0 curve whose complement is
a reduced effective divisor of degree $\leq 2$.
Therefore, we may take a $k$-point on $U_k$, which lifts to a $\cO_{K}$-point on $U$ by Hensel's lemma.
\footnote{By applying the two-ray game using this lift and repeating the above argument, we can show that $\Sigma (\mathcal{X}/\cO_{K})_{\red}$ is actually a smooth scheme over $\cO_K$.}
In summary, we obtain a relative line on $\mathcal{X}$.
By Proposition \ref{prop:MUsplitcriteria}, $\mathcal{X}_K$ is split.
This completes the proof.
\end{proof}

\begin{thm}
\label{thm:Mukai-Umemura_Shafarevich}
Let $F$ be a number field and $S$ a finite set of finite places of $F$.
We set
\[
    \Shaf_{\PGL_2,F,S}
    := \left\{
    X \left|
    \begin{array}{l}
      \textup{$X \colon$ Mukai--Umemura variety over $F$,} \\
      \textup{$X$ admits good reduction at $\cO_{F,\p}$} \\
        \textup{for any  $\p \notin S$}
    \end{array}
    \right\}\right.
    /F\textup{-isom.}
\]
Then we have
\[
\# \Shaf_{\PGL_{2},F,S} =
\begin{cases}
0 & \textup{if } 10 \notin \cO_{F,S}^{\times}, \\
2^{\#S+r-1}  & \textup{if } 10 \in \cO_{F,S}^{\times},
\end{cases}
\]
where $r$ is the number of real places of $F$.
\end{thm}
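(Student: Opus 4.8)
The plan is to convert the good-reduction conditions, via Proposition~\ref{prop:goodreductioncriteriaMukaiUmemura}, into purely local Brauer-theoretic conditions and then to carry out a class field theory count. First I would record, using Theorem~\ref{thm:Mukai-Umemurageneral}, that over a perfect field of characteristic $\neq 2,5$ the set of Mukai--Umemura varieties up to isomorphism is identified with $H^1(G_F, \PGL_2(\overline{F}))$, and that the connecting map attached to $1 \to \Gm \to \GL_2 \to \PGL_2 \to 1$ together with Hilbert 90 identifies this pointed set with the subset of $\Br(F)$ consisting of classes of index dividing $2$, which over a number field is exactly $\Br(F)[2]$ (index equals exponent over number fields). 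Since the automorphism scheme commutes with base change and reducedness is harmless in characteristic $0$, for any place $\p$ the base change $X_{F_\p}$ of a Mukai--Umemura variety $X/F$ is again a Mukai--Umemura variety over the (perfect) field $F_\p$, whose class in $H^1(G_{F_\p}, \PGL_2) \hookrightarrow \Br(F_\p)$ is the localization $\alpha_\p$ of the class $\alpha \in \Br(F)[2]$ attached to $X$; moreover, by the trivial-class statement in Theorem~\ref{thm:Mukai-Umemurageneral}, $X_{F_\p}$ is split (Definition~\ref{defn:splitMukai-Umemura}) if and only if $\alpha_\p = 0$.

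Next I would dispose of the case $10 \notin \cO_{F,S}^\times$. Then there is a finite place $\p_0 \notin S$ lying over $2$ or $5$, so $F_{\p_0}$ is a $p$-adic field with $p \in \{2,5\}$; by Proposition~\ref{prop:goodreductioncriteriaMukaiUmemura}(1) no Mukai--Umemura variety over $F_{\p_0}$ admits good reduction at $\cO_{F,\p_0}$. Since $X_{F_{\p_0}}$ is a Mukai--Umemura variety over $F_{\p_0}$, the variety $X$ cannot admit good reduction at $\p_0$, so $\Shaf_{\PGL_2,F,S} = \emptyset$, giving the first case of the formula.

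In the remaining case $10 \in \cO_{F,S}^\times$, every finite place over $2$ or $5$ lies in $S$, so for every finite $\p \notin S$ the residue characteristic is different from $2$ and $5$ and Proposition~\ref{prop:goodreductioncriteriaMukaiUmemura}(2) applies: $X$ has good reduction at $\p$ if and only if $X_{F_\p}$ is split, i.e.\ if and only if $\alpha_\p = 0$. Hence $\Shaf_{\PGL_2,F,S}$ is in bijection with $\{\alpha \in \Br(F)[2] : \alpha_\p = 0 \text{ for all finite } \p \notin S\}$. To count this I would invoke the fundamental exact sequence $0 \to \Br(F) \to \bigoplus_v \Br(F_v) \xrightarrow{\sum \mathrm{inv}_v} \Q/\Z \to 0$: such an $\alpha$ is determined by its invariants $\mathrm{inv}_v(\alpha) \in \tfrac{1}{2}\Z/\Z$, which vanish at all complex places and all finite places outside $S$, are arbitrary in $\tfrac{1}{2}\Z/\Z$ at the places of $S$ and at the real places, and satisfy the single reciprocity relation that their sum is $0$; conversely every such tuple comes from a genuine $2$-torsion class by surjectivity in the sequence. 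The number of such tuples is $2^{\#S + r - 1}$ (note $\#S + r \ge 1$, since some place over $2$ lies in $S$), which is the asserted value.

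The main obstacle here is not a single hard estimate but the careful bookkeeping of the dictionary ``Mukai--Umemura variety $\leftrightarrow$ quaternion Brauer class $\leftrightarrow$ tuple of local invariants'' and the verification that it is compatible with base change and with the precise notion of split variety used in Proposition~\ref{prop:goodreductioncriteriaMukaiUmemura}; once that is in place, the finiteness and the exact count reduce to the exactness of the Brauer sequence, the only delicate points being the correct treatment of archimedean places (real places contributing a $\Z/2$, complex places nothing) and the origin of the exponent $-1$ from the reciprocity constraint.
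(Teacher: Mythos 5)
Your proposal is correct and follows essentially the same route as the paper's proof: both cases are handled via Proposition \ref{prop:goodreductioncriteriaMukaiUmemura}, the set is identified with $2$-torsion Brauer classes whose local invariants vanish outside $S$ and the real places, and the count $2^{\#S+r-1}$ comes from the reciprocity constraint in the fundamental exact sequence. The extra details you supply (index versus exponent over number fields, compatibility of the cohomology class with base change, and the remark that $\#S+r\geq 1$) are all consistent with, and slightly more explicit than, the paper's argument.
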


\begin{proof}
If $10 \notin \cO_{F,S}^{\times}$, then there exists a finite place $\p$ of $\cO_{F,S}$ with $\p | (2)$ or $\p | (5)$.
Therefore, we have $\Shaf_{\PGL_{2},F,S} = \emptyset$ by Proposition \ref{prop:goodreductioncriteriaMukaiUmemura} (1).

We suppose that $10 \in \cO_{F,S}^{\times}$.
Let $X \in \Shaf_{\PGL_{2},F,S}$ and 
\[
\alpha_{X} \in H^1 (G_F, \PGL_2 (\overline{F})) \simeq \Br (F)[2]
\]
be the class corresponding to $X$ under the bijection in Theorem \ref{thm:Mukai-Umemurageneral}.
Recall that we have the exact sequence
\begin{equation}
0 \rightarrow \Br (F) \rightarrow  \prod_v \Br (F_{v}) \rightarrow \Q/\Z \rightarrow 0,
\end{equation}
where $v$ runs over all the places of $F$.
We denote the image of $\alpha_{X}$ in the middle term by $(\alpha_{X,v})_v$.
By Theorem \ref{thm:Mukai-Umemurageneral} and Proposition \ref{prop:goodreductioncriteriaMukaiUmemura},
for a non-Archimedean place $v$ with $v \nmid (2), (5)$, $\alpha_
{X,v} =0$ if and only if $X$ admits good reduction at $\cO_{F,v}$.
Since $v \notin S \Rightarrow v \nmid (2), (5)$, we obtain a bijection
\begin{equation*}
\begin{aligned}
\Shaf_{\PGL_{2},F,S} &\rightarrow
\left.\left
\{  
(\alpha_v)_v \in
\prod_{v \in S \cup T} (\frac{1}{2}\Z/\Z)
\right| \sum \alpha_v  =0
\right\},\\
&X \mapsto (\alpha_{X,v})_v,
\end{aligned}
\end{equation*}
where $T$ is the set of real places of $F$.
Since the cardinality of the right-hand side is equal to $2^{\#S+r-1}$, this completes the proof.
\end{proof}

\subsection{Shafarevich conjecture for $V_{22}$-varieties of $\Ga$-type}

The following proposition shows that the Shafarevich conjecture for $V_{22}$-varieties of $\Ga$-type fails.

\begin{prop}
\label{prop:GaV22shaffails}
Let $F$ be a number field
and $X$ a $V_{22}$-variety of $\Ga$-type over $F$.
Let $S$ be a finite set of finite places of $F$ such that $10 \in \cO_{F,S}^{\times}$.
Then there exists a $V_{22}$-scheme $\mathcal{X}$ over $\cO_{F,S}$ such that $\mathcal{X}_{F} \simeq X$.
In particular, there are infinitely many $F$-isomorphism classes of $V_{22}$-varieties of $\Ga$-type over $F$ that extend to $\cO_{F}[1/10]$.
\end{prop}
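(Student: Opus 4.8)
The plan is to write down an explicit integral model of $X$ over $\cO_{F,S}$ by running the relative two-ray game starting from the split $V_5$-scheme together with an integral form of a $\Ga$-type quintic curve. First I would invoke Theorem~\ref{thm:GaV22overk}: since $F$ is a perfect field with $\chara F = 0$, we have $X \simeq X_\xi^{(\mathrm{a})}$ for some $\xi \in F^{\times}$, where $X_\xi^{(\mathrm{a})}$ is obtained by applying the two-ray game (Proposition~\ref{prop:two-rayW5toV22}) to the pair $(Y_F, Z_\xi^{(\mathrm{a})})$, with $Y_F \subset \P^6_F$ the split $V_5$-variety cut out by~(\ref{eqn:W5}) and $Z_\xi^{(\mathrm{a})}$ the quintic defined by~(\ref{eqn:ZGageneral}). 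As the isomorphism class of $X_\xi^{(\mathrm{a})}$ depends on $\xi$ only through its class in $F^{\times}/F^{\times 4}$, I would normalize $\xi$: choosing $\delta \in \cO_{F,S}\setminus\{0\}$ with $\xi\delta \in \cO_{F,S}$ and replacing $\xi$ by $\xi\delta^{4} = (\xi\delta)\delta^{3}$, we may and do assume $\xi \in \cO_{F,S}\setminus\{0\}$.

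Next I would spread out over $B := \Spec \cO_{F,S}$. Let $\mathcal{Y} = Y_{\cO_{F,S}} \subset \P^6_{\cO_{F,S}}$ be the split $V_5$-scheme defined by~(\ref{eqn:W5}), and let $\mathcal{Z} = \mathcal{Z}_\xi^{(\mathrm{a})} \subset \mathcal{Y}$ be the image of the $\cO_{F,S}$-morphism $\P^1_{\cO_{F,S}} \to \mathcal{Y}$ given by~(\ref{eqn:ZGageneraldvr}); this morphism is a fibrewise closed immersion, so $\mathcal{Z} \simeq \P^1_{\cO_{F,S}}$ and in particular $\mathcal{Z}$ is flat over $\cO_{F,S}$. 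I would then verify that $\mathcal{Z}$ is a relative smooth rational quintic curve on $\mathcal{Y}$: its generic fibre is $Z_\xi^{(\mathrm{a})}$, which is smooth since $\xi \neq 0$ and $\chara F = 0$; and for each finite place $\p \notin S$ we have $\chara \kappa(\p) \neq 2,5$ (because $10 \in \cO_{F,S}^{\times}$), so the converse part of Proposition~\ref{prop:degenerationMUorGa}, applied over the discrete valuation ring $\cO_{F,\p}$ with the $S$-integer $\xi \in \cO_{F,\p}\setminus\{0\}$, shows that $\mathcal{Z}\times_B \Spec\cO_{F,\p}$ is a relative smooth rational quintic curve, hence has smooth special fibre. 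Since $B$ is a connected excellent Dedekind scheme, I would then apply Proposition~\ref{prop:integraltwo-rayW5toV22} to $(\mathcal{Y},\mathcal{Z})$ to obtain a $V_{22}$-scheme $\mathcal{X}$ over $\cO_{F,S}$; by part~(4) of that proposition its generic fibre is the $V_{22}$-variety produced from $(Y_F, Z_\xi^{(\mathrm{a})})$ by the absolute two-ray game, namely $X_\xi^{(\mathrm{a})} \simeq X$. This gives the first assertion.

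For the \emph{in particular} statement, note that $10 \in \cO_{F,S}^{\times}$ forces $S \supseteq S_0 := \{\p : \p \mid 10\}$; applying the first assertion with $S$ replaced by $S_0$ (legitimate since $10 \in \cO_{F,S_0}^{\times}$), \emph{every} $V_{22}$-variety of $\Ga$-type over $F$ extends to a $V_{22}$-scheme over $\cO_{F,S_0} = \cO_F[1/10]$, hence has good reduction at every finite place outside $S_0$, and a fortiori outside $S$. Thus $\Shaf_{\Ga,F,S}$ coincides with the set of all $F$-isomorphism classes of $V_{22}$-varieties of $\Ga$-type over $F$, which by Theorem~\ref{thm:GaV22overk} is in bijection with $F^{\times}/F^{\times 4}$; this group is infinite (for instance because $F^{\times}$ surjects onto the group of principal fractional ideals of $F$, a free abelian group of infinite rank), so $\Shaf_{\Ga,F,S}$ is infinite.

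The main difficulty is essentially bookkeeping rather than a genuine obstruction: one must check that the explicit curve $\mathcal{Z}_\xi^{(\mathrm{a})}$ really is fibrewise a smooth quintic at all primes not dividing $10$ — which is exactly what the converse direction of Proposition~\ref{prop:degenerationMUorGa} supplies — and that the relative two-ray game is available over the one-dimensional base $\cO_{F,S}$ and not merely over a discrete valuation ring, which is Proposition~\ref{prop:integraltwo-rayW5toV22}. The only conceptual point is the normalization $\xi \mapsto \xi\delta^{4}$ putting $\xi$ into $\cO_{F,S}$; the content of the proposition is that, unlike the Mukai--Umemura case (Proposition~\ref{prop:goodreductioncriteriaMukaiUmemura}) and the $\Gm$-case, a $V_{22}$-variety of $\Ga$-type carries no local obstruction to good reduction away from $2$ and $5$, since both the ambient split $V_5$-scheme and the curve $Z_\xi^{(\mathrm{a})}$ have manifestly integral defining equations there.
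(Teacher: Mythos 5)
Your proposal is correct and follows essentially the same route as the paper's own proof: normalize $\xi$ within its class in $F^{\times}/F^{\times 4}$ to lie in $\cO_{F,S}$, spread out $(Y,Z_\xi^{(\mathrm{a})})$ to the split $V_5$-scheme with the integral quintic \eqref{eqn:ZGageneraldvr} (whose fibrewise smoothness away from $2,5$ is exactly the converse part of Proposition~\ref{prop:degenerationMUorGa}), and run the relative two-ray game of Proposition~\ref{prop:integraltwo-rayW5toV22}; the infinitude then follows from $F^{\times}/F^{\times 4}$ being infinite. Your write-up is slightly more explicit than the paper's about the normalization $\xi\mapsto\xi\delta^{4}$ and the fibrewise smoothness check, but the argument is the same.
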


\begin{proof}
By Theorem \ref{thm:GaV22overk}, $X$ is isomorphic to $X_{\xi}^{(\mathrm{a})}$ for some $\overline{\xi} \in F^{\times}/ F^{\times 4}$.
Moreover, by replacing representative if necessary, we may assume that $\xi \in \cO_{F,S}$.
Let 
$
\mathcal{Y} := Y_{\cO_{F,S}} \subset \P^6_{\cO_{F,S}}
$
be the split $V_5$-scheme over $\cO_{F,S}$ defined by (\ref{eqn:W5}).
We denote the generic fiber $\mathcal{Y}_{F}$ by $Y$.
Let $Z_{\xi}^{(\mathrm{a})} \subset Y$ be the smooth rational quintic curve defined by $(\ref{eqn:ZGageneral})$.
Then the Zariski closure $\mathcal{Z}_{\xi}^{(\mathrm{a})}$ of $Z_{\xi}^{(\mathrm{a})}$, which is defined by the same equation as in (\ref{eqn:ZGageneraldvr}), is a relative smooth rational quintic curve whose reduction at a height 1 prime ideal $\p$ of $\cO_{F,S}$ is given by (\ref{eqn:ZGageneral}) (resp.\ (\ref{eqn:ZMU})) if $\xi \notin \p$ (resp.\ $\xi \in \p$).
By applying the two-ray game to $(\mathcal{Y}, \mathcal{Z}_{\xi}^{(\mathrm{a})})$ (Proposition \ref{prop:integraltwo-rayW5toV22}), we obtain a $V_{22}$-scheme $\mathcal{X}_{\xi}^{(\mathrm{a})}$ over $\cO_{F,S}$. This completes the proof of the first claim.
Since $F^{\times}/F^{\times 4}$ is an infinite set for any number field, the final statement follows from Theorem \ref{thm:GaV22overk}.
\end{proof}

\begin{prop}
\label{prop:GaV22degenerates}
{\cora 
Let $R$ be a discrete valuation ring, $K$ the fraction field of $R$, and $k$ the residue field of $R$.
Assume that $K$ is perfect.
Let $X$ be a $V_{22}$-variety of $\Ga$-type over $K$ (note that we have $\chara K \neq 2,5$ by Theorem \ref{thm:Gaclassification}).
Then $X$ admits good reduction at $R$ if and only if $\chara k \neq 2,5$.
Moreover, for any $V_{22}$-scheme $\mathcal{X}$ over $R$ with $\mathcal{X}_K \simeq X$, $\mathcal{X}_k$ is either a $V_{22}$-variety of $\Ga$-type over $k$ or a Mukai--Umemura variety.
}
\end{prop}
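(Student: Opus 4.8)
The plan is to read off the statement from two ingredients already in place: the classification of degenerations of $\Ga$-type (and Mukai--Umemura type) rational quintic curves on a split $V_5$-scheme over a discrete valuation ring (Proposition~\ref{prop:degenerationMUorGa}), and the relative two-ray game for lines on $V_{22}$-schemes over Dedekind schemes together with its inverse (Propositions~\ref{prop:integraltwo-rayV22toW5} and~\ref{prop:integraltwo-rayW5toV22}). There are two things to prove: the final (``moreover'') assertion, which will also yield the implication that good reduction forces $\chara k\neq 2,5$; and the converse implication that $\chara k\neq 2,5$ forces good reduction. Throughout one uses that $\chara K\neq 2,5$ (Theorem~\ref{thm:Gaclassification}), so that whenever $\chara k\in\{2,5\}$ the ring $R$ is automatically of mixed characteristic.

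For the ``moreover'' assertion, fix a $V_{22}$-scheme $\mathcal{X}$ over $R$ with $\mathcal{X}_K\simeq X$. First I would produce a line on $X$ defined over $K$: by Lemma~\ref{lem:GaSigma} the reduced singular locus of $\Sigma(X_{\overline K})_{\red}$ is a single point, which is therefore fixed by $G_K$ (the $\Ga$-action on $X$ descends to $K$, $K$ being perfect, so $\Sigma(X)_{\red}$ and its singular locus are defined over $K$), and since $K$ is perfect this point is $K$-rational, i.e.\ it corresponds to a line $L\subset X$ over $K$. Taking the Zariski closure $\mathcal{L}$ of $L$ in $\mathcal{X}\hookrightarrow\P^{13}_R$ gives an $R$-flat closed subscheme whose special fibre has the Hilbert polynomial of a line in $\P^{13}$, hence is a line, so $\mathcal{L}$ is a relative line on $\mathcal{X}$. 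Applying Proposition~\ref{prop:integraltwo-rayV22toW5} to $(\mathcal{X},\mathcal{L})$ produces a $V_5$-scheme $\mathcal{Y}/R$ with a relative smooth rational quintic curve $\mathcal{Z}\subset\mathcal{Y}$; by part~(4) its generic fibre is the pair $(Y_0,Z_0)$ obtained from $(X,L)$ by the two-ray game of Proposition~\ref{prop:two-rayV22toW5}, where $Y_0$ is a \emph{split} $V_5$-variety over $K$ (Theorem~\ref{thm:GaV22overk}, last sentence) and $Z_0$ is of $\Ga$-type --- indeed the stabilizer of $Z_0$ has identity component a form of $\Ga$, whence $Z_{0,\overline K}$ is of $\Ga$-type by Theorem~\ref{thm:BGaquintic>2}.

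Next I would make $\mathcal{Y}$ into the standard split $V_5$-scheme: when $\chara k\neq 2$ this follows from Lemma~\ref{lem:split_after_finite_extension}(2) since $\mathcal{Y}_K=Y_0$ is split, and when $\chara k\in\{2,5\}$ one first passes to a finite extension of $R$ (again a discrete valuation ring) over which $\mathcal{Y}$ is split by Lemma~\ref{lem:split_after_finite_extension}(1), which only strengthens the contradiction to be derived. Proposition~\ref{prop:degenerationMUorGa}, applied to $(\mathcal{Y},\mathcal{Z})$ with $\mathcal{Z}_K$ of $\Ga$-type, then forces $\chara k\neq 2,5$ --- this is the implication ``good reduction $\Rightarrow\chara k\neq 2,5$'' --- and identifies $\mathcal{Z}$, up to the action of $\PGL_{2,R}(R)$ via $\sigma$, with the curve $\mathcal{Z}^{(\mathrm a)}_{\zeta}$ of \eqref{eqn:ZGageneraldvr} for some $\zeta\in R\setminus\{0\}$. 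Hence $\mathcal{Z}_k=\mathcal{Z}^{(\mathrm a)}_{\zeta,k}$ is a quintic curve of $\Ga$-type \eqref{eqn:ZGageneral} if $\overline\zeta\neq 0$ in $k$ and of Mukai--Umemura type \eqref{eqn:ZMU} if $\overline\zeta=0$, so by part~(4) of Proposition~\ref{prop:integraltwo-rayV22toW5} the fibre $\mathcal{X}_k$ is the $V_{22}$-variety obtained from $(\mathcal{Y}_k,\mathcal{Z}_k)$ by the two-ray game, which (after base change to $\overline k$, by Theorems~\ref{thm:Gaclassification} and~\ref{thm:MukaiUmemuraclassification}) is a $V_{22}$-variety of $\Ga$-type over $k$ in the first case and a Mukai--Umemura variety over $k$ in the second. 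For the reverse implication, assume $\chara k\neq 2,5$: by Theorem~\ref{thm:GaV22overk} one has $X\simeq X^{(\mathrm a)}_{\xi}$ with $\xi\in K^{\times}$, unique modulo $K^{\times 4}$; adjusting the representative (fourth powers absorb all multiples of the uniformizer's valuation) we may take $\xi\in R\setminus\{0\}$, and then $\mathcal{Z}^{(\mathrm a)}_{\xi}\subset Y_R$ of \eqref{eqn:ZGageneraldvr} is a relative smooth rational quintic curve by the converse part of Proposition~\ref{prop:degenerationMUorGa}, so Proposition~\ref{prop:integraltwo-rayW5toV22} applied to $(Y_R,\mathcal{Z}^{(\mathrm a)}_{\xi})$ yields a $V_{22}$-scheme over $R$ with generic fibre $X^{(\mathrm a)}_{\xi}\simeq X$.

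The step I expect to be the main obstacle is the treatment of residue characteristics $2$ and $5$: Proposition~\ref{prop:degenerationMUorGa} is formulated for the standard split $V_5$-scheme $Y_R$, so one must carefully descend the splitting of $\mathcal{Y}$ to a finite extension of $R$ that is again a discrete valuation ring (passing through $\widehat R$ and adjoining a square root of $2$, exactly as in the proof of Proposition~\ref{prop:degenerationMUorGa}) and check that the base-changed quintic curve remains of $\Ga$-type. A secondary, routine point is that the relative two-ray game propositions are stated over excellent Dedekind schemes; for a general $R$ one runs them over $\widehat R$ and descends good reduction using Remark~\ref{rem:grcomplete}, which does not change the special fibre.
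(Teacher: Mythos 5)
Your proposal is correct and follows essentially the same route as the paper's proof: take the line corresponding to the unique singular point of $\Sigma(X)_{\red}$, pass to its Zariski closure, run the relative two-ray game of Proposition~\ref{prop:integraltwo-rayV22toW5}, reduce to the split $V_5$-scheme via Lemma~\ref{lem:split_after_finite_extension}, and invoke Proposition~\ref{prop:degenerationMUorGa}, with the converse direction being exactly the construction in Proposition~\ref{prop:GaV22shaffails}. The only differences are cosmetic (you make explicit why the line is $K$-rational and use part~(2) of Lemma~\ref{lem:split_after_finite_extension} to avoid extending $R$ when $\chara k\neq 2$, where the paper simply extends $R$).
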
 

\begin{proof}
{\cora
By the same proof as in Proposition \ref{prop:GaV22shaffails}, $X$ admits good reduction at $R$ if $\chara k \neq 2,5$.
The only-if part follows from the final assertion by Theorem \ref{thm:Gaclassification}, so it suffices to show the final assertion.}
    Take the line $L \subset X$ corresponding to the singular point of $\Sigma (X)$.
    Let $\mathcal{L}$ be the Zariski closure of $L$ in $\mathcal{X}$.
    By applying the two-ray game to $(\mathcal{X},\mathcal{L})$ (Proposition \ref{prop:integraltwo-rayV22toW5}), we obtain a $V_5$-scheme and a relative smooth rational quintic curve $(\mathcal{Y},\mathcal{Z})$ over $R$.
By Lemma \ref{lem:split_after_finite_extension},
    by extending $R$ if necessary,
    we may assume that $\mathcal{Y}$ is the split $V_5$-scheme over $R$ embedded in $\P^6_{R}$.
By Proposition \ref{prop:degenerationMUorGa}, $\mathcal{Z}$ is equal to the relative smooth quintic curve defined by (\ref{eqn:ZGageneraldvr}) for some $\xi \in R \setminus \{0 \}$.
If $\xi \in R^{\times}$, then $\mathcal{X}_{k}$ is a $V_{22}$-variety of $\Ga$-type by Theorem \ref{thm:Gaclassification}. 
If $\xi \in R \setminus R^{\times}$, then $\mathcal{X}_{k}$ is a Mukai--Umemura variety by Theorem \ref{thm:MukaiUmemuraclassification}.
This completes the proof.
\end{proof}

{\cblue
As shown in Proposition \ref{prop:GaV22shaffails}, the original Shafarevich conjecture does not hold for $\Ga$–$V_{22}$-varieties.
However, if we restrict ourselves to $\Ga$–good reductions, an analogous finiteness result can be proved (in an explicit form).

\begin{thm}
\label{thm:shafGatoGa}
    Let $F$ be a number field and $S$ a finite set of finite places of $F$.
    Consider the set 
    \[
    \Shaf_{\Ga,F,S}' := \left\{
    X \left|
    \begin{array}{l}
      \textup{$X \colon$ $V_{22}$-variety of $\Ga$-type over $F$} \\
      \textup{$X$ admits $\Ga$-good reduction at $\cO_{F,\p}$} \\
        \textup{for any finite place $\p$ of $F$ outside $S$}
    \end{array}
    \right\}\right.
    /F\textup{-isom.}
    \]
    Then $\Shaf_{\Ga,F,S}'$ is a finite set.
    Moreover, if $\cO_{F,S}$ is a principal ideal domain, then we have
    \[
    \# \Shaf_{\Ga,F,S}' = 
    \begin{cases}
      0   &\textup{if } 10 \notin \cO_{F,S}^{\times},\\
     \# \cO_{F,S}^{\times}/\cO_{F,S}^{\times 4} & \textup{otherwise}.
    \end{cases}
    \]
    \end{thm}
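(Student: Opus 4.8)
The plan is to combine the explicit classification of $V_{22}$-varieties of $\Ga$-type over $F$ (Theorem \ref{thm:GaV22overk}) with the $\Ga$-good reduction criterion coming from Proposition \ref{prop:GaV22degenerates} and its proof. First I would recall that by Theorem \ref{thm:GaV22overk} there is a bijection $F^{\times}/F^{\times 4} \simeq \{V_{22}\text{-varieties of }\Ga\text{-type over }F\}/F\text{-isom}$, sending $\overline{\xi}$ to $X_{\xi}^{(\mathrm{a})}$. So the task is to identify, inside $F^{\times}/F^{\times 4}$, the subset of classes $\overline{\xi}$ such that $X_{\xi}^{(\mathrm{a})}$ admits $\Ga$-good reduction at every $\p \notin S$, and to show this subset is finite (with the stated explicit count under the PID hypothesis).

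The key step is a local computation: for a finite place $\p \notin S$ with residue characteristic $\neq 2,5$, I would show that $X_{\xi}^{(\mathrm{a})}$ admits $\Ga$-good reduction at $\cO_{F,\p}$ if and only if $v_\p(\xi) \equiv 0 \pmod 4$. The ``if'' direction follows from the construction in Proposition \ref{prop:GaV22shaffails}: after scaling $\xi$ by a fourth power (which does not change the isomorphism class) we may assume $\xi \in \cO_{F,\p}^{\times}$, and then the two-ray game applied to $(\mathcal{Y}, \mathcal{Z}_{\xi}^{(\mathrm{a})})$ over $\cO_{F,\p}$ produces a $V_{22}$-scheme whose special fiber is of $\Ga$-type by Theorem \ref{thm:Gaclassification}. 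For the ``only if'' direction, I would use the structure of the argument in the proof of Proposition \ref{prop:GaV22degenerates}: given a $\Ga$-good model $\mathcal{X}$ over $\cO_{F,\p}$, one takes the line $\mathcal{L}$ through the singular point of $\Sigma$, runs the relative two-ray game (Proposition \ref{prop:integraltwo-rayV22toW5}) to get $(\mathcal{Y},\mathcal{Z})$, and after a finite base change trivializes $\mathcal{Y}$; then Proposition \ref{prop:degenerationMUorGa} forces $\mathcal{Z}$ to be $\mathcal{Z}_{\xi'}^{(\mathrm{a})}$ with $\xi' \in R \setminus \{0\}$, and $\Ga$-good reduction forces $\xi' \in R^{\times}$ — after untwisting via the $\PGL_{2,R}(R)$-action and tracking how the class in $F^{\times}/F^{\times 4}$ transforms, this yields $v_\p(\xi) \equiv 0 \pmod 4$. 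The place $\p \mid 10$ cannot occur outside $S$ if $\Shaf'_{\Ga,F,S} \neq \emptyset$, since Proposition \ref{prop:GaV22degenerates} shows there is no $\Ga$-good (indeed no good) reduction at residue characteristic $2$ or $5$; hence if $10 \notin \cO_{F,S}^{\times}$ the set is empty.

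Granting the local criterion, the global conclusion is a clean algebraic statement: $X_{\xi}^{(\mathrm{a})} \in \Shaf'_{\Ga,F,S}$ if and only if $10 \in \cO_{F,S}^{\times}$ and $v_\p(\xi) \equiv 0 \pmod 4$ for all $\p \notin S$, i.e.\ the fractional ideal $(\xi)$ is, up to $S$-components, a fourth power of an ideal. When $\cO_{F,S}$ is a PID this says exactly that $\overline{\xi}$ lies in the image of $\cO_{F,S}^{\times}/\cO_{F,S}^{\times 4} \to F^{\times}/F^{\times 4}$, and this map is injective (a unit that becomes a fourth power in $F$ is already a fourth power of a unit, by unique factorization), giving $\#\Shaf'_{\Ga,F,S} = \#(\cO_{F,S}^{\times}/\cO_{F,S}^{\times 4})$, which is finite by Dirichlet's $S$-unit theorem. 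In the general case, the natural map $\cO_{F,S}^{\times}/\cO_{F,S}^{\times 4} \to \Shaf'_{\Ga,F,S}$ is still surjective up to a finite ambiguity controlled by $\mathrm{Cl}(\cO_{F,S})[4]$ (choosing ideal representatives whose fourth power is principal), so $\Shaf'_{\Ga,F,S}$ is finite unconditionally.

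The main obstacle I expect is the ``only if'' direction of the local criterion: one must show that \emph{every} $\Ga$-good integral model of $X_{\xi}^{(\mathrm{a})}$ — not just the one produced by the explicit construction — forces $v_\p(\xi) \equiv 0 \pmod 4$, and this requires carefully controlling the finite base change used to split $\mathcal{Y}$ (so that the valuation condition descends) and tracking the $\PGL_{2}$-untwisting at the level of the invariant $\xi \in F^{\times}/F^{\times 4}$, exactly as in the manipulations of Proposition \ref{prop:degenerationGm}. The bookkeeping of how $\xi$ changes under $\sigma(\diag(a,1))$ and under ramified base change is routine but is where an error would most easily creep in; everything else is formal.
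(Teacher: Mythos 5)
Your proposal follows essentially the same route as the paper: classify via Theorem \ref{thm:GaV22overk}, reduce to the local statement that $\Ga$-good reduction at $\p \nmid 10$ forces $v_\p(\xi)\equiv 0 \pmod 4$ by running the two-ray game on an integral model and invoking Proposition \ref{prop:degenerationMUorGa}, and then conclude globally using the injectivity of $\cO_{F,S}^{\times}/\cO_{F,S}^{\times 4}\to F^{\times}/F^{\times 4}$. Two small points of comparison. First, for finiteness without the PID hypothesis the paper simply enlarges $S$ (which only enlarges $\Shaf'_{\Ga,F,S}$) to reach the PID case, whereas you argue directly with $4$-torsion in the $S$-class group; both work. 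Second, and more substantively: the step you single out as the main obstacle — base-changing to split the $V_5$-scheme $\mathcal{Y}$ and then descending the congruence $v(\xi)\equiv 0\pmod 4$ — is genuinely dangerous as you phrase it, because after a ramified extension of degree $e$ the conclusion $4\mid v_{\p'}(\xi)=e\,v_\p(\xi)$ need not give $4\mid v_\p(\xi)$. The paper sidesteps this entirely: by the last assertion of Theorem \ref{thm:GaV22overk} the generic fiber $\mathcal{Y}_{F_\p}$ is already a \emph{split} $V_5$-variety, and since $\p\nmid(2)$ Lemma \ref{lem:split_after_finite_extension}(2) (via the proof of Proposition 7.4 of \cite{V5}) shows $\mathcal{Y}$ is split over $\cO_{F,\p}$ with no base change at all, so Proposition \ref{prop:degenerationMUorGa} applies over $\cO_{F,\p}$ itself and the comparison $\xi\in\xi' F_\p^{\times 4}$ with $\xi'\in\cO_{F,\p}^{\times}$ comes straight from Theorem \ref{thm:GaV22overk} over $F_\p$. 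With that substitution your argument closes completely.
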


\begin{proof}
If $10 \notin \cO_{F,S}^{\times}$, then we have $\Shaf_{\Ga,F,S}' = \emptyset$ by Proposition \ref{prop:GaV22degenerates}.
Therefore, we assume $10 \in \cO_{F,S}^{\times}$.
To show the finiteness of $\Shaf_{\Ga,F,S}'$, we may extend $S$ so that $\cO_{F,S}$ is a principal ideal domain.
Therefore, it suffices to show the latter statement.
Note that we may replace $\Ga$-good reduction at $\cO_{F,\p}$ with $\Ga$-good reduction at $\cO_{F,(\p)}$ by Remark \ref{rem:grcomplete} (2).
We freely use Theorem \ref{thm:GaV22overk} and its notation.
By the standard gluing argument based on Proposition \ref{prop:GaV22degenerates} (cf.\ \cite[Lemma 4.1]{Javanpeykar-Loughran:GoodReductionFano}), for any $X_{\xi}^{(\a)} \simeq X \in \Shaf_{\Ga,F,S}'$ (where $\overline{\xi}\in F^{\times}/F^{\times 4}$), 
we may take a $V_{22}$-scheme $\mathcal{X}_X$ over $\cO_{F,S}$ such that $\mathcal{X}_{X,F} \simeq X$ and $\mathcal{X}_{X,k(\p)}$ is a $V_{22}$-variety of $\Ga$-type over $k(\p)$ for any finite place $\p \notin S$ of $F$.
We fix a finite place $\p\notin S$ of $F$.
Let $\mathcal{L} \subset \mathcal{X}_{X, \cO_{F,\p}}$ be the Zariski closure of the line $L \subset X_{F_\p}$ corresponding to the $F$-rational point in $\Sigma (X_{F_\p})_{\red, \sing}$.
Let $(\mathcal{Y},\mathcal{Z})$ be the $V_5$-scheme over $\cO_{F,\p}$ with the smooth rational quintic curve obtained by applying the two-ray game (Proposition \ref{prop:two-rayV22toW5}) to $(\mathcal{X}_{X, \cO_{F,\p}}, \mathcal{L})$.
By the construction of $X_{\xi}^{(\a)}$ (see Theorem \ref{thm:GaV22overk}), $\mathcal{Y}_{F_{\p}}$ is a split $V_5$-variety, and $\mathcal{Z}_{F_\p}$ is $\PGL_{2} (F_{\p})$-equivalent to $Z_\xi^{(\a)}$ as in (\ref{eqn:ZGageneral}).
Since $\p \nmid (2)$, $\mathcal{Y}_{\cO_{F,\p}}$ is a split $V_5$-scheme over $\cO_{F,\p}$ by \cite[Proof of Proposition 7.4]{V5}.
Therefore, by Proposition \ref{prop:degenerationMUorGa}, $\mathcal{Z}_{\cO_{F,\p}}$ is $\PGL_{2} (\cO_{F,\p})$-conjugate to $\mathcal{Z}_{\xi'}^{(\a)}$ in (\ref{eqn:ZGageneraldvr}) for some $\xi' \in \cO_{F,\p} \setminus 0$.
Since $\mathcal{X}_{X, k(\p)}$ is a $V_{22}$-variety of $\Ga$-type, we obtain $\xi'\in \cO_{F,\p}^{\times}$.
Since $\mathcal{Z}_{\xi',F_{\p}}^{(\a)} = Z_{\xi'}^{(\a)}$ is 
mapped to $Z_{\xi}^{(\a)}$ by an element of $\PGL_2 (F_{\p})$, the $V_{22}$-varieties of $\Ga$-type $X_{\xi, F_{\p}}^{(\a)}$ and $X_{\xi', F_{\p}}^{(\a)}$ over $F_{\p}$ are isomorphic.
By Theorem \ref{thm:GaV22overk}, we have $\xi \in \xi' F_{\p}^{\times 4}$.
Therefore, we have $4 \mid v_{\p} (\xi)$ for any finite place $\p \notin S$.
Since $\cO_{F,S}$ is a principal ideal domain, we may take $\varpi_{\p} \in \cO_{F,S}$ with $(\varpi_{\p}) = \p$.
Then 
$
\xi  \prod_{\p \notin S} \varpi_\p^{-v_{\p}(\xi)} \in \cO_{F,S}^{\times}.
$
Therefore, $\overline{\xi} \in F^{\times}/F^{\times 4}$ is contained in
$
\cO_{F,S}^{\times} / \cO_{F,S}^{\times 4} \subset F^{\times}/F^{\times 4}.
$

Conversely, suppose that $\overline{\xi}$ has a representative $\xi \in \cO_{F,S}^{\times}.$
Let $\mathcal{Y}$ be the split $V_5$-scheme over $\cO_{F,S}$ defined by (\ref{eqn:W5}), and $\mathcal{Z}_{\xi}^{(\a)} \subset \mathcal{Y}$ a smooth rational quintic curve defined by the same equation as in (\ref{eqn:ZGageneraldvr}).
By applying the two-ray game (Proposition \ref{prop:two-rayW5toV22}) to $(\mathcal{Y}, \mathcal{Z}_{\xi}^{(\a)})$, we obtain a $V_{22}$-scheme $\mathcal{X}_{\xi}^{(\a)}$ over $\cO_{F,S}$ such that $\mathcal{X}_{\xi, F}^{(\a)} = X_{\xi}^{(\a)}$.
Moreover, since $\xi \in \cO_{F,S}^{\times}$, $\mathcal{X}_{\xi, k(\p)}^{(\a)}$ is a $V_{22}$-variety of $\Ga$-type for any finite place $\p \notin S$ of $F$.
Therefore, we have $X_{\xi}^{(\a)} \in \Shaf_{\Ga,F,S}'$.
By the above argument, we obtain a bijection
$\cO_{F,S}^{\times}/\cO_{F,S}^{\times 4} \simeq \Shaf_{\Ga,F,S}'$ as desired.
\end{proof}
}

\subsection{Shafarevich conjecture for $V_{22}$-varieties of $\Gm$-type}

\begin{defn}
\label{defn:Gmreduction}
Let $R$ be a discrete valuation ring, $K$ the fraction field of $R$, and $k$ the residue field of $R$.
Let $X$ be a $V_{22}$-variety of $\Ga$-type over $K$.
Suppose that there exists a $V_{22}$-scheme $\mathcal{X}$ over $R$ such that $\mathcal{X}_{K} \simeq X$.
Take a finite extension $K'/K$ 
such that $\Sigma(X_{K'})_{\red,\sing}$ consists of two rational points $[L_1]$ and $[L_2]$.
Let $R'$ be a discrete valuation ring containing $R$ with the fraction field $K'$, and $k'$ the residue field of $R'$.
We denote the Zariski closure of $L_i$ in $\mathcal{X}_{R'}$ by $\mathcal{L}_i \subset \mathcal{X}_{R'}$.
\begin{enumerate}
    \item 
    If the reduction $\mathcal{L}_{1, k'}, \mathcal{L}_{2,k'}$ of $\mathcal{L}_1, \mathcal{L}_2$ on $\mathcal{X}_{k'}$ are different to each other, we say $X$ admits \emph{good reduction of standard type at $R$}.
    In this case, we call $\mathcal{X}$ a \emph{standard model of $X$ over $R$}.
    \item 
    If $\mathcal{L}_{1.k'}, \mathcal{L}_{2,k'} \subset \mathcal{X}_{k_L}$ are the same, we say $X$ admits \emph{good reduction of twisted type at $R$}.
    In this case, we call $\mathcal{X}$ a \emph{twisted model of $X$ over $R$}.
\end{enumerate}
Clearly, these definitions do not depend on the choice of $K'$ and $R'$.
\end{defn}

\begin{prop}
\label{prop:class.stdtwisted}
In the setting of Definition \ref{defn:Gmreduction}, we have the following.
\begin{enumerate}
    \item 
    If $\mathcal{X}$ satisfies the condition of Definition \ref{defn:Gmreduction} (1),
    then $\mathcal{X}_{k}$ is either a $V_{22}$-variety of $\Gm$-type or a Mukai--Umemura variety.
    Therefore, $X$ admits $\Gm$-good reduction or Mukai--Umemura good reduction.
    \item 
    If $\mathcal{X}$ satisfies the condition of Definition \ref{defn:Gmreduction} (2),
    then we have $\chara k \neq 2,5$, and $\mathcal{X}_{k}$ is either a $V_{22}$-variety of $\Ga$-type or a Mukai--Umemura variety.
    Therefore, $X$ admits $\Ga$-good reduction or Mukai--Umemura good reduction.
\end{enumerate}
\end{prop}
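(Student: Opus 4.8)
The plan is to analyze the two cases by reducing, via the relative two-ray game for lines, to the degeneration results for $\Gm$-type quintic curves (Propositions \ref{prop:degenerationGm} and \ref{prop:degenerationGmp=2}). Since the assertions may be checked after a finite extension of $R$, I would first pass to a discrete valuation ring $R'/R$ with fraction field $K'$ over which $\Sigma(X_{K'})_{\red,\sing}$ consists of two rational points $[L_1]$, $[L_2]$; note that $X_{K'}$ is then a split $\Gm$-$V_{22}$-variety (Theorem \ref{thm:Gmclassification}, cf.\ Lemma \ref{lem:GmSigma}). Taking the Zariski closures $\mathcal{L}_i\subset\mathcal{X}_{R'}$, one of them — say $\mathcal{L}_1$ — is a relative line on $\mathcal{X}_{R'}$ (automatic flatness of the closure plus upper-semicontinuity of degree, as the generic fiber is a line and $\mathcal{X}_{R'}$ is a $V_{22}$-scheme). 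Applying the relative two-ray game (Proposition \ref{prop:integraltwo-rayV22toW5}) to $(\mathcal{X}_{R'},\mathcal{L}_1)$ yields a $V_5$-scheme $\mathcal{Y}$ over $R'$ with a relative smooth rational quintic curve $\mathcal{Z}\subset\mathcal{Y}$ whose generic fiber is of $\Gm$-type. By Lemma \ref{lem:split_after_finite_extension}, after a further finite extension I may assume $\mathcal{Y}=Y_{R'}$ is the split model.

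The core step is then: the two singular points $[L_1]$, $[L_2]$ of $\Sigma(X_{K'})_{\red}$ correspond, under the two-ray game, to the two lines $L_1'=\sigma\begin{psmallmatrix}1&b\\0&1\end{psmallmatrix}(l_{(1:0:0)})$ and $L_2'=\sigma\begin{psmallmatrix}1&b\\0&1\end{psmallmatrix}(l_{(0:1:0)})$ on $Y_{K'}$ figuring in Remarks \ref{rem:lineGmreduction} and \ref{rem:lineGmreductionp=2} (after normalizing $\mathcal{Z}_{K'}$ to the form in those propositions). Precisely: when $\chara k\neq 2$, Proposition \ref{prop:degenerationGm} tells us $\mathcal{Z}$ is either $\mathcal{Z}_u$ as in (\ref{eqn:ZuGmdvr}) (case (1) of that proposition) or $\mathcal{Z}_{u,b}$ with $b\in K'\setminus R'$ (case (2)); in the first situation the two closures $L_{1,k'}',L_{2,k'}'$ are disjoint (this is the last sentence of Remark \ref{rem:lineGmreduction}), while in the second they coincide (Remark \ref{rem:lineGmreduction}). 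When $\chara k=2$, Proposition \ref{prop:degenerationGmp=2} forces $\mathcal{Z}=\mathcal{Z}_u$ with $u\in R'$, $\bar u\neq 0,1$, so only the standard case can occur, and the two reductions of $L_1',L_2'$ are disjoint (Remark \ref{rem:lineGmreductionp=2}). Matching these dichotomies against Definition \ref{defn:Gmreduction}, the hypothesis of (1) (the reductions of $\mathcal{L}_1$, $\mathcal{L}_2$ differ) forces $\mathcal{Z}$ to be the curve $\mathcal{Z}_u$ of (\ref{eqn:ZuGmdvr}) with $\bar u\neq 0,1$, hence by Theorem \ref{thm:BGaquintic>2} (resp.\ Theorem \ref{thm:BGaquintic2}) $\mathcal{Z}_{k'}$ is a $\Gm$-type quintic curve — unless $\bar u=\tfrac54$ (only if $\chara k\neq 2,5$), in which case $\mathcal{Z}_{k'}$ degenerates to the Mukai--Umemura quintic (\ref{eqn:ZMU}). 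Running the two-ray game over $k'$ backwards (Proposition \ref{prop:two-rayW5toV22} together with Theorems \ref{thm:Gmclassification}, \ref{thm:MukaiUmemuraclassification}) identifies $\mathcal{X}_{k'}$, hence $\mathcal{X}_k$ by descent, as a $\Gm$-$V_{22}$-variety or a Mukai--Umemura variety; this proves (1). The hypothesis of (2) (the reductions coincide) forces $\chara k\neq 2,5$ (since by Proposition \ref{prop:degenerationGmp=2} the twisted case is impossible in characteristic $2$, and by the explicit form of (\ref{eqn:cord}) it forces $\chara k\neq 5$) and places us in case (2) of Proposition \ref{prop:degenerationGm}, where $\mathcal{Z}_{u,b,k'}$ is given by (\ref{eqn:reductiontwist}): a $\Ga$-type quintic (\ref{eqn:ZGageneral}) when $c=-4\Ord(b)$, a Mukai--Umemura quintic (\ref{eqn:ZMU}) when $c>-4\Ord(b)$. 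Again running the two-ray game backwards over $k'$ and using Theorems \ref{thm:Gaclassification} and \ref{thm:MukaiUmemuraclassification}, $\mathcal{X}_{k'}$ — hence $\mathcal{X}_k$ — is a $\Ga$-$V_{22}$-variety or a Mukai--Umemura variety, proving (2).

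The main obstacle, and the point requiring care, is the descent from $k'$ back to $k$: having identified $\mathcal{X}_{k'}$ up to $k'$-isomorphism with one of the known models, I must argue that the type of $\mathcal{X}_k$ (i.e.\ the structure of $\Aut^\circ_{\mathcal{X}_k/k,\red}$) is insensitive to the base change $k'/k$ — which follows because "$\dim\Aut\geq 1$ and which of $\Ga$, $\Gm$, $\PGL_2$ occurs" is determined by $\mathcal{X}_{\overline{k}}$, and the extension $k'/k$ is finite. A secondary technical point is checking that $\mathcal{L}_1$ (or $\mathcal{L}_2$) is genuinely a relative line — i.e.\ flat with the correct Hilbert polynomial on every fiber, not merely on the generic one — so that Proposition \ref{prop:integraltwo-rayV22toW5} applies; this is where one invokes flatness of the closure over a DVR plus constancy of $-K\cdot(\text{fiber})$ in a flat family of curves in a $V_{22}$-scheme. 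Finally, one should double-check that the normalization putting $\mathcal{Z}_{K'}$ into the shape required by Propositions \ref{prop:degenerationGm}--\ref{prop:degenerationGmp=2} — namely conjugating the stabilizer torus to the diagonal one by an element of $\PGL_{2,R'}(R')$ — is legitimate; this is exactly the torsor-triviality argument ($H^1_{\et}(\Spec R',\Ga)=H^1_{\et}(\Spec R',\Gm)=0$) carried out in the proof of Proposition \ref{prop:degenerationMUorGa}, which I would cite verbatim.
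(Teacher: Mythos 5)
Your proposal is correct and follows essentially the same route as the paper's proof: pass to an extension of $R$ where the two singular points of $\Sigma(X)_{\red}$ become rational, take the closure of one of the corresponding lines, run the relative two-ray game to a split $V_5$-scheme with a relative quintic of $\Gm$-type, and then read off the dichotomy standard/twisted from Propositions \ref{prop:degenerationGm} and \ref{prop:degenerationGmp=2} together with Remarks \ref{rem:lineGmreduction} and \ref{rem:lineGmreductionp=2}. The technical points you flag (flatness of the closure of the line, torsor-triviality for the normalization, insensitivity of the type to finite extensions of the residue field) are exactly the ones the paper handles, the only cosmetic difference being that the paper passes to a strict Henselization rather than an explicit finite extension.
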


\begin{proof}
By taking a strict Henselization, we may assume that $R$ is a strictly Henselian discrete valuation ring.
Moreover, we may assume that $K'=K$, i.e.,\ the singular points of $\Sigma (X)_{\red}$ are two $K$-rational points.
We denote the corresponding lines on $X$ by $L_{1}$ and $L_2$.
Let $\mathcal{L}_i$ be the Zariski closure of $L_i$ in $\mathcal{X}$.
Let $(\mathcal{Y},\mathcal{Z})$ be a $V_5$-scheme over $R$ with a relative smooth quintic curve $\mathcal{Z}$ on $\mathcal{Y}$ obtained by applying the two-ray games for $(\mathcal{X}, \mathcal{L}_{1})$ (see Proposition \ref{prop:integraltwo-rayV22toW5}).
By Lemma \ref{lem:split_after_finite_extension}, after replacing $K$ by a finite extension and $R$ by its normalization if necessary, we may assume that $\mathcal{Y}$ is split and $\sqrt{2} \in R$.
Suppose that $\chara k \neq 2$.
Then $(Y, \mathcal{Z})$ satisfies the conditions in Proposition \ref{prop:degenerationGm}.
Suppose that the condition in Definition \ref{defn:Gmreduction} (1) (resp.\ (2)) holds.
By Remark \ref{rem:lineGmreduction} and the proof of Lemma \ref{lem:GmSigma} (1), the case (1) (resp.\ the case (2)) of Proposition \ref{prop:degenerationGm} holds, and $\mathcal{Z}_k$ is of $\Gm$-type or Mukai--Umemura type (resp.\ of $\Ga$-type or Mukai--Umemura type).
Thus, we obtain the desired result in this case.
In the case when $\chara k =2$, the result follows from Proposition \ref{prop:degenerationGmp=2} and Remark \ref{rem:lineGmreductionp=2} in a similar way.
\end{proof}
\begin{rem}
\label{rem:twistednew}
As we shall see in Proposition \ref{prop:grcforsplitGm}, twisted reduction indeed occurs even in equal-characteristic zero.
The standard reduction of $V_{22}$ varieties of $\Gm$-type in equal-characteristic zero is well-studied (e.g.,\ see \cite{Kuznetsov-ProkhorovGm}), 
but we could not find a reference on twisted reduction (in particular, on $\Ga$-good reduction).
\end{rem}
{\cora
\begin{prop}
\label{prop:grcforsplitGm}
Let $R$ be an excellent discrete valuation ring, $K$ the fraction field of $R$, and $k$ the residue field of $R$.
Let $p := \chara k$, and we denote the normalized additive valuation on $R$ by $\Ord$.
For
\[\
u \in \P^1 (K) \setminus \{(0:1), (1:1), (5:4), (1:0)\},
\]
let $X_u$ be the split $V_{22}$-variety of $\Gm$-type over $K$ defined in Definition \ref{defn:splitGm}.
We identify $u$ with an element of $K = \P^1 (K) \setminus \{(1:0)\}$.
Then the following hold.
\begin{enumerate}
    \item 
The $V_{22}$-variety of $\Gm$-type $X_u$ admits good reduction of standard type at $R$ if and only if $u \in R^{\times}$ and $u-1 \in R^{\times}$ hold.
    \item 
The $V_{22}$-variety of $\Gm$-type $X_u$ admits good reduction of twisted type at $R$ if and only if $p\neq 2,5$ and $\Ord (u-\frac{5}{4}) \geq 4$.
\end{enumerate}
\end{prop}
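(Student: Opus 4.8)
The strategy is to transport everything, via the relative two‑ray game connecting $V_{22}$‑schemes and $V_5$‑schemes (Propositions \ref{prop:integraltwo-rayV22toW5} and \ref{prop:integraltwo-rayW5toV22}), to the already‑established classification of smooth degenerations of $\Gm$‑type rational quintic curves (Propositions \ref{prop:degenerationGm} and \ref{prop:degenerationGmp=2}), together with the analysis of how the two singular points of $\Sigma$ specialize (Proposition \ref{prop:class.stdtwisted} and Remarks \ref{rem:lineGmreduction} and \ref{rem:lineGmreductionp=2}). Throughout, the key bridge is that the relative two‑ray game is fibre‑preserving, so ``standard vs.\ twisted'' reduction of the $V_{22}$‑scheme corresponds to ``case (1) vs.\ case (2)'' of Propositions \ref{prop:degenerationGm}/\ref{prop:degenerationGmp=2}, and the parameter $u$ is pinned down by the uniqueness in Theorem \ref{thm:Gmclassification}.

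For the ``if'' directions I would produce explicit models. Note first that ``$u\in R$ with $\overline u\neq 0,1$'' is equivalent to ``$u\in R^{\times}$ and $u-1\in R^{\times}$''. Then the converse assertions of Propositions \ref{prop:degenerationGm}(1) and \ref{prop:degenerationGmp=2} (which together cover all residue characteristics) show that the relative curve $\mathcal{Z}_u$ of (\ref{eqn:ZuGmdvr}) on the split $V_5$‑scheme $Y_R$ is a relative smooth rational quintic curve; applying the two‑ray game (Proposition \ref{prop:integraltwo-rayW5toV22}) to $(Y_R,\mathcal{Z}_u)$ produces a $V_{22}$‑scheme $\mathcal{X}/R$ with $\mathcal{X}_K\simeq X_u$, and by Remarks \ref{rem:lineGmreduction} and \ref{rem:lineGmreductionp=2} (the case $b=0$) the two singular points of $\Sigma_{\mathcal{Z}_K}(Y_K)$ specialize to distinct points, whence the two lines $L_1,L_2\subset X_u$ have distinct reductions and $\mathcal{X}$ is a standard model. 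For the twisted case, assuming $\chara k\neq 2,5$ and $\Ord(u-\tfrac{5}{4})\geq 4$, I would take a uniformizer $\varpi$ of $R$ and set $b:=\varpi^{-1}$, so that $\Ord(u-\tfrac{5}{4})\geq 4=-4\Ord(b)$ and Proposition \ref{prop:degenerationGm}(2) makes $\mathcal{Z}_{u,b}$ a relative smooth rational quintic curve; the two‑ray game applied to $(Y_R,\mathcal{Z}_{u,b})$ yields a $V_{22}$‑scheme over $R$ whose generic fibre is $X_u$ (because $\mathcal{Z}_{u,b,K}$ is the image of $Z_u$ under an automorphism of $Y_K$) and whose special fibre is of $\Ga$‑ or Mukai--Umemura type, and Remark \ref{rem:lineGmreduction} gives $L_{1,k}=L_{2,k}$, so this is a twisted model.

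For the ``only if'' directions I would start from an arbitrary $V_{22}$‑scheme $\mathcal{X}/R$ with $\mathcal{X}_K\simeq X_u$. Since $X_u$ is split, $\Sigma(X_u)_{\red,\sing}$ consists of two $K$‑rational lines (Theorem \ref{thm:Gmclassificationgeneral}(5)); taking the Zariski closure of one of them and running the relative two‑ray game (Proposition \ref{prop:integraltwo-rayV22toW5}) produces a $V_5$‑scheme $\mathcal{Y}/R$ with a relative smooth rational quintic curve $\mathcal{Z}$, where by Theorems \ref{thm:BGaquintic>2}, \ref{thm:BGaquintic2} and the uniqueness in Theorem \ref{thm:Gmclassification} the curve $\mathcal{Z}_K$ is conjugate to $Z_u$ for the \emph{same} $u$. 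When $\chara k\neq 2$, Lemma \ref{lem:split_after_finite_extension}(2) shows $\mathcal{Y}$ is already split, so Proposition \ref{prop:degenerationGm} applies directly; when $\chara k=2$, I would first pass to a finite extension $R'/R$ over which $\mathcal{Y}$ becomes split and $\sqrt{2}\in R'$, and invoke Proposition \ref{prop:degenerationGmp=2}, which always lands in its unique case. Combining this with Proposition \ref{prop:class.stdtwisted} and Remarks \ref{rem:lineGmreduction}, \ref{rem:lineGmreductionp=2}: a standard model corresponds to case (1) of Proposition \ref{prop:degenerationGm}/\ref{prop:degenerationGmp=2}, hence $u\in R$ with $\overline u\neq 0,1$, i.e.\ $u,u-1\in R^{\times}$; a twisted model is impossible for $\chara k=2$ (Proposition \ref{prop:degenerationGmp=2}) and for $\chara k=5$ (Proposition \ref{prop:degenerationGm}(2) requires $\chara k\neq 5$), and otherwise corresponds to case (2), which forces $\Ord(u-\tfrac{5}{4})\geq -4\Ord(b)\geq 4$ for the element $b\in K\setminus R$ supplied by Proposition \ref{prop:degenerationGm}(2). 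Since for $\chara k\neq 2$ no extension is needed, these bounds hold directly over $R$; in the $\chara k=2$ case one only needs to descend $u\in K\cap R'=R$ together with $\overline u\neq 0,1$, which is harmless.

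The step I expect to be the main obstacle is the integral bookkeeping that identifies the two lines $L_1,L_2\subset X_u$ (the singular points of $\Sigma(X_u)_{\red}$) with the two singular points of $\Sigma_{\mathcal{Z}}(\mathcal{Y})$ tracked in Remarks \ref{rem:lineGmreduction} and \ref{rem:lineGmreductionp=2}---matching the blown‑up line with the bisecant of $\mathcal{Z}$ and the second singular point with $L_2$---and that checks, using the fibre‑preserving property of Propositions \ref{prop:integraltwo-rayV22toW5}--\ref{prop:integraltwo-rayW5toV22} together with Propositions \ref{prop:tworayfloppinglocus} and \ref{prop:tworaySigma}, that the specialization behaviour of these lines is transported faithfully between $\mathcal{X}$ and $\mathcal{Y}$. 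Much of this is already subsumed in the proof of Proposition \ref{prop:class.stdtwisted}; once it is in place, what remains is purely a matter of sorting the residue‑characteristic cases and reading off the valuation inequalities from Propositions \ref{prop:degenerationGm} and \ref{prop:degenerationGmp=2}.
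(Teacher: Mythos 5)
Your proposal is correct and follows essentially the same route as the paper: explicit models via the relative two-ray game applied to $\mathcal{Z}_u$ (resp.\ $\mathcal{Z}_{u,b}$ with $b=\varpi^{-1}$) for the ``if'' directions, and, for the ``only if'' directions, running the two-ray game backward from a given model, splitting the resulting $V_5$-scheme via Lemma \ref{lem:split_after_finite_extension}, and reading off the conditions on $u$ from Propositions \ref{prop:degenerationGm} and \ref{prop:degenerationGmp=2} together with the line-tracking in Remarks \ref{rem:lineGmreduction} and \ref{rem:lineGmreductionp=2} and Proposition \ref{prop:class.stdtwisted}.
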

\begin{proof}
We show (1).
Assume that $u\in R$ and $u-1 \in R^{\times}$.
Let $\mathcal{X}_u$ be the $V_{22}$-scheme over $R$ obtained by applying the two-ray game (Proposition \ref{prop:integraltwo-rayW5toV22}) to $(\mathcal{Y}, \mathcal{Z}_u)$, where $\mathcal{Y} \subset \P^6_{R}$ is the split $V_5$-scheme over $R$ defined by (\ref{eqn:W5}) and $\mathcal{Z}_u$ is a relative smooth rational quintic curve over $R$ defined by (\ref{eqn:ZuGmdvr}).
Then $\mathcal{X}_u$ is a standard model of $X_u$ by Remarks \ref{rem:lineGmreduction} and \ref{rem:lineGmreductionp=2} (see the proof of Proposition \ref{prop:class.stdtwisted}).
Conversely, suppose that $X_u$ admits a standard model $\mathcal{X}$ over $R$.
By extending $R$ if necessary, we may assume that $\sqrt2 \in R$ and 
$\Sigma (X_u)_{\red, \sing}$ consists of a rational point.
Let $\mathcal{L} \subset\mathcal{X}$ be the Zariski closure of a line on $X_u$ corresponding to the point.
Let $(\mathcal{Y}, \mathcal{Z})$ be a $W_5$-scheme over $R$ with a relative smooth rational quintic curve obtained by applying the two-ray game (\ref{prop:integraltwo-rayV22toW5}) to $(\mathcal{X}, \mathcal{L})$.
By Lemma \ref{lem:split_after_finite_extension} (1), extending $R$ again,
we may assume that $\mathcal{Y}$ is a split $W_5$-scheme over $R$.
Then by Propositions \ref{prop:degenerationGm} and \ref{prop:degenerationGmp=2}, we obtain $u, u-1 \in R^{\times}$ as desired.
Note that, by the above proof and Lemma \ref{lem:split_after_finite_extension} (2), if $p \neq 2,$ then any standard model $\mathcal{X}$ is isomorphic to $\mathcal{X}_u$ (without extending $R$).

For (2), by Proposition \ref{prop:class.stdtwisted}, Theorems \ref{thm:MukaiUmemuraclassification} and \ref{thm:GaV22overk}, we may assume that $p \neq 2,5$.
Then the proof proceeds in the same way as in (1) by using Lemma \ref{lem:split_after_finite_extension} (2) instead of Lemma \ref{lem:split_after_finite_extension} (1) (without extending $R$).
\end{proof}
\cora}

{\cred
\begin{lem}
\label{lem:dvrXuaut}
Let $K$ be a $p$-adic field with the residue field $k$.
Let $u \in \cO_K^{\times}$ be a unit such that
$u-1 \in \cO_K^{\times}$. 
Moreover, we assume that $u- \frac{5}{4} \in \cO_K^{\times}$ if $p\neq 2,5$.
Let $\mathcal{X}_u$ be the $V_{22}$-scheme over $\cO_K$ obtained by applying the two-ray game (Proposition \ref{prop:integraltwo-rayW5toV22}) to $(\mathcal{Y}, \mathcal{Z}_u)$, where $\mathcal{Y} \subset \P^6_{\cO_K}$ is the split $V_5$-scheme over $\cO_K$ defined by (\ref{eqn:W5}) and $\mathcal{Z}_u$ is a relative smooth rational quintic curve over $\cO_K$ defined by (\ref{eqn:ZuGmdvr}).
Then there exists an embedding of group schemes
\[
\iota\colon \mathbb{G}_{m,\cO_K} \rtimes (\Z/2\Z)_{\cO_K} \hookrightarrow  \Aut_{\mathcal{X}_u/\cO_K}
\]
such that $\iota_K$ and $\iota_k$ induces an isomorphisms
\[
\mathbb{G}_{m,K} \rtimes (\Z/2\Z)_K \simeq \Aut_{\mathcal{X}_{u,K}/K,\red}
\quad 
\textup{and}
\quad
\mathbb{G}_{m,k} \rtimes (\Z/2\Z)_k \simeq \Aut_{\mathcal{X}_{u,k}/k,\red}
.
\]
\end{lem}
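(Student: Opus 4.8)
The strategy is to produce the $\Gm \rtimes \Z/2\Z$-action directly on the relative level by transporting the structure from $(\mathcal{Y},\mathcal{Z}_u)$ through the relative two-ray game. First I would observe that the torus $\Gm$ acts on the pair $(\mathcal{Y},\mathcal{Z}_u)$ over $\cO_K$: indeed $\mathcal{Y}$ is the split $V_5$-scheme, and by the diagonal action $\diag(a^6,a^5,a^4,a^3,a^2,a,1)$ (see \eqref{eqn:diagonalaction} and \eqref{eqn:diagonalactionp=2}) the relative quintic curve $\mathcal{Z}_u$ defined by \eqref{eqn:ZuGmdvr} is visibly stable. Since the two-ray game (Proposition \ref{prop:integraltwo-rayW5toV22}) is canonical — all its steps (blow-up of $\mathcal{Z}_u$, anti-canonical morphism, flop, $K$-negative contraction) are uniquely determined — any automorphism of $(\mathcal{Y},\mathcal{Z}_u)$ over $\cO_K$ induces an automorphism of $\mathcal{X}_u$ over $\cO_K$, yielding a homomorphism $\Gm_{,\cO_K} \hookrightarrow \Aut_{\mathcal{X}_u/\cO_K}$ (an embedding, since it is already an embedding on the generic fiber by Theorem \ref{thm:BGaquintic>2}).

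Next I would extend this to the semidirect product by producing the involution $\iota$ over $\cO_K$. Here I would follow the argument of Lemma \ref{lem:GmSigma}(3): applying the relative two-ray game for conics (Proposition \ref{prop:integraltwo-rayV22toQ3}) starting from the $\Gm$-invariant relative smooth conic $\mathcal{C} \subset \mathcal{X}_u$ (coming from the exceptional locus over the $\Gm$-fixed point $(0:0:0:0:0:1:0)$ of $\mathcal{Z}_u$), we obtain a relative quadric threefold $\mathcal{Q} \subset \P^4_{\cO_K}$ with a relative quadratically normal sextic $\widetilde\Gamma$, and one needs the explicit normal form of $(\mathcal{Q},\widetilde\Gamma)$ over $\cO_K$ — i.e.\ that after a change of coordinates over $\cO_K$ (using $u,u-1\in\cO_K^\times$, and, when $p\neq 2,5$, $u-\tfrac54\in\cO_K^\times$ to control the parameter $v$ with $v^{-4}=5-4u$), $\mathcal{Q}$ is defined by $v Q_0 + Q_1$ and $\widetilde\Gamma$ is the standard rational sextic. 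The involution \eqref{eqn:involutionP4}, $(x_0:\cdots:x_4)\mapsto(x_4:\cdots:x_0)$, is defined over $\cO_K$ and preserves $\mathcal{Q}$ and $\widetilde\Gamma$, so running the two-ray game backwards (Proposition \ref{prop:integraltwo-rayQ3toV22}) gives $\iota \in \Aut_{\mathcal{X}_u/\cO_K}(\cO_K)$ with $\iota^2=1$ and $\iota$-conjugation on $\Gm_{,\cO_K}$ equal to inversion, because $\iota\circ(a\cdot)\circ\iota=(a^{-1}\cdot)$. Combining, I get the desired $\iota\colon \Gm_{,\cO_K}\rtimes(\Z/2\Z)_{\cO_K}\hookrightarrow \Aut_{\mathcal{X}_u/\cO_K}$.

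It remains to check that $\iota_K$ and $\iota_k$ are isomorphisms onto the reduced automorphism schemes. On the generic fiber $\mathcal{X}_{u,K}$ is a split $V_{22}$-variety of $\Gm$-type (by construction, $u\in K$, $u\notin\{0,1,5/4\}$), so $\Aut_{\mathcal{X}_{u,K}/K,\red}\simeq \Gm\rtimes\Z/2\Z$ by Theorem \ref{thm:Gmclassification}, and by that theorem's proof the natural map from $\Gm\rtimes\Z/2\Z$ (acting via inversion, permuting the singular points of $\Sigma$) is exactly this isomorphism. For the special fiber: by Proposition \ref{prop:grcforsplitGm}(1), $\mathcal{X}_u$ is a standard model, so by Proposition \ref{prop:class.stdtwisted}(1) combined with the explicit reduction $\overline{\mathcal{Z}_u}=Z_{\overline u}$ (which is $\Gm$-type since $\overline u,\overline u-1\neq 0$), $\mathcal{X}_{u,k}$ is again a split $V_{22}$-variety of $\Gm$-type with parameter $\overline u$, whence $\Aut_{\mathcal{X}_{u,k}/k,\red}\simeq \Gm_k\rtimes\Z/2\Z$ by Theorem \ref{thm:Gmclassification} and $\iota_k$ realizes this isomorphism (injectivity on each fiber is automatic since $\iota$ is an embedding; surjectivity onto the reduced part follows from the dimension and component count in Theorem \ref{thm:Gmclassification}). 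I expect the main obstacle to be the second paragraph: carrying the coordinate normalization of $(\mathcal{Q},\widetilde\Gamma)$ — done over a field in \cite{Kuznetsov-ProkhorovGm} — over the base $\cO_K$ without introducing denominators, which is where the integrality hypotheses $u,u-1\in\cO_K^\times$ (and $u-\tfrac54\in\cO_K^\times$ when $p\neq2,5$) are essential; alternatively one can sidestep this by noting that $\iota$ need only be constructed after faithfully flat base change and then descended, but the explicit model is cleanest.
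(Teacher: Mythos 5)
Your proposal is correct and follows essentially the same route as the paper: first the $\Gm$-action is transported from $(\mathcal{Y},\mathcal{Z}_u)$ through the relative two-ray game, then the involution is produced by running the relative conic two-ray game on the $\Gm$-invariant relative conic and applying the coordinate involution to the normalized model $(\mathcal{Q},\widetilde{\Gamma})$ over $\cO_K$, exactly as in the paper (which obtains the same conic by descending the unique $\Gm$-invariant conic from $\overline{K}$ and taking its Zariski closure). Your closing verification of the fiberwise isomorphisms via Theorem \ref{thm:Gmclassification} and Proposition \ref{prop:grcforsplitGm} is a harmless elaboration of what the paper leaves implicit.
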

\begin{proof}
By the construction of $\mathcal{X}_u$, 
we obtain an immersion
\begin{equation}
\label{eqn:GmembedOK}
\mathbb{G}_{m,\cO_K} \hookrightarrow \Aut_{\mathcal{X}_u/\cO_K}.
\end{equation}
We want to show the existence of an involution on $\mathcal{X}_u$ as in Lemma \ref{lem:GmSigma}.
Basically, we follow the same strategy as in Lemma \ref{lem:GmSigma}.
By \cite[Proposition 3.2 and Lemma 4.1]{Kuznetsov-ProkhorovGm}, $\mathcal{X}_{u, \overline{K}}$ admits a unique $\mathbb{G}_{m,\overline{K}}$-invariant smooth conic.
By the uniqueness, this descends to a $\mathbb{G}_{m,K}$-invariant smooth conic $C \subset \mathcal{X}_{u,K}$.
Let $\mathcal{C}\subset \mathcal{X}_u$ be the Zariski closure.
Then by the proof of Lemma \ref{lem:GmSigma}, $\mathcal{C}$ is a relative smooth $\mathbb{G}_{m,\cO_K}$-invariant conic on $\mathcal{X}_u$.
By applying the two-ray game (Proposition \ref{prop:integraltwo-rayV22toQ3}) to $(\mathcal{X}_u, \mathcal{C})$, we obtain $(\mathcal{Q},\Gamma)$, 
where $\mathcal{Q} \subset \P^4_{\cO_K}$ is a relative quadric threefold over $\cO_K$ and $\Gamma \subset \mathcal{Q}$ is a relative quadratically normal smooth genus 0 sextic over $\cO_K$.
By the construction, we have an embedding
$\mathbb{G}_{m,\cO_K} \hookrightarrow \Aut_{(\mathcal{Q},\Gamma)/\cO_K}.$
Moreover, by the proof of Lemma \ref{lem:GmSigma}, the induced morphism $\mathbb{G}_{m,K} \rightarrow \Aut_{\Gamma_K/K}$ is non-trivial.
Since an automorphism group of a genus 0 curve without rational points is non-split, 
$\Gamma_K$ admits a rational point, i.e.,\ $\Gamma_K$ is a rational sextic.
Therefore, $\Gamma$ is a relative rational sextic over $\cO_K$.
By the same argument as in \cite[Lemma 3.5 and Lemma 3.6]{Kuznetsov-ProkhorovGm}, by using the action of $\PGL_{2} (\cO_K)$, we may assume the following:
\begin{itemize}
    \item
    The curve
    $\Gamma \subset \P^4_{\cO_K}$ is defined by
    \begin{equation}
    \begin{gathered}
        \P^1 \rightarrow \P^4\\
        (s:t) \mapsto (s^6:s^5t:s^3t^3:st^5:t^6)
    \end{gathered}
\end{equation}
    \item
    The quadric $\mathcal{Q} \subset \P^4_{[x_0,x_1,x_2,x_3,x_4],\cO_K}$ is defined by the equation 
    $
    v Q_0 + Q_1
    $
    for some $v \in \cO_K^{\times}$ with $v-1 \in \cO_K^{\times}$.
    Here, $Q_0 := x_0 x_4-x_2^2$ and $Q_1:= x_2^2 - x_1x_3$.
    \item 
    $\mathbb{G}_{m,\cO_K}$ acts on $\P^4_{\cO_K}$ by
    \[
    t \cdot (x_0:x_1:x_2:x_3:x_4) = (x_0:tx_1:t^3x_2:t^5x_3:t^6x_6).
    \]
\end{itemize}
Then the involution on $\P^4_{\cO_K}$ defined by the same equation as (\ref{eqn:involutionP4}) induces the desired involution on $\mathcal{X}_u$.
Combining with (\ref{eqn:GmembedOK}), we obtain the desired embedding $\iota$.
\end{proof}

By using Lemma \ref{lem:dvrXuaut}, we obtain a $p$-adic integral analogue of Corollary \ref{cor:Gmclassificationfinite} as following:

\begin{lem}
\label{lem:unramifiedextensionGmV22}
Let $K$ be a $p$-adic field with the residue field $k$.
Let $u \in \cO_K^{\times}$ be a unit such that
$u-1 \in \cO_K^{\times}$. 
Moreover, we assume that $u- \frac{5}{4} \in \cO_K^{\times}$ if $p\neq 2,5$.
Let $\mathcal{X}_u$ be the $V_{22}$-scheme of $\Gm$-type over $\cO_K$ defined in Lemma \ref{lem:dvrXuaut}.
Let $L/K$ be a unramified quadratic extension.
Then the set
\[
\{
\mathcal{X} \colon V_{22} \textup{-scheme over }\cO_K \mid \mathcal{X}_{\cO_L} \simeq \mathcal{X}_{u, \cO_L}
\}/\cO_K\textup{-isom}
\]
consists of two elements $\mathcal{X}_u$ and $\mathcal{X}_u'$.
Moreover, $\mathcal{X}_{u,K}'$ and $\mathcal{X}_{u,k}'$ is not split.
\end{lem}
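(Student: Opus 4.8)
The plan is to read the set in the lemma as the set of $\cO_K$-forms of $\mathcal{X}_u$ that become isomorphic to $\mathcal{X}_{u,\cO_L}$ over $\cO_L$, and to compute it by descent along the finite étale Galois cover $\Spec\cO_L\to\Spec\cO_K$ with group $G:=\Gal(L/K)\simeq\Z/2\Z$. Indeed, any $\cO_K$-scheme $\mathcal{X}$ with $\mathcal{X}_{\cO_L}\simeq\mathcal{X}_{u,\cO_L}$ is automatically a $V_{22}$-scheme (smoothness, properness and projectivity descend, the last via the $G$-invariant relative anticanonical polarization, and the numerical invariants $b_2=1$, $(-K)^3=22$ descend), so the set under consideration is exactly the set of such forms up to $\cO_K$-isomorphism. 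Since $\cO_L\otimes_{\cO_K}\cO_L\simeq\prod_{g\in G}\cO_L$, Čech/Galois descent identifies this set with $H^1(G,A)$, where $A:=\Aut_{\mathcal{X}_{u,\cO_L}/\cO_L}(\cO_L)$ carries the $G$-action induced from the action on $\cO_L$. Thus the lemma reduces to showing $\#H^1(G,A)=2$ together with a geometric identification of the nontrivial class.

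The first real step is to identify $A$ as a $G$-group. By Lemma \ref{lem:dvrXuaut}, base change along $\cO_K\to\cO_L$ gives a closed subgroup scheme $Z:=\image(\iota_{\cO_L})\simeq\Gm_{\cO_L}\rtimes(\Z/2\Z)_{\cO_L}\hookrightarrow\Aut_{\mathcal{X}_{u,\cO_L}/\cO_L}$ whose generic fibre is an isomorphism onto $\Aut_{\mathcal{X}_{u,L}/L,\red}$; note $\mathcal{X}_{u,L}=X_{u,L}$ is a split $V_{22}$-variety of $\Gm$-type over $L$, so that reduced group is $\Gm_L\rtimes\Z/2\Z$ by Theorem \ref{thm:Gmclassificationgeneral}. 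Now take $\phi\in A$. Since $\cO_L$ is reduced, $\phi$ factors through $\Aut_{\mathcal{X}_{u,\cO_L}/\cO_L,\red}$, and its generic fibre $\phi_L\in\Aut_{X_{u,L}/L}(L)=\Aut_{X_{u,L}/L,\red}(L)$ lies in the generic fibre $Z_L=\Aut_{X_{u,L}/L,\red}$ of $Z$; that is, $\phi_L$ factors through $Z$. As $\Spec\cO_L$ is integral and $Z$ is closed, $\phi^{-1}(Z)$ is a closed subscheme of $\Spec\cO_L$ containing the generic point, hence all of $\Spec\cO_L$, so $\phi$ factors through $Z$, i.e. $\phi\in Z(\cO_L)=\cO_L^\times\rtimes\Z/2\Z$. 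Together with $\iota_{\cO_L}$ this gives $A=\cO_L^\times\rtimes\Z/2\Z$, where $G$ acts on $\cO_L^\times$ through the Galois action and trivially on the factor $\Z/2\Z$, and where $\Z/2\Z$ acts on $\Gm$ by inversion. I expect this to be the main obstacle: it is precisely here that one must control the possibly non-reduced automorphism scheme in residue characteristic $p$ and rule out spurious $\cO_L$-points.

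The cohomology count is then routine, using the exact sequence of $G$-groups $1\to\cO_L^\times\to A\to\Z/2\Z\to1$. Here $H^1(G,\cO_L^\times)=0$: from $1\to\cO_L^\times\to L^\times\xrightarrow{v_L}\Z\to0$ (with trivial $G$-action on $\Z$ since $L/K$ is unramified), the map $H^0(G,L^\times)=K^\times\to H^0(G,\Z)=\Z$ is $v_K$, hence surjective, so the connecting map into $H^1(G,\cO_L^\times)$ vanishes, and $H^1(G,L^\times)=0$ by Hilbert 90 forces $H^1(G,\cO_L^\times)=0$. Hence the fibre of $H^1(G,A)\to H^1(G,\Z/2\Z)=\Hom(G,\Z/2\Z)$ over the trivial class is the single class $[\mathcal{X}_u]$. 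The section $s\colon\Z/2\Z\to A$, $a\mapsto(1,a)$, shows the fibre over the nontrivial homomorphism $c_0$ is nonempty, and by twisting it is the image of $H^1(G,{}_{sc_0}\cO_L^\times)$, where the twisted $G$-action on $\cO_L^\times$ is $t\mapsto\tau(t)^{-1}$; a direct cocycle computation identifies this with $\cO_K^\times/N_{L/K}(\cO_L^\times)$, which is trivial because the norm map on units of an unramified extension is surjective. So that fibre is also a single class, whence $\#H^1(G,A)=2$; write $\mathcal{X}_u'$ for the $\cO_K$-scheme representing the nontrivial class, so the set in the lemma is $\{\mathcal{X}_u,\mathcal{X}_u'\}$ with $\mathcal{X}_u\not\simeq_{\cO_K}\mathcal{X}_u'$.

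Finally, for the ``moreover'': the image of $[\mathcal{X}_u']$ in $H^1(G,\Z/2\Z)$ is nontrivial, i.e. the associated $\Z/2\Z$-torsor over $\cO_K$ is the connected cover $\Spec\cO_L$. Base-changing to $K$, the class of $\mathcal{X}_{u,K}'$ in $H^1(G_K,\Aut_{X_{u,K}/K,\red})$ is the inflation of this class, and still has nontrivial image in $H^1(G_K,\Z/2\Z)$ (inflation is injective on these $\Hom$-groups and $L/K$ is a nontrivial quadratic extension). Since $\mathcal{X}_{u,K}=X_{u,K}$ is split, and a form of $X_{u,K}$ is split exactly when its class lies over the trivial element of $H^1(G_K,\Z/2\Z)$ (Theorem \ref{thm:Gmclassificationgeneral}, together with Theorem \ref{thm:Gmclassification}, which forces a split form of $X_{u,K}$ to be $X_{u,K}$ itself), $\mathcal{X}_{u,K}'$ is not split. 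The same applies over the residue field: $\mathcal{X}_{u,k}=\mathcal{X}_u\otimes_{\cO_K}k$ is the split $V_{22}$-variety of $\Gm$-type over $k$ (its reduced automorphism group is $\Gm_k\rtimes\Z/2\Z$ by Lemma \ref{lem:dvrXuaut}, so Theorem \ref{thm:Gmclassificationgeneral}(5) applies), $\cO_L\otimes_{\cO_K}k=k_L$ is the unramified quadratic extension of $k$, and $\mathcal{X}_{u,k}'$ corresponds to the inflation along $\Gal(k_L/k)\hookrightarrow G_k$ of the nontrivial class; hence $\mathcal{X}_{u,k}'$ is not split either. This completes the plan.
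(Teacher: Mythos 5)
Your proposal is correct and follows essentially the same route as the paper: Galois descent along the finite étale cover $\Spec\cO_L\to\Spec\cO_K$ identifies the set with $H^1(G,\Gm(\cO_L)\rtimes\Z/2\Z)$, the fibre over the trivial class of $H^1(G,\Z/2\Z)$ is a singleton because $H^1(G,\cO_L^\times)=0$, the fibre over the nontrivial class is a singleton because the inversion-twisted $H^1$ is $\cO_K^\times/N_{L/K}(\cO_L^\times)=0$ for an unramified extension (the paper packages this as $H^1(G,U_{\cO_L/\cO_K}(1)(\cO_L))$), and non-splitness of $\mathcal{X}'_{u,K}$ and $\mathcal{X}'_{u,k}$ is read off from the nontrivial image in $H^1(\cdot,\Z/2\Z)$ via Theorem \ref{thm:Gmclassificationgeneral}. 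The only differences are cosmetic (valuation sequence plus Hilbert 90 instead of Hochschild–Serre for $H^1(G,\cO_L^\times)=0$), and your explicit closure argument for why every element of $\Aut_{\mathcal{X}_{u,\cO_L}/\cO_L}(\cO_L)$ factors through the image of $\iota$ is in fact more detailed than the paper's one-line appeal to Lemma \ref{lem:dvrXuaut}.
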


\begin{proof}
By Lemma \ref{lem:dvrXuaut}, we have
\begin{equation}
\label{eqn:isomautdvr}
\Aut_{\mathcal{X}_{u}/\cO_K} (\cO_L) \simeq \mathbb{G}_{m,\cO_K} (\cO_L) \rtimes \Z/2\Z.
\end{equation}
Since $\cO_L/\cO_K$ is a finite \'{e}tale Galois extension, the set in the statement is isomorphic to
\[
H^1 (\mathcal{G}, \mathbb{G}_{m,\cO_K} (\cO_L) \rtimes \Z/2\Z).
\]
Here, we set $\mathcal{G}:= \Gal (\cO_L/\cO_K)\simeq \Z/2\Z$.
Let $\overline{c_{\mathcal{X}}}$ be the class
\[
\overline{c_{\mathcal{X}}} \in H^1 (\mathcal{G}, \mathbb{G}_{m,\cO_K} (\cO_L) \rtimes \Z/2\Z)
\]
corresponding to $\mathcal{X}$, 
and $p$ the natural projection
\[
H^1 (\mathcal{G}, \mathbb{G}_{m,\cO_K} (\cO_L) \rtimes \Z/2\Z) \rightarrow H^1 (\mathcal{G}, \Z/2\Z).
\]
Note that we have
$H^1 (\Spec \cO_K, \Gm) =0$ and $H^1 (\Spec \cO_L, \Gm) =0.$
By the Hochschild-Serre spectral sequence, we obtain
$
H^1 (\mathcal{G}, \Gm (\cO_L)) =0.
$
Therefore, if $p (\overline{c_{\mathcal{X}}})$ is the trivial class, then $\overline{c_{\mathcal{X}}}$ is the trivial class, which corresponds to $\mathcal{X}_u$.
We assume that $p (\overline{c_{\mathcal{X}}})$ is the non-trivial class.
By the same argument as in Theorem \ref{thm:Gmclassificationgeneral}, we obtain a diagram
\[
  \begin{CD}
       H^1 (\mathcal{G},\mathbb{G}_{m,\cO_K} (\cO_L))  @>>>  H^1(\mathcal{G},\mathbb{G}_{m,\cO_K} (\cO_L) \rtimes \Z/2\Z) @>{p}>>  H^1(\mathcal{G}, \Z/2\Z)   \\
     @.    @V{\simeq}V{\cdot c_{\mathcal{X}}^{-1}}V  @V{\simeq}V{\cdot c_{\mathcal{X}}^{-1}}V   \\
       H^1(\mathcal{G},U_{\cO_L/\cO_K}(1) (\cO_L)) @>>>  H^1 (\mathcal{G},U_{\cO_L/\cO_K}(1) (\cO_L) \rtimes \Z/2\Z) @>{p}>>  H^1 (\mathcal{G}, \Z/2\Z).
  \end{CD}
\]
Here, $U_{\cO_L/\cO_K} (1)$ is the rank 1 relative unitary group, which is defined as the exact sequence
\[
1 \rightarrow U_{\mathcal{\cO_L/\cO_K}} (1) \rightarrow \Res_{\cO_L/\cO_K} \mathbb{G}_{m,\cO_L} \xrightarrow{N_{\cO_L/\cO_K}} \mathbb{G}_{m,\cO_K} \rightarrow 1,
\]
where $N_{\cO_L/\cO_K}$ is the norm map.
By taking $\cO_L$-valued points and then cohomology, we obtain 
\[
H^1 (\mathcal{G}, U_{\mathcal{\cO_L/\cO_K}} (1) (\cO_L)) \simeq \cO_K^{\times}/ N_{\cO_L/\cO_K} (\cO_L^{\times}).
\]
Since $L/K$ is an unramified extension, the right-hand side is trivial.
Therefore, $p^{-1} (p (\overline{c_{\mathcal{X}}}))$ is a singleton $\{\overline{c_X}\}$.
We denote the corresponding scheme by $\mathcal{X}_u'$.
The image of $\overline{c_{\mathcal{X}}}$ in $H^1 (\Gal (L/K), \Z/2\Z)$ via
\begin{eqnarray*}
H^1 (\mathcal{G}, \Aut_{\mathcal{X}_u/\cO_K} (\cO_L)) \rightarrow H^1 (\Gal (L/K), \Aut_{\mathcal{X}_{u,K}/K} (L)) \rightarrow H^1 (\Gal (L/K), \Z/2\Z)
\end{eqnarray*}
is non-trivial element by the property of an isomorphism (\ref{eqn:isomautdvr}) (see Lemma \ref{lem:dvrXuaut}). Hence, $\mathcal{X}'_{u,K}$ is not split.
The case of $\mathcal{X}'_{u,k}$ is analogous.
\end{proof}
}

{
\cblue
\begin{thm}
\label{thm:GmShafarevich}
Let $F$ be a number field 
and $S$ a finite set of finite places of $F$.
Then the set
    \[
    \Shaf_{\Gm,F,S} := \left\{
    X \left|
    \begin{array}{l}
      \textup{$X \colon$$V_{22}$-variety of $\Gm$-type over $F$} \\
      \textup{$X$ admits good reduction at $\cO_{F,\p}$}\\
        \textup{for any finite place $\p$ of $F$ outside $S$}
    \end{array}
    \right\}\right.
    /F\textup{-isom}
    \]
is a finite set.
\end{thm}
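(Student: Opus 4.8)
The plan is the following. Since $\Shaf_{\Gm,F,S} \subseteq \Shaf_{\Gm,F,S'}$ whenever $S \subseteq S'$, it suffices to prove finiteness after enlarging $S$, so we may assume that $\cO_{F,S}$ is a principal ideal domain and that $S$ contains every finite place of $F$ above $10$. By Theorem \ref{thm:Gmclassification} together with the argument of Lemma \ref{lem:Gmsplitformexists}, each $X \in \Shaf_{\Gm,F,S}$ determines a well-defined point $u = u(X) \in \P^1(F) \setminus \{(0:1),(1:1),(5:4),(1:0)\}$, namely the unique $u$ with $X_{\overline F} \simeq X_{u,\overline F}$; it lies in $F$ since it is $G_F$-invariant. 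We identify $\P^1(F)\setminus\{(1:0)\}$ with $F$ and regard $u(X) \in F \setminus \{0,1,\tfrac54\}$.

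\emph{Step 1: the image of $X \mapsto u(X)$ is finite.} Fix $X \in \Shaf_{\Gm,F,S}$ and a finite place $\p \notin S$, so $\p \nmid 10$. By Theorem \ref{thm:Gmclassificationgeneral} there is an extension $F'_\p/F_\p$ of degree at most $2$ over which $X_{F_\p}$ becomes the split variety $X_{u,F'_\p}$, and good reduction is preserved by base change, so $X_{u}$ has good reduction at the complete (hence excellent) discrete valuation ring $\cO_{F'_\p}$. This good reduction is of standard or of twisted type, so Proposition \ref{prop:grcforsplitGm} shows that either $u, u-1 \in \cO_{F'_\p}^\times$, or $v_\p(u - \tfrac54) > 0$. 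In the latter case, since $\tfrac54 \in \cO_{F,\p}^\times$ (as $\p \nmid 10$), we still get $v_\p(u) = v_\p(\tfrac54) = 0$ and $v_\p(u-1) = v_\p(\tfrac14) = 0$. Hence $v_\p(u) = v_\p(u-1) = 0$ for every $\p \notin S$, i.e.\ $u \in \cO_{F,S}^\times$ and $1 - u \in \cO_{F,S}^\times$. Thus $(u, 1-u)$ is a solution of the $S$-unit equation $\alpha + \beta = 1$, and by the Siegel--Mahler--Lang theorem (as used in \cite{Scholl}) there are only finitely many such $u$.

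\emph{Step 2: for a fixed $u$ in the image, the fiber is finite.} Since $u, u-1 \in \cO_{F,S}^\times$, the split variety $X_u$ extends to a $V_{22}$-scheme $\mathcal{X}_u$ over $\cO_{F,S}$ by Proposition \ref{prop:integraltwo-rayW5toV22} applied to the quintic curve (\ref{eqn:ZuGmdvr}). The elements of $\Shaf_{\Gm,F,S}$ with $u(X) = u$ are exactly the $F$-twists of $X_u$, classified by Theorem \ref{thm:Gmclassificationgeneral} as
\[
H^1(G_F, \Gm(\overline F) \rtimes \Z/2\Z) = \{X_u\} \ \sqcup\ \bigsqcup_{[l : F] = 2} F^\times / N_{l/F}(l^\times),
\]
where $l$ corresponds to the $\Z/2\Z$-torsor $\Sigma(X)_{\red, \sing}$. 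Using the integral classification of $\Gm$-type models in Lemmas \ref{lem:dvrXuaut} and \ref{lem:unramifiedextensionGmV22}, together with the degeneration statements of Proposition \ref{prop:class.stdtwisted} and Remark \ref{rem:lineGmreduction}, one shows that good reduction outside $S$ forces: (i) $l$ is unramified outside $S_2 := S \cup \{\p : \p \mid 2\}$, so there are only finitely many such $l$ by Hermite--Minkowski; and (ii) for each such $l$, the twisting class in $F^\times/N_{l/F}(l^\times)$ lies in the image of $H^1_{\mathrm{fppf}}(\cO_{F,S_2}, \mathcal{U}_l)$ for a torus model $\mathcal{U}_l$ of $U_l(1)$ over $\cO_{F,S_2}$, and this last group is finite, being squeezed between the finite groups $\coker\bigl(N\colon \cO_{l,S_2'}^\times \to \cO_{F,S_2}^\times\bigr)$ (a quotient of $\cO_{F,S_2}^\times/(\cO_{F,S_2}^\times)^2$, where $S_2'$ denotes the places of $l$ above $S_2$) and $\ker\bigl(\Pic\cO_{l,S_2'} \to \Pic\cO_{F,S_2}\bigr)$. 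Hence the fiber over each $u$ is finite, and $\Shaf_{\Gm,F,S} = \bigsqcup_u(\text{fiber over }u)$ is finite.

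\emph{Main obstacle.} Step 1 is essentially formal given Proposition \ref{prop:grcforsplitGm}. The real difficulty is the second half of Step 2: since $F^\times/N_{l/F}(l^\times)$ is infinite, one must genuinely rule out all but finitely many twists, i.e.\ translate good reduction at each $\p \notin S$ into the assertion that the local twisting class comes from an integral torsor (equivalently, is locally a unit class), which requires the careful local analysis of Lemmas \ref{lem:dvrXuaut}, \ref{lem:unramifiedextensionGmV22} and Proposition \ref{prop:class.stdtwisted}, including the places ramified in $l$ where $X$ degenerates to $\Ga$- or Mukai--Umemura type. An alternative route, the one alluded to in the introduction, is to pass via the canonical $\Gm$-invariant conic on $X$ (Lemma \ref{lem:GmSigma}) to a quadric threefold with rational sextic $(Q,\Gamma)$ over $F$, glue the local good models to a model of $(Q,\Gamma)$ over $\cO_{F,S}$, and bound the finitely many such $(Q,\Gamma)$ by combining finiteness of $\Br(\cO_{F,S})$ with a height estimate derived from the same $S$-unit finiteness used in Step 1.
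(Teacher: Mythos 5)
Your overall architecture coincides with the paper's: finiteness of the invariant $u$ via the $S$-unit equation and Siegel--Mahler--Lang, then finiteness of the quadratic fields $l$ via Hermite--Minkowski, then finiteness of the twisting classes inside $F^\times/N_{l/F}(l^\times)$ by a norm-class argument. Step 1 is correct and complete (your handling of the twisted-type case, deducing $v_\p(u)=v_\p(u-1)=0$ from $v_\p(u-\tfrac54)>0$, is exactly what is needed). The problems are in Step 2. First, a concrete error: claim (i) is false as stated. The field $l$ need not be unramified outside $S\cup\{\p\mid 2\}$; it can ramify at any place $\p\notin S$ with $v_\p(u-\tfrac54)\geq 4$, because at such a place the model may be of twisted type, the two singular points of $\Sigma(X)_{\red}$ collapse to a single point of the special fiber (Remark \ref{rem:lineGmreduction}), and the special fiber degenerates to $\Ga$- or Mukai--Umemura type. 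The correct assertion, which is what the paper proves, is that $l/F$ is unramified outside $S\cup T_u$ with $T_u=\{\p: v_\p(u-\tfrac54)\geq 1\}$; since $T_u$ is finite for each fixed $u$, Hermite--Minkowski still applies, but your stated reason (ramification only above $2$) would not detect these places.

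Second, and more seriously, the heart of Step 2 --- that good reduction outside $S$ forces the twisting class in $F^\times/N_{l/F}(l^\times)$ into a finite subset --- is only asserted (``one shows that \dots''), and you yourself flag it as the main obstacle. This is precisely where the paper does real work: for each $\p\notin S\cup T_u$ it shows, using Lemma \ref{lem:unramifiedextensionGmV22}, that there are exactly two $V_{22}$-schemes over $\cO_{F,\p}$ becoming $\mathcal{X}_u$ over the unramified quadratic extension, and hence that the local component of the twisting class is completely determined (trivial at split places, a fixed class $\alpha_u$ at inert places, independent of $X$); the Hasse norm theorem then makes the localization map $F^\times/N_{l/F}(l^\times)\to\prod_{\p\notin S\cup T_u}F_\p^\times/N(l_\q^\times)$ finite-to-one, forcing finiteness of the fiber. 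Your alternative packaging via $H^1_{\mathrm{fppf}}$ of an integral model of $U_l(1)$ is a legitimate reformulation of the same local conditions, and the finiteness of that cohomology group is standard, but the step ``the class lies in the image of the integral $H^1$'' is exactly equivalent to the local analysis you have not carried out. As written, the proposal identifies the right skeleton but leaves the decisive local-to-global step unproved and contains one incorrect intermediate claim.
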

}
\begin{proof}
{ \cblue
We may assume that $S$ contains any finite place $\p$ of $F$ with $\p \mid (2)$ or $\p \mid (5)$.
In the proof, we freely use Theorem \ref{thm:Gmclassificationgeneral} and its notation.
For any
\[
u=(u:1) \in P_F := \P^1 (F) \setminus \{(0:1), (1:1),(1:0),(5:4)\},
\]
and a quadratic extension $E/F$,
we fix a bijection
\[
\Psi_{u,E} \colon \Phi_u^{-1} (\Spec E) \simeq F^{\times}/ N_{E/F} (E^{\times})
\]
by choosing a section 
\[
\Z/2\Z \simeq \pi_0 (\Aut_{X_u/F, \red}) (F) \hookrightarrow \Aut_{X_u/F, \red}(F)
\]
(see Remark \ref{rem:Gmclassificationcanonical} (1)).
We write $\Phi_{u}^{-1} (\Spec E)$ as
$
\{
X_u^{E,\alpha} \mid\alpha \in F^{\times}/N_{E/F} (E^{^\times}).
\}
$
Then we obtain
$
A_F = \{(X_u)_u, (X_{u}^{E,\alpha})_{u, E,\alpha}\}.
$
We denote $X_u$ by $X_u^{F,1}$ for the compatibility.

\noindent{\bf Claim 1.}
The set
\[
P_F^{\circ} :=
\left\{
u \in P_F \left| 
\begin{array}{l}
\textup{ there exists a field } E \textup{ with } [E:F] \leq 2 \textup{ and } \alpha \in F^{\times}/N_{E/F}(E^{\times})\\
\textup{ such that } X_{u}^{E,\alpha} \in \Shaf_{\Gm,F,S}
\end{array}
\right\}\right.
\]
is a finite set.
}
\begin{proof}
{\cblue
For any $X \simeq X_u^{E,\alpha} \in \Shaf_{\Gm,F,S}$, we may take a smooth projective scheme model $\mathcal{X}_{X}$ over $\cO_{F,S}$ by Remark \ref{rem:grcomplete} (2) and the standard gluing argument (cf.\ \cite[Lemma 4.1]{Javanpeykar-Loughran:GoodReductionFano}).
Let $\p \notin S$ be a finite place of $F$, and $\q$ a finite place of $E$ with $\q \mid \p$.
{\cora Since $X_{E_{\q}}$ is a split $V_{22}$-variety of $\Gm$-type $X_u$ (over $E_{\q}$), by Proposition \ref{prop:grcforsplitGm}, we have $u , u-1 \in \cO_{E,\q}^{\times}$.}
Therefore, $u$ is contained in  the set
$\{
u \in \cO_{F,S}^{\times}\mid 1-u \in \cO_{F,S}^{\times}
\}$,
which is a finite set by Siegel--Mahler--Lang's theorem (\cite[Chapter VII, \S4]{Lang}).
This completes the proof.
}
\end{proof}
{\cred
For any $u\in P_F$, we put
$
\Shaf_u := \Shaf_{\Gm,F,S} \cap A_{F,u}.
$
Note that $\Shaf_u \neq \emptyset$ if and only if $u \in P_F^{\circ}$, so we have
$
\Shaf_{\Gm,F,S} = \bigsqcup_{u \in P_F^{\circ}} \Shaf_u.
$
By Claim 1, we are reduced to proving $\#\Shaf_u< \infty$ for each $u \in P_F^{\circ}$.

\noindent{\bf Claim 2.}
For fixed $u \in P_F^{\circ}$, 
the set
\[
B_u :=
\{
E \colon \textup{field with } [E:F]\leq2 \mid 
\exists \alpha \in F^{\times}/N_{E/F} F^{\times} \textup{ such that } X_u^{E,\alpha} \in \Shaf_u
\}
\]
is a finite set.
\begin{proof}
We set
\[
T_u := 
\left\{
\p\colon \textup{finite place of }F
\left|\
v_{\p} \left( u-\frac{5}{4} \right) \geq 1 
\right\}\right..
\]
For any $X_u^{E,\alpha} \simeq X \in \Shaf$, we may take a smooth projective scheme model $\mathcal{X}_{X}$ over $\cO_{F,S}$ as in the proof of Claim 1.
By the Hermite--Minkowski Theorem, it suffices to show that $E/F$ is unramified outside $S\cup T_u$.
We may assume that $[E:F]=2$.
Let $\p$ be a finite place of $F$ that ramifies in $E/F$, and $\q$ the finite place of $E$ with $\q^2 = \p$.
We want to show $\p \in S\cup T_u$, so we assume that $\p \notin S$.
Since $\Sigma(X)_{\red,\sing} \simeq \Spec E$ and $\Sigma (\mathcal{X}_{X, \cO_{F,\p}} / \cO_{F,\p})$ is proper, we obtain a morphism $\Spec \cO_{E, \q} \rightarrow \Sigma (\mathcal{X}_{X, \cO_{F,\p}} / \cO_{F,\p})$ whose scheme theoretic image is equal to the Zariski closure $V$ of $\Sigma (X_{E_\q})_{\red, \sing}$ in $\Sigma (\mathcal{X}_{X, \cO_{F,\p}} / \cO_{F,\p})$.
Therefore, $V_{k(\p)}$ is dominated by $\Spec \cO_{E,\q}/ \p \cO_{E,\q}$, which admits just one $k(\p)$-valued point.
Hence the reduced reduction $(V_k)_{\red}$ consists of one $k (\p)$-rational point. This means $\mathcal{X}_{X, \cO_{F,\p}}$ is a twisted model over $\cO_{F,\p}$.
{\cora 
Since $\mathcal{X}_{X,\cO_{E,\q}}$ is a twisted model of a split $V_{22}$-variety of $\Gm$-type,
by Proposition \ref{prop:grcforsplitGm}, we obtain $v_\q (u-\frac{5}{4}) \geq 4$.}
Therefore, we have $\p \in T_u$ as desired.
This completes the proof.
\end{proof}
For any $u \in P_F$ and a field extension $E/F$ with $[E:F] \leq 2$, we set
\[
\Shaf_{u,E} := \Phi_u^{-1} (\Spec E) \cap \Shaf_{\Gm,F,S} \subset \Shaf_u.
\]
Here, we set $\Phi_u^{-1} (\Spec F):=\{X_u\}$. By Claim 2, $\Shaf_{u,E}$ is empty for all but finitely many $E$.
Therefore, it suffices to show that $\Shaf_{u,E}$ is a finite set.
Since $\Shaf_{u,F}$ is a singleton, we may assume that $E \neq F$.
Let $T_u$ be as in the proof of Claim 2.
By the proof of Claim 2, $E/F$ is unramified outside $S \cup T_u$.
Let $\p \notin S \cup T_u$ be any finite place of $F$, and $X \in \Shaf_{u,E}$.
Let $\q \mid \p$ be a finite place of $E$.
{\cora By $\p \notin T_u$ and proposition \ref{prop:grcforsplitGm}, we have $u, u-1, u-\frac{5}{4} \in \cO_{E,\q}^{\times}$. 
Moreover, it follows from $\p \nmid (2)$ and what is written at the end of the proof of Proposition \ref{prop:grcforsplitGm} (1) that $\mathcal{X}_{X, \cO_{E,\q}} \simeq \mathcal{X}_{u, \cO_{E,\q}}$.
}
If $\q$ over $\p$ is split, then $X_{F_\p} = X_{E_{\q}} = X_u$.
Suppose that $\q$ over $\p$ is inert. 
Since we have $\Aut_{X/F, \red} \simeq U_{E/F} (1) \rtimes \Z/2\Z$, we obtain $\Aut_{X_{F_\p}/F_{\p}, \red} \simeq U_{E_{\q}/F_{\p}} (1) \rtimes \Z/2\Z$ and hence $X_{F_\p}$ is not split.
By Lemma \ref{lem:unramifiedextensionGmV22}, $\mathcal{X}_{X,\cO_{F,p}}$ is isomorphic to $\mathcal{X}_{u}'$, where we use the same notation as in Lemma \ref{lem:unramifiedextensionGmV22}.
Let
$\alpha_u \in F_{\p}^{\times}/N_{E_\q/F_\p} E_\q^{\times}$ be the element such that
$
\mathcal{X}_{u,F_{\p}}' \simeq X_{u}^{(E_\q, \alpha_u)}.
$
Note that this only depends on $u$, i.e.,\ does not depend on $X$.
By the Hasse norm theorem and the local class field theory, we obtain a finite-to-one morphism
\[
p:=\prod p_\p\colon
F^{\times}/N_{E/F} F^{\times} \rightarrow \prod_{\p \notin S \cup T_u} F_{\p}^{\times}/N_{E_\q/F_\p} E_{\q}^{\times}.
\]
Moreover, by the above consideration, for any $X_{u,E}^{(\alpha_X)} \simeq X\in \Shaf_{u,E}$, we have
\[
p_\p (\alpha_{X}) =
\begin{cases}
     \alpha_{u} \in F_\p^{\times}/N_{E_q/F_\p} E_{\q}^{\times}  & \textup{if } \p \textup{ is inert in } E,\\
    1\in F_\p^{\times} / N_{F_\p/F_\p} F_{\p}^{\times} =\{1\} &
    \textup{if } \p \textup{ is split in } E.
\end{cases}
\]
Therefore, 
$
p (\{\alpha_X \mid X\in \Shaf_{u,E}\})
$
is a singleton, so $\Shaf_{u,E}$ is a finite set as desired.
}
\end{proof}

\medskip
\noindent {\bf Acknowledgements}
The authors express their gratitude to Professor Shigeru Mukai for various invaluable discussions and also for providing us with the problem on Mukai--Umemura varieties in positive characteristic.
The authors, particularly the second author, also wish to thank Professor Kento Fujita for numerous insightful discussions about $V_{22}$.
The authors would like to thank Professors Tatsuro Kawakami, Hiromu Tanaka, Yuji Odaka, Takashi Kishimoto, and Adrien Dubouloz, Daniel Loughran, and Ariyan Javanpeykar for valuable discussions.

\printbibliography
\end{document}